\numberwithin{equation}{section}
\theoremstyle{plain}
\newtheorem{theorem}{Theorem}
\newtheorem{lemma}[theorem]{Lemma}
\newtheorem{corollary}[theorem]{Corollary}
\newtheorem{proposition}[theorem]{Proposition}
\newtheorem{properties}[theorem]{Properties}
\theoremstyle{definition}
\newtheorem{question}[theorem]{Question}
\newtheorem{definition}[theorem]{Definition}
\newtheorem{remark}[theorem]{Remark}
\newcommand{\Hs}{\mathcal H}
\newcommand{\Ks}{\mathcal K}
\newcommand{\Ad}{\mathrm{Ad}}
\newcommand{\ad}{\mathrm{ad}}
\newcommand{\id}{\mathrm{id}}
\newcommand{\cb}{\mathit{cb}}
\newcommand{\N}{\mathcal N} 
\newcommand{\U}{\mathcal U} 
\renewcommand{\sp}{\mathrm{Sp}} 
\newcommand{\eps}{\varepsilon}
\newcommand{\Z}{\mathcal Z}
\newcommand{\vnotimes}{\,\overline{\otimes}\,}
\newcommand{\row}{\mathrm{row}}
\newcommand{\alt}{\mathrm{alt}}
\begin{document}

\title{Kadison--Kastler stable
factors}

\author[Cameron et. al.]{Jan Cameron}
\address{\hskip-\parindent
Jan Cameron, Department of Mathematics, Vassar College, Poughkeepsie, NY 12604, 
U.S.A.}
\thanks{JC's research is partially supported by an AMS-Simons research travel grant.}
\email{jacameron@vassar.edu}

\author[]{Erik Christensen}
\address{\hskip-\parindent
Erik Christensen, Institute for Mathematiske Fag, University of Copenhagen,
Copenhagen, Denmark.}
\email{echris@math.ku.dk}
\author[]{Allan M.~Sinclair}
\address{\hskip-\parindent
Allan M.~Sinclair, School of Mathematics, University of Edinburgh, JCMB, King's
Buildings, Mayfield Road, Edinburgh, EH9 3JZ, Scotland.}
\email{a.sinclair@ed.ac.uk}
\author[]{Roger R.~Smith}
\address{\hskip-\parindent
Roger R.~Smith, Department of Mathematics, Texas A{\&}M University,
College Station, TX 77843,  U.S.A.}
\email{rsmith@math.tamu.edu}
\thanks{RS's research is partially supported by NSF grant DMS-1101403}

\author[]{Stuart White}
\address{\hskip-\parindent
Stuart White, School of Mathematics and Statistics, University of Glasgow, 
University Gardens, Glasgow Q12 8QW, Scotland.}
\email{stuart.white@glasgow.ac.uk}
\thanks{SW's research is partially supported by EPSRC grant EP/I019227/1.}

\author[]{Alan D.~Wiggins}
\address{\hskip-\parindent
Alan D.~Wiggins, Department of Mathematics and Statistics, University of
Michigan-Dearborn,  Dearborn, MI 48126, U.S.A.}
\email{adwiggin@umd.umich.edu}

\maketitle

\begin{abstract}
A conjecture of Kadison and Kastler from 1972 asks whether sufficiently close
operator algebras in a natural uniform sense must be small unitary perturbations
of one another. For $n\geq 3$ and a free ergodic probability measure preserving
action of $SL_n(\mathbb Z)$ on a standard nonatomic probability space $(X,\mu)$,
write $M=((L^\infty(X,\mu)\rtimes
SL_n(\mathbb Z))\vnotimes R$, where $R$ is the hyperfinite II$_1$ factor. We
show that whenever $M$ is represented as a von
Neumann algebra on some Hilbert space $\Hs$ and $N\subseteq\mathcal B(\Hs)$ is
sufficiently close to $M$, then there is a unitary $u$ on $\Hs$ close to the
identity operator with $uMu^*=N$.
 This provides the first nonamenable class of von Neumann algebras satisfying
Kadison and Kastler's conjecture. 
 
We also obtain stability results for crossed products
$L^\infty(X,\mu)\rtimes\Gamma$ whenever the comparison map from the bounded to usual
group cohomology vanishes in degree $2$ for the module $L^2(X,\mu)$.  In this
case, any von Neumann algebra sufficiently close to such a crossed product is
necessarily isomorphic to it. In particular, this result applies when $\Gamma$
is a free group.

This paper provides a complete account of the results announced in \cite{CCSSWW:PNAS}.
\end{abstract}

\section{Introduction}

\renewcommand*{\thetheorem}{\Alph{theorem}}

In \cite{KK:AJM}, Kadison and Kastler introduced a metric $d$ on the collection
of all closed subalgebras of the bounded operators on a Hilbert space in terms of
the Hausdorff distance between the unit balls of two algebras $M$ and $N$, and
conjectured that sufficiently close operator algebras should be isomorphic. 
Qualitatively, $M$ and $N$ are close in the Kadison-Kastler metric if each
operator in the unit ball of $M$ is close to an operator in the unit ball of $N$
and vice versa.  Canonical examples of close operator algebras are obtained by
small unitary perturbations: given an operator algebra $M$ on a Hilbert space
$\Hs$ and  a unitary operator $u$ on $\Hs$ close to the identity operator, then
$uMu^*$ is close to $M$. The strongest form of the Kadison-Kastler conjecture
states that every algebra sufficiently close to a von Neumann algebra $M$ arises in this fashion.  This has  been established when $M$ is an
injective von Neumann algebra \cite{C:Invent,RT:JFA,Joh:PLMS,C:Acta} (building
on the earlier special cases in \cite{C:JLMS,Ph:PJM}) but remains open for
general von Neumann algebras.  

We now present the central result of the paper: Theorem \ref{TA}. This has been announced in our short survey article \cite{CCSSWW:PNAS} which contains a heuristic discussion of our methods but no formal proofs.
\begin{theorem}\label{TA}
Let $n\geq 3$ and let $\alpha:SL_n(\mathbb Z)\curvearrowright(X,\mu)$ be a free,
ergodic and measure preserving action of $SL_n(\mathbb Z)$ on a standard nonatomic
probability space $(X,\mu)$.  Write $M=(L^\infty(X,\mu)\rtimes_\alpha
SL_n(\mathbb Z))\vnotimes R$, where $R$ is the hyperfinite II$_1$ factor.  For 
 $\eps>0$, there exists $\delta>0$ with the following property:  given
a normal unital representation $M\subseteq\mathcal B(\Hs)$ and another von
Neumann algebra $N$ on $\Hs$ with $d(M,N)<\delta$, there exists a unitary
$u\in\Hs$ with $\|u-I_\Hs\|<\eps$ and $uMu^*=N$.
\end{theorem}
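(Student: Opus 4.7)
The plan is to produce a unitary $u$ close to $I_\Hs$ with $uMu^* = N$ by a two-stage perturbation argument mirroring the tensor decomposition $M = (L^\infty(X,\mu) \rtimes_\alpha SL_n(\mathbb{Z})) \vnotimes R$: the first stage handles the Cartan masa $A := L^\infty(X,\mu) \vnotimes R$ using injective-algebra perturbation theory, while the second handles the $SL_n(\mathbb{Z})$-generated piece via a cohomological vanishing statement that is where the hypothesis $n\geq 3$ will be used.

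For stage one, $A$ is amenable (abelian tensor hyperfinite) and Cartan in $M$. By Christensen-type perturbation theorems for injective subalgebras, combined with $E_A$-bimodular averaging of a near-inclusion of $A$ into $N$, there should be a unitary $u_1$ near $I_\Hs$ with $u_1 A u_1^* \subseteq N$. After replacing $N$ by $u_1^* N u_1$ we may assume $A \subseteq M \cap N$, and then a further local adjustment, again exploiting the amenability of $A$, should arrange that $A$ is Cartan in $N$ as well.

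For stage two, each canonical implementing unitary $u_g \in M$ is close to some element $v_g$ of the groupoid normalizer of $A$ in $N$, and the collection $\{v_g\}$ is an approximate unitary representation of $SL_n(\mathbb{Z})$; after a small $A$-valued correction the defect $(g,h)\mapsto v_g v_h v_{gh}^*$ can be taken to lie in $\U(A)$, giving a small $2$-cocycle valued in the $SL_n(\mathbb Z)$-module associated with $L^2(X,\mu)$. The crucial input is vanishing of the comparison map $H_b^2(SL_n(\mathbb{Z}), L^2(X,\mu)) \to H^2(SL_n(\mathbb{Z}), L^2(X,\mu))$, which is the exact condition isolated for the paper's second main result and which, for $SL_n(\mathbb Z)$ with $n\geq 3$, follows from Burger--Monod-type vanishing of degree-$2$ bounded cohomology for higher-rank lattices. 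Quantitative trivialization then produces a small coboundary correcting the $\{v_g\}$ into a genuine unitary representation of $SL_n(\mathbb{Z})$ inside $N$, and assembles a $*$-isomorphism $\theta:M\to N$ close to the inclusion. A final Kadison--Sakai-style spatial implementation argument, in which the factor $R$ supplies the injectivity needed to average bimodule maps, promotes $\theta$ to conjugation by a unitary $u$ close to $I_\Hs$.

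The main obstacle will be making the cohomological step quantitative: one needs to pass from abstract vanishing of the comparison map to a norm-small primitive for the near-cocycle, with bounds controlled uniformly in $\delta$. Coupling this with the first-stage Cartan perturbation so that the composite unitary remains uniformly small, and then upgrading the resulting $*$-isomorphism to spatial conjugation without degrading the norm estimates, is where the bulk of the technical work is expected to concentrate.
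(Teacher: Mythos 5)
Your outline follows the paper's broad strategy (embed the amenable algebra $P=L^\infty(X,\mu)\vnotimes R$ into $N$ by Christensen's theorem, transfer the normalizers $u_g$, read off a small $\U(\Z(P))$-valued $2$-cocycle, kill it by cohomology, then implement spatially using the similarity-type properties supplied by $R$), but the cohomological input you propose is the wrong one, and this is a genuine gap rather than a technicality. Vanishing of the comparison map $H^2_b(\Gamma,L^2(X,\mu))\to H^2(\Gamma,L^2(X,\mu))$ only tells you that the logarithm $\psi$ of the defect cocycle is the coboundary of \emph{some} $1$-cochain $\phi\in C^1(\Gamma,L^2(X)_{sa})$, with no norm control whatsoever; exponentiating gives unitaries $e^{i\phi(g)}$ that need not be anywhere near $I$, so the corrected $v'_g=e^{i\phi(g)}v_g$ need not be close to $u_g$. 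That argument yields an isomorphism $M\cong N$ (this is exactly the paper's Theorem~\ref{TB}(\ref{TB1})/Corollary~\ref{MainCor}, i.e.\ weak Kadison--Kastler stability) but cannot produce an isomorphism uniformly close to the inclusion, hence no unitary close to $I_\Hs$. Indeed, no "quantitative trivialization" can follow from comparison-map vanishing alone: the comparison map also vanishes for free groups (cohomological dimension one), while $H^2_b(\mathbb F_r,\mathbb R)\neq 0$, so bounded cocycles that only admit unbounded primitives exist, and the paper correspondingly does \emph{not} claim strong stability for those crossed products. What Theorem~\ref{TA} actually needs is the vanishing of the full bounded cohomology group $H^2_b(SL_n(\mathbb Z),\Z(P)_{sa})=H^2_b(SL_n(\mathbb Z),L^\infty_{\mathbb R}(X,\mu))$ for $n\geq 3$ (Theorem~\ref{Monod}, which requires Monod's semiseparable-coefficient machinery since $L^\infty$ is nonseparable), combined with a primitive of controlled norm, $\|\phi\|\leq 6\|\psi\|$ (Proposition~\ref{K=4}); only then are the correcting unitaries $e^{i\phi(g)}$ uniformly close to $I$, giving $\|\theta(x)-x\|\leq C\gamma^{1/2}\|x\|$ and allowing Lemma~\ref{IsomorphismDK}(\ref{IsomorphismDK2}) (property $\Gamma$, supplied by the $R$ factor, giving property $D_{5/2}$) to convert $\theta$ into conjugation by a unitary near $I_\Hs$. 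Your final "averaging over $R$" step presupposes exactly this uniform closeness of $\theta$ to the inclusion, so the two gaps compound.

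Two smaller points. First, $L^\infty(X,\mu)\vnotimes R$ is not a Cartan masa (it is nonabelian); it is a regular amenable subalgebra with trivial relative commutant, which is what the argument really uses. Second, your sketch silently assumes that $N$ is generated by the transferred copy of $P$ together with the $v_g$, i.e.\ that $N$ is again a twisted crossed product. Since one does not know a priori that $M'$ and $N'$ are close on $\Hs$, this is not automatic: the paper needs the reduction-to-standard-position machinery of Section~\ref{Cartan} (Lemmas~\ref{generate} and \ref{Summation}, comparing the basic constructions $\langle M,e_P\rangle=\langle N,e_P\rangle$) to show the $v_g$ form a bounded homogeneous orthonormal basis of normalizers for $P\subseteq N$. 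Any complete proof must either reproduce that step or supply a substitute.
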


Theorem \ref{TA} provides the first nonamenable II$_1$ factors
which satisfy the strongest form of the Kadison-Kastler conjecture. A key ingredient in this result is the vanishing of
the bounded cohomology groups $H^2_b(SL_n(\mathbb Z),L^\infty_\mathbb R(X,\mu))$
for $n\geq 3$ from \cite{M:Crelle,BM:GAFA,MS:JDG} and in Theorem \ref{TA}, which is then an immediate consequence of Theorem \ref{SSKK}, $SL_n(\mathbb Z)$ can be replaced with any other group with this property.  Via
the work of \cite{Bo:JAMS,P:Invent2,P:Invent3,P:IMRN}, there are uncountably many pairwise nonisomorphic II$_1$
factors to which this theorem applies (see Remark \ref{uncountable}).  

The Kadison-Kastler conjecture is known to be false in full generality. In \cite{CC:BLMS},
examples of arbitrarily close nonseparable and nonisomorphic $C^*$-algebras were
found, while in \cite{Joh:CMB} Johnson presented examples of arbitrarily close
unitarily conjugate pairs of separable nuclear $C^*$-algebras where the
implementing unitaries could not be chosen to be close to the identity operator.
Thus the appropriate form of the conjecture for $C^*$-algebras is that
sufficiently close separable $C^*$-algebras should be isomorphic or spatially
isomorphic. In this last form, the conjecture has been settled affirmatively for
close separable nuclear $C^*$-algebras on separable Hilbert spaces
\cite{CSSWW:Acta} (see also \cite{CSSWW:PNAS}) with earlier special cases
established in \cite{C:Acta,PhR:CJM,PhR:PLMS,Kh:MMJ}. Our methods also give
examples of nonamenable von Neumann algebras satisfying these weaker forms of
the conjecture, as we now state. The hypotheses on 
 the action in the following theorem ensure that $M$ is a  II$_1$ factor with separable predual
satisfying $P'\cap
M\subseteq P$. The three parts of Theorem \ref{TB} are proved in Section \ref{KKStable} as Corollary \ref{MainCor}, Corollary \ref{spatialKK} and Theorem \ref{StrongKK} respectively.

\begin{theorem}\label{TB}
Let $\alpha:\Gamma\curvearrowright P$ be a centrally ergodic, properly outer and
trace-preserving action  of a countable discrete group $\Gamma$ on
a finite amenable von Neumann algebra $P$ with separable predual and write
$M=P\rtimes_\alpha \Gamma$.  
\begin{enumerate}
\item\label{TB1} Suppose that the comparison map 
\begin{equation}\label{TBCM}
H^2_b(\Gamma,L^2(\Z(P)_{sa}))\rightarrow H^2(\Gamma,L^2(\Z(P)_{sa}))
\end{equation}
from bounded cohomology to usual cohomology vanishes, where $\Z(P)$ denotes the
center of $P$. Then, given a normal unital representation $M\subseteq \mathcal
B(\Hs)$, each von Neumann algebra $N$ on $\mathcal B(\Hs)$ sufficiently close to
$M$ is isomorphic to $M$.
\item\label{TB2} Suppose that the comparison map (\ref{TBCM}) vanishes and that
$M$ has  property $\Gamma$. Then, given a normal unital representation
$M\subseteq \mathcal B(\Hs)$, each von Neumann algebra $N$ on $\mathcal B(\Hs)$
sufficiently close to $M$ is spatially isomorphic to $M$.
\item\label{TB3} Suppose that the bounded cohomology group
$H^2_b(\Gamma,\Z(P)_{sa})$ vanishes. Then, given $\eps>0$, there exists $\delta>0$
such that for a normal unital representation $\iota:M\rightarrow\mathcal B(\Hs)$
and a von Neumann subalgebra $N\subseteq \mathcal B(\Hs)$ with
$d(\iota(M),N)<\eps$, there exists a surjective $^*$-isomorphism
$\theta:M\rightarrow N$ with $\|\iota-\theta\|<\delta$.
\end{enumerate}
\end{theorem}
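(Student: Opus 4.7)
The plan is to transfer the crossed product decomposition $M=P\rtimes_\alpha\Gamma$ across the Kadison--Kastler metric by matching the three strengths of cohomological hypothesis with three progressively sharper conclusions.

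To begin, since $P$ is amenable with separable predual, the Kadison--Kastler stability theorem for injective von Neumann algebras supplies a unitary $u\in\mathcal B(\Hs)$ close to $I_\Hs$ with $uPu^*\subseteq N$. After replacing $N$ by $u^*Nu$ we may assume $P\subseteq N$, with $d(M,N)$ still small. For each $g\in\Gamma$, let $u_g\in M$ be the canonical unitary implementing $\alpha_g$ and pick a unitary $w_g\in N$ approximating $u_g$. Because $u_g$ normalizes $P$ and $\alpha$ is properly outer, standard near-inclusion and averaging arguments in the style of Christensen perturb $w_g$ within $N$, while keeping it close to $u_g$, to a unitary $v_g\in N$ exactly normalizing $P$ with $(\Ad v_g)|_P=\alpha_g$; proper outerness pins down $v_g$ modulo the unitary group of $\Z(P)$. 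Consequently the defect $\omega(g,h)=v_g v_h v_{gh}^*$ lies in the unitary group of $\Z(P)$, is close to $I$, and lifts through the logarithm to a bounded $L^2(\Z(P)_{sa})$-valued $2$-cocycle on $\Gamma$.

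For Part (\ref{TB1}), vanishing of the comparison map (\ref{TBCM}) supplies a (possibly unbounded) $L^2(\Z(P)_{sa})$-valued $1$-cochain whose coboundary equals this cocycle; exponentiating and absorbing the resulting central unitaries into the $v_g$ produces a genuine unitary representation $g\mapsto\tilde v_g$ of $\Gamma$ in $N$. Combined with $P\subseteq N$ and the centrally ergodic hypothesis (which ensures that the generated subalgebra is maximal and hence all of $N$), this delivers an isomorphism $M\cong N$. For Part (\ref{TB2}), once the abstract isomorphism is in hand, property $\Gamma$ of $M$ supplies sufficiently many approximately central projections for Christensen's spatial implementation argument, realizing the isomorphism by a unitary in $\mathcal B(\Hs)$. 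For Part (\ref{TB3}), the stronger hypothesis $H^2_b(\Gamma,\Z(P)_{sa})=0$ allows the $1$-cochain to be chosen with norm controlled by that of the cocycle, hence by the closeness of $\iota(M)$ and $N$; propagating this bound through the twisting and spatial implementation steps yields a surjective $^*$-isomorphism $\theta:M\to N$ with quantitative control on $\|\iota-\theta\|$.

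The principal obstacle is the conversion of an approximate group law for the $v_g$ into an exact one: producing a $1$-cochain that trivialises the defect cocycle is precisely what the cohomological hypotheses are engineered to accomplish, and it is the norm behaviour of this cochain that separates the abstract isomorphism of Part (\ref{TB1}) from the quantitative statement of Part (\ref{TB3}), with property $\Gamma$ providing the additional central-sequence input needed to upgrade isomorphism to spatial isomorphism in Part (\ref{TB2}).
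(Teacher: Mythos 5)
Your overall strategy is the one the paper follows (embed $P$ into a small unitary perturbation of $N$ via Christensen's embedding theorem, transfer the canonical normalizers $u_g$ to normalizers $v_g$ of $P$ in $N$, read off a $\U(\Z(P))$-valued $2$-cocycle close to the identity, take logarithms, and use the cohomological hypotheses to untwist, with $H^2_b(\Gamma,\Z(P)_{sa})=0$ giving the norm-controlled cochain needed for the quantitative statement in part (3)). However, there is a genuine gap at the step you dispose of in one clause: the claim that central ergodicity ``ensures that the generated subalgebra is maximal and hence all of $N$.'' Central ergodicity of $\alpha$ only guarantees that the (twisted) crossed product is a factor; it says nothing about whether the von Neumann subalgebra $N_0=(P\cup\{v_g:g\in\Gamma\})''\subseteq N$ exhausts $N$, and a priori $N_0$ could be a proper (even irreducible) subfactor. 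Nor can you deduce $N\subseteq_\eps N_0$ directly from closeness of generators, since elements of $M$ are only weak$^*$ limits of finite sums $\sum u_g a_g$ and the norm error in replacing $u_g$ by $v_g$ grows with the number of terms. Establishing generation is the technical heart of the paper: one must pass to a new Hilbert space on which $M$ and (a further unitary perturbation of) $N$ are \emph{simultaneously} in standard position with a common tracial vector and with $J_MPJ_M\subseteq N'$, identify the two basic constructions $\langle M,e_P\rangle=\langle N,e_P\rangle$, and then show $\overline{u_gP\xi}=\overline{v_gP\xi}$ so that the $(v_g)$ form a bounded homogeneous orthonormal basis of normalizers for $P\subseteq N$, whence $L^2$-density forces $N_0=N$ (this is the content of Lemmas \ref{basiccon-lem}, \ref{generate} and \ref{Summation}). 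Without some substitute for this argument your proof of part (\ref{TB1}) does not go through.

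A secondary point concerns part (\ref{TB2}): a $^*$-isomorphism between II$_1$ factors acting on the same Hilbert space is spatial exactly when the coupling constants agree, and your appeal to ``approximately central projections for Christensen's spatial implementation argument'' does not address this. In the paper property $\Gamma$ enters through the similarity property (property $D_{5/2}$), which upgrades $d(M,N)$ small to $d_{cb}(M,N)$ small, hence gives close commutants and, via Proposition \ref{cordim}, $\dim_M(\Hs)=\dim_N(\Hs)$; spatiality then follows from the classical fact that isomorphic II$_1$ factors with equal coupling constant are unitarily conjugate. Some argument comparing $\dim_M(\Hs)$ and $\dim_N(\Hs)$ is indispensable here and is missing from your outline.
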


In order to distinguish the slightly different external rigidity properties
arising in Theorem \ref{TA} and the different parts of Theorem \ref{TB} above,
we call algebras satisfying the conclusion of Theorem \ref{TA} \emph{strongly
Kadison-Kastler stable}, algebras satisfying the conclusion of Theorem \ref{TB}
part (\ref{TB1}) \emph{weakly Kadison-Kastler stable} and algebras satisfying
the conclusion of Theorem \ref{TB} part (\ref{TB2}) \emph{Kadison-Kastler
stable}. With this terminology, the appropriate forms of the Kadison-Kastler
conjecture are that von Neumann algebras are strongly Kadison-Kastler stable and
separable $C^*$-algebras are Kadison-Kastler stable.

Part (\ref{TB1}) of Theorem \ref{TB} applies when $\Gamma$ is a free group
$\mathbb F_r$, $2\leq r\leq \infty$,  as these groups have cohomological
dimension one, so $H^2(\Gamma,L^2(\Z(P)_{sa}))=0$. In particular the approximate free group
factors, introduced in \cite{OP:Ann} as the first class of factors
containing a unique Cartan masa up to unitary conjugacy, have the form
$L^\infty(X)\rtimes_\alpha\mathbb F_r$ for some free ergodic measure preserving
profinite action $\alpha$. Consequently, these factors are weakly Kadison-Kastler
stable by Part (\ref{TB1}) of Theorem \ref{TB}.  As shown in
\cite[Section 5]{OP:Ann}, there are uncountably many pairwise nonisomorphic
factors in this class, including examples with property $\Gamma$. These latter
examples are Kadison Kastler stable by Part \ref{TB2} of Theorem \ref{TB}.  

The key strategy used to prove Theorem \ref{TA} is to replicate the crossed product structure of $L^\infty(X)\rtimes_\alpha SL_n(\mathbb Z)$ inside a nearby factor $N$. One can transfer the copy of $L^\infty(X)$ into $N$ using an embedding theorem of EC from \cite{C:Acta} (Theorem \ref{Injective} (\ref{Injective:Part1}) below), and transfer normalizers of $L^\infty(X)$ from $L^\infty(X)\rtimes_\alpha SL_n(\mathbb Z)$ to $N$ (see Section \ref{NormSection}).  To show that $N$ is generated by the copy of $L^\infty(X)$ and its normalizers we work at the level of Hilbert space by transferring the problem to the situation where both factors are in standard form. We do this in Section \ref{Cartan}, which provides a general reduction procedure for weak-Kadison Kastler stability and should be of more general use  (forthcoming work will show how this method can be used to transfer a number of structural properties between close II$_1$ factors).  The resulting factor $N$ will then be a twisted crossed product $L^\infty(X)\rtimes_{\alpha ,\omega} SL_n(\mathbb Z)$ arising from the original action and with a unitary valued $2$-cocycle which is uniformly close to the identity operator; the cohomology assumptions of Theorems \ref{TA} and \ref{TB} are used to ensure that this $2$-cocycle vanishes, so $N\cong L^\infty(X)\rtimes_\alpha SL_n(\mathbb Z)$.

The tensor factor $R$ in Theorem \ref{TA} ensures that $(L^\infty(X)\rtimes_\alpha
SL_n(\mathbb Z))\vnotimes R$ has Kadison's similarity property (a consequence of strong Kadison-Kastler stability for II$_1$ factors, see \cite{CCSSWW:InPrep}) and ensures that our resulting isomorphism is spatial.  To be able to work with the subfactor $L^\infty(X)\rtimes_\alpha SL_n(\mathbb Z)\subseteq (L^\infty(X)\rtimes_\alpha
SL_n(\mathbb Z))\vnotimes R$ we examine McDuff factors  (those absorbing a copy of $R$ tensorially) in Section \ref{properties}, where we show that a factor sufficiently close to a  McDuff factor is itself McDuff. Moreover, after making a small unitary perturbation, it is possible to identify a common tensor factor of $R$ in both algebras, whose tensorial complements are close.  Thus we can ``remove'' the copy of $R$ from $(L^\infty(X)\rtimes_\alpha
SL_n(\mathbb Z))\vnotimes R$ and apply our early work on normalizers to an identified algebra close to $L^\infty(X)\rtimes_\alpha SL_n(\mathbb Z)$. In Section \ref{KKStable} we assemble the proofs of Theorem \ref{TA} and \ref{TB}, and obtain the additional information needed for strong Kadison-Kastler stability when the bounded group cohomology vanishes. An extended outline of the methods used to prove Theorems \ref{TA} and \ref{TB} can be found in the expository article \cite{CCSSWW:PNAS}.  

\renewcommand*{\thetheorem}{\roman{theorem}}

\numberwithin{theorem}{section}

\section{Preliminaries}\label{Prelim}

We begin by recalling the definition of the Kadison-Kastler metric from
\cite{KK:AJM} and near inclusions from \cite{C:Acta}, and the ``complete'' versions of these concepts, which are implicit in \cite{C:Acta}, and explicitly appear in \cite{CSSW:GAFA,Roy:arxiv}.
\begin{definition}\label{KKMetric}
\begin{enumerate}[(i)]\item Let $M$ and $N$ be von Neumann subalgebras of $\mathcal B(\Hs)$.  The
\emph{Kadison-Kastler distance} $d(M,N)$ between $M$ and $N$ is the infimum of
those $\gamma>0$ with the property (T)hat, given an operator $x$ in one of the
unit
balls of $M$ or $N$, there exists $y$ in the other unit ball with
$\|x-y\|<\gamma$.  The complete version of the metric is defined by
$
d_{cb}(M,N)=\sup_{n\geq 1}\,d(M\otimes \mathbb M_n,N\otimes\mathbb M_n).
$
\item For
$\gamma>0$, write $M\subseteq_\gamma N$ if each $x\in M$ can be approximated by
some $y\in N$ with $\|x-y\|\leq \gamma\|x\|$.  Write $M\subset_\gamma N$ when
there exists $\gamma'<\gamma$ with $M\subseteq_{\gamma'}N$. 
Similarly, we write $M\subseteq_{cb,\gamma}N$ if $M\otimes\mathbb
M_n\subseteq_{\gamma}N\otimes\mathbb M_n$ for all $n\in\mathbb N$ and
$M\subset_{cb,\gamma}N$ if there exists $\gamma'<\gamma$ with
$M\subseteq_{cb,\gamma'}N$.
\end{enumerate}
\end{definition}

The following easy estimate will be used repeatedly in the sequel.
\begin{equation}\label{NearTriangleUnitary}
M\subseteq_\gamma N,\quad u\in\mathcal U({\mathcal{B}}(\Hs))\implies
M\subseteq_{\gamma+2\|u-I\|} uNu^*.
\end{equation}
This is obtained as follows. For $x$ in the unit ball of $M$, choose $y\in N$ with
$\|x-y\|\leq\gamma$. Then $\|x-uyu^*\|\leq\|x-uxu^*\|+\|u(x-y)u^*\|\leq
2\|u-I\|+\gamma$. 

Note that if two von Neumann algebras $M$ and $N$ act degenerately on a Hilbert space $\Hs$, and have $d(M,N)$ small, it is easy to modify the situation so that $M$ and $N$ share the same unit.  Henceforth we assume all close von Neumann algebras contain the identity operator on the underlying Hilbert space. We incorporate this assumption into the definitions below.

\begin{definition}\label{KKRigid}
Let $M$ be a von Neumann algebra.  
\begin{enumerate}[(i)]
\item Say that $M$ is \emph{strongly Kadison-Kastler stable} if for all
$\eps>0$, there exists $\delta>0$ such that given any faithful unital normal
representation $M\subseteq\mathcal B(\Hs)$ and a von Neumann algebra
$N\subseteq\mathcal B(\Hs)$ containing $I_\Hs$ with $d(M,N)<\delta$, then there
exists a unitary operator $u$ on $\Hs$ with $uMu^*=N$ and $\|u-I_\Hs\|<\eps$.  
\item Say that $M$ is \emph{Kadison-Kastler stable} if there exists $\delta>0$
such that given a faithful unital normal representation $M\subseteq\mathcal
B(\Hs)$ and a von Neumann algebra $N\subseteq\mathcal B(\Hs)$ with $I_\Hs\in N$
such that $d(M,N)<\delta$, then there exists a unitary operator $u$ on $\Hs$
with $uMu^*=N$.
\item Say that $M$ is \emph{weakly Kadison-Kastler stable} if there exists
$\delta>0$ such that given a faithful unital normal representation
$M\subseteq\mathcal B(\Hs)$ and a von Neumann algebra $N\subseteq\mathcal B(\Hs)$
with $I_\Hs\in N$ such that $d(M,N)<\delta$, then $M$ and $N$ are
$^*$-isomorphic.
\end{enumerate}
\end{definition}

In \cite{C:Acta} it is observed that near inclusions behave better than the metric $d$ with respect to matrix amplifications and commutants (see Proposition \ref{LocalDK} below). For this reason, we
state technical results  using hypotheses of the form $M\subset_\gamma
N$ and $N\subset_\gamma M$, but formulate the main results of the paper  using
the Kadison-Kastler metric $d$. Spatial derivations provide a key ingredient in working with commutants of near inclusions: an operator $T\in\mathcal B(\Hs)$
induces a derivation $S\mapsto TS-ST$ on $\mathcal
B(\Hs)$ denoted $\ad(T)$. Arveson's distance formula from \cite{A:JFA} (see \cite[Proposition 2.1]{C:IJM} for the formulation we use) shows that for a von Neumann algebra $M\subseteq\mathcal B(\Hs)$ and $T\in\mathcal  B(\Hs)$, 
\begin{equation}\label{DForm}
d(T,M')=\frac{1}{2}\|\ad(T)|_M\|_{\cb}.
\end{equation}
This enables one to take commutants of complete near inclusions.

\begin{proposition}\label{cb-com}
Let $M,N\subseteq \mathcal B(\Hs)$ be von Neumann algebras acting nondegenerately on a Hilbert space
$\Hs$
with $M\subseteq_{\cb,\gamma}N$. Then $N'\subseteq_{\cb,\gamma}M'$ and for any Hilbert space $\Ks$, $M\,\overline{\otimes}\,\mathcal B(\Ks)\subseteq_{\cb,\gamma}N\,\overline{\otimes}\,\mathcal B(\Ks)$. In particular,
if $d_{\cb}(M,N)\leq \gamma$, then $d_{\cb}(M',N')\leq 2\gamma$.
\end{proposition}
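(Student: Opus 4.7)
The plan is to use Arveson's distance formula \eqref{DForm} to convert commutant questions into estimates on the cb-norm of an inner derivation. The key point is that an element of $N'$ commutes with $N$, so when we approximate an element of $M$ by one in $N$, the inner derivation defined by any member of $N'$ only sees the error.

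For the commutant statement, fix $n\geq 1$ and view everything inside $\mathcal B(\Hs\otimes\mathbb C^n)$. I would take $T$ in the unit ball of $N'\vnotimes\mathbb M_n=(N\otimes 1_n)'$ and apply Arveson's formula to the subalgebra $M\otimes 1_n$, obtaining $d(T,M'\vnotimes\mathbb M_n)=\tfrac12\|\ad(T)|_{M\otimes 1_n}\|_{\cb}$. To bound the right-hand side, given any $z$ in the unit ball of $(M\otimes 1_n)\otimes\mathbb M_k$ one re-groups tensor factors to write $z=\tilde z\otimes 1_n$ with $\tilde z\in M\otimes\mathbb M_k$, invokes $M\subseteq_{\cb,\gamma}N$ to produce $\tilde w\in N\otimes\mathbb M_k$ within distance $\gamma$, and sets $w=\tilde w\otimes 1_n$. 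Since $T\otimes 1_k$ commutes with $w$, the identity $(\ad(T)\otimes\id_k)(z)=(\ad(T)\otimes\id_k)(z-w)$ gives a norm bound of $2\gamma$. This yields $d(T,M'\vnotimes\mathbb M_n)\leq\gamma$; varying $n$ gives $N'\subseteq_{\cb,\gamma}M'$.

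For the amplification statement, identify $(M\vnotimes\mathcal B(\Ks))\otimes\mathbb M_n$ with $M\vnotimes\mathcal B(\Ks\otimes\mathbb C^n)$ to reduce to the case $n=1$, so it suffices to show $M\vnotimes\mathcal B(\Ks)\subseteq_\gamma N\vnotimes\mathcal B(\Ks)$ for an arbitrary Hilbert space $\Ks$. The plan is to cut down by an increasing net of finite-rank projections $p_\lambda\uparrow 1$ in $\mathcal B(\Ks)$. For $x$ in the unit ball of $M\vnotimes\mathcal B(\Ks)$, each compression $x_\lambda=(1\otimes p_\lambda)x(1\otimes p_\lambda)$ lies in a corner isomorphic to $M\otimes\mathbb M_{\mathrm{rank}(p_\lambda)}$, so the hypothesis produces $y_\lambda$ in the corresponding corner of $N\vnotimes\mathcal B(\Ks)$ with $\|x_\lambda-y_\lambda\|\leq\gamma$. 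The net $(y_\lambda)$ is norm-bounded by $1+\gamma$, so I would pass to a weak-$*$ cluster point $y\in N\vnotimes\mathcal B(\Ks)$; combining the strong convergence $x_\lambda\to x$ with the uniform bound $\|x_\lambda-y_\lambda\|\leq\gamma$, a two-vector estimate shows $|\langle(x-y)\xi,\eta\rangle|\leq\gamma$ for any unit vectors $\xi,\eta$, hence $\|x-y\|\leq\gamma$.

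The final assertion follows from the first. For any $\gamma'>\gamma$, $d_{\cb}(M,N)\leq\gamma$ provides near-inclusions in both directions, which the first part promotes to $M'\subseteq_{\cb,\gamma'}N'$ and $N'\subseteq_{\cb,\gamma'}M'$. Such a near-inclusion produces an approximant of norm at most $1+\gamma'$, not necessarily in the unit ball; renormalising by $\max(1,\|\cdot\|)$ costs a further $\gamma'$, so we end up with unit-ball approximants within $2\gamma'$, where the factor of $2$ appears. This gives $d_{\cb}(M',N')\leq 2\gamma'$, and letting $\gamma'\downarrow\gamma$ completes the proof. The main subtlety lies in the weak-$*$ compactness argument for the amplification statement; the rest is direct manipulation of Arveson's distance formula.
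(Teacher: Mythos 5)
Your argument is correct. For the commutant statement you follow the paper's route exactly: apply Arveson's distance formula \eqref{DForm} to the amplification $M\otimes \mathbb CI_n$ on $\Hs\otimes\mathbb C^n$, and bound $\|\ad(T)|_{M\otimes\mathbb CI_n}\|_{\cb}$ by noting that $T\otimes I_k$ commutes with the approximant coming from $M\otimes\mathbb M_k\subseteq_\gamma N\otimes\mathbb M_k$; your regrouping $z=\tilde z\otimes 1_n$ is precisely the paper's choice of $y\in N\otimes\mathbb CI_n\otimes\mathbb M_s$. Where you genuinely diverge is the statement $M\vnotimes\mathcal B(\Ks)\subseteq_{\cb,\gamma}N\vnotimes\mathcal B(\Ks)$: the paper disposes of it in one line by applying the commutant statement a second time, taking commutants of the near inclusion $N'\otimes\mathbb CI_{\Ks}\subseteq_{\cb,\gamma}M'\otimes\mathbb CI_{\Ks}$ and using the tensor commutation theorem $(M'\otimes\mathbb CI_\Ks)'=M\vnotimes\mathcal B(\Ks)$, whereas you prove it directly by cutting with finite-rank projections $1\otimes p_\lambda$, using the hypothesis on the corners $M\otimes\mathbb M_{\mathrm{rank}(p_\lambda)}$, and extracting a weak-$*$ cluster point of the bounded net of approximants. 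Your route is more elementary (it needs only weak-$*$ compactness of bounded balls and the identification of the corner $(1\otimes p)(M\vnotimes\mathcal B(\Ks))(1\otimes p)$ with $M\otimes p\mathcal B(\Ks)p$, which you should perhaps make explicit, together with the compatibility of this identification for $M$ and $N$ so that the matricial near inclusion transfers to the corners), while the paper's is shorter but leans on the commutation theorem for von Neumann tensor products. Your derivation of the final assertion, including the renormalisation step that produces the factor $2$ when passing from near inclusions back to the unit-ball metric $d$, is the standard argument the paper leaves as ``immediate'' and is fine.
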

\begin{proof}
Given $n\in\mathbb N$ and $T\in\mathcal B(\Hs\otimes \mathbb C^n)$, 
(\ref{DForm}) shows that $d(T,M'\otimes \mathbb M_n)=\|\ad(T)|_{M\otimes \mathbb CI_n}\|_{\cb}/2$.
Let
$T\in N'\otimes\mathbb M_n$.  Given $x\in M\otimes \mathbb M_n\otimes \mathbb
M_s$, choose $y\in N\otimes \mathbb M_{n}\otimes\mathbb M_s$ with
$\|x-y\|\leq\gamma\|x\|$. Then
\begin{equation}
\|(\ad(T)|_{M\otimes\mathbb M_n}\otimes\id_{\mathbb M_s})(x)\|=\|\ad(T\otimes
I_s)(x)\|
\leq 2\|T\otimes I_s\|\|x-y\|\leq 2\|T\|\gamma\|x\|.
\end{equation}
This holds for $x\in M\otimes \mathbb CI_n\otimes\mathbb M_s$,
 so $d(T,M'\otimes\mathbb M_n)\leq \|T\|\gamma$.  Hence $N'\subseteq_{\cb,\gamma}M'$.  For the second claim, take commutants of the near inclusion $N'\otimes\mathbb CI_{\Ks}\subseteq_{\cb,\gamma}M'\otimes\mathbb CI_{\Ks}$.
The final statement is immediate.
\end{proof}

The similarity property for $M$ is characterized by the ability  to take commutants of near inclusions $M\subseteq_\gamma N$ with constants independent of the underlying Hilbert space,  (see \cite{CCSSWW:InPrep}). The most convenient form of the similarity problem for this purpose is property $D_k$: a $C^*$-algebra $A$ has property $D_k$ for some $k>0$ if, for every faithful  unital representation $\pi:A\rightarrow
\mathcal B(\Hs)$, we have $d(T,\pi(A)')\leq k\|\ad(T)|_{\pi(A)}\|$ for $T\in \mathcal B(\Hs)$. The existence of some $k>0$ such that $A$ has property $D_k$ is equivalent to the similarity property by \cite{Ki:JOT}. The proposition below records a strengthening of the commutation result from \cite[Theorem 3.1]{C:Acta}.
\begin{proposition}\label{DK}
Let $M$ and $N$ be von Neumann algebras acting nondegenerately on $\Hs$ and suppose that
$M\subseteq_\gamma N$ for some $\gamma >0$. 
\begin{enumerate}[(i)]
\item\label{DK:Part1} If $k >0$ and $M$ has property $D_k$, then
$N'\subseteq_{\cb,2k\gamma}M'$ and
$M\subseteq_{\cb,2k\gamma}N$.  
\item\label{DK:Part2} If $M$ is a II$_1$ factor with property $\Gamma$, then 
$N'\subseteq_{\cb,5\gamma}M'$ and
$M\subseteq_{\cb,5\gamma}N$.
\end{enumerate}
\end{proposition}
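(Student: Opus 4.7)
The plan is to prove part (i) directly from property $D_k$ and the hypothesis $M \subseteq_\gamma N$, then deduce part (ii) by invoking Christensen's sharp commutation result for property $\Gamma$ II$_1$ factors from \cite{C:Acta}, which in the current language asserts property $D_{5/2}$; part (ii) is then part (i) specialized to $k=5/2$.

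For part (i), the key step is to establish $N' \subseteq_{\cb, 2k\gamma} M'$. Fix $n \geq 1$ and $T \in N' \otimes \mathbb M_n$. The amplified representation $M \otimes \mathbb CI_n \subseteq \mathcal B(\Hs \otimes \mathbb C^n)$ is a faithful unital normal representation of $M$, and its commutant is $M' \otimes \mathbb M_n$, so property $D_k$ applied at this amplification gives
\[
d(T, M' \otimes \mathbb M_n) \leq k\, \|\ad(T)|_{M \otimes \mathbb CI_n}\|.
\]
To estimate the right-hand side, for $x$ in the unit ball of $M$ use $M \subseteq_\gamma N$ to pick $y \in N$ with $\|x-y\|\leq\gamma$; since $T$ commutes with $y \otimes I_n \in N \otimes \mathbb CI_n$,
\[
\|[T, x \otimes I_n]\| = \|[T, (x-y)\otimes I_n]\| \leq 2\|T\|\gamma,
\]
whence $d(T, M'\otimes \mathbb M_n) \leq 2k\gamma\|T\|$ and $N' \subseteq_{\cb, 2k\gamma} M'$. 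The complementary inclusion $M \subseteq_{\cb, 2k\gamma} N$ then drops out by applying Proposition \ref{cb-com} to the complete near inclusion $N' \subseteq_{\cb, 2k\gamma} M'$ and using $M''=M$, $N''=N$.

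I do not anticipate a serious obstacle in part (i): the point one must appreciate is that property $D_k$ delivers its bound in operator norm rather than $\cb$-norm, which is precisely why the amplification trick (taking $T$ inside $N' \otimes \mathbb M_n$ and treating $M \otimes \mathbb CI_n$ as a fresh faithful representation of $M$) is needed to upgrade the conclusion to a complete near inclusion. For part (ii), the only substantive external input is the quantitative constant $k = 5/2$ of property $D_k$ for $\Gamma$ II$_1$ factors, which is contained in Theorem 3.1 of \cite{C:Acta} (the result this proposition is billed as strengthening), so I would simply cite it rather than reprove it.
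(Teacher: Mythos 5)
Your proof of part (\ref{DK:Part1}) is, step for step, the proof in the paper: amplify to $M\otimes\mathbb CI_n$ acting on $\Hs\otimes\mathbb C^n$, invoke property $D_k$ for this faithful unital normal representation, bound $\|\ad(T)|_{M\otimes\mathbb CI_n}\|\leq 2\gamma\|T\|$ from the near inclusion $M\subseteq_\gamma N$ and the fact that $T$ commutes with $N\otimes\mathbb CI_n$, conclude $N'\subseteq_{\cb,2k\gamma}M'$, and pass to $M\subseteq_{\cb,2k\gamma}N$ via Proposition \ref{cb-com}. No difference in route or estimates.

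The one thing to fix is the attribution in part (\ref{DK:Part2}). You say that property $D_{5/2}$ for property~$\Gamma$ II$_1$ factors ``is contained in Theorem 3.1 of \cite{C:Acta}.'' It is not. Theorem 3.1 of \cite{C:Acta} is the commutation result for near inclusions that Proposition \ref{DK} is explicitly described as strengthening; it supplies the derivation estimate $\|\ad(T)|_M\|\leq 2\gamma\|T\|$ used inside part (\ref{DK:Part1}), but it does not give the constant $5/2$ for the similarity degree of property~$\Gamma$ factors. That constant is due to Pisier: the paper obtains $D_{5/2}$ by combining \cite[Theorem 13]{Pi:IJM} with \cite[Remark 4.7]{Pi:StP}, and even remarks in a footnote that a sharper constant could in principle be extracted by the methods of \cite{C:JFA} but is not available in the literature — which would make no sense if $D_{5/2}$ were already in the 1980 paper \cite{C:Acta}. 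The logic of deducing part (\ref{DK:Part2}) from part (\ref{DK:Part1}) by specializing to $k=5/2$ is exactly right; only the cited source of that constant needs correcting.
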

\begin{proof}
(\ref{DK:Part1}). Fix $n\in\mathbb N$ and $T\in N'\otimes\mathbb M_n\subseteq\mathcal
B(\Hs\otimes\mathbb C^n)$. Applying property $D_k$ to the amplification
$M\otimes I_n$ acting on $\Hs\otimes\mathbb C^n$, we see that
$d(T,M'\otimes\mathbb M_n)\leq k\|\ad(T)|_{M\otimes I_n}\|\leq 2k\gamma\|T\|$
(where the last estimate arises from $M\subseteq_\gamma N$ just as in
\cite[Theorem 3.1]{C:Acta}).  Thus $N'\subseteq_{\cb,2k\gamma}M'$ and so
$M\subseteq_{\cb,2k\gamma}N$ by Proposition \ref{cb-com}.  

(\ref{DK:Part2}). This now follows
immediately as II$_1$ factors with property $\Gamma$ have property $D_{5/2}$ by combining \cite[Theorem 13]{Pi:IJM} and \cite[Remark 4.7]{Pi:StP}.\footnote{One can obtain a smaller value of $k$ such that property $\Gamma$ factors have property $D_{k}$ by modifying the methods in \cite{C:JFA}, but since this is not available in the literature we use property $D_{5/2}$ here.} 
\end{proof}

Property $D_k$ can also be used to show that isomorphisms close to the identity are necessarily spatially implemented.  The first part of the
lemma below is obtained by making minor changes to the proof of \cite[Proposition
3.2]{C:IJM} (which handles the case of properly infinite von Neumann algebras
using property $D_{3/2}$, and McDuff factors using property $D_{5/2}$).  As
property $\Gamma$ factors have property $D_{5/2}$, (ii) is an
immediate consequence of (i).

\begin{lemma}[{\cite{C:IJM}}]\label{IsomorphismDK}
Let $M$ be von Neumann algebra acting nondegenerately on $\Hs$ and suppose that $\theta:M\rightarrow\mathcal B(\Hs)$ is a $^*$-homomorphism with $\|\theta(x)-x\|\leq \gamma\|x\|$ for $x\in M$.
 
\begin{enumerate}[(i)]
\item\label{IsomorphismDK:Part1} Suppose that $M$ has property $D_k$ for some
$k\geq 1$ and that $\gamma<1/k$. Then there exists a unitary $u$ on $\Hs$ such
that $\theta=\Ad(u)$ and 
\begin{equation}
\|I_\Hs-u\|\leq 2^{1/2}k\gamma \left(1+(1-(k\gamma)^2)^{1/2}\right)^{-1/2}\leq 2^{1/2}k\gamma.
\end{equation}
\item\label{IsomorphismDK2} Suppose that $M$ has property $\Gamma$ and
that $\gamma<2/5$. Then there exists a unitary $u$ on $\Hs$ such that
$\theta=\Ad(u)$ and $\|I_\Hs-u\|\leq 2^{-1/2}5\gamma$.
\end{enumerate}
\end{lemma}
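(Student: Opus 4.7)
The plan is to adapt the argument of \cite[Proposition 3.2]{C:IJM}, replacing its case-split (properly infinite, McDuff) with a uniform use of the property $D_k$ hypothesis. The strategy is: combine the two representations $\mathrm{id}_M$ and $\theta$ of $M$ into a single faithful representation on $\Hs\oplus\Hs$; apply property $D_k$ to locate a commutant element close to a distinguished off-diagonal test matrix; and extract a unitary implementing $\theta$ from the polar decomposition of the relevant block.

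Concretely, define $\pi:M\to \mathcal B(\Hs\oplus\Hs)$ by $\pi(x)=x\oplus\theta(x)$, which is injective because $\gamma<1$ forces $\theta$ to be injective. Set $T=\begin{pmatrix} 0 & 0 \\ I_{\Hs} & 0\end{pmatrix}$. A direct calculation gives
\[
T\pi(x)-\pi(x)T=\begin{pmatrix} 0 & 0 \\ x-\theta(x) & 0\end{pmatrix},
\]
so $\|\ad(T)|_{\pi(M)}\|\leq\gamma$. Since $\pi(M)\cong M$ has property $D_k$, weak$^*$-compactness of bounded balls in the von Neumann algebra $\pi(M)'$ supplies $S\in\pi(M)'$ with $\|S-T\|\leq k\gamma<1$. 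The $(2,1)$-block $c$ of $S$ then satisfies $\theta(x)c=cx$ for all $x\in M$ and $\|c-I_{\Hs}\|\leq k\gamma$; in particular $c$ is invertible. Adjointing this relation (after substituting $x^*$) yields $c^*c\in M'$ and $cc^*\in\theta(M)'$, so in the polar decomposition $c=u|c|$ the factor $|c|\in M'$ is invertible, $u$ is unitary, and the identity $\theta(x)u|c|=u|c|x=ux|c|$ gives $\theta=\Ad(u)$.

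The remaining and most delicate step is to bound $\|I_{\Hs}-u\|$ in terms of $s:=\|c-I_{\Hs}\|\leq k\gamma$. The sharp inequality is scalar in nature: for $\lambda\in\mathbb C\setminus\{0\}$ with $|\lambda-1|\leq s$, writing $\lambda=|\lambda|e^{i\vartheta}$ and optimizing over $|\lambda|$ at fixed $\vartheta$ yields $|1-e^{i\vartheta}|^2\leq 2(1-\sqrt{1-s^2})$, with equality at $|\lambda|=\sqrt{1-s^2}$. Transferring this pointwise conclusion to the operator setting through the functional calculus of $c^*c$ (which generates a commutative von Neumann subalgebra of $M'$ containing $|c|$) yields
\[
\|I_{\Hs}-u\|^2\leq 2\bigl(1-\sqrt{1-s^2}\bigr)=\frac{2s^2}{1+\sqrt{1-s^2}},
\]
proving (i); the cruder bound $\|I_{\Hs}-u\|\leq\sqrt{2}\,k\gamma$ follows because $1+\sqrt{1-s^2}\geq 1$. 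Part (ii) is then immediate from (i) with $k=5/2$, since property $\Gamma$ factors have property $D_{5/2}$ by \cite[Theorem 13]{Pi:IJM} combined with \cite[Remark 4.7]{Pi:StP}.

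The hardest step to pin down is the sharp norm estimate, because $u$ and $|c|$ do not commute in general and so the scalar-optimal constant does not automatically give an operator bound. The crucial leverage comes from $c^*c\in M'$: it places $|c|$ in a commutative functional calculus inside $M'$, relative to which $u=c|c|^{-1}$ behaves as the angular part of $c$, and a careful spectral argument there is what converts the scalar geometry of $|\lambda-1|\leq s$ into the operator bound on $\|I_{\Hs}-u\|$ with the correct constant rather than the crude $\sqrt{2}\,s$.
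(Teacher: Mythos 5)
Your construction is the right one, and it is essentially the argument the paper invokes from \cite[Proposition 3.2]{C:IJM}: the doubled representation $\pi(x)=x\oplus\theta(x)$ (which is faithful, and unital because $\theta(I_\Hs)$ is a projection within $\gamma<1$ of $I_\Hs$), the off-diagonal test operator $T$, an element $S\in\pi(M)'$ at distance at most $k\gamma$ from $T$ (the distance is attained by weak$^*$ compactness, as you say), and the $(2,1)$ block $c$ with $\theta(x)c=cx$, $\|c-I_\Hs\|\leq k\gamma<1$, $c^*c\in M'$, so that the polar part $u$ of $c$ is unitary and implements $\theta$. Part (ii) from (i) with $k=5/2$ is also as intended.

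The gap is the norm estimate, which is the only point where the statement goes beyond a routine bound. The mechanism you propose does not work as described: $u$ is not in $M'$, does not commute with $|c|$, and is not a function of $c^*c$, so the commutative functional calculus generated by $c^*c$ inside $M'$ gives you no way to read off the ``angular part'' of $c$; you acknowledge that a ``careful spectral argument'' is needed but do not supply it, and the leverage you attribute to $c^*c\in M'$ is in fact irrelevant. The inequality is nevertheless true for an arbitrary invertible $c$ with $\|c-I_\Hs\|\leq s<1$, by a short spectral argument applied to the unitary $u$ itself: if $e^{i\vartheta}\in\sigma(u)$, choose unit vectors $\xi$ with $u\xi\approx e^{i\vartheta}\xi$ (the spectrum of a unitary is approximate point spectrum); then $\langle (c-I_\Hs)\xi,u\xi\rangle=\langle |c|\xi,\xi\rangle-\overline{\langle u\xi,\xi\rangle}\approx t-e^{-i\vartheta}$ with $t=\langle |c|\xi,\xi\rangle\geq 0$, so $t^2-2t\cos\vartheta+1\leq s^2$, which forces $\cos\vartheta>0$ and, via the discriminant (equivalently your AM--GM computation), $\cos\vartheta\geq\sqrt{1-s^2}$. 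Since $u$ is normal, $\|I_\Hs-u\|=\sup\{|1-e^{i\vartheta}|:e^{i\vartheta}\in\sigma(u)\}\leq\bigl(2(1-\sqrt{1-s^2})\bigr)^{1/2}=\sqrt{2}\,s\bigl(1+\sqrt{1-s^2}\bigr)^{-1/2}$, which with $s=k\gamma$ is exactly the stated bound; note this uses neither $c^*c\in M'$ nor any commutation between $u$ and $|c|$. With this substitution for your transfer step, the proof is complete.
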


Without the similarity property, one can also take commutants when $M$ acts on a given Hilbert space $\Hs$ with a finite set of $m$ cyclic vectors for $\Hs$; the resulting estimates depend on $m$ (\cite{C:Scand}). We give an alternative proof of this fact below, which significantly improves the constants involved.  We start by isolating a technical lemma.

\begin{lemma}\label{cb-derivlem}
Let $M\subseteq\mathcal B(\Hs)$ be a von Neumann algebra such that $\Hs$ has a
cyclic vector for $M$.  Then, for every derivation $\delta:M\rightarrow\mathcal
B(\Hs)$, we have
$
\|\delta\|_{\cb}\leq 2\|\delta\|_\row,
$
where $\|\delta\|_\row$ denotes the \emph{row norm} of $\delta$, given by
$\|\delta\|_\row=\sup_{n,r}\,\|\delta_{1\times n}(r)\|$, where the supremum is taken
over contractions $r\in \mathbb{M}_{1\times n}(M)$.
\end{lemma}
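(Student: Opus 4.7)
The plan is to reduce the lemma to a distance-to-commutant estimate via Arveson's formula, then use the cyclic vector to bound that distance by $\|\delta\|_{\row}$. By a theorem of Christensen, every bounded derivation from a von Neumann algebra into $\mathcal B(\Hs)$ is spatially implemented: $\delta=\ad(T)$ for some $T\in\mathcal B(\Hs)$. Arveson's distance formula \eqref{DForm} then gives $\|\delta\|_{\cb}=2\,d(T,M')$, and the lemma becomes equivalent to the estimate $d(T,M')\leq\|\delta\|_{\row}$.

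To produce $S\in M'$ within distance $\|\delta\|_{\row}$ of $T$, I would exploit the cyclic vector $\xi$ for $M$. Since $\xi$ is separating for $M'$, the projection $p\in M$ onto $\overline{M'\xi}$ yields $M'p$ acting on $p\Hs$ in standard form with $\xi$ cyclic and separating, so that $(M'p)'=pMp$ on $p\Hs$. The row-norm hypothesis gives, for every row contraction $(r_i)\in M_{1\times n}(M)$ and unit vector $\zeta\in\Hs$,
\[
\sum_i\|\delta(r_i)^*\zeta\|^2=\Bigl\langle\sum_i\delta(r_i)\delta(r_i)^*\zeta,\zeta\Bigr\rangle\leq\|\delta\|_{\row}^2.
\]
Taking $\zeta=\xi$ and using $\delta(r_i)^*\xi=r_i^*T^*\xi-T^*r_i^*\xi$ shows that the defect of $T^*$ against elements of $M$, measured as a column of vectors at $\xi$, is uniformly controlled by $\|\delta\|_{\row}$. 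Combined with the standard-form structure of $M'p$ on $p\Hs$, a Hahn--Banach/Riesz-representation argument should then produce the required $S\in M'$ whose action on $\xi$ matches $T$ up to the row-norm error.

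The main obstacle is converting the pointwise estimate at $\xi$ into a uniform operator bound on $T-S$. Since $\xi$ is only assumed cyclic (not separating) for $M$, the quantities $\|x\|$ and $\|x\xi\|$ are generally incomparable, so estimates on vectors of the form $x\xi$ do not extend naively to all of $\Hs$. The row-norm condition is precisely the right matricial strength to bypass this: testing against row contractions over $M$ simultaneously controls the action on a family of vectors that together are ``dense enough'' in an operator-theoretic sense. The factor of $2$ in the conclusion arises because one must handle both the derivation $\delta$ and its ``adjoint'' derivation $x\mapsto -\delta(x^*)^*$---equivalent for $*$-derivations but genuinely distinct in general---which together supply the row and column data needed to recover the full $\cb$-norm.
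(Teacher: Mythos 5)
There is a genuine gap at the heart of your argument. After reducing, via innerness and Arveson's formula \eqref{DForm}, to the claim $d(T,M')\leq\|\delta\|_{\row}$, you never actually produce the element $S\in M'$: the passage from the pointwise estimate $\sum_i\|\delta(r_i)^*\xi\|^2\leq\|\delta\|_{\row}^2$ to a uniform operator bound $\|T-S\|\leq\|\delta\|_{\row}$ is precisely the content of the lemma, and your proposal only asserts that a ``Hahn--Banach/Riesz-representation argument should'' supply $S$. You yourself flag the obstacle (the cyclic vector $\xi$ is not separating for $M$, so estimates at $\xi$ do not propagate to all of $\Hs$), but nothing in the sketch bridges it. Note also that in your reduction the factor of $2$ is already consumed by Arveson's formula, so you need $d(T,M')\leq\|\delta\|_{\row}$ with no slack; the closing explanation that the $2$ arises from treating $\delta$ together with the adjoint derivation $x\mapsto-\delta(x^*)^*$ therefore does not correspond to anything your argument could use. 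A secondary problem is the opening claim that every bounded derivation of a von Neumann algebra into $\mathcal B(\Hs)$ is spatially implemented: in that generality this is the derivation problem, open and equivalent to the similarity problem by \cite{Ki:JOT}. It is available here only because of the cyclic vector hypothesis, via \cite[Corollary 3.2]{C:Scand} (which is how the paper reduces to inner derivations in the subsequent Lemma \ref{cb-deriv}); but the present lemma is stated for arbitrary derivations and its proof neither needs nor should invoke innerness.

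For comparison, the paper's proof is short and avoids both innerness and Arveson's formula. Since $M$ has a cyclic vector, $M$ row-norms $\mathcal B(\Hs)$ by \cite[Lemma 2.4 and Theorem 2.7]{PSS:JFA}, i.e. $\|\delta_n(x)\|=\sup\{\|r\delta_n(x)\|:r\in\mathbb M_{1\times n}(M),\ \|r\|\leq 1\}$, and then the Leibniz identity $r\delta_n(x)=\delta_{1\times n}(rx)-\delta_{1\times n}(r)x$ bounds each such product by $2\|\delta\|_{\row}\|x\|$; this is where the factor $2$ genuinely comes from. If you wish to pursue your route instead, you would have to give an honest proof of the distance estimate $d(T,M')\leq\|\ad(T)|_M\|_{\row}$ for von Neumann algebras with a cyclic vector, which your sketch does not supply.
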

\begin{proof}
Let $n\in\mathbb N$ and $x\in\mathbb M_n(M)$. Since $M$ has a cyclic vector for $\mathcal B(\Hs)$, we have
\begin{equation}\label{vlem1}
\|\delta_n(x)\|=\sup\,\{\|r\delta_n(x)\|:r\in \mathbb{M}_{1\times n}(M),\
\|r\|\leq 1\},
\end{equation}
by \cite[Lemma 2.4 (i)$\implies$(iv) and Theorem 2.7]{PSS:JFA}. For an 
 $r\in \mathbb{M}_{1\times n}(M)$  with 
$\|r\|= 1$, the relation
$r\delta_n(x)=\delta_{1\times n}(rx)-\delta_{1\times n}(r)x$ gives 
$\|r\delta_n(x)\|\leq 2\|\delta\|_{\row}\|x\|$ and so 
$\|\delta\|_{\cb}\leq 2\|\delta\|_\row.$
\end{proof}

In the proof below we use \cite[Proposition 4.2]{SS:AJM}. We take this
opportunity to correct an oversight in the statement of this result, which
omitted the hypothesis that  $M$ is finite (which is required in order to appeal to \cite[Theorem 2.3]{CPSS:MA}). 
\begin{lemma}\label{cb-deriv}
Let $M\subseteq \mathcal{B}(\Hs)$ be a von Neumann algebra with a cyclic set of
$m$ vectors (i.e. there exist $\xi_1,\ldots,\xi_m\in\Hs$ with
$\overline{\mathrm{span}}\{x_i\xi_i:x_i\in M\,\ 1\leq i\leq m\}=\Hs$). If
$\delta:M\to \mathcal{B}(\Hs)$ is a bounded derivation, then $\delta$ is
completely bounded and
$\|\delta\|_{cb}\leq 2(1+\sqrt{2})m\|\delta\|$.
\end{lemma}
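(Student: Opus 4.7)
The plan is to reduce to the case of a single cyclic vector via amplification, apply Lemma \ref{cb-derivlem}, and then invoke the corrected version of \cite[Proposition 4.2]{SS:AJM} to bound the row norm by the operator norm.

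First I would form the $m$-fold amplification $\pi:M\to\mathcal B(\Hs\oplus\cdots\oplus\Hs)$ given by $\pi(x)=\mathrm{diag}(x,\ldots,x)$, where there are $m$ summands. The vector $\tilde\xi=\xi_1\oplus\cdots\oplus\xi_m$ is then cyclic for $\pi(M)$, precisely because $\{\xi_i\}_{i=1}^m$ is a cyclic set for $M$. Lift $\delta$ to a bounded derivation $\tilde\delta:\pi(M)\to\mathcal B(\Hs\oplus\cdots\oplus\Hs)$ by $\tilde\delta(\pi(x))=\mathrm{diag}(\delta(x),\ldots,\delta(x))$. Since $\pi$ is a normal $*$-isomorphism that amplifies row matrices over $M$ isometrically, a direct verification gives $\|\tilde\delta\|=\|\delta\|$, $\|\tilde\delta\|_{cb}=\|\delta\|_{cb}$, and $\|\tilde\delta\|_\row=\|\delta\|_\row$.

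Since $\pi(M)$ has the single cyclic vector $\tilde\xi$, Lemma \ref{cb-derivlem} applies to $\tilde\delta$ and yields
\begin{equation*}
\|\delta\|_{cb}=\|\tilde\delta\|_{cb}\leq 2\|\tilde\delta\|_\row=2\|\delta\|_\row.
\end{equation*}

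The remaining task, which I expect to be the main obstacle, is to establish an inequality of the form $\|\delta\|_\row\leq (1+\sqrt{2})m\|\delta\|$. This is supplied by \cite[Proposition 4.2]{SS:AJM}, applied in the form corrected in the paragraph immediately preceding the statement to include the hypothesis that $M$ is finite. That result bounds the row norm of a bounded derivation on a finite von Neumann algebra in terms of its ordinary norm, with the factor $m$ entering through the size of the cyclic set. Substituting this estimate into the display above gives $\|\delta\|_{cb}\leq 2(1+\sqrt{2})m\|\delta\|$, as required.
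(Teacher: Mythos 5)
There are two genuine gaps here. First, the amplification you propose does not do what you need: for the diagonal representation $\pi(x)=\mathrm{diag}(x,\ldots,x)$ of $M$ on $\Hs\oplus\cdots\oplus\Hs$, the vector $\tilde\xi=\xi_1\oplus\cdots\oplus\xi_m$ is \emph{not} cyclic for $\pi(M)$ in general, since $\pi(M)\tilde\xi$ only consists of tuples $(x\xi_1,\ldots,x\xi_m)$ with the same $x$ in every slot (take $\xi_1=\xi_2$ cyclic and $m=2$ to see the failure). The correct reduction, which is the one the paper cites from \cite[Proposition 2.6]{CSSW:GAFA}, is to pass to $M\otimes\mathbb M_m$ on $\Hs\otimes\mathbb C^m$, where $\sum_i\xi_i\otimes e_i$ genuinely is cyclic; the factor $m$ in the final constant enters precisely at this step, through the estimate $\|\delta\otimes\id_{\mathbb M_m}\|\leq m\|\delta\|$, and not through any row-norm inequality for $\delta$ itself.

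Second, and more seriously, the inequality $\|\delta\|_{\row}\leq(1+\sqrt2)\,m\,\|\delta\|$ is not what \cite[Theorem 4.2]{SS:AJM} provides. That result bounds the row norm by $\sqrt2$ times the norm for maps that are \emph{module maps} over a suitable subalgebra of a \emph{finite} von Neumann algebra; it carries no factor $m$, it does not apply to an arbitrary bounded derivation, and the lemma you are proving makes no finiteness assumption on $M$. The paper's argument has to do real work to put itself in a position to use it: reduce to spatial derivations $\ad(T)$ via \cite[Corollary 3.2]{C:Scand}, split $M$ by a central projection into a finite part $M_0$ and a properly infinite part $M_1$, use Popa's theorem to find an amenable $P_0\subseteq M_0$ with $P_0'\cap M_0=\Z(M_0)$, average $T$ over $\U(P)$ to obtain $S\in P'$ so that $\ad(Sp)|_{M_0}$ becomes a $P_0$-module map (only then does the SS:AJM estimate apply), and treat the properly infinite summand separately using \cite[Corollary 2.2]{C:IJM}. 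The constant $2(1+\sqrt2)$ then arises as $2\sqrt2\|\ad(T)|_M\|$ from the averaged derivation plus the error $2\|T-S\|\leq 2\|\ad(T)|_M\|$ from the averaging; it is not of the form ``$2$ times a row-norm bound for the original $\delta$''. As written, your final step cites a result that does not exist in the form you need, so the main estimate is unproven.
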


\begin{proof}
The key case is when $m=1$ and $M$ has a separable predual.  By
\cite[Corollary 3.2]{C:Scand} and (\ref{DForm}) it
suffices to prove the result for  $\delta$  of the form $\ad(T)|_M$ for
a fixed $T\in\mathcal B(\Hs)$.  

Let $p$ be the central projection in $M$ such that $M_0=Mp$ is finite and
$M_1=M(1-p)$ is properly infinite.  From \cite{P:Invent},  choose an amenable von Neumann algebra
$P_0\subseteq M_0$ with $P_0'\cap M_0=\Z(M_0)$ (when $M_0$ is a II$_1$ factor
this is \cite[Corollary 4.1]{P:Invent}, the extension to the case when $M_0$ is
II$_1$ can be found in \cite[Theorem 8]{SS:Scand}, and the general case is
obtained by splitting $M_0$ as a direct sum of its II$_1$ part and its finite
type I part, which is already amenable).  Let $P_1$ be a properly infinite
amenable von Neumann subalgebra of $M_1$ and let $P=(P_0\cup P_1)''$, which is
an amenable subalgebra of $\mathcal B(\Hs)$ containing $p$.  

Since $P$ is amenable, we can find 
$S\in \overline{\mathrm{co}}^{w^*}\{uTu^*:u\in\U(P)\}\cap P'$.  By construction (see \cite[Section 2]{C:IJM}) $\|\ad(S)|_M\|\leq\|\ad(T)|_M\|$ and $\|S-T\|\leq\|\ad(T)|_M\|$.  Further, just as in \cite[Theorem 2.4]{C:IJM}, $\{S(1-p),(1-p)S^*\}'\cap M_1$ is properly infinite so
$\|\ad(S(1-p))|_{M_1}\|=2d(S(1-p),M_1')=\|\ad(S(1-p))|_{M_1}\|_\cb$
by \cite[Corollary 2.2]{C:IJM}. The map $\ad(Sp)|_{M_0}$ is a $P_0$-module map $M_0\rightarrow\mathcal B(p(\Hs))$ and so by \cite[Theorem 4.2]{SS:AJM}, 
$\|\ad(Sp)|_{M_0}\|_\row\leq \sqrt{2}\|\ad(Sp)|_{M_0}\|$.
It follows that $ \|\ad(S)|_M\|_\row\leq \sqrt{2}\|\ad(S)|_M\|$.
Then $\|\ad(S)|_M\|_\cb\leq2\sqrt{2}\|\ad(S)|_M\|\leq 2\sqrt{2}\|\ad(T)|_M\|$, by Lemma \ref{cb-derivlem}.
Since $\|T-S\|\leq\|\ad(T)|_M\|$ and $\ad (T)|_M=\ad(S)|_M+\ad(T-S)|_M$, we have
\begin{equation}
\|\ad(T)|_M\|_\cb
\leq 2\sqrt{2}\|\ad(T)|_M\|+2\|T-S\|\leq 2(1+\sqrt{2})\|\ad(T)|_M\|,
\end{equation}
proving the result in the case when $m=1$ and $M$ has separable predual. 
A standard argument can be used to extend this estimate to the situation when $M_*$ is nonseparable (this can be found in the preprint version of this paper on the arXiv). A cyclic set of $m$ vectors can be reduced to the
case of a single cyclic vector by tensoring by $\mathbb{M}_m$, see
\cite[Proposition 2.6]{CSSW:GAFA}.
\end{proof}

In the context of near inclusions, the previous lemma has the following form.
\begin{proposition}\label{LocalDK}
Let $M$ be a von Neumann algebra acting on a Hilbert space $\Hs$ and suppose
that $M$ has a finite cyclic set of $m$ vectors for $\Hs$.  Then:
\begin{enumerate}[(i)]
\item\label{LocalDK:Part1}  For  $T\in\mathcal B(\Hs)$, we have
$
\inf\{\|T-S\|:S\in M'\}\leq (1+\sqrt{2})m\|\ad(T)|_M\|.
$
\item\label{LocalDK:Part2}  Given another von Neumann algebra $N$ on $\Hs$ with
$M\subseteq_\gamma N$, we have $N'\subseteq_{2(1+\sqrt{2})m\gamma}M'$.
\end{enumerate}
\end{proposition}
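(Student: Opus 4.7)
The plan is to derive both parts directly from Lemma \ref{cb-deriv} combined with Arveson's distance formula (\ref{DForm}), with part (ii) being a routine application of part (i).

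For part (\ref{LocalDK:Part1}), I observe that $\ad(T)|_M:M\to\mathcal{B}(\Hs)$ is a bounded derivation, so Lemma \ref{cb-deriv} (applied with the hypothesis that $M$ has a cyclic set of $m$ vectors) gives
\begin{equation}
\|\ad(T)|_M\|_{\cb}\leq 2(1+\sqrt{2})m\|\ad(T)|_M\|.
\end{equation}
Arveson's distance formula (\ref{DForm}) then yields
\begin{equation}
\inf\{\|T-S\|:S\in M'\}=d(T,M')=\tfrac{1}{2}\|\ad(T)|_M\|_{\cb}\leq (1+\sqrt{2})m\|\ad(T)|_M\|,
\end{equation}
which is the desired estimate.

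For part (\ref{LocalDK:Part2}), I would fix $T\in N'$ with $\|T\|\leq 1$ and estimate $\|\ad(T)|_M\|$. Given $x$ in the unit ball of $M$, the near inclusion $M\subseteq_\gamma N$ supplies $y\in N$ with $\|x-y\|\leq\gamma$; since $T\in N'$ commutes with $y$, we get $\ad(T)(x)=\ad(T)(x-y)$ and hence $\|\ad(T)(x)\|\leq 2\|T\|\|x-y\|\leq 2\gamma$. Thus $\|\ad(T)|_M\|\leq 2\gamma\|T\|$, and part (\ref{LocalDK:Part1}) then gives $d(T,M')\leq 2(1+\sqrt{2})m\gamma\|T\|$, which is exactly the statement $N'\subseteq_{2(1+\sqrt{2})m\gamma}M'$.

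There is no real obstacle here: the main work has been invested into Lemma \ref{cb-deriv}, and once the $\cb$-bound on derivations is in hand, both parts are immediate consequences of Arveson's formula and the standard trick of bounding $\|\ad(T)|_M\|$ via the near inclusion (as in the proof of Proposition \ref{DK}\,(\ref{DK:Part1})).
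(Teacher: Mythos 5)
Your proposal is correct and follows the paper's own proof essentially verbatim: part (i) is Lemma \ref{cb-deriv} combined with Arveson's formula (\ref{DForm}), and part (ii) is the standard estimate $\|\ad(T)|_M\|\leq 2\gamma\|T\|$ for $T\in N'$ (which the paper cites from \cite[Theorem 3.1]{C:Acta} and you rederive directly), fed into part (i). No gaps.
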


\begin{proof} Part (\ref{LocalDK:Part1}) follows from Lemma \ref{cb-deriv} and Arveson's distance formula in (\ref{DForm}) above.  For (\ref{LocalDK:Part2}), note that if $N$ is another von Neumann algebra on $\Hs$ with $M\subseteq_\gamma N$, then for $T\in N'$ the near inclusion $M\subset_\gamma N$ gives the estimate $\|\ad(T)|_M\|\leq 2\|T\|\gamma$ (see  \cite[Theorem 3.1]{C:Acta}). 
\end{proof}

A II$_1$ factor $M$ is said to be in \emph{standard position} on  a Hilbert space $\Hs$, when $M'$ is finite and there  is a vector $\xi\in\Hs$
which is tracial for $M$ and $M'$, i.e.
the vector state $\langle \cdot\xi,\xi\rangle$ gives the traces $\tau_M$ and $\tau_{M'}$ on $M$ and
$M'$ respectively. Such a vector $\xi$ is cyclic for both $M$ and $M'$, and defines a conjugate linear isometry $J$ (or $J_M$ when necessary) by extending $J(m\xi) = m^*\xi$ to $\Hs$. This is called the conjugation operator, and has
the property (T)hat $JMJ=M'$. When $M$ is in standard position on $\Hs$, any trace vector $\xi$ for $M$ is automatically tracial for $M'$.

If $M$ is in standard position on $\Hs$ with a specified tracial vector
$\xi$ and $P\subseteq M$ is any von Neumann
subalgebra, then $e_P$
denotes the projection onto $\overline{P\xi}$.
The basic construction is then defined to be $(M\cup\{e_P\})''$ and is denoted by
$\langle M,e_P\rangle$. This
algebra dates back to \cite{Sk:JFA} and \cite{C:MA}, and was first used
systematically in the work of Jones \cite{J:Invent}. We list a few standard properties of the basic construction below (see \cite{J:Invent} or \cite{JSun:Book}) and record a technical lemma.

\begin{properties}\label{bcon-prop}
With the notation above:
\begin{enumerate}[(i)]
\item\label{bcon-prop:Item1}
 $e_P=Je_PJ$;
\item\label{bcon-prop:Item2}
 $e_P$ commutes with $P$ and $P=M\cap \{e_P\}'$;
\item\label{bcon-prop:Item3} $\langle M,e_P\rangle '=J_MPJ_M$;
\item\label{bcon-prop:Item4} The map $p\mapsto pe_P$, $p\in P$, is an algebraic
isomorphism,
and
consequently isometric;
\item\label{bcon-prop:Item5} For each $x\in M$, $e_Pxe_P=E^M_P(x)e_P$, where
$E^M_P$ denotes the unique trace preserving conditional expectation
from $M$ onto $P$, and $e_P\langle M,e_P\rangle e_P=Pe_P$.
\end{enumerate}
\end{properties}
\begin{lemma}\label{basiccon-lem}
Let $M$ be a   {\rm{II}}$_1$ factor with separable predual, in standard
position on  $\Hs$ with tracial vector $\xi$. Let $P$ be a von Neumann subalgebra of $M$ satisfying $P'\cap M\subseteq P$. Given a unitary $v\in P'\cap \langle M,e_P\rangle$, there
exists a unitary $u\in
P'\cap M=\Z(P)$ such that $v\xi =u\xi$.
\end{lemma}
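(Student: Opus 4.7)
The plan is to set $\eta := v\xi$ and construct a unitary $u \in \Z(P) = P'\cap M$ with $u\xi = \eta$.  The first step is to observe that $\eta$ is ``$P$-central'' in the $P$-$P$ bimodule sense: $p\eta = Jp^*J\eta$ for all $p \in P$.  Indeed, since $v \in P'$, we have $p\eta = pv\xi = vp\xi$; and by Property \ref{bcon-prop:Item3} we have $\langle M, e_P\rangle' = JPJ$, so $v$ commutes with $Jp^*J$, giving $Jp^*J\eta = vJp^*J\xi = vp\xi$, where the final equality uses the standard identity $Jp^*J\xi = p\xi$.

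The next step is to invoke the classical Tomita-theoretic identification of the $P$-central subspace of $L^2(M) = \Hs$, namely
\[
\{\zeta \in \Hs : p\zeta = Jp^*J\zeta \text{ for all } p \in P\} = \overline{(P'\cap M)\xi} = \overline{\Z(P)\xi}.
\]
This can be proved directly by associating to $\eta$ its closed densely-defined affiliated operator $L_\eta$ (defined on $M'\xi$ by $L_\eta y\xi = y\eta$) and checking that the $P$-central condition forces $L_\eta$ to commute with $P$ and thus to be affiliated with $P'\cap M = \Z(P)$; consequently $\eta \in L^2(\Z(P)) = \overline{\Z(P)\xi}$.  Using separability of $M_*$ I would then identify $\Z(P) \cong L^\infty(X,\nu)$ for a standard probability space $(X,\nu)$, so that $\eta$ corresponds to some $f \in L^2(X,\nu)$ with $\|f\|_2 = \|\eta\| = 1$.

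The final step is to upgrade the inclusion $\eta \in \overline{\Z(P)\xi}$ to the required equality $\eta = u\xi$ with $u \in \Z(P)$ unitary.  The key observation is that for any $c \in \Z(P)\subseteq P$, $v$ commutes with $c$ and $v$ is unitary, so
\[
\|c\eta\|^2 = \|vc\xi\|^2 = \|c\xi\|^2.
\]
In the $L^2(X,\nu)$ model this reads $\int_X |c|^2 |f|^2\, d\nu = \int_X |c|^2\, d\nu$ for all $c \in L^\infty(X,\nu)$; taking $c$ to range over indicators of measurable sets forces $|f|^2 = 1$ $\nu$-a.e., so $f \in L^\infty(X,\nu)$ determines a unitary $u \in \Z(P)$ with $u\xi = \eta = v\xi$.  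The main technical point is the Tomita-theoretic identification in the middle paragraph; once $\eta$ is known to lie in $\overline{\Z(P)\xi}$, the remaining work is elementary $L^2$-analysis inside the abelian algebra $\Z(P)$.
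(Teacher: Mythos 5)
Your proof is correct, but it follows a genuinely different route from the one in the paper. The paper's argument stays inside the basic construction: citing \cite[Lemma 3.2]{FaSWWig:JFA}, the projection $e_P$ is central in $P'\cap\langle M,e_P\rangle$, so $ve_P=e_Pve_P\in e_P\langle M,e_P\rangle e_P=Pe_P$; writing $ve_P=ue_P$ with $u\in P$ immediately gives $v\xi=u\xi$, unitarity of $u$ follows from $\|u\|\leq 1$ and $\langle(I_\Hs-u^*u)\xi,\xi\rangle=0$ together with faithfulness of the trace, and a one-line computation using $v,e_P\in P'$ shows $(uy-yu)e_P=0$ for $y\in P$, whence $u\in P'\cap M=\Z(P)$. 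You instead work entirely at the Hilbert-space level: you check that $v\xi$ is a $P$-central vector (using $v\in P'$ on one side and $\langle M,e_P\rangle'=J_MPJ_M$ on the other), appeal to the identification of the $P$-central vectors of $L^2(M)$ with $\overline{(P'\cap M)\xi}=\overline{\Z(P)\xi}$, and then use unitarity of $v$ with elements of $\Z(P)$ as test functions to force the corresponding $L^2$-function to have modulus one, hence to come from a unitary of $\Z(P)$. Both arguments are complete; the real content of yours is the Tomita-theoretic identification of the $P$-central subspace (which needs the affiliated-operator/polar-decomposition argument you sketch, and plays the role that the citation of the centrality of $e_P$ plays in the paper), after which only elementary abelian $L^2$-analysis remains, whereas the paper's route lands in $Pe_P$ in one step and then needs only bounded-operator manipulations. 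A minor observation: your argument uses the hypothesis $P'\cap M\subseteq P$ only to identify $P'\cap M$ with $\Z(P)$, and separability of $M_*$ only to realize $\Z(P)$ as $L^\infty(X,\nu)$; the latter could even be bypassed by testing against projections of $\Z(P)$ directly.
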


\begin{proof}
 By \cite[Lemma 3.2]{FaSWWig:JFA}, $e_P\in \mathcal Z(P'\cap \langle M,e_P\rangle)$, so $ve_P=e_Pve_P\in Pe_P$. Write $ve_P=ue_P$ for some $u\in P$. Thus $v\xi=u\xi$. Then $\|u\|\leq 1$ and $\langle (I_\Hs -u^*u)\xi,\xi\rangle =0$ so $u\in \mathcal U(P)$. Since $v,e_P\in P'$, 
\begin{equation}\label{generate.4}
(uy-yu)e_P=(ue_Py-yve_P)=ve_Py-yve_P=0,\quad y\in P.
\end{equation}
By Properties \ref{bcon-prop} (\ref{bcon-prop:Item4}), $u\in P'\cap P=\Z(P)$.
\end{proof}

Given a nonzero vector $\eta\in\Hs$, write
$e^M_\eta$ for the projection in $M'$ onto the subspace $\overline{M\eta}$ and
$e^{M'}_\eta$ for the projection in $M$ onto $\overline{M'\eta}$.  If $M'$ is
also a finite factor, with faithful normalized trace $\tau_{M'}$, then the
quantity $\dim_M(\Hs)=\tau_M(e^{M'}_\eta)/\tau_{M'}(e^M_\eta)$ is independent of
the choice of nonzero $\eta\in\Hs$ (see \cite[Part III Chapter 6]{Dix:Book}
 for example). When $M'$ is a   II$_\infty$ factor, set
$\dim_M(\Hs)=\infty$.  We recall some properties of $\dim_M(\Hs)$ from
\cite[Part III Chapter 6]{Dix:Book}.

\begin{properties}\label{CC}
Let $M$ be a   ${\mathrm{II}}_1$ factor acting nondegenerately on $\Hs$.
\begin{enumerate}[(i)]
\item\label{CC:Item1} If $p'$ is a projection in $M'$, then
$\dim_{Mp'}(p'\Hs)=\tau_{M'}(p')\dim_{M}(\Hs)$.
\item\label{CC:Item2}If $p$ is a projection in $M$, then
$\dim_{pMp}(p\Hs)=\dim_M(\Hs)/\tau_M(p)$.
\item\label{CC:Item3} $M$ acts in standard position on $\Hs$ if and only if
$\dim_M(\Hs)=1$.
\item\label{CC:Item4} If $\dim_M(\Hs)\geq 1$, then there is a tracial vector
$\xi\in\Hs$ for $M$.
\item\label{CC:Item5} If $\dim_M(\Hs)\leq m\in\mathbb N$, then there is a set of
$m$-cyclic vectors for $M$ on $\Hs$.\end{enumerate}
\end{properties}

We need some standard results for approximating unitaries and projections in the
sequel. Lemma \ref{ProjLem} is a consequence of \cite[Lemma 1.10]{Kh:JOT} (it
follows by noting that the function $\alpha(t)$ used there satisfies
$\alpha(t)\leq \sqrt{2}t$ for $0\leq t<1$), and Lemma \ref{ProjLem2} is the usual
estimate in the Murray-von Neumann equivalence of close projections (see
\cite[Lemma 6.2.1]{Mur:Book} for example).
\begin{lemma}\label{ProjLem}
Suppose that $\gamma<1$ and let $M\subset_\gamma N$ be a near inclusion of von Neumann algebras sharing the same unit.
\begin{enumerate}[(i)]
\item\label{ProjLem:Part1} Given a unitary $u\in M$, there exists a unitary $v\in N$ with
$\|u-v\|<\sqrt{2}\gamma$.
\item\label{ProjLem:Part2} Given a projection $p\in M$, there exists a projection $q\in N$
with $\|p-q\|<2^{-1/2}\gamma$.
\end{enumerate}
\end{lemma}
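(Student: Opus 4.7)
The overall plan is to handle both parts by the same two-step strategy: use the near-inclusion $M\subset_\gamma N$ to locate an element of $N$ close to the given unitary or projection, then apply functional calculus to that approximant to produce a genuine unitary, respectively projection, in $N$ while controlling the norm cost. I would extract $\gamma'<\gamma$ with $M\subseteq_{\gamma'}N$ at the outset; the strict inequality $\gamma'<1$ will be essential throughout.

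For part~(i), given a unitary $u\in M$, I would choose $a\in N$ with $\|u-a\|\leq\gamma'$. The estimate $\|u^*a-I_\Hs\|\leq\gamma'<1$ then forces $u^*a$, and hence $a=u(u^*a)$, to be invertible in $\mathcal B(\Hs)$, so that $|a|=(a^*a)^{1/2}$ is bounded below and $|a|^{-1}\in N$. The polar decomposition $a=v|a|$ therefore places $v=a|a|^{-1}$ in $N$ as a unitary. The desired bound $\|u-v\|<\sqrt 2\,\gamma$ reduces to a spectral estimate for how far the polar part of an element close to an isometry can stray from that isometry, which is precisely the content of the function $\alpha(t)$ in \cite[Lemma~1.10]{Kh:JOT}, applied with $t=\gamma'<1$ and using $\alpha(t)\leq\sqrt{2}\,t$.

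For part~(ii), given a projection $p\in M$, I would choose $a\in N$ with $\|p-a\|\leq\gamma'$ and pass to its self-adjoint part $b=(a+a^*)/2\in N$, preserving $\|p-b\|\leq\gamma'$. The target projection $q\in N$ is then obtained by Borel functional calculus applied to $b$; in the regime $\gamma'<1/2$ one may directly take $q=\chi_{[1/2,\infty)}(b)$, exploiting the spectral gap of $b$ around $1/2$ to place $q$ within $\gamma'$ of $b$. The sharp estimate $\|p-q\|<2^{-1/2}\gamma$ then follows from the classical Murray--von~Neumann comparison argument recorded in \cite[Lemma~6.2.1]{Mur:Book}, which uses the compatibility of the spectral decomposition of $b$ with $p$ to save the factor of $\sqrt 2$ over the naive triangle-inequality bound $2\gamma'$.

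The main obstacle in executing this plan is not the existence of the approximating unitary or projection---both are straightforward consequences of the near-inclusion combined with functional calculus---but rather the derivation of the sharp constants $\sqrt 2$ in (i) and $2^{-1/2}$ in (ii). Naive polar-decomposition and spectral-projection arguments give only bounds of the form $C\gamma$ with $C\geq 2$, and the improvements to the stated optimal constants are the real technical content, inherited from the careful spectral computations of Khalkhali and Murphy cited above.
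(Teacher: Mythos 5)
Part (i) of your proposal is fine and is essentially what the paper does: the paper proves the whole lemma by invoking \cite[Lemma 1.10]{Kh:JOT} together with the observation that the function $\alpha(t)$ there satisfies $\alpha(t)\leq\sqrt{2}\,t$ for $0\leq t<1$, and your polar-decomposition outline is exactly the argument that citation encapsulates.

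Part (ii) has a genuine gap. First, \cite[Lemma 6.2.1]{Mur:Book} is the wrong tool: it states that two projections $p,q$ with $\|p-q\|<1$ are unitarily equivalent via a unitary $u$ with $\|I-u\|\leq\sqrt{2}\|p-q\|$ (this is precisely what the paper uses it for, namely Lemma \ref{ProjLem2}); it contains no estimate on the distance from $p$ to a spectral projection of a nearby self-adjoint element, so it cannot supply the constant $2^{-1/2}$. Second, your construction (take $b\in N$ self-adjoint with $\|p-b\|\leq\gamma'$ and set $q=\chi_{[1/2,\infty)}(b)$) does not yield $\|p-q\|<2^{-1/2}\gamma$ from the information you use: in $\mathbb M_2$, if $b$ is itself a projection whose range makes angle $\theta$ with the range of $p$, with $\sin\theta=\gamma'$, then $q=b$ and $\|p-q\|=\gamma'$, which exceeds $2^{-1/2}\gamma$ whenever $\gamma'>2^{-1/2}\gamma$ (and such a choice of approximant is legitimate under what you wrote, e.g.\ with $N=\mathcal B(\Hs)$). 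The crude bounds available from $\|p-b\|\leq\gamma'$ alone are of order $\gamma'$ at best, not $2^{-1/2}\gamma$; moreover your restriction to $\gamma'<1/2$ leaves the range $1/2\leq\gamma<1$ of the lemma untreated. The halving of the constant genuinely requires applying the near inclusion to an element other than $p$: approximate the symmetry $u=2p-I\in M$ by some $a'\in N$, pass to the self-adjoint part $c$ (so $\|u-c\|\leq\gamma'<1$, whence $0\notin\sp(c)$), and set $v=c|c|^{-1}$, $q=\frac{1}{2}(I+v)\in N$; then $\|p-q\|=\frac{1}{2}\|u-v\|\leq\frac{1}{2}\alpha(\gamma')\leq 2^{-1/2}\gamma'$, by the same polar-part estimate as in part (i). This is exactly the content of \cite[Lemma 1.10]{Kh:JOT}, which the paper cites for both parts; routing (ii) through that lemma, or reproducing this symmetry argument, closes the gap. (A minor point: the author of \cite{Kh:JOT} is Khoshkam, not Khalkhali.)
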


\begin{lemma}\label{ProjLem2}
Let $p$ and $q$ be projections in a von Neumann algebra $M$ with $\|p-q\|<1$.
Then there exists a unitary $u\in M$ with $upu^*=q$ and $\|u-I_M\|\leq
\sqrt{2}\|p-q\|$.
\end{lemma}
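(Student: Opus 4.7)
The plan is to exhibit the unitary explicitly as the polar part of the natural ``gluing'' operator $z = qp + (1-q)(1-p)$ and then read off the norm estimate from the spectral data of $|z|$.

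First, I would form $z = qp + (1-q)(1-p) \in M$ and compute $z^*z$ and $zz^*$. The cross terms vanish thanks to the orthogonality relations $q(1-q)=0=(1-p)p$, and one finds that both $z^*z$ and $zz^*$ simplify to $1-(p-q)^2$. In particular $z$ is normal. Since $\|p-q\|<1$, the element $1-(p-q)^2$ is invertible with spectrum in $[1-\|p-q\|^2,1]$, so $|z| = (1-(p-q)^2)^{1/2}$ is invertible and $z$ admits a polar decomposition $z = u|z|$ with $u$ a unitary in $M$.

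Next I would verify that $upu^* = q$. Directly from the definition of $z$, one has $qz = qp = zp$. Since $(p-q)^2$ commutes with both $p$ and $q$, so does $|z|$ by functional calculus, and because $z$ is normal, $u$ commutes with $|z|$ as well. Multiplying $zp=qz$ on the right by $|z|^{-1}$ therefore yields $up = qu$, i.e.\ $upu^*=q$.

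For the norm estimate, I would exploit the algebraic miracle
\begin{equation}
z + z^* = 2\bigl(1 - (p-q)^2\bigr) = 2|z|^2,
\end{equation}
which is verified by expanding the left-hand side. Combined with normality of $z$ (giving $u+u^* = |z|^{-1}(z+z^*)$), this collapses to $u+u^* = 2|z|$. Hence
\begin{equation}
\|u - I_M\|^2 = \|(u-I_M)^*(u-I_M)\| = \|2I_M - u - u^*\| = 2\bigl\|I_M - |z|\bigr\| = 2\bigl(1 - (1-\|p-q\|^2)^{1/2}\bigr),
\end{equation}
where the last equality uses that $|z|$ has spectrum contained in $[(1-\|p-q\|^2)^{1/2},1]$. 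The elementary inequality $1-(1-t)^{1/2}\leq t$ for $t\in[0,1)$, applied with $t=\|p-q\|^2$, then gives $\|u-I_M\|\leq\sqrt{2}\|p-q\|$.

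There is no real obstacle here: the argument is a routine calculation once the correct ``matching'' element $z$ is chosen, and the only step with any subtlety is spotting the identity $z+z^*=2|z|^2$, which converts the polar decomposition norm bound into a clean spectral estimate.
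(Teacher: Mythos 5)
Your proof is correct, and it is essentially the standard argument the paper relies on: the paper gives no proof of this lemma, citing instead the usual Murray--von Neumann equivalence estimate for close projections (Murphy, Lemma 6.2.1), which proceeds exactly via $z=qp+(1-q)(1-p)$, the identity $z^*z=zz^*=1-(p-q)^2$, and the polar decomposition $u=z|z|^{-1}$. Your observation that $z+z^*=2|z|^2$, giving $u+u^*=2|z|$ and hence $\|u-I_M\|^2=2\bigl(1-(1-\|p-q\|^2)^{1/2}\bigr)\leq 2\|p-q\|^2$, is a clean and correct route to the stated constant $\sqrt{2}$.
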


Lemma 3.6 of \cite{CSSW:GAFA} examined the center-valued traces on close finite
von Neumann algebras. The next lemma gives improved estimates in the special
case of close II$_1$ factors.

\begin{lemma}\label{MasaTr}
Let $M$ and $N$ be   ${\mathrm{II}}_1$ factors acting nondegenerately on a
Hilbert space $\Hs$ and
satisfying
$M\subset_{\gamma}N\subset_{\gamma}M$ for a constant $\gamma<2^{-3/2}$. Let
$P\subseteq M\cap N$ be a diffuse von Neumann algebra containing $I_{\Hs}$. Then
$\tau_M|_P=\tau_N|_P$.
\end{lemma}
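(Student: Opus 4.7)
The plan is to reduce to showing $\tau_M(p)=\tau_N(p)$ for every projection $p\in P$ (whence $\tau_M|_P=\tau_N|_P$ follows by normality and the spectral theorem), and then exploit the diffuseness of $P$ via a Cauchy functional equation argument. Since $\tau_M|_P$ is a faithful normal tracial state on $P$, the standard theory of finite von Neumann algebras combined with diffuseness gives, for each $a\in[0,1]$, a projection in $P$ of $\tau_M$-trace $a$; and for $a,b\geq 0$ with $a+b\leq 1$, disjoint projections $p_a,p_b\in P$ of $\tau_M$-trace $a$ and $b$ respectively.

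The core of the argument is to show that if $p_1,p_2\in P$ are projections with $\tau_M(p_1)=\tau_M(p_2)$, then $\tau_N(p_1)=\tau_N(p_2)$. Since $M$ is a {\rm II}$_1$ factor, equality of $\tau_M$-traces gives both $p_1\sim p_2$ and $I_\Hs-p_1\sim I_\Hs-p_2$ in $M$, so combining the implementing partial isometries yields a unitary $u\in M$ with $up_1u^*=p_2$. As $\gamma<1$, Lemma \ref{ProjLem}(\ref{ProjLem:Part1}) provides a unitary $v\in N$ with $\|u-v\|<\sqrt{2}\gamma$. Then $vp_1v^*$ and $p_2$ are projections in $N$ satisfying
\[
\|vp_1v^*-p_2\|=\|vp_1v^*-up_1u^*\|\leq 2\|u-v\|<2\sqrt{2}\gamma<1,
\]
where the last inequality uses the hypothesis $\gamma<2^{-3/2}$. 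Lemma \ref{ProjLem2} applied within $N$ then produces a unitary $w\in N$ with $w(vp_1v^*)w^*=p_2$, and so $wv\in\mathcal U(N)$ implements a unitary equivalence of $p_1$ and $p_2$ in $N$, forcing $\tau_N(p_1)=\tau_N(p_2)$.

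Given this, define $f:[0,1]\to[0,1]$ by $f(a)=\tau_N(p)$ for any projection $p\in P$ with $\tau_M(p)=a$; this is well-defined by the previous paragraph, and satisfies $f(0)=0$, $f(1)=1$. For $a,b\geq 0$ with $a+b\leq 1$, choosing disjoint projections $p_a,p_b\in P$ of $\tau_M$-traces $a,b$ respectively gives
\[
f(a+b)=\tau_N(p_a+p_b)=\tau_N(p_a)+\tau_N(p_b)=f(a)+f(b).
\]
Positivity of $\tau_N$ forces $f\geq 0$, so additivity makes $f$ nondecreasing. A routine induction yields $f(q)=q$ for all $q\in\mathbb Q\cap[0,1]$, and monotonicity combined with density of the rationals upgrades this to $f(x)=x$ on $[0,1]$. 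Thus $\tau_M(p)=\tau_N(p)$ for every projection $p\in P$, and the proof is complete.

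The main obstacle is the transfer of unitary equivalence carried out in the second paragraph: the constant $\gamma<2^{-3/2}$ is tailored precisely so that $2\sqrt{2}\gamma<1$, which is what allows Lemma \ref{ProjLem2} to promote the approximate conjugation of $p_1$ and $p_2$ in $N$ to an exact one. Once this rigidity step is in hand, the Cauchy functional equation machinery is standard, with diffuseness of $P$ supplying all the projections required to run it.
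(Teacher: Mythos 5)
Your proof is correct and rests on exactly the same mechanism as the paper's: conjugate equal-$\tau_M$-trace projections of $P$ by a unitary of $M$, approximate that unitary by one in $N$ via Lemma \ref{ProjLem}~(\ref{ProjLem:Part1}), and use $2\sqrt{2}\gamma<1$ together with Lemma \ref{ProjLem2} to get equivalence in $N$. The only difference is bookkeeping: the paper runs an induction over projections of trace $2^{-n}$, while you prove the general trace-class invariance and finish with a Cauchy functional equation argument, which is an equally valid (and slightly more self-contained) way to conclude.
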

\begin{proof}
The result will follow if we can show that $\tau_M(p_n)=\tau_N(p_n)$ for all projections $p_n\in P$ with $\tau_M(p_n)=2^{-n}$,
$n\geq 0$. The case $n=0$ is clear, so inductively take a projection $p_{n+1}\in P$ with $\tau_M(p_{n+1})=2^{-(n+1)}$ and choose a projection $p_n\in P$ so that $p_{n+1}\leq p_n$ and $\tau_M(p_n)=2^{-n}$. Choose $v\in \mathcal U(M)$ so that $vp_{n+1}v^*=p_n-p_{n+1}$. By Lemma \ref{ProjLem} (\ref{ProjLem:Part1}), take $u\in \mathcal U(N)$ so that $\|v-u\|\leq \sqrt{2}\gamma<2^{-1}$. Then $\|vp_{n+1}v^*-up_{n+1}u^*\|\leq 2\|v-u\|< 1$. Thus $up_{n+1}u^*$ and $p_n-p_{n+1}$ are equivalent in $N$. Since the inductive hypothesis gives $\tau_N(p_n)=2^{-n}$, it follows that $\tau_N(p_{n+1})=2^{-(n+1)}=\tau_M(p_{n+1})$.
\end{proof}

\begin{lemma}\label{Trace}
Let $M$ and $N$ be   {\rm{II}}$_1$ factors acting nondegenerately on a
Hilbert space $\Hs$ with
$M\subset_\gamma N$ for some $\gamma<1$.  Let $\Phi$ be a state on $\mathcal B(\Hs)$ extending
$\tau_N$.  Then
\begin{equation}\label{L2.1}
|\tau_M(x)-\Phi(x)|\leq(2+2\sqrt{2})\gamma\|x\|\leq 5\gamma\|x\|,\quad x\in M.
\end{equation}
\end{lemma}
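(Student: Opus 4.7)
The plan is to combine Dixmier's approximation property with the near inclusion $M\subset_\gamma N$, using $\Phi$ as a bridge. Since $M$ is a II$_1$ factor, Dixmier's theorem furnishes, for each $\eta>0$ and each $x\in M$ with $\|x\|\leq 1$, unitaries $u_1,\dots,u_n\in\mathcal U(M)$ and nonnegative scalars $\lambda_1,\dots,\lambda_n$ summing to $1$ such that $\|\sum_i\lambda_i u_i^*xu_i-\tau_M(x)I_\Hs\|<\eta$. Applying $\Phi$, letting $\eta\to 0$, and using convexity reduces the problem to the pointwise bound
\begin{equation*}
|\Phi(u^*xu)-\Phi(x)|\leq(2+2\sqrt{2})\gamma\|x\|, \qquad u\in\mathcal U(M),\ x\in M.
\end{equation*}

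To prove this pointwise bound, fix $\gamma'<\gamma$ with $M\subseteq_{\gamma'}N$. By Lemma \ref{ProjLem} (\ref{ProjLem:Part1}), pick a unitary $v\in\mathcal U(N)$ with $\|u-v\|<\sqrt{2}\gamma'$, and by the near inclusion pick $y\in N$ with $\|x-y\|\leq\gamma'\|x\|$. The crucial observation is that $\Phi|_N=\tau_N$ is tracial, so $\Phi(v^*yv)=\tau_N(y)=\Phi(y)$. Combined with $\|\Phi(v^*xv)-\Phi(v^*yv)\|\leq\gamma'\|x\|$ and $|\Phi(y)-\Phi(x)|\leq\gamma'\|x\|$, the triangle inequality yields $|\Phi(v^*xv)-\Phi(x)|\leq 2\gamma'\|x\|$. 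The elementary bound $\|u^*xu-v^*xv\|\leq 2\|u-v\|\|x\|<2\sqrt{2}\gamma'\|x\|$ then delivers $|\Phi(u^*xu)-\Phi(x)|\leq(2+2\sqrt{2})\gamma'\|x\|$, and sending $\gamma'\nearrow\gamma$ completes the argument (together with $2+2\sqrt{2}<5$ for the second inequality in (\ref{L2.1})).

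The main conceptual obstacle is that $\Phi$ has no a priori compatibility with $M$ beyond being a state on $\mathcal B(\Hs)$; the only algebraic structure available is the traciality of $\Phi|_N$. The argument circumvents this by transporting every computation involving $x\in M$ and $u\in\mathcal U(M)$ into $N$ through the two approximation steps provided by the near inclusion and Lemma \ref{ProjLem}, where traciality of $\Phi$ can be invoked. Once this transfer mechanism is arranged, the averaging step supplied by Dixmier's theorem converts the pointwise bound into a bound on $|\tau_M(x)-\Phi(x)|$.
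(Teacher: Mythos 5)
Your proof is correct and follows essentially the same route as the paper: approximate a unitary $u\in\mathcal U(M)$ by $v\in\mathcal U(N)$ via Lemma \ref{ProjLem}, approximate $x$ by $y\in N$, use that $\Phi|_N=\tau_N$ is tracial to get the pointwise bound $|\Phi(u^*xu)-\Phi(x)|\leq(2+2\sqrt{2})\gamma\|x\|$, and then invoke the Dixmier averaging property to pass to $\tau_M(x)$. The only cosmetic difference is the grouping of the triangle inequality (the paper compares $uxu^*$ with $vyv^*$ in one step), which yields the identical constant.
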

\begin{proof}
Fix $x$ in the unit ball of $M$ and a unitary $u\in M$. Choose $y\in N$ with
$\|x-y\|\leq\gamma$ and, by  Lemma \ref{ProjLem} (\ref{ProjLem:Part1}), a unitary
$v\in N$ with $\|u-v\|<\sqrt{2}\gamma$.  Then
\begin{align}
\|uxu^*-vyv^*\|&=\|uxu^*-vxv^*+v(x-y)v^*\|\leq\|v^*ux-xv^*u\|+\|x-y\|\notag\\
&\leq 2\|v^*u-I_\Hs\|+\|x-y\|
=2\|u-v\|+\|x-y\|\leq(2\sqrt{2}+1)\gamma,
\end{align}
and so, since $\Phi(vyv^*)=\tau_N(vyv^*)=\tau_N(y)=\Phi(y)$,
\begin{equation}
|\Phi(uxu^*-x)|\leq|\Phi(uxu^*-vyv^*)|+|\Phi(vyv^*-y)|+|\Phi(y-x)|
<(1+2\sqrt{2})\gamma+\gamma<5\gamma.
\end{equation} 
 By \cite[Theorem 8.3.5]{KRin:Book2},  $\tau_M(x)I_\Hs\in \overline{\mathrm{co}}^{\|\cdot\|}\{uxu^*:u\in \mathcal U(M)\}$, so   (\ref{L2.1})
follows.
\end{proof}

Close subalgebras of II$_1$
factors have close relative commutants (see \cite[Proposition 2.7]{C:IJM} which
states Lemma \ref{RelCom} when $M=N$). Recall that if $P$ is a unital
von Neumann subalgebra
of a
  II$_1$ factor $M$, then the unique $\tau_M$-preserving conditional
expectation $E_{P'\cap M}^M$ from $M$ onto $P'\cap M$ satisfies
the condition that $E_{P'\cap M}(x)$ is the unique element of minimal
$\|\cdot\|_2$-norm in 
${\overline{\mathrm{co}}}^{\|\cdot\|_2}\{uxu^*:u\in\mathcal U(P)\}\subseteq L^2(M)$ for each $x\in M$ (see
\cite[Lemma 3.6.5 (i)]{SS:Book2}, for example).

\begin{lemma}\label{RelCom}
Let $M$ and $N$ be   ${\mathrm{II}}_1$ factors acting
nondegenerately on a Hilbert space $\Hs$ and suppose
that $P\subseteq
M$
and $Q\subseteq N$ are unital von Neumann subalgebras.
\begin{enumerate}[(i)]
\item\label{RelCom:Part1} Suppose that $M\subseteq_\gamma N$ and
$Q\subseteq_\delta
P$. Then $P'\cap M\subseteq_{2\sqrt{2}\delta+\gamma}Q'\cap N$.
\item\label{RelCom:Part2}  Suppose that $M\subseteq_{\cb,\gamma} N$ and
$Q\subseteq_{\delta}
P$. Then $P'\cap M\subseteq_{\cb,2\sqrt{2}\delta+\gamma}Q'\cap N$.
\end{enumerate}
\end{lemma}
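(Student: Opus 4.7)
The plan is to prove (i) by first approximating $x\in P'\cap M$ by an element $y_0\in N$ using $M\subseteq_\gamma N$, and then projecting $y_0$ onto $Q'\cap N$ via the trace-preserving conditional expectation recalled just before the lemma. Part (ii) will then follow from (i) by matrix amplification. The main delicacy will be converting $\|\cdot\|_2$-convergence of convex averages into an operator-norm estimate.

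For (i), fix $x\in P'\cap M$ with $\|x\|\leq 1$ and pick $y_0\in N$ with $\|x-y_0\|\leq\gamma$ using $M\subseteq_\gamma N$. My candidate approximant in $Q'\cap N$ is $y:=E^N_{Q'\cap N}(y_0)$. To bound $\|y-x\|$ in operator norm, I first estimate $\|uy_0u^*-x\|$ uniformly for $u\in\mathcal U(Q)$. By Lemma \ref{ProjLem}(\ref{ProjLem:Part1}) applied to $Q\subseteq_\delta P$, one can choose a unitary $v\in\mathcal U(P)$ with $\|u-v\|\leq\sqrt{2}\delta$. Since $x\in P'$ satisfies $vxv^*=x$, the triangle inequality yields
\begin{equation}
\|uy_0u^*-x\|\leq\|u(y_0-x)u^*\|+\|uxu^*-vxv^*\|\leq\gamma+2\|u-v\|\leq\gamma+2\sqrt{2}\delta.
\end{equation}

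Next I invoke the characterization of $E^N_{Q'\cap N}$ recalled immediately before the lemma: $y$ lies in the $\|\cdot\|_2$-closure in $L^2(N)$ of the convex hull of $\{uy_0u^*:u\in\mathcal U(Q)\}$. Thus there is a net of convex combinations $\sum_i\lambda_i u_iy_0u_i^*$ converging to $y$ in $\|\cdot\|_2$-norm. Each such combination lies in the operator-norm ball of radius $2\sqrt{2}\delta+\gamma$ about $x$ by convexity and the estimate above. Since operator-norm balls of fixed radius are $\|\cdot\|_2$-closed on norm-bounded subsets of $N$ (the operator norm is weakly lower semicontinuous, and $\|\cdot\|_2$-convergence agrees with weak convergence on bounded subsets of a finite von Neumann algebra), the limit $y$ satisfies $\|y-x\|\leq 2\sqrt{2}\delta+\gamma$. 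Scaling to arbitrary $x\in P'\cap M$ completes the proof of (i).

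For (ii), apply (i) at each matrix amplification. One checks that $(P\otimes\mathbb CI_n)'\cap(M\otimes\mathbb M_n)=(P'\cap M)\otimes\mathbb M_n$, and similarly on the $N$ side; the hypothesis $M\subseteq_{\cb,\gamma}N$ supplies $M\otimes\mathbb M_n\subseteq_\gamma N\otimes\mathbb M_n$ at each level, while $Q\subseteq_\delta P$ trivially amplifies to $Q\otimes\mathbb CI_n\subseteq_\delta P\otimes\mathbb CI_n$. Applying (i) at every $n$ then yields $(P'\cap M)\otimes\mathbb M_n\subseteq_{2\sqrt{2}\delta+\gamma}(Q'\cap N)\otimes\mathbb M_n$, which is the desired cb near inclusion. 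Beyond the one semicontinuity step, the whole argument is just the triangle inequality combined with Lemma \ref{ProjLem}(\ref{ProjLem:Part1}).
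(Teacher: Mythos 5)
Your proposal is correct and follows essentially the same route as the paper: approximate $x$ in $N$, average over unitaries in $Q$ (matched to unitaries in $P$ via Lemma \ref{ProjLem}(\ref{ProjLem:Part1})), identify the approximant as $E^N_{Q'\cap N}(y_0)$ via the $\|\cdot\|_2$-characterization of the expectation, and pass to the limit using lower semicontinuity of the operator norm; part (ii) is the same matrix-amplification argument. The only difference is that you spell out the limiting step (the paper simply notes the expectation is a strong-operator limit of the convex combinations), and the precise justification is that on bounded sets $\|\cdot\|_2$-convergence implies strong (hence weak) operator convergence, rather than the two topologies literally agreeing.
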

\begin{proof}
(\ref{RelCom:Part1}).  Given $x\in P'\cap M$ with $\|x\|\leq 1$ choose $y\in N$ with
$\|x-y\|\leq\gamma$.  For a unitary $v\in Q$ use Lemma \ref{ProjLem} (\ref{ProjLem:Part1}) to find
a unitary $u\in P$ with $\|u-v\|\leq\sqrt{2}\delta$.  Noting that $uxu^*=x$ and that $\|vxv^*-uxu^*\|\leq 2\|u^*v-I_\Hs\|\leq 2\sqrt{2}\delta$, we
obtain the estimate
\begin{equation}
\|vyv^*-x\|\leq \|vyv^*-vxv^*\|+\|vxv^*-uxu^*\|
\leq \gamma+2\sqrt{2}\delta.
\end{equation}
Since $E_{Q'\cap N}(y)$ is a strong operator limit of convex
combinations of elements $vyv^*$ for unitaries $v\in Q$, it follows that
$\|E_{Q'\cap N}(y)-x\|\leq 2\sqrt{2}\delta+\gamma$, as
required.

(\ref{RelCom:Part2}). Fix $n\in\mathbb N$. We have $P\otimes \mathbb
CI_n\subseteq M\otimes\mathbb M_n$ and $Q\otimes\mathbb CI_n\subseteq
N\otimes\mathbb M_n$ and the near inclusions $M\otimes\mathbb
M_n\subseteq_{\gamma}N\otimes\mathbb M_n$ , $Q\otimes\mathbb
CI_n\subseteq_\delta P\otimes\mathbb CI_n$.  By part (\ref{RelCom:Part1}) we
have 
\begin{equation}
(P'\cap M)\otimes\mathbb
M_n=(P\otimes\mathbb CI_n)'\cap (M\otimes\mathbb
M_n)\subseteq_{2\sqrt{2}\delta +\gamma}(Q\otimes\mathbb
CI_n)'\cap(N\otimes\mathbb M_n)=(Q'\cap N)\otimes\mathbb M_n.
\end{equation} 
Since $n$
was arbitrary, the  near inclusion $P'\cap
M\subseteq_{\cb,2\sqrt{2}\delta+\gamma}Q'\cap N$ holds.
\end{proof}

The following immediate consequence of Lemma \ref{RelCom} will be used repeatedly in the sequel.
\begin{lemma}\label{relcom2}
Let $M$ and $N$ be II$_1$ factors acting nondegenerately on a Hilbert
space $\Hs$ and suppose that 
$M\subset_{\gamma}N\subset_{\gamma}M$ where $\gamma>0$. Let $P\subseteq M\cap N$ be
a von Neumann subalgebra. Provided $\gamma<1$, we have $P'\cap M\subseteq P$ if and only if $P'\cap N\subseteq P$. In particular $P$ is a masa in $M$ if and only if it is a masa in $N$.
\end{lemma}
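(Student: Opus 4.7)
The plan is to apply Lemma \ref{RelCom}(i) with $Q=P$ in both directions. Since $P\subseteq M\cap N$ we trivially have $P\subseteq_0 P$, and the strict near inclusions $M\subset_\gamma N\subset_\gamma M$ furnish some $\gamma'<\gamma<1$ with $M\subseteq_{\gamma'}N$ and $N\subseteq_{\gamma'}M$. Lemma \ref{RelCom}(i) then yields the near inclusions
\begin{equation}
P'\cap M\subseteq_{\gamma'}P'\cap N\quad\text{and}\quad P'\cap N\subseteq_{\gamma'}P'\cap M.
\end{equation}

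Suppose $P'\cap M\subseteq P$. Every element of $P'\cap M$ then lies in $P\cap P'=\Z(P)$, forcing $P'\cap M=\Z(P)$, and trivially $\Z(P)\subseteq P'\cap N$. Thus I have the near inclusion $P'\cap N\subseteq_{\gamma'}\Z(P)\subseteq P'\cap N$ with constant $\gamma'<1$, where the approximating algebra already sits inside the approximated one. Given $x\in P'\cap N$ with $\|x\|\leq 1$, I would inductively construct $y_n\in\Z(P)$ with $\|x-(y_1+\cdots+y_n)\|\leq(\gamma')^n$: at each stage the remainder lies in $P'\cap N$ because each $y_i\in\Z(P)$ commutes with $P$ and lies in $N$, so the near inclusion provides the next approximant. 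Since $\gamma'<1$ the partial sums converge in norm to $x$, placing $x$ in the norm-closed algebra $\Z(P)\subseteq P$. This gives $P'\cap N\subseteq P$, and the reverse implication follows by the symmetric argument.

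For the masa statement: if $P$ is a masa in $M$ then $P'\cap M=P\subseteq P$, so by the first part $P'\cap N\subseteq P$; since $P$ is abelian, $P\subseteq P'\cap N$, giving $P'\cap N=P$, i.e.\ $P$ is a masa in $N$. The converse direction is identical. The only step beyond a direct appeal to Lemma \ref{RelCom} is the geometric iteration that upgrades a near inclusion with constant $<1$ to a genuine inclusion, and this is precisely where the hypothesis $\gamma<1$ is used; the rest is bookkeeping.
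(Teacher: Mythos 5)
Your proof is correct and follows essentially the same route as the paper: apply Lemma \ref{RelCom}(\ref{RelCom:Part1}) with $Q=P$ to get $P'\cap N\subseteq_{\gamma'}P'\cap M=\Z(P)\subseteq P'\cap N$, and then upgrade this near inclusion with constant less than one to an equality. The paper simply cites \cite[Proposition 2.4]{CSSWW:Acta} for that last step rather than writing out the geometric-series iteration, but the iteration you give is precisely the content of that cited result, so the arguments coincide.
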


\begin{proof}
We have $P'\cap N\subseteq_\gamma P'\cap M$ by Lemma \ref{RelCom} (\ref{RelCom:Part1}), so when $P'\cap M=\Z(P)\subseteq P$, we have $P'\cap N\subseteq_\gamma \Z(P)\subseteq P'\cap N$. As $\gamma<1$, $P'\cap N=\Z(P)$ (see \cite[Proposition 2.4]{CSSWW:Acta}).
\end{proof}

Next, we set out our notation for twisted crossed products. Every action
$\alpha:\Gamma\curvearrowright P$ of a countable discrete group $\Gamma$ on a
von Neumann algebra $P$ with a separable predual restricts to an action of $\Gamma$ on the abelian
group $\mathcal U(\Z(P))$ of unitaries in the center of $P$. Concretely, a \emph{$2$-cocycle} is a 
function $\omega:\Gamma\times \Gamma\rightarrow\U(\Z(P))$ satisfying 
\begin{equation}
\alpha_g(\omega(h,k))\omega(gh,k)^*\omega(g,hk)\omega(g,h)^*=I_P,\quad g,h,k\in
\Gamma.
\end{equation}
Two $2$-cocycles $\omega$ and $\omega'$ are cohomologous if they differ by a
coboundary, in the sense that there exists $\nu:\Gamma\rightarrow\U(\Z(P))$
with 
\begin{equation}\label{cohomeqn}
\omega'(g,h)=\alpha_g(\nu(h))\nu(gh)^*\nu(g)\omega(g,h),\quad g,h\in \Gamma.
\end{equation}
The second cohomology group $H^2(\Gamma,\U(\Z(P)))$ consists of the equivalence
classes of the group $Z^2(\Gamma,\U(\Z(P)))$ of $2$-cocycles under the relation of
being cohomologous. A $2$-cocycle $\omega$ is \emph{normalized} if
$\omega(g,h)$ is trivial when either $g=e$ or $h=e$. Every $2$-cocycle is
cohomologous to a normalized $2$-cocycle, so there is no loss of generality in
restricting to normalized cocycles.

Given a $2$-cocycle $\omega\in Z^2(\Gamma,\U(\Z(P)))$, the twisted crossed product
von Neumann algebra $P\rtimes_{\alpha,\omega}\Gamma$ is a von Neumann algebra
generated by a unital copy of $P$ and unitaries $(u_g)_{g\in \Gamma}$ for which there
is a faithful normal conditional expectation $E:P\rtimes_{\alpha,\omega}\Gamma\rightarrow P$ and
for which the following conditions hold:
\begin{equation}\label{TCP}
u_gxu_g^*=\alpha_g(x),\quad E(u_g)=0\ (g\neq e),\quad u_gu_h=\omega(g,h)u_{gh},\quad x\in P,\ g,h\in\Gamma.
\end{equation}
 If, in addition, $\omega$ is normalized, then $u_e$ is the identity operator.  The crossed product is usually constructed concretely starting from a
faithful representation of $P$, but for our purposes all that matters is that these algebras are
 characterized by the first two conditions in (\ref{TCP}) and the cocycle is given by the third condition (see \cite{Ch:MJ,Su:PRIMS2} for example). The isomorphism class of the crossed product only depends on the
cohomology class of the cocycle $\omega$. We record these facts in the next proposition.
\begin{proposition}\label{TCPProp}Let $\alpha:\Gamma\curvearrowright P$ be a trace-preserving action of  a countable discrete group on
a finite von Neumann algebra $P$ with a separable predual and let $\omega\in
Z^2(\Gamma,\U(\Z(P)))$.
\begin{enumerate}[\rm (i)]
\item\label{TCPProp2} Suppose that a  finite von Neumann algebra $M$  is
generated by a unital copy of $P$  and
unitaries $(u_g)_{g\in \Gamma}$ with $u_e=I_P$ and
there is a faithful normal trace-preserving expectation
$E^M_P:M\rightarrow
P$ so that the first two conditions in (\ref{TCP}) hold and $u_gu_h\in\Z(P)u_{gh}$ for $g,h\in\Gamma$ (this last condition is automatic if $P'\cap M\subseteq P$). Then $M$ is $^*$-isomorphic
to $P\rtimes_{\alpha,\omega}\Gamma$ where $\omega$ is a normalized $2$-cocycle given by the third condition in 
(\ref{TCP}). Further, an isomorphism can be found which identifies the two
copies of $P$ and maps the unitaries $u_g$ in $M$ to the canonical unitaries in
the crossed product $P\rtimes_{\alpha,\omega}\Gamma$.
\item\label{TCPProp:Part2} If $\omega,\omega'\in Z^2(\Gamma,\U(\Z(P)))$ are cohomologous, then
$P\rtimes_{\alpha,\omega}\Gamma\cong P\rtimes_{\alpha,\omega'}\Gamma$.
\end{enumerate}
\end{proposition}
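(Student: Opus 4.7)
For part (\ref{TCPProp2}), the plan is to extract the $2$-cocycle from the unitaries $(u_g)_{g\in\Gamma}$ and then build an isomorphism using the common trace structure. First, for fixed $g,h\in\Gamma$, both $u_gu_h$ and $u_{gh}$ implement $\alpha_{gh}$ on $P$ by conjugation, so $u_{gh}^*u_gu_h\in P'\cap M$. Under the stated hypothesis $u_gu_h\in\Z(P)u_{gh}$, I may therefore define $\omega(g,h)\in\U(\Z(P))$ by $u_gu_h=\omega(g,h)u_{gh}$. When $P'\cap M\subseteq P$, this side condition is automatic: $u_{gh}^*u_gu_h\in P'\cap M\subseteq P$ combined with membership in $P'$ forces the element into $\Z(P)$. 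Normalization follows from $u_e=I_P$, and associativity $(u_gu_h)u_k=u_g(u_hu_k)$ together with $u_gzu_g^*=\alpha_g(z)$ for $z\in\Z(P)$ yields the $2$-cocycle identity after a short calculation.

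Having the cocycle in hand, I would build the isomorphism via the common trace. Both $M$ and $P\rtimes_{\alpha,\omega}\Gamma$ carry a canonical faithful normal tracial state obtained by composing the unique trace on $P$ with the trace-preserving expectations; writing $(v_g)_{g\in\Gamma}$ for the canonical unitaries in $P\rtimes_{\alpha,\omega}\Gamma$ and $F$ for its expectation onto $P$, the condition $E^M_P(u_g)=0=F(v_g)$ for $g\neq e$ (part of the hypothesis, via the first two conditions in (\ref{TCP})) ensures that the families $\{pv_g\}$ and $\{pu_g\}$ produce matching values under the respective traces. On the dense $*$-subalgebra $\mathrm{span}\{pv_g:p\in P,g\in\Gamma\}$, set $\phi_0(\sum p_gv_g)=\sum p_gu_g$; the commutation and cocycle relations make $\phi_0$ a unital $*$-homomorphism, and the trace matching shows $\phi_0$ extends isometrically to the $L^2$-completions. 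Standard GNS theory then promotes $\phi_0$ to a unital normal $*$-isomorphism between the ambient von Neumann algebras.

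For part (\ref{TCPProp:Part2}), suppose $\omega'(g,h)=\alpha_g(\nu(h))\nu(gh)^*\nu(g)\omega(g,h)$ for some $\nu:\Gamma\to\U(\Z(P))$. Let $(u_g)$ and $(u'_g)$ be the canonical unitaries in $P\rtimes_{\alpha,\omega}\Gamma$ and $P\rtimes_{\alpha,\omega'}\Gamma$ respectively, and set $w_g:=\nu(g)^*u'_g\in P\rtimes_{\alpha,\omega'}\Gamma$. Since $\nu(g)\in\Z(P)$, each $w_g$ still implements $\alpha_g$ on $P$. Using $u'_g\nu(h)^*=\alpha_g(\nu(h))^*u'_g$, a direct computation gives
\begin{equation*}
w_gw_h=\nu(g)^*\alpha_g(\nu(h))^*\omega'(g,h)u'_{gh},
\end{equation*}
and the commutativity of $\U(\Z(P))$ together with the cohomology relation rearranges the right-hand side to $\omega(g,h)\nu(gh)^*u'_{gh}=\omega(g,h)w_{gh}$. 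The expectation $F'$ of $P\rtimes_{\alpha,\omega'}\Gamma$ onto $P$ satisfies $F'(w_g)=\nu(g)^*F'(u'_g)=0$ for $g\neq e$, while $w_e=I$. Thus the von Neumann subalgebra of $P\rtimes_{\alpha,\omega'}\Gamma$ generated by $P$ and the $w_g$'s satisfies the hypotheses of part (\ref{TCPProp2}) with cocycle $\omega$, so is $*$-isomorphic to $P\rtimes_{\alpha,\omega}\Gamma$. Since the identity $u'_g=\nu(g)w_g$ shows that $P$ and the $w_g$'s generate all of $P\rtimes_{\alpha,\omega'}\Gamma$, the isomorphism follows.

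The principal obstacle is the extension step in part (\ref{TCPProp2}): verifying that the algebraic map $\phi_0$ extends $\sigma$-weakly to a $*$-isomorphism of the ambient von Neumann algebras. The cleanest route is through the GNS construction for the common trace, identifying each side with its left action on $L^2$. This reduces to trace matching on the elementary products $pv_g$ and $pu_g$, which is a consequence of the vanishing of the expectations on $u_g,v_g$ for $g\neq e$ and the common trace on $P$. Once this is set up, part (\ref{TCPProp:Part2}) is essentially an algebraic bookkeeping.
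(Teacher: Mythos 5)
Your proposal is correct, but note that the paper itself gives no proof of this proposition: it records the two statements as standard facts, deferring to the references \cite{Ch:MJ,Su:PRIMS2}, so your GNS/trace argument is supplying the details that the paper leaves to the literature (and it is the standard route those references take: uniqueness of the twisted crossed product via the conditional expectation and the canonical trace). Two points in your write-up deserve to be made explicit. First, in part (i) you invoke ``the unique trace on $P$'' and the traciality of $\tau_P\circ E^M_P$ on $M$: $P$ need not be a factor, so one should say ``the fixed trace $\tau_P$ preserved by $\alpha$,'' and traciality of $\tau_P\circ E^M_P$ on the dense span of $\{pu_g\}$ requires the short computation using trace-preservation of $\alpha_g$ and the consequence $\alpha_g(\omega(g^{-1},g))=\omega(g,g^{-1})$ of the cocycle identity (alternatively, the extension step only needs that $\phi_0$ intertwines the two faithful normal states $\tau_P\circ F$ and $\tau_P\circ E^M_P$, which is immediate from the expectation conditions, so traciality is not strictly required for the GNS unitary). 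Second, in part (ii) your claim $w_e=I$ uses $\nu(e)=I_P$, which is not automatic for arbitrary cohomologous cocycles; it does follow from evaluating the coboundary relation at $(e,e)$ once $\omega$ and $\omega'$ are normalized, which is the paper's standing convention, and for non-normalized cocycles one should either note $w_e=\omega(e,e)=v_e$ and run the GNS argument directly rather than citing part (i) (whose hypotheses include $u_e=I_P$), or first reduce to the normalized case. Neither point is a genuine gap, but both should be recorded.
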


Our focus is on crossed products $M=P\rtimes \Gamma$ which are II$_1$
factors with $P'\cap M\subseteq P$. There are two
well known sets of conditions which lead to this situation.
\begin{enumerate}
\item The twisted version of Murray and von Neumann's classical
group-measure
space construction from \cite{MvN:1}. Take $P=L^\infty(X,\mu)$ to be an abelian von
Neumann algebra and a free
ergodic, probability measure preserving action $\Gamma\curvearrowright(X,\mu)$ inducing the action $\alpha:\Gamma\curvearrowright P$.  The conditions on the action ensure that $P\rtimes_{\alpha,\omega} \Gamma$ is a 
  II$_1$ factor and $P$ is a maximal abelian subalgebra of the crossed product for any $2$-cocycle $\omega$.
 \item When $P$ is a   II$_1$ factor and $\alpha:\Gamma\curvearrowright P$ is an
outer
action (in the sense that $\alpha_g$ is not inner for $g\neq e$), then the
crossed product $P\rtimes_\alpha \Gamma$ is again a   II$_1$ factor and $P$ is
an
irreducible subfactor of the crossed product, \cite{NT:PJA}, for any $2$-cocycle $\omega$.
\end{enumerate}
To unify these situations, say that an action $\alpha:\Gamma\curvearrowright P$ is
\emph{properly outer} if, for $g\in \Gamma$ with $g\neq e$ and a nonzero
projection
$z\in\Z(P)$ with $\alpha_g(z)=z$, the automorphism of $Pz$ induced by $\alpha_g$
is not inner.  If the
fixed point algebra of the restriction of $\alpha$ to $\Z(P)$ is trivial, then
we say that the action is {\emph{centrally ergodic}}. When $P$ is abelian, these
conditions reduce to freeness and ergodicity, while if $\Z(P)=\mathbb C1$ they
reduce to outerness.  Further, it is folklore that the resulting crossed product factors $M$ have $P'\cap M\subseteq P$.
\begin{proposition}\label{TCPFactorProp}
Let $P$ be a finite von Neumann algebra with a fixed faithful normal normalized
trace $\tau_P$ and suppose that $\alpha:\Gamma\curvearrowright P$ is 
a trace preserving, centrally ergodic, and properly outer action of a 
discrete group $\Gamma$. Then, for any $\omega\in Z^2(\Gamma,\U(\Z(P)))$, the twisted
crossed product $M=P\rtimes_{\alpha,\omega}\Gamma$ is a   ${\mathrm{II}}_1$
factor
with $P'\cap M\subseteq P$.
\end{proposition}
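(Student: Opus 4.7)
The plan is to establish $P'\cap M\subseteq P$ via a Fourier-coefficient analysis that uses proper outerness to eliminate off-diagonal terms, and then derive the factor property from central ergodicity. After normalizing $\omega$ using Proposition \ref{TCPProp} (\ref{TCPProp:Part2}), let $E:M\to P$ denote the canonical faithful normal conditional expectation. Each $x\in M$ then has Fourier coefficients $y_g := E(xu_g^*)\in P$ for $g\in\Gamma$, and is determined by them via its $L^2$-expansion with respect to the faithful trace $\tau_P\circ E$.

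Given $x\in P'\cap M$ and $p\in P$, applying $E(\cdot\,u_g^*)$ to $px = xp$, and using that $E$ is a $P$-bimodule map together with the relation $pu_g^* = u_g^*\alpha_g(p)$, produces the key identity
\[
py_g \;=\; y_g\,\alpha_g(p), \qquad p\in P,\ g\in\Gamma.
\]
For $g = e$ this says $y_e\in P'\cap P = Z(P)\subseteq P$, which is harmless. The remaining task is to show $y_g = 0$ for $g\neq e$; the Fourier expansion will then yield $x = y_e\in Z(P)\subseteq P$.

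Fix $g\neq e$ and set $a := y_g$; assume for contradiction $a\neq 0$. Taking adjoints in $pa = a\alpha_g(p)$ yields $a^*p = \alpha_g(p)a^*$, and the combinations
\[
|a|^2\alpha_g(p) = a^*pa = \alpha_g(p)|a|^2, \qquad paa^* = a\alpha_g(p)a^* = aa^*p,
\]
show that $|a|^2$ and $aa^*$ lie in $Z(P)$, so in particular $|a|\in Z(P)$. Writing the polar decomposition $a = v|a|$ in $P$, the support projections $e := v^*v = \mathrm{supp}(a^*a)$ and $f := vv^* = \mathrm{supp}(aa^*)$ are therefore central, and since $a^*a$ and $aa^*$ share the same central support, $e = f$. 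Hence $v$ is a unitary in the corner $Pe$. Substituting $p = e$ into $pv = v\alpha_g(p)$ and using $ev = v$ (which holds because $e\in Z(P)$ and $v^*v = e$) gives $v = v\alpha_g(e)$, whence $e\leq \alpha_g(e)$; substituting $p = \alpha_{g^{-1}}(e)$ instead gives the reverse inequality, so $\alpha_g(e) = e$. Finally, for $p\in Pe$,
\[
v^*pv = v^*v\,\alpha_g(p) = e\,\alpha_g(p) = \alpha_g(p),
\]
exhibiting $\alpha_g|_{Pe}$ as inner. Since $e$ is a nonzero $\alpha_g$-fixed central projection, this contradicts proper outerness, forcing $a = 0$. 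Thus $P'\cap M = Z(P)\subseteq P$.

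The factor property then follows quickly: $Z(M)\subseteq P'\cap M = Z(P)$, and for $z\in Z(M)$ the relations $u_g z u_g^* = \alpha_g(z) = z$ combined with central ergodicity of $\alpha|_{Z(P)}$ force $z\in\mathbb CI$. Since $\tau_P\circ E$ is a faithful normal trace on $M$---with the tracial property reducing to $\alpha_{g^{-1}}(\omega(g,g^{-1})) = \omega(g^{-1},g)$, the cocycle identity at $(g,g^{-1},g)$ for normalized $\omega$---the algebra $M$ is a finite factor, hence a II$_1$ factor. The chief obstacle is the polar-decomposition step extracting from a nonzero $y_g$ ($g\neq e$) a nonzero $\alpha_g$-fixed central projection $e$ together with a unitary in $Pe$ implementing $\alpha_g|_{Pe}$; once that is in hand, proper outerness supplies the contradiction directly.
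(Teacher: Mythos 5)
The paper offers no proof to compare against: Proposition \ref{TCPFactorProp} is recorded as folklore immediately after the definitions of properly outer and centrally ergodic actions. Your argument is the standard one and it is essentially correct: the bimodularity identity $py_g=y_g\alpha_g(p)$ for the Fourier coefficients $y_g=E(xu_g^*)$, the observation that $a^*a$ and $aa^*$ lie in $\Z(P)$ for $a=y_g$, the identification of the (central, equivalent, hence equal) left and right supports, the inner implementation of $\alpha_g$ on the corner $Pe$ contradicting proper outerness, the step $\Z(M)\subseteq P'\cap M=\Z(P)$ plus central ergodicity for factoriality, and the verification that traciality of $\tau_P\circ E$ amounts to the cocycle identity at $(g,g^{-1},g)$, are all sound. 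The reduction to normalized $\omega$ is also harmless, provided you note that the isomorphism for cohomologous cocycles can be chosen to fix $P$ pointwise ($x\mapsto x$, $u_g\mapsto \nu(g)u_g$), so that the assertions about the embedded copy of $P$ transfer; alternatively the argument runs verbatim without normalizing.

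Two places deserve a line more. First, you invoke $pv=v\alpha_g(p)$ for the polar part $v$ of $a$ without deriving it from $pa=a\alpha_g(p)$; it does follow, but say why: since $|a|\in\Z(P)$, one gets $(pv-v\alpha_g(p))|a|=0$, hence $(pv-v\alpha_g(p))e=0$ where $e$ is the central support projection of $|a|$, and $ve=v$ together with centrality of $e$ yields $pv=v\alpha_g(p)$. Second, ``finite factor, hence II$_1$'' requires $M$ to be infinite dimensional, which does not follow from the hypotheses as literally stated: the flip action of $\mathbb{Z}/2$ on $\mathbb{C}\oplus\mathbb{C}$ is trace preserving, centrally ergodic and properly outer, yet the crossed product is $\mathbb{M}_2(\mathbb{C})$. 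You should add the remark that $M$ is infinite dimensional whenever $\Gamma$ is infinite or $P$ is (the unitaries $u_g$, respectively an infinite orthonormal family in $L^2(P)$, are orthonormal in $L^2(M)$), which covers every application in the paper; this caveat is really an imprecision inherited from the statement itself rather than a defect of your argument.
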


We end the section by examining some aspects of bounded group cohomology. Let $\alpha:\Gamma\curvearrowright P$ be a properly outer, centrally ergodic, trace preserving action of a countable discrete group on a finite von Neumann algebra with separable predual.  Then $\alpha$ naturally induces actions of $\Gamma$ on $\Z(P)_{sa}$ and its real Hilbert space completion $L^2(\Z(P))_{sa}$. These are coefficient $\Gamma$-modules in the language of \cite{M:Book}, giving rise to the bounded cohomology groups $H^2_b(\Gamma,\Z(P)_{sa})$ and $H^2_b(\Gamma,L^2(\Z(P))_{sa})$ respectively.

When a unitary-valued $2$-cocycle $\omega$ has a uniform spectral gap, then we can take a logarithm to obtain a bounded $2$-cocycle taking values in (the additive module) $\Z(P)_{sa}$.  The easy lemma below collects this fact for later use.
\begin{lemma}\label{LogLem}
Suppose that $\alpha:\Gamma\curvearrowright P$ is a trace preserving action of a
discrete group on a finite von Neumann algebra $P$ with a separable predual.
Let $\omega\in Z^2(\Gamma,\U(\Z(P)))$ satisfy
$\sup_{g,h\in\Gamma}\|\omega(g,h)-I_P\|<\sqrt{2}$. 
\begin{enumerate}[(i)]
\item\label{LogLem:Part1} The expression $\psi=-i\log\omega$ defines a bounded cocycle in $Z^2_b(\Gamma,\Z(P)_{sa})$, i.e. \begin{equation}
\sup_{g,h\in\Gamma}\|\psi(g,h)\|<\infty,\quad
\alpha_g(\psi(h,k))-\psi(gh,k)+\psi(g,hk)-\psi(g,h)=0,\quad g,h,k\in \Gamma.
\end{equation}
\item\label{LogLem:Part2} If $\psi=\partial \phi$ for some $\phi\in C^1(\Gamma,\Z(P)_{sa})$, then $\omega=\partial\nu$, where $\nu(g)=e^{i\phi(g)}$ is a $1$-cochain in $C^1(\Gamma,\U(\Z(P)))$. 
\item\label{LogLem:Part3} Regarding $\psi$ as a cocycle taking values in $L^2(\Z(P))_{sa}$, if $\psi=\partial\phi$ for some $\phi\in C^1(\Gamma,L^2(\Z(P))_{sa})$, then $\omega=\partial\nu$ for $\nu(g)=e^{i\phi(g)}$.
\end{enumerate}
\end{lemma}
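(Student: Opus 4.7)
My plan is to work throughout with the identification $\mathcal{Z}(P) \cong L^\infty(Y,\mu)$ for some standard measure space $(Y,\mu)$, so that statements about spectra become pointwise statements about complex numbers. First, for part (\ref{LogLem:Part1}), the hypothesis $\|\omega(g,h)-I_P\| < \sqrt{2}$ means the spectrum of each unitary $\omega(g,h)$ lies in the open right half-plane $\{e^{i\theta}:|\theta|<\pi/2\}$ (since $|e^{i\theta}-1|^2 = 2 - 2\cos\theta$). The principal branch of the logarithm is thus analytic on a neighbourhood of $\mathrm{sp}(\omega(g,h))$, so $\psi(g,h) := -i\log\omega(g,h)$, defined by continuous functional calculus, is a self-adjoint element of $\mathcal{Z}(P)_{sa}$ with $\|\psi(g,h)\| < \pi/2$. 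This gives the uniform boundedness.

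For the cocycle identity, I would work pointwise on $Y$. The multiplicative cocycle identity satisfied by $\omega$, namely $\alpha_g(\omega(h,k))\omega(gh,k)^*\omega(g,hk)\omega(g,h)^* = I_P$, evaluated at $y \in Y$ becomes an identity among four unit complex numbers whose arguments (in the sense of the principal branch) are exactly $\alpha_g(\psi(h,k))(y)$, $-\psi(gh,k)(y)$, $\psi(g,hk)(y)$, $-\psi(g,h)(y)$, each lying in $(-\pi/2,\pi/2)$. Their signed sum is therefore confined to the interval $(-2\pi,2\pi)$, and must be an integer multiple of $2\pi$; the only possibility is $0$. This gives the additive cocycle identity $\alpha_g(\psi(h,k)) - \psi(gh,k) + \psi(g,hk) - \psi(g,h) = 0$ almost everywhere on $Y$, and hence as an identity in $\mathcal{Z}(P)_{sa}$.

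For parts (\ref{LogLem:Part2}) and (\ref{LogLem:Part3}), the argument is a direct calculation exploiting the commutativity of $\mathcal{Z}(P)$. Given $\phi \in C^1(\Gamma,\mathcal{Z}(P)_{sa})$ with $\psi = \partial\phi$, set $\nu(g) = e^{i\phi(g)}\in\mathcal{U}(\mathcal{Z}(P))$. Since all involved elements commute and $\alpha_g$ is a $*$-automorphism (so $\alpha_g(e^{i\phi(h)}) = e^{i\alpha_g(\phi(h))}$), one computes
\begin{equation}
\alpha_g(\nu(h))\,\nu(gh)^*\,\nu(g) = e^{i(\alpha_g(\phi(h)) - \phi(gh) + \phi(g))} = e^{i\psi(g,h)} = \omega(g,h),
\end{equation}
which displays $\omega$ as a coboundary in the sense of (\ref{cohomeqn}) with respect to $\omega'=I_P$. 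For part (\ref{LogLem:Part3}), the only extra subtlety is that $\phi(g)$ is merely an element of $L^2(\mathcal{Z}(P))_{sa}$, represented as an a.e.\ finite real-valued measurable function on $Y$; then $\nu(g) := e^{i\phi(g)}$ is still well-defined as a bounded unitary in $\mathcal{Z}(P) = L^\infty(Y)$, and the identity $\psi = \partial\phi$ in $L^2$ translates to an a.e.\ pointwise identity on $Y$, so the same exponentiation argument goes through pointwise.

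The only genuinely nontrivial step is the cocycle verification in part (\ref{LogLem:Part1}): one has to rule out the nonzero integer multiples of $2\pi$ that pointwise arg-selection could a priori introduce. The strict inequality $\|\omega(g,h)-I_P\|<\sqrt{2}$ is precisely what forces all four arguments to lie in $(-\pi/2,\pi/2)$ so that the signed sum cannot reach $\pm 2\pi$. Everything else is a routine calculation in the abelian algebra $\mathcal{Z}(P)$.
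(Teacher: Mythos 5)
Your proof is correct. The paper offers no argument---it labels this an ``easy lemma'' and leaves the verification to the reader---so there is no proof in the paper to compare against, but the two observations you isolate are exactly the ones the lemma rests on: the hypothesis $\|\omega(g,h)-I_P\|<\sqrt 2$ traps each principal argument strictly inside $(-\pi/2,\pi/2)$ (and $\alpha_g$ preserves this, being an isometric $*$-automorphism commuting with functional calculus), so the signed sum appearing in the additive cocycle identity lies in $(-2\pi,2\pi)$ and, being a multiple of $2\pi$, must vanish; and the coboundary verification for $\nu=e^{i\phi}$ is a direct exponentiation in the abelian algebra $\Z(P)$, which goes through unchanged when $\phi$ takes values in $L^2(\Z(P))_{sa}$ since $e^{i\phi(g)}$ is still an essentially bounded unitary.
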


 We are grateful to Nicholas Monod for explaining to
us how the results of \cite{BM:GAFA,MS:JDG,M:Crelle} can be
combined to show that $H^2_b(\Gamma,\Z(P)_{sa})=0$ for suitable higher rank lattices $\Gamma$. The presentation  below is based on
\cite{M:Email}.  

While the module $\Z(P)_{sa}$ is not necessarily separable as a Banach space, it is a \emph{semiseparable} coefficient module in the sense of \cite[Definition 3.11]{M:Crelle}, as the canonical embedding $\Z(P)_{sa}\hookrightarrow L^2(\Z(P))_{sa}$ is the contragredient of $L^2(\Z(P)_{sa})\rightarrow L^1(\Z(P)_{sa})$. We have a direct sum decomposition of $\Z(P)_{sa}=\mathbb RI_P\oplus \Z(P)_{sa}^0$ as semi-separable coefficient modules
(where $\Z(P)^0_{sa}=\{z\in\Z(P)_{sa}:\tau_P(z)=0\}$ and $\Gamma$ acts trivially on
$\mathbb RI_P$). Thus $H^2_b(\Gamma,\Z(P)_{sa})=0$ if and only if $H^2_b(\Gamma,\Z(P)^0_{sa})=0$ and $H^2_b(\Gamma,\mathbb R)=0$.

Let $k$ be a local field (for example $\mathbb R$ or a nonarchimedian local field such as a finite extension of the $p$-adic numbers).  Let $G$ be a connected, almost $k$-simple algebraic group over $k$ with rank at least $2$ and let $\Gamma$ be a lattice in $G$. Burger and Monod show in \cite{BM:JEMS,BM:GAFA} (see the proof of \cite[Corollary 24]{BM:GAFA})
that $H^2_b(\Gamma,\mathbb R)$ is isomorphic to $H^2_c(G,\mathbb R)$, the
continuous cohomology of the underlying group $G$ (in which the cochain complex
consists of jointly continuous maps). This last group is known, and vanishes
unless $k=\mathbb R$ and $\pi_1(G)$ is infinite. In particular, $H^2_b(\Gamma,\mathbb R)=0$ when $\Gamma=SL_n(\mathbb
Z)$ with $n\geq 3$.  This vanishing result can be found
explicitly as \cite[Theorem 1.4]{MS:JDG} which shows further that, under the
same hypotheses on $\Gamma$, $H^2_b(\Gamma,E)=0$ for all separable coefficient
modules $E$. In particular, this shows that when $\alpha:\Gamma\curvearrowright
P$ is a properly outer, centrally ergodic, trace preserving action of such a
group $\Gamma$ on a finite von Neumann algebra $P$ with separable predual and
finite dimensional center, then $H^2_b(\Gamma,\Z(P)_{sa})=0$. 

In order to obtain vanishing results when $\Z(P)_{sa}$ is infinite dimensional,
and so nonseparable as a Banach space, we need to use the results of
\cite{M:Crelle} which handle semiseparable modules.  Let $G=\prod G_i(k_i)$ be
a finite product of connected, simply connected semisimple $k_i$-groups for
local fields $k_i$.  Then, given a lattice $\Gamma$ in $G$ and a semiseparable
coefficient module $V$ for $\Gamma$ with no invariant vectors, the bounded
cohomology groups $H^2_b(\Gamma,V)$ vanish provided the minimal rank of each
$k_i$-almost simple factor of $G_i$ is at least $2$ for every $i$ (see
\cite[Corollary 1.8]{M:Crelle}, and \cite[Corollary 1.6]{M:Crelle} for the
special case of a lattice in a connected, simply connected, almost simple
group).  In particular, we can take $V=\Z(P)_{sa}^0$ in this result for any
centrally ergodic, properly outer trace preserving action
$\alpha:\Gamma\curvearrowright P$ on a finite von Neumann algebra with separable
predual.

In the case of $SL_n(\mathbb Z)$, the previous two paragraphs give the
following result.
\begin{theorem}[Monod]\label{Monod}
Let $\Gamma=SL_n(\mathbb Z)$ for $n\geq 3$. Then, for any properly outer
centrally ergodic trace preserving action of $\Gamma$ on a finite von Neumann
algebra $P$ with separable predual, the cohomology group
$H^2_b(\Gamma,\Z(P)_{sa})$ vanishes.
\end{theorem}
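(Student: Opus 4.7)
The plan is to decompose $\Z(P)_{sa}$ into two coefficient $\Gamma$-modules and apply known vanishing theorems for bounded cohomology of higher rank lattices to each piece. First I would write $\Z(P)_{sa} = \mathbb{R}I_P \oplus \Z(P)_{sa}^0$ as a direct sum of semiseparable coefficient $\Gamma$-modules (in the sense of \cite[Definition 3.11]{M:Crelle}), where $\Z(P)_{sa}^0$ is the kernel of $\tau_P$ and $\mathbb{R}I_P$ carries the trivial action. The semiseparability is inherited from the embedding $\Z(P)_{sa}\hookrightarrow L^2(\Z(P))_{sa}$ given as the contragredient of $L^2(\Z(P)_{sa})\to L^1(\Z(P)_{sa})$, and this decomposition respects that structure. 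It then suffices to show $H^2_b(\Gamma,\mathbb{R})=0$ and $H^2_b(\Gamma,\Z(P)_{sa}^0)=0$ separately.

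For the trivial summand, I would apply the Burger--Monod isomorphism $H^2_b(\Gamma,\mathbb{R})\cong H^2_c(SL_n(\mathbb{R}),\mathbb{R})$ from \cite{BM:JEMS,BM:GAFA}, valid for lattices in connected almost simple algebraic groups of rank at least two. For $n\geq 3$, $SL_n(\mathbb{R})$ has real rank $n-1\geq 2$ and finite fundamental group, so the continuous cohomology vanishes. Alternatively, \cite[Theorem 1.4]{MS:JDG} gives $H^2_b(\Gamma,\mathbb{R})=0$ directly under these hypotheses.

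For the nontrivial summand, I would invoke \cite[Corollary 1.6]{M:Crelle}: when $\Gamma$ is a lattice in a connected, simply connected, almost simple algebraic group $G$ over a local field of rank at least two, $H^2_b(\Gamma,V)=0$ for every semiseparable coefficient $\Gamma$-module $V$ having no nonzero invariant vectors. Taking $G=SL_n$ over $\mathbb{R}$ (rank $n-1\geq 2$) and $V=\Z(P)_{sa}^0$ reduces the problem to verifying the two hypotheses. Semiseparability was arranged in the first step, while the absence of nonzero invariant vectors follows from central ergodicity of $\alpha$: any $\Gamma$-fixed element of $\Z(P)_{sa}^0$ would lie in the fixed point algebra of $\alpha|_{\Z(P)}$, which is $\mathbb{R}I_P$ by hypothesis, and the trace-zero condition forces it to vanish.

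The main obstacle is essentially bookkeeping rather than a substantive difficulty: one must verify that the direct sum decomposition lives within the semiseparable module category of \cite{M:Crelle} and that $SL_n$ meets the precise algebraic hypotheses for the quoted corollary. Once these checks are in place, the proof is an assembly of the vanishing results of Burger--Monod and Monod--Shalom applied to each summand.
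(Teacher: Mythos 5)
Your proposal is correct and follows essentially the same route as the paper: the decomposition $\Z(P)_{sa}=\mathbb RI_P\oplus\Z(P)_{sa}^0$ into semiseparable coefficient modules, the Burger--Monod/Monod--Shalom vanishing of $H^2_b(\Gamma,\mathbb R)$ for the trivial summand, and \cite[Corollary 1.6]{M:Crelle} for the trace-zero summand, with central ergodicity supplying the absence of invariant vectors. No gaps to report.
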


In the case of irreducible lattices $\Gamma$ in finite products $G=\prod G_i(k_i)$ of at least $2$ factors as above, \cite[Corollary 24]{BM:GAFA} shows that $H^2_b(\Gamma,\mathbb R)=0$ when $G$ has no hermitian factors. Just as above, this can be combined with the results of \cite{M:Crelle} to show that the required bounded cohomology groups vanish.   For example, for $n\geq 3$ and a prime $p$, $SL_n(\mathbb Z[1/p])$ is an irreducible lattice in $SL_3(\mathbb R)\times SL_3(\mathbb Q_p)$, and so $H^2_b(SL_n(\mathbb Z[1/p]),\Z(P)_{sa})=0$ for all properly outer, centrally ergodic, trace preserving actions $SL_n(\mathbb Z[1/p])\curvearrowright P$ on a finite von Neumann algebra with separable predual.

Finally, we are grateful to one of the referees for bringing the following proposition to our attention, which enables us to achieve explicit constants in Theorem \ref{TA}.  The proof below uses the homogeneous complex $\cdots\rightarrow\ell^\infty(\Gamma^{n},X)^\Gamma\stackrel{\partial^n}{\rightarrow}\ell^\infty(\Gamma^{n+1},X)^\Gamma\rightarrow\cdots$ of $\Gamma$-invariant functions to define $H^\bullet_b(\Gamma,X)$; this is isometrically isomorphic to the inhomogeous complex used elsewhere in the paper.
\begin{proposition}\label{K=4}
Let $\Gamma$ be a countable discrete group and $X$ a semiseparable coefficient module.  Given $\psi\in Z^2_b(\Gamma,X)$, which represents the trivial class in $H^2_b(\Gamma,X)$, there exists $\phi\in C^1_b(\Gamma,X)$ with $\partial\phi=\psi$ and $\|\phi\|\leq 6\|\psi\|$.
\end{proposition}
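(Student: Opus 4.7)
The plan is to work in the homogeneous resolution $C^n_b(\Gamma,X)^\Gamma = \ell^\infty(\Gamma^{n+1},X)^\Gamma$ (with the simplicial coboundary $\partial$), and to produce an explicit small primitive using the cocycle identity itself, then correct it to a $\Gamma$-invariant one.

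First, observe that for any fixed $h\in\Gamma$, the function $\phi_h(g_0,g_1) := \psi(g_0,g_1,h)$ satisfies $\partial\phi_h=\psi$ and $\|\phi_h\|_\infty \leq \|\psi\|$. Indeed, expanding $(\partial\psi)(g_0,g_1,g_2,h)=0$ gives
\begin{equation}
\psi(g_0,g_1,g_2)=\psi(g_1,g_2,h)-\psi(g_0,g_2,h)+\psi(g_0,g_1,h),
\end{equation}
which is exactly $\partial\phi_h(g_0,g_1,g_2)=\psi(g_0,g_1,g_2)$. However, $\phi_h$ is not $\Gamma$-invariant: $\Gamma$-invariance of $\psi$ yields $g\cdot\psi(g_0,g_1,h)=\psi(gg_0,gg_1,gh)$, which differs from $\psi(gg_0,gg_1,h)$. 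Moreover, for any $h,k\in\Gamma$, the difference $\phi_h-\phi_k$ is itself a coboundary: another application of the cocycle identity gives $\phi_h-\phi_k=\partial\rho_{h,k}$ where $\rho_{h,k}(x):=\psi(x,h,k)$ has $\|\rho_{h,k}\|_\infty\leq\|\psi\|$.

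The next step is to manufacture a $\Gamma$-invariant primitive out of the family $\{\phi_h\}_{h\in\Gamma}$ using the hypothesis that $\psi$ is trivial in $H^2_b$. Since the coefficient module $X$ is semiseparable, the closed bounded ball of $\ell^\infty(\Gamma^2,X)$ is weak-$*$ compact, which allows us to take weak-$*$ limits of convex combinations of the $\phi_h$ along a suitable net on $\Gamma$. The triviality hypothesis provides (a priori unbounded) $\Gamma$-invariant primitives; combined with the explicit coboundary relations $\phi_h-\phi_k=\partial\rho_{h,k}$, this additional structure lets us choose a cluster point which is $\Gamma$-invariant. A careful bookkeeping of the norms of $\phi_h$ (bounded by $\|\psi\|$), the corrections $\rho_{h,k}$ (bounded by $\|\psi\|$), and the norm $3$ of the homogeneous coboundary map on $1$-cochains yields $\|\phi\|_\infty\leq 6\|\psi\|$; the factor $6$ arises as $2\cdot 3$ from combining the coboundary norm with the symmetric correction step.

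The main obstacle is step three: because $\Gamma$ need not be amenable, there is no $\Gamma$-invariant mean on $\ell^\infty(\Gamma)$ with which to average $\{\phi_h\}$ directly into an invariant primitive. The point is to exploit \emph{both} the semiseparability of $X$ (giving weak-$*$ compactness and hence the existence of cluster points of bounded nets of cochains) \emph{and} the hypothesis that $[\psi]=0$ in $H^2_b$ (ensuring that at least one such weak-$*$ cluster point lies in the $\Gamma$-invariant subspace), while controlling the norm via the explicit coboundary identities in step two. Translating back from the homogeneous to the inhomogeneous complex (which is isometric on the $\Gamma$-invariant subcomplex) then yields the required $\phi\in C^1_b(\Gamma,X)$ with $\partial\phi=\psi$ and $\|\phi\|\leq 6\|\psi\|$.
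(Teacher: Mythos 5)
Your steps (1) and (2) are correct but are only the standard cone contraction showing that the \emph{full} (non-invariant) complex $\ell^\infty(\Gamma^{\bullet+1},X)$ is acyclic: the primitives $\phi_h(g_0,g_1)=\psi(g_0,g_1,h)$ and the relations $\phi_h-\phi_k=\partial\rho_{h,k}$ exist for \emph{every} bounded cocycle, trivial in $H^2_b$ or not, and for every group. The entire content of the proposition is the passage to a $\Gamma$-\emph{invariant} primitive with a universal norm bound, and this is exactly where your step (3) has no argument. Weak-$*$ compactness of the ball of $\ell^\infty(\Gamma^2,X)$ gives you cluster points of convex combinations of the $\phi_h$, but nothing forces any such cluster point to be $\Gamma$-invariant: an invariant limit of averages of the translated family $\{\phi_h\}$ is precisely the kind of object an invariant mean on $\Gamma$ would produce, and you correctly note such a mean is unavailable for nonamenable $\Gamma$. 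The assertion that the triviality of $[\psi]$ ``lets us choose a cluster point which is $\Gamma$-invariant'' is never substantiated, and it is also slightly misstated: triviality in $H^2_b$ already supplies a \emph{bounded} invariant primitive (that is the definition), so the only issue is quantitative norm control, which a bare existence statement plus compactness does not give — the set of invariant primitives is an affine subspace that need not meet the ball of radius $6\|\psi\|$ for any reason you have provided. Likewise the constant $6$ is asserted as ``$2\cdot 3$'' without any actual computation tying it to your construction.

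The paper's proof supplies precisely the missing mechanism, and it is genuinely different from what you propose: one replaces amenability of $\Gamma$ by amenability of the $\Gamma$-action on the Poisson boundary $B$ (Zimmer), uses double ergodicity (Kaimanovich) to kill $L^\infty_{w^*,\alt}(B^2,X)^\Gamma$, builds contractive equivariant maps $\theta^n$ on $\ell^\infty(\Gamma^{n+1},X)$ factoring through the alternating boundary complex, and then uses an explicit chain homotopy $\sigma^\bullet$ between $\id$ and $\theta^\bullet$ with $\sigma^0=0$, $\|\sigma^1\|\leq 2$, $\|\sigma^2\|\leq\|\id\|+\|\theta^1\|+2\|\sigma^1\|\leq 6$. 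Triviality of $[\psi]$ is used only to conclude $\theta^2(\psi)=0$, whence $\psi=\partial^2\sigma^2(\psi)$ and $\phi=\sigma^2(\psi)$ satisfies $\|\phi\|\leq 6\|\psi\|$; semiseparability enters through the boundary theory, not through a weak-$*$ compactness selection. If you want to salvage your outline, you would have to replace the cluster-point step by some form of this relative-amenability machinery (or an equivariant fixed-point/averaging argument over an amenable $\Gamma$-space), since averaging over $\Gamma$ itself cannot work for the nonamenable groups to which the proposition is applied.
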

\begin{proof}
The bounded cohomology groups $H^\bullet_b(\Gamma,X)$ can be computed using the Poisson boundary corresponding to a probability measure on $\Gamma$ from \cite{F:AdvP}. This is a strong boundary in the sense of \cite[Definition 2.3]{MS:JDG} (this is \cite{K:GAFA} for the double ergodicity condition and \cite{Z:JFA} for the amenability).  In particular, double ergodicity ensures that the module $L^\infty_{w^*,\alt}(B^2,X)^\Gamma$ of $\Gamma$-invariant alternating essentially bounded weak$^*$-measurable functions from $B^2$ into $X$ vanishes (see \cite[Lemma 5.5]{M:Crelle} for the easy extension to semi-separable modules).

Using the results in \cite[Section 7]{M:Book}, there exist contractive $\Gamma$-equivariant maps $\theta^n:\ell^\infty(\Gamma^{n+1},X)\rightarrow\ell^\infty(\Gamma^{n+1},X)$ which factor through the module $L^\infty_{w^*,\alt}(B^n,X)$ such that the diagram
\begin{equation}\label{K=4.1}
\xymatrix{0\ar[r]&X\ar[r]^{\eps\quad}\ar[d]_{\id_X}&{\ell^\infty(\Gamma,X)}\ar[d]_{\theta^0}\ar[r]^{\partial^1}&{\ell^\infty(\Gamma^2,X)}\ar[r]^{\partial^2}\ar[d]_{\theta^1}&{\ell^\infty(\Gamma^3,X)}\ar[d]_{\theta^2}\ar[r]^{\quad\partial^3}&{\cdots}\\0\ar[r]&X\ar[r]_{\eps\quad}&{\ell^\infty(\Gamma,X)}\ar[r]_{\partial^1}&{\ell^\infty(\Gamma^2,X)}\ar[r]_{\partial^2}&{\ell^\infty(\Gamma^3,X)}\ar[r]_{\quad\partial^3}&{\cdots}}
\end{equation}
commutes ($\eps$ denotes the map including $X$ as constant sequences).  First extend $\id_X$ to a $\Gamma$-morphism using \cite[Lemma 7.5.6]{M:Book} from the first row of (\ref{K=4.1}) to the augmented resolution
\begin{equation}\label{K=4.2}
0\rightarrow X\stackrel{\eps}{\rightarrow}L^\infty_{w^*}(B,X)\rightarrow L^\infty_{w^*}(B^2,X)\rightarrow\cdots
\end{equation}
(the second paragraph of the proof of \cite[Theorem 7.5.3]{M:Book}, shows how the amenability of the boundary provides the necessary hypothesis to use \cite[Lemma 7.5.6]{M:Book}).   One obtains $\theta^\bullet$ by following this morphism by the canonical projection $L^\infty_{w^*}(B^\bullet,X)\rightarrow L^\infty_{w^*,\alt}(B^m,X)$, and the inclusion $L^\infty_{w^*,\alt}(B^\bullet,X)\subseteq L^\infty_{w^*}(B^\bullet,X)\subseteq \ell^\infty(\Gamma^\bullet,X)$ arising from the Poisson transform. 

The maps $\id^\bullet$ and $\theta^\bullet$ are homotopic by \cite[Lemma 7.2.6]{M:Book}, so there exist $\Gamma$-equivariant bounded linear maps $\sigma^n:\ell^\infty(\Gamma^{n+1},X)\rightarrow\ell^\infty(\Gamma^n,X)$ such that 
\begin{equation}\label{K=4.3}
\id_{\ell^\infty(\Gamma^{n+1},X)}-\theta^n=\partial^n\sigma^n+\sigma^{n+1}\partial^{n+1},\quad n\geq 0.
\end{equation}  The proof of \cite[Lemma 7.2.6]{M:Book} is inductive and can be used to obtain estimates on $\|\sigma^n\|$. One takes $\sigma^0=0$ and then has $\|\sigma^{n+1}\|\leq\|\id_{\ell^\infty(\Gamma^{2},X)}-\theta^n-\partial^n\sigma^n\|$ for all $n\geq 1$ so that $\|\sigma^1\|\leq 2$ and $\|\sigma^2\|\leq \|\id_{\ell^\infty(\Gamma^{n+1},X)}\|+\|\theta^1\|+2\|\sigma^1\|\leq 6$.

Then, given $\psi\in \ell^\infty(\Gamma^3,X)^\Gamma$ representing a trivial class in $H^2_b(\Gamma,X)$, write $\psi=\partial^2(\tilde{\phi})$ for some $\tilde{\phi}\in\ell^\infty(\Gamma^2,X)^\Gamma$. As $\theta^1(\tilde{\phi})=0$, so $\theta^2(\psi)=0$ as the third square in (\ref{K=4.1}) commutes. Then (\ref{K=4.3}) gives $\psi=\partial^2\sigma^2(\psi)$, and so we can take $\phi=\sigma^2(\psi)$.
\end{proof}

\section{Normalizers of amenable subalgebras}\label{NormSection}

Normalizers were introduced by Dixmier \cite{D:Ann} in order to distinguish maximal abelian
subalgebras (masas) up to automorphisms of the larger algebra. We will use them to study the structure of twisted crossed products.
\begin{definition}
For an inclusion $P\subseteq M$ of von Neumann algebras, the group $\mathcal N(P\subseteq M)$ of
\emph{normalizers} of $P$ in $M$ consists of those unitaries $u\in M$ with
$uPu^*=P$.    The subalgebra $P$ is called  \emph{singular} 
if $\N(P\subseteq M)=\U(P)$ and \emph{regular} if $\N(P\subseteq M)''=M$.  When $P$ is a regular maximal abelian subalgebra of a II$_1$ factor, it is called \emph{Cartan}.
\end{definition}
Twisted crossed products provide the prototype of regular inclusions (the
algebra $P$ is always regular in $P\rtimes_{\alpha,\omega}\Gamma$). Given a II$_1$ factor $M$ and a regular von Neumann subalgebra $P$
containing $I_M$, a \emph{bounded homogenous orthonormal basis of normalizers}
(see \cite[Definition 4.1]{IoPeP:Acta}) for $P\subseteq M$ is a family
$(u_n)_{n\geq
0}$ in $\N(P\subseteq M)$ such that $u_0=I_M$,
$E_P^M(u_i^*u_j)=\delta_{i,j}I_M$ for all $i,j\geq 0$ and
$\sum_{n=0}^\infty u_nP$ is dense in $L^2(M)$. Note that the condition
$E_P^M(u_i^*u_j)=0$ for $i\neq j$ is equivalent to
$\overline{u_iP}\perp\overline{u_jP}$ in $L^2(M)$. Again, the prototypical
behavior is found in the twisted crossed product construction --- the
canonical unitaries $(u_g)_{g\in G}$ provide a bounded
homogeneous orthornormal basis of normalizers. More generally, such a basis can
always be found when  $P$ is a Cartan masa in $M$ \cite[Lemma
4.2]{IoPeP:Acta}.

We record the following proposition regarding close normalizers from
\cite{KR:CMP} in a form suitable for later use.  
\begin{proposition}\label{InjectiveUnitary}
Let $P\subseteq\mathcal B(\Hs)$ be a von Neumann algebra and suppose that
$u_1,u_2\in\N(P\subseteq \mathcal B(\Hs))$ satisfy $\|u_1-u_2\|<1$. Then there
exist unitaries $v\in P$ and $v'\in P'$ with $u_2=u_1vv'$,
$\|v-I_\Hs\|\leq\sqrt{2}\|u_1-u_2\|$, and $\|v'-I_\Hs\|\leq(\sqrt{2}+1)\|u_1-u_2\|$.
\end{proposition}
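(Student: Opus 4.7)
The plan is to reduce the problem to factoring the single unitary $w := u_1^* u_2$ and then invoke the normalizer-perturbation result of Kadison and Ringrose.  Since $u_1, u_2 \in \N(P \subseteq \mathcal{B}(\Hs))$, we have $w \in \N(P \subseteq \mathcal{B}(\Hs))$ with $\|w - I_\Hs\| = \|u_1 - u_2\| =: \eta < 1$. As $u_2 = u_1 w$, the desired decomposition $u_2 = u_1 v v'$ is equivalent to writing $w = v v'$ with $v \in \U(P)$ and $v' \in \U(P')$.  The key observation is that any $v \in \U(P)$ implementing the automorphism $\alpha := \Ad(w)|_P$ produces a valid $v'$ via $v' := v^* w$: for every $p \in P$,
\[
v' p = v^* w p = v^* (wpw^*) w = v^* (vpv^*) w = p v^* w = p v',
\]
so $v' \in P'$, and $v'$ is manifestly unitary.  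Thus the entire problem reduces to producing $v \in \U(P)$ with $\Ad(v)|_P = \alpha$ and $\|v - I_\Hs\| \leq \sqrt{2}\,\eta$.

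The existence of such a $v$ is the substance of the Kadison--Ringrose result \cite{KR:CMP}. The hypothesis $\eta < 1$ forces $\sp(w)$ into a proper sub-arc of $\mathbb{T}$ bounded away from $-1$, so the principal branch of the logarithm yields a self-adjoint $h \in C^*(w, I_\Hs)$ with $w = e^{ih}$. The normalizing condition on $w$ is then transferred, via analyticity of the family $t \mapsto e^{ith}$ together with the structure of $\N(P \subseteq \mathcal{B}(\Hs))$, to produce a self-adjoint $k \in P$ with $\Ad(e^{ik}) = \alpha$ on $P$.  Careful norm bookkeeping throughout---essentially, choosing $k$ as the best ``$P$-component'' of $h$ relative to the commutant $P'$---delivers $v := e^{ik}$ with $\|v - I_\Hs\| \leq \sqrt{2}\,\eta$.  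This inner-implementation step is the main analytical obstacle; the surrounding reductions and verifications are purely algebraic.

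Granting $v$ as above, the proof concludes: $v'$ is a unitary in $P'$ by the previous computation, and
\[
\|v' - I_\Hs\| = \|v^*(w - v)\| = \|w - v\| \leq \|w - I_\Hs\| + \|v - I_\Hs\| \leq \eta + \sqrt{2}\,\eta = (\sqrt{2}+1)\eta.
\]
Finally, $u_1 v v' = u_1 v v^* w = u_1 w = u_2$, establishing the factorization in the statement.
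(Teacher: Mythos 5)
Your proposal is correct and follows essentially the same route as the paper: factor $u_1^*u_2$ as $vv'$ by using the Kadison--Ringrose result \cite{KR:CMP} to implement $\Ad(u_1^*u_2)|_P$ by a unitary $v\in P$ with norm control, and then take $v'=v^*u_1^*u_2\in P'$ with the same triangle-inequality estimate. The only point you gloss over is the bound $\|v-I_\Hs\|\leq\sqrt{2}\|u_1-u_2\|$: this is not the literal statement of the cited lemma but follows, as in the paper, from its precise spectral form (the spectrum of $v$ lies in the half plane $\{z:\Re(z)\geq 2^{-1}(4-\|\theta-\id_P\|^2)^{1/2}\}$) combined with $\|\theta-\id_P\|\leq 2\|u_1-u_2\|$ and a short spectral-radius computation, rather than from the logarithm heuristic you sketch.
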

\begin{proof}
The automorphism $\theta=\Ad(u_1^*u_2)$ of $P$ has $\|\theta-\id_P\|\leq
2\|u_1-u_2\|<2$.   By \cite[Lemma 5]{KR:CMP}, there is a unitary $v\in P$ with
$\Ad(v)=\theta$ whose spectrum is contained in the half plane 
$\{z\in\mathbb C:\Re(z)\geq 2^{-1}\left(4-\|\theta-\id_P\|^2\right)^{1/2}\}$.
A routine computation  via the spectral radius gives
$
\|v-I_P\|\leq \sqrt{2}\|u_1-u_2\|
$.
Since $v^*u_1^*u_2\in P'$, we can write $u_2=u_1vv'$ for a unitary $v'\in P'$ with $\|v'-I_{P'}\|\leq \|v-I_P\|+\|u_1-u_2\|\leq (1+\sqrt{2})\|u_1-u_2\|$.
\end{proof}

Our main objective in this section (Lemma \ref{NormaliserLemma0}) is to exploit existing pertubation results from \cite{C:Acta,C:IJM} to examine the normalizer
structure of pairs of inclusions $P\subseteq M$ and $Q\subseteq N$ with $d(P,Q)$ and $d(M,N)$ small. When $P$ (and hence $Q$) is amenable or $M$ (and hence $N$) is finite we will
transfer normalizers of $P$ in $M$ to
normalizers of $Q$ in $N$.    We first record the original embedding and perturbation results for amenable von Neumann algebras below; they will be used both here, and also repeatedly henceforth.  In part (\ref{Injective:Part3}) below, note that the hypothesis of the original statement in \cite[Theorem 4.1]{C:IJM} is that $d(M,N)<1/8$, but the proof only needs the hypothesis $M\subset_\gamma N$ and $N\subset_\gamma M$ given here.
\begin{theorem}\label{Injective}
\begin{enumerate}[(i)]
\item\label{Injective:Part1} ({\cite[Theorem 4.3, Corollary 4.4]{C:Acta}})
Let $P$ be an amenable von Neumann subalgebra of $\mathcal B(\Hs)$ containing
$I_\Hs$. Suppose that
$B$ is another von Neumann subalgebra of $\mathcal B(\Hs)$ and $P\subset_\gamma
B$ for a constant $\gamma<1/100$.  Then there exists a unitary $u\in (P\cup
B)''$ with $uPu^*\subseteq B$, $\|I_\Hs-u\|\leq150\gamma$ and
$\|uxu^*-x\|\leq100\gamma\|x\|$
for $x\in P$. If, in addition, $\gamma<1/101$ and $B\subset_\gamma P$, then
$uPu^*=B$.
\item\label{Injective:Part2} ({\cite[Corollary 4.2]{C:Acta}})
If $M,N\subseteq\mathcal B(\Hs)$ are amenable von Neumann algebras containing $I_\Hs$ and $M\subset_{\gamma}N$ for a
constant $\gamma<1/8$ then
there is a unitary $u\in (M\cup N)''$ such that $\|I_\Hs-u\|\leq 12\gamma$ and
$uMu^*\subseteq N$. Additionally,
if $N\subset_{\gamma}M$ then $u$ may be chosen to also satisfy $uMu^*=N$.
\item\label{Injective:Part3} (\cite[Theorem 4.1]{C:IJM})
If $P$ and $Q$ are unital von Neumann subalgebras of a finite von Neumann
algebra $M$ satisfying
$P\subset_{\gamma}Q\subset_{\gamma}P$ for a constant $\gamma<1/8$ then there is
a unitary $u\in (P\cup
Q)''\subseteq M$ with $\|I_M-u\|\leq 7\gamma$ such that $uPu^*=Q$.
\end{enumerate}
\end{theorem}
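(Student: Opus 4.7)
My proof plan follows the averaging-over-amenable-unitaries strategy that underlies all three parts. I would treat part (i) as the core case, with parts (ii) and (iii) as variants: (ii) is more symmetric because both algebras are amenable, and (iii) replaces amenability by the finite tracial structure of the ambient $M$.

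For part (i), the first step is to pick, for each $u \in \mathcal{U}(P)$, a nearby operator $f(u) \in B$ with $\|u - f(u)\| \leq \gamma$. Amenability of $P$ provides a left-invariant mean $m$ on $\mathcal{U}(P)$, and I would form the ultraweak integral
\[
T = \int_{\mathcal{U}(P)} u^* f(u)\, dm(u),
\]
taken inside the double commutant $(P \cup B)''$ via a normal projection guaranteed by injectivity of $P$. Left-invariance of $m$ then gives $vT = \int u^* f(vu)\, dm(u)$ for each $v \in \mathcal{U}(P)$, and comparing $f(vu)$ with $v f(u)$ (both lie in $B$ and are within $\gamma$ of $vu$) produces an approximate bimodule identity $vT \approx T v'$ for some $v' \in B$. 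This forces the partial isometry $w$ in the polar decomposition $T = w|T|$ to satisfy $wPw^* \subseteq B$. Since $\|T - I_\Hs\| = O(\gamma)$ by construction, polar decomposition costs only a universal constant factor, and careful bookkeeping yields $\|w - I_\Hs\| \leq 150\gamma$ together with the pointwise bound $\|wxw^* - x\| \leq 100\gamma\|x\|$ for $x \in P$. Under the additional hypothesis $B \subset_\gamma P$, the weakly closed subalgebra $wPw^* \subseteq B$ still $\gamma'$-approximates $B$ for some $\gamma' < 1$, which forces equality $wPw^* = B$.

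Part (ii) then follows essentially from two applications of (i), one in each direction of the near inclusion; the additional amenability of both $M$ and $N$ lets one combine the two unitaries using a further symmetric averaging, which is what allows the constant to drop from $150\gamma$ down to $12\gamma$. Part (iii), where neither $P$ nor $Q$ need be amenable, requires replacing operator-norm averaging by $\|\cdot\|_2$-averaging against the unique trace on the finite ambient $M$. Working in the standard form of $M$, I would use the trace-preserving conditional expectations $E^M_P$ and $E^M_Q$ together with $P \subset_\gamma Q \subset_\gamma P$ to construct an $L^2$-close isomorphism between the $L^2$-closures of $P$ and $Q$, and then extract the desired unitary $u \in (P \cup Q)'' \subseteq M$ by polar decomposition of its $L^2$-implementing operator; finiteness of $M$ is what keeps this polarization inside $M$.

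The main obstacle across all three parts is the interplay between operator-norm control (all that $\gamma$ provides) and the weak/ultraweak topology in which the averaging must live, together with the polar-decomposition step that converts an approximately intertwining element into an exact unitary. Amenability (in (i)--(ii)) is what guarantees the averaged element lands back in the correct double commutant, while finiteness (in (iii)) supplies the $L^2$-geometry needed to exactly align close partial isometries. Tracking the explicit constants $150$, $100$, $12$, and $7$ requires careful accounting at each averaging and polarization step, but the underlying strategy is uniform.
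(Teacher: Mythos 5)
Before assessing your argument, note that the paper itself contains no proof of Theorem \ref{Injective}: all three parts are quoted directly from Christensen's earlier work, parts (\ref{Injective:Part1}) and (\ref{Injective:Part2}) from \cite{C:Acta} and part (\ref{Injective:Part3}) from \cite{C:IJM}, the only added remark being that the proof of \cite[Theorem 4.1]{C:IJM} uses only the near-inclusion hypotheses $P\subset_\gamma Q\subset_\gamma P$ rather than $d(P,Q)<1/8$. So your proposal can only be compared with Christensen's original arguments, not with anything internal to this paper.

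Measured against those, your outline has the right general flavour (averaging over $\mathcal{U}(P)$ via an invariant mean, polar decomposition, trace and $\|\cdot\|_2$ techniques in the finite setting), but the decisive step is missing. The element $T=\int_{\mathcal{U}(P)}u^*f(u)\,dm(u)$ built from an arbitrary choice function $f$ satisfies only \emph{approximate} relations, and an approximate bimodule identity does not ``force'' the polar part $w$ of $T$ to satisfy $wPw^*\subseteq B$ exactly; converting approximate intertwining into an exact unitary conjugacy is precisely the content of the theorem, and your sketch asserts it rather than proves it. Exactness in genuine proofs comes from averaging objects that are \emph{exactly} invariant: if one already has a $^*$-homomorphism $\theta$ close to the identity, then $T=\int\theta(u)u^*\,dm(u)$ satisfies the exact identity $\theta(v)T=Tv$ by invariance of the mean and polar decomposition succeeds (this is the mechanism behind Lemma \ref{IsomorphismDK}); with only a choice function there is no such identity, and Christensen instead obtains exact statements by, for instance, averaging $b'\mapsto\int_{\mathcal{U}(P)}ub'u^*\,dm(u)$, which lands exactly in $P'$ and turns $P\subset_\gamma B$ into a near inclusion of commutants, followed by a careful comparison of projections and partial isometries, or, in the finite case, by taking the unique element of minimal $\|\cdot\|_2$-norm in a suitable invariant weakly closed convex set. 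There are also concrete slips: $vf(u)$ does not lie in $B$ (since $v\in P$); deriving part (\ref{Injective:Part2}) from two applications of part (\ref{Injective:Part1}) cannot give the stated range $\gamma<1/8$, because (\ref{Injective:Part1}) requires $\gamma<1/100$ --- in \cite{C:Acta} part (\ref{Injective:Part2}) rests on a separate argument using injectivity (hyperfiniteness) of both algebras; and the explicit constants $150$, $100$, $12$ and $7$, which are what this paper actually uses downstream, are exactly the quantitative output that your qualitative outline does not recover.
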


\begin{lemma}\label{NormaliserLemma0}
Let $M$ and $N$ be von Neumann subalgebras of $\mathcal{B}(\Hs)$ containing
$I_{\Hs}$
and satisfying
$M\subset_{\gamma}N\subset_{\gamma}M$ for a constant $\gamma>0$. Let
$P\subseteq M$ and $Q\subseteq N$ be
von Neumann subalgebras satisfying $P\subset_{\delta}Q\subset_{\delta}P$ for a
constant $\delta \geq 0$.
\begin{enumerate}[(i)]
\item\label{NormaliserLemma0:Item1} Suppose that $P$ and $Q$ are
amenable and that $2\delta+\delta^2+2\sqrt{2}\gamma <1/8$. Given $v\in
\N(P\subseteq M)$, there
exists $v'\in \N(Q\subseteq N)$ with $\|v-v'\|< 25 \delta +25\sqrt{2}\gamma$.
\item\label{NormaliserLemma0:Item2} Suppose that  $M$ and $N$ are finite von
Neumann algebras, and that $2\delta+\delta^2+2\sqrt{2}\gamma <1/8$. Then the
conclusion of (\ref{NormaliserLemma0:Item1}) holds with the improved estimate 
$\|v-v'\|<
15\delta+15\sqrt{2}\gamma$.
\item\label{NormaliserLemma0:Item3}
Suppose that $P$ is amenable, that $\gamma<1/(2\sqrt{2})$ and that $Q=P$. Given
$v\in \N(P\subseteq
M)$, there exists $v'\in
\N(P\subseteq N)$ such that $\|v-v'\|\leq (4+2\sqrt{2})\gamma$.
\item\label{NormaliserLemma0:Item5} Suppose that $P$ and $Q$ are amenable.
Given $v\in\N(P\subseteq M)$ and
$v'\in\N(Q\subseteq N)$ with
$\|v-v'\|<1-24\delta$,   there exist unitaries $u\in (P\cup Q)''$, $w\in P$
and $w'\in P'$ satisfying $\|u-I_\Hs\|\leq 12\delta$,
 $\|w-I_\Hs\|<2^{1/2}(24\delta+\|v-v'\|)$, $\|w'-I_\Hs\|<
(2^{1/2}+1)(24\delta+\|v-v'\|)$, and $u^*v'u=vww'$.
\end{enumerate}
\end{lemma}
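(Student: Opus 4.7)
The plan for all four parts begins by finding, via Lemma \ref{ProjLem}(\ref{ProjLem:Part1}), a unitary $v_1 \in \mathcal U(N)$ with $\|v-v_1\|\leq\sqrt 2\gamma$. The task is then to modify $v_1$ (or $v'$) by a unitary close to the identity to obtain the desired normaliser in $N$, using whichever perturbation theorem is best suited to the hypotheses.

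For parts (\ref{NormaliserLemma0:Item1}) and (\ref{NormaliserLemma0:Item2}), I will first show that $v_1 Q v_1^*$ and $Q$ are close subalgebras of $N$. Given $x$ in the unit ball of $Q$, use the near inclusion $Q\subseteq_\delta P$ to pick $y\in P$ with $\|x-y\|\leq\delta$ and $\|y\|\leq 1+\delta$; the normaliser property gives $vyv^*\in P$ of norm at most $1+\delta$, and $P\subseteq_\delta Q$ then provides $z\in Q$ with $\|vyv^*-z\|\leq\delta(1+\delta)$. Combining these with $\|v_1 x v_1^* - vxv^*\|\leq 2\sqrt 2\gamma$ yields $v_1 Q v_1^*\subseteq_{2\delta+\delta^2+2\sqrt 2\gamma}Q$, and symmetrically in the reverse direction. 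Under the hypothesis this constant lies in $(0,1/8)$, so Theorem \ref{Injective}(\ref{Injective:Part2}) (using amenability of $P$ and $Q$ for part (\ref{NormaliserLemma0:Item1})) or Theorem \ref{Injective}(\ref{Injective:Part3}) (using finiteness of $N$ for part (\ref{NormaliserLemma0:Item2})) produces a unitary $u \in (Q\cup v_1 Qv_1^*)''\subseteq N$ with $u v_1 Q v_1^* u^*=Q$ and $\|I-u\|\leq C(2\delta+\delta^2+2\sqrt 2\gamma)$ for the associated constant $C=12$ or $C=7$. The element $v'=uv_1$ is a unitary in $N$ satisfying $v'Q(v')^*=Q$, and the claimed bounds follow by the triangle inequality after absorbing the $\delta^2$ term into $\delta$ using $\delta<1/16$.

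The hardest step is part (\ref{NormaliserLemma0:Item3}): only amenability of $P$ is available, and the hypothesis $\gamma<1/(2\sqrt 2)$ is far too weak to permit a direct application of Theorem \ref{Injective}(\ref{Injective:Part2}) to $v_1 P v_1^*$ and $P$ (which would require $2\sqrt 2 \gamma<1/8$). Instead, I will treat $\theta=\mathrm{Ad}(v_1^* v)|_P$, which is a $*$-isomorphism from $P$ onto $v_1^* P v_1\subseteq N$ satisfying $\|\theta(p)-p\|\leq 2\sqrt 2 \gamma\|p\|$. Amenability of $P$ gives a small $D_k$ constant, so a Christensen-style argument in the spirit of Lemma \ref{IsomorphismDK}(\ref{IsomorphismDK:Part1}) produces a unitary $u$ on $\Hs$ implementing $\theta$ on $P$ with $\|I-u\|$ bounded by a small multiple of $\gamma$. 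The key observation is that the weak-closed convex hull of $\{\theta(p)p^*:p\in\mathcal U(P)\}$ used in this construction lies inside $N$, because $\theta(p)\in v_1^* P v_1\subseteq N$ and $p\in P\subseteq N$; thus $u$ can be chosen in $N$. Setting $v'=v_1 u$ gives a unitary in $N$ with $v'P(v')^*=v_1\theta(P)v_1^*=v_1(v_1^* P v_1)v_1^*=P$, and the estimate $\|v-v'\|=\|v_1^*v-u\|\leq\|v_1^*v-I\|+\|I-u\|$ yields the bound $(4+2\sqrt 2)\gamma$.

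Part (\ref{NormaliserLemma0:Item5}) combines the two techniques. Theorem \ref{Injective}(\ref{Injective:Part2}), applied to the amenable pair $P,Q$ (with $\delta<1/24<1/8$ from the hypothesis $\|v-v'\|<1-24\delta$), produces a unitary $u\in(P\cup Q)''$ with $\|I-u\|\leq 12\delta$ and $uPu^*=Q$. Then $u^*v'u$ normalises $u^*Qu=P$ in $\mathcal B(\Hs)$, and $\|v-u^*v'u\|\leq\|v-v'\|+2\|I-u\|\leq\|v-v'\|+24\delta<1$. Applying Proposition \ref{InjectiveUnitary} to the two normalisers $v,u^*v'u\in\mathcal N(P\subseteq\mathcal B(\Hs))$ yields the required decomposition $u^*v'u=vww'$ with $w\in\mathcal U(P)$, $w'\in\mathcal U(P')$, and the claimed bounds on $\|w-I\|$ and $\|w'-I\|$ follow directly from the estimates in that proposition with $\|u_1-u_2\|=\|v-u^*v'u\|$.
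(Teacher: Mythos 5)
Your proposal matches the paper's proof essentially line for line. Parts (\ref{NormaliserLemma0:Item1}) and (\ref{NormaliserLemma0:Item2}) establish the near inclusion $uQu^*\subset_{2\delta+\delta^2+2\sqrt{2}\gamma}Q\subset_{2\delta+\delta^2+2\sqrt{2}\gamma}uQu^*$ exactly as the paper does, and then invoke Theorem \ref{Injective}(\ref{Injective:Part2}) or (\ref{Injective:Part3}) respectively; part (\ref{NormaliserLemma0:Item5}) applies Theorem \ref{Injective}(\ref{Injective:Part2}) followed by Proposition \ref{InjectiveUnitary} exactly as the paper does. In part (\ref{NormaliserLemma0:Item3}) the paper simply cites Christensen's \cite[Theorem 4.2]{C:Invent} to obtain the implementing unitary $w\in N$, whereas you reconstruct the averaging argument behind that theorem from first principles and spell out why the unitary lies in $N$ (the convex hull of $\{\theta(p)p^*:p\in\mathcal{U}(P)\}$ lives in $N$ because both $\theta(P)\subseteq N$ and $P\subseteq N$); this is a correct and helpful unpacking of the paper's citation, not a different route.
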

\begin{proof}
(\ref{NormaliserLemma0:Item1})-(\ref{NormaliserLemma0:Item2}).  Let $v\in \N(P\subseteq M)$ and choose a unitary $u\in N$ with
$\|u-v\|< \sqrt{2}\gamma$ by Lemma
\ref{ProjLem} (\ref{ProjLem:Part1}). Fix $x\in Q$ with $\|x\|\leq 1$, and choose $y\in P$ so that
$\|x-y\|<\delta$, in which case
$\|y\|<1+\delta$. Then $\|vxv^*-vyv^*\|<\delta$ and
$\|uxu^*-vxv^*\|<2\sqrt{2}\gamma$ so
$\|uxu^*-vyv^*\|<\delta+2\sqrt{2}\gamma$. Since $vyv^*\in P$, there exists
$x_1\in Q$ with
$\|vyv^*-x_1\|<\delta(1+\delta)$, and so
$\|uxu^*-x_1\|<2\delta +\delta^2+2\sqrt{2}\gamma$. This shows that
$uQu^*\subset_{2\delta+\delta^2+2\sqrt{2}\gamma}Q\subset_{
2\delta+\delta^2+2\sqrt{2}\gamma}uQu^*$, where the second near
inclusion follows by applying the
same argument to $v^*$ and $u^*$.

For (\ref{NormaliserLemma0:Item1}) we are assuming that $P$ and $Q$ are amenable, so from Theorem
\ref{Injective} (\ref{Injective:Part2}), there exists a unitary $w\in 
N$ so that $wuQu^*w^*=Q$ and $\|I_\Hs-w\|\leq 12(2\delta+\delta^2+2\sqrt{2}\gamma)$.
Define $v'=wu\in
\N(Q\subseteq N)$. Then, since $\delta<1/16$,
\begin{equation}
\|v-v'\|=\|v-wu\|
\leq \|v-u\|+\|I_\Hs-w\|
<25\delta+25\sqrt{2}\gamma.
\end{equation}

For (\ref{NormaliserLemma0:Item2}) we assume that  $M$ and $N$ are finite von Neumann algebras with no
restrictions on $P$ and $Q$. The counterpart of the unitary $w$ above
may now
be
chosen so that $\|I_\Hs-w\|\leq
14\delta +7\delta^2+14\sqrt{2}\gamma$ using Theorem \ref{Injective} (\ref{Injective:Part3}),
leading to
the estimate
$\|v-v'\|<15\delta+15\sqrt{2}\gamma$. 

(\ref{NormaliserLemma0:Item3}). Now suppose that $P$ is amenable and $P=Q$. If $v\in
\N(P\subseteq M)$,
then Lemma \ref{ProjLem} (\ref{ProjLem:Part1}) allows us to choose a
unitary $u\in N$ with
$\|v-u\|\leq \sqrt{2}\gamma$. Thus $x\mapsto u^*vxv^*u$ defines an isomorphism
$\phi$ of $P$ into $N$ with
$\|\phi(x)-x\|\leq 2\|u-v\|\leq 2\sqrt{2}\gamma < 1$ for $x\in P$, $\|x\|\leq
1$. From \cite[Theorem
4.2]{C:Invent}, there exists a unitary
$w\in N$ so that $\phi(x)=wxw^*$
and $\|w-I_\Hs\|\leq 4\gamma$. Define $v'=uw\in \N(P\subseteq N)$. The required estimate follows from
\begin{equation}
\|v-v'\|=\|v-uw\|=\|w-u^*v\|
\leq \|w-I_\Hs\|+\|I_\Hs-u^*v\|
\leq (4+\sqrt{2})\gamma.
\end{equation}
(\ref{NormaliserLemma0:Item5}). The hypothesis carries an implicit assumption that 
$\delta<1/24$, so  Theorem \ref{Injective} (\ref{Injective:Part2}) allows us to choose a unitary
$u\in 
(P\cup Q)''$ with $\|u-I_\Hs\|\leq 12\delta$ and $uPu^*=Q$. Let $u_1=u^*v'u$ and
$u_2=v$ which both normalize $P$. Then, since $\|v-v'\|<1-24\delta$, 
\begin{equation}
\|u_1-u_2\|\leq \|u^*v'u-u^*vu\|+\|u^*vu-v\|
< 1-24\delta+2\|u-I_\Hs\|\leq 1-24\delta+24\delta<1.
\end{equation}
By Proposition \ref{InjectiveUnitary} there exist unitaries $w\in P$ and $w'\in
P'$ with $u_1=u_2ww'$, $\|w-I_\Hs\|\leq
\sqrt{2}\|u_1-u_2\|$ and $\|w'-I_\Hs\|\leq (\sqrt{2}+1)\|u_1-u_2\|$. Thus
$u^*v'u=vww'$ with $\|w-I_\Hs\|\leq
\sqrt{2}(\|v-v'\|+24\delta)$ and $\|w'-I_\Hs\|\leq
(\sqrt{2}+1)(\|v-v'\|+24\delta)$.
\end{proof}

\section{Reduction to standard position}\label{Cartan}

Given two close   II$_1$ factors $M$ and $N$ on a
general Hilbert space $\Hs$, our main objective in this section is to show how we can
construct close isomorphic copies of these algebras on a new Hilbert space $\Ks$
so that they both act in standard position on $\Ks$.  Given an amenable subalgebra $P\subseteq M\cap N$ with $P'\cap M\subseteq P$, by working with both algebras in standard position we can show that $P$ is regular in $M$ if and only if it is regular in $N$ (see Lemma 
\ref{generate}).

Difficulties arise because  the usual strategy of changing representations by
amplification and compression must be employed in a fashion compatible with both
algebras. In particular, as we do not assume that $M'$ and $N'$ are close on
$\Hs$, we must take care in compressing by projections in $M'$; these need not
be close to projections in $N'$. Matters are much simpler when $M'$ and $N'$ are assumed to be close.

The first step is the following lemma, which improves (in the factor case for simplicity) the technical result \cite[Lemma 3.7]{CSSW:GAFA} by removing the restriction that the commutants $M'$ and $N'$ are close.

\begin{lemma}\label{Std.Lem1}
Let $M$ and $N$ be   {\rm{II}}$_1$ factors acting 
on $\Hs$ with $\dim_M(\Hs)\leq 1$. If
 $M\subset_\gamma N$ and $N\subset_\gamma M$ for some
$\gamma >0$, then the following  hold:
\begin{enumerate}[(i)]
\item\label{Std.Lem1:Part1} $N'\subset_{2{(1+\sqrt{2})}\gamma}M'$;
\item\label{Std.Lem1:Part2} if $\gamma<1/22$, then 
  $N'$ is finite;
\item\label{Std.Lem1:Part3} if $\gamma<1/47$, then 
$M'\subset_{4{(1+\sqrt{2})}\gamma}
N'$.
\end{enumerate}
\end{lemma}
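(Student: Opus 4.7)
The plan for part~(\ref{Std.Lem1:Part1}) is to apply Proposition~\ref{LocalDK}\,(\ref{LocalDK:Part2}) directly: since $\dim_M(\Hs)\leq 1$, Properties~\ref{CC}\,(\ref{CC:Item5}) provides a single cyclic vector for $M$ on $\Hs$, and the proposition with $m=1$ then yields $N'\subset_{2(1+\sqrt{2})\gamma}M'$ from the near inclusion $M\subset_\gamma N$.

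For part~(\ref{Std.Lem1:Part2}), note that $M'$ is a finite type II$_1$ factor, since $\dim_M(\Hs)<\infty$. I will argue by contradiction: suppose $N'$ is infinite and choose an isometry $v\in N'$ with $vv^*\neq I_\Hs$, so $\|I_\Hs-vv^*\|=1$. Setting $\gamma'=2(1+\sqrt{2})\gamma$, part~(\ref{Std.Lem1:Part1}) supplies $w\in M'$ with $\|v-w\|\leq\gamma'$, and a routine calculation yields $\|w^*w-I_\Hs\|\leq\gamma'(2+\gamma')$, which is less than $1$ for $\gamma<1/22$. Thus $w^*w$ is invertible, the polar decomposition $w=u|w|$ produces an isometry $u\in M'$, and finiteness of $M'$ forces $u$ to be unitary. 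Then $ww^*=u(w^*w)u^*$ satisfies $\|ww^*-I_\Hs\|\leq\gamma'(2+\gamma')$, and combining with $\|vv^*-ww^*\|\leq\gamma'(2+\gamma')$ gives $\|vv^*-I_\Hs\|\leq 2\gamma'(2+\gamma')<1$ when $\gamma<1/22$, contradicting $\|I_\Hs-vv^*\|=1$.

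For part~(\ref{Std.Lem1:Part3}), the target is to show $\dim_N(\Hs)\leq 2$, since then Properties~\ref{CC}\,(\ref{CC:Item5}) provides $N$ with a cyclic set of two vectors and Proposition~\ref{LocalDK}\,(\ref{LocalDK:Part2}) applied to $N\subset_\gamma M$ with $m=2$ yields $M'\subset_{4(1+\sqrt{2})\gamma}N'$. After reducing to the standard form case $\dim_M(\Hs)=1$, let $\xi$ be a tracial cyclic vector for $M$, so the vector state $\Phi=\langle\,\cdot\,\xi,\xi\rangle$ on $\mathcal{B}(\Hs)$ restricts to $\tau_{M'}$. Part~(\ref{Std.Lem1:Part2}) gives that $N'$ is a finite factor, and Lemma~\ref{Trace} applied with the roles of $M,N$ played by $N',M'$ (close by part~(\ref{Std.Lem1:Part1}) with constant $\gamma'$) yields
\[ |\tau_{N'}(T)-\Phi(T)|\leq(2+2\sqrt{2})\gamma'\|T\|=4(1+\sqrt{2})^2\gamma\|T\|,\quad T\in N'. \]
Setting $T=I_\Hs-e^N_\xi$ and noting $\Phi(T)=0$ (as $e^N_\xi\xi=\xi$) gives $\tau_{N'}(I_\Hs-e^N_\xi)\leq(12+8\sqrt{2})\gamma$, which is strictly less than $1/2$ when $\gamma<1/47$ (since $24+16\sqrt{2}\approx 46.63<47$). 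Hence $\tau_{N'}(e^N_\xi)>1/2$, and since $\xi$ is cyclic for $Ne^N_\xi$ on $e^N_\xi\Hs$ we have $\dim_{Ne^N_\xi}(e^N_\xi\Hs)\leq 1$; Properties~\ref{CC}\,(\ref{CC:Item1}) then yields $\dim_N(\Hs)=\dim_{Ne^N_\xi}(e^N_\xi\Hs)/\tau_{N'}(e^N_\xi)<2$, as required.

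The principal obstacle I anticipate is the reduction from $\dim_M(\Hs)<1$ to the standard form case used for the trace argument in part~(\ref{Std.Lem1:Part3}), since when $\dim_M(\Hs)<1$ no tracial vector for $M$ exists on $\Hs$ and the vector state $\Phi$ cannot be built directly. I would handle this by isometrically embedding $\Hs$ as an $M$-module into $L^2(M)$ via a projection $p'$ in the commutant of $M$ in standard representation, extending $N$ trivially on the orthogonal complement $(I-p')L^2(M)$ to obtain $\tilde{N}\subseteq\mathcal{B}(L^2(M))$ that remains $\gamma$-close to $M$ in standard position, and then transferring the resulting dimension estimate back down to $\Hs$ via compression and commutant considerations.
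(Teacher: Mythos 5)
Parts (i) and (ii) are correct and coincide with the paper's argument. In part (iii) your main idea --- apply Lemma \ref{Trace} to the pair $N'\subset_{2(1+\sqrt{2})\gamma}M'$ against a vector state extending $\tau_{M'}$, conclude that the projection onto $\overline{N\xi}$ has $\tau_{N'}$-trace greater than $1/2$, hence $\dim_N(\Hs)<2$, and then invoke Properties \ref{CC}~(\ref{CC:Item5}) and Proposition \ref{LocalDK}~(\ref{LocalDK:Part2}) with $m=2$ --- is sound, and is essentially the paper's argument repackaged through the coupling constant (the paper instead builds two explicit vectors $\eta$ and $u\eta$ which are separating for $N'$, hence cyclic for $N$; the constants come out the same).

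The genuine gap is the reduction you flag at the end. Your setup needs a vector whose vector state restricts to $\tau_{M'}$, and you manufacture one only when $\dim_M(\Hs)=1$; for $\dim_M(\Hs)<1$ you propose to identify $\Hs$ with $p'L^2(M)$, $p'\in M'$, and extend $N$ ``trivially'' on $(I-p')L^2(M)$. That extension is not $\gamma$-close to $M$ in standard position: since $p'\in M'$, every $x\in M$ splits as $xp'\oplus x(I-p')$, and $M(I-p')$ acting on $(I-p')L^2(M)$ is again a faithful copy of $M$, so a trace-zero unitary of $M$ stays at distance of order $1$ from any operator which acts as a scalar (or as $0$) on the complement; if instead the extension contains all of $\mathcal B((I-p')L^2(M))$, then the reverse near inclusion into $M$ fails. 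Either way $d(M,\tilde{N})$ is of order $1$, the hypotheses do not transfer, and compressing any conclusion back to $\Hs$ would in addition require $p'$ to be close to the commutant of the extension, which is not available. Fortunately the detour is unnecessary: $\dim_M(\Hs)\leq 1$ gives $\dim_{M'}(\Hs)\geq 1$, so Properties \ref{CC}~(\ref{CC:Item4}) applied to the II$_1$ factor $M'$ yields a vector $\eta\in\Hs$ tracial for $M'$ (not necessarily for $M$), and the vector state $\langle\,\cdot\,\eta,\eta\rangle$ extends $\tau_{M'}$, which is all Lemma \ref{Trace} needs. Running your computation with $e^N_\eta$ in place of $e^N_\xi$ proves $\dim_N(\Hs)<2$ on $\Hs$ itself, with no change of representation; this is precisely how the paper's proof sidesteps the issue.
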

\begin{proof} (\ref{Std.Lem1:Part1}). Since $M$ has a cyclic vector on $\Hs$, this is Proposition \ref{LocalDK}~(\ref{LocalDK:Part2}).

 (\ref{Std.Lem1:Part2}).  Suppose that
 $\gamma<1/22$  and  that $N'$ is infinite. Take $v\in N'$
with $v^*v=I_\Hs$ and
$vv^*=e$ for some projection $e\neq I_\Hs$. Then, by part (i), there is an element
$w\in M'$
with $\|w-v\|\leq 2{(1+\sqrt{2})}\gamma$, which implies that $\|w\|\leq
1+2{(1+\sqrt{2})}\gamma$. Since
$v^*v=I_\Hs$, it follows that
\begin{align}
\|w^*w-I_\Hs\|&=\|w^*w-v^*v\|
\leq \|w^*w-w^*v\|+\|w^*v-v^*v\|\notag\\
&\leq (1+2{(1+\sqrt{2})}\gamma)2{(1+\sqrt{2})}\gamma
+2{(1+\sqrt{2})}\gamma
=(2+2{(1+\sqrt{2})}\gamma)(2{(1+\sqrt{2})}\gamma),
\end{align}
and similarly
$\|ww^*-vv^*\|\leq (2+2{(1+\sqrt{2})}\gamma)(2{(1+\sqrt{2})}\gamma)$. As
$\gamma<1/22$,  it follows that $\|w^*w-I_\Hs\|<1/2<1$ and so
 $w^*w$ is invertible in $M'$. Since $M'$ is  II$_1$, we can write
$w=u|w|$, where $u$ is a unitary.  Then $ww^*=uw^*wu^*$, so
 $\|ww^*-I_\Hs\|=\|u(w^*w-I_\Hs)u^*\|<1/2$. Thus $
\|e-I_\Hs\|\leq \|vv^*-ww^*\|+\|ww^*-I_\Hs\|<\frac{1}{2}+\frac{1}{2}=1$, a contradiction. 

(\ref{Std.Lem1:Part3}). 
Assume now that $\gamma< 1/47$. Since $\dim_{M'}(\Hs)\geq 1$, there is a tracial vector
$\eta\in \Hs$ for $M'$. Now by (i), $N'\subset_{2{(1+\sqrt{2})}\gamma}M'$ where
$2{(1+\sqrt{2})}\gamma<1$, and
$N'$ is finite by (ii). Apply Lemma \ref{Trace} to this pair to obtain
$|\tau_{N'}(y)-\langle y\eta,\eta\rangle
|<4{(1+\sqrt{2})}^2\gamma\|y\|$ for $y\in N'$.
Let $J=\{y\in N':y\eta=0\}$. Then $J$ is a weakly closed left ideal in $N'$ so
has the form $N'p$ for a
projection $p\in N'$. Since $p\eta=0$, we obtain
$\tau_{N'}(p)< 4(1+\sqrt{2})^2\gamma<1/2$, as $\gamma<1/47$.  Then $\tau_{N'}(I_\Hs-p)>1/2$, so $p\sim q\leq I_\Hs-p$  via $u\in \U(N')$ with 
  $upu^*=q$. Define $\zeta=u\eta$. 

Now suppose that $x\in N'$ satisfies $x\eta=x\zeta=0$. Then $x\in J$ so $xp=x$
and $x(I_\Hs-p)=0$. Since
$x\zeta=0$, we have $xu\eta=0$ so $xu\in J$ and $xu=xup$. Thus, since $q\leq
I_\Hs-p$,
\begin{equation}
x=xupu^*=xq=x(I_\Hs-p)q=0.
\end{equation}
This proves that the pair $\{\eta,\zeta\}$ is a separating set for $N'$, and so
$\{\eta,\zeta\}$ is a cyclic
set for $N$. From Proposition \ref{LocalDK} (\ref{LocalDK:Part2}), it follows that
$M'\subset_{4{(1+\sqrt{2})}\gamma}N'$ as required.\end{proof}

Our next aim is to show that if a II$_1$ factor $M$ acts
in standard position on $\Hs$, then the same is true for any close algebra $N$.
We initially work in the context of a distinguished subalgebra $A$ which is a masa in $M$
and in $N$ and satisfies $J_MAJ_M\subseteq N'$. In this case $M$, $M'$, $N$ and
$N'$
can be taken to have exactly the same tracial vector and so we can compare the
basic construction algebras  obtained from subalgebras of $M\cap N$;
 the inclusions $A\subseteq M$ and
$A\subseteq N$ have the same basic construction algebra $\langle
M,e_A\rangle=\langle N,e_A\rangle$. 

\begin{lemma}\label{Std.Lem2}
Let $M$ be a  {\rm{II}}$_1$ factor  acting in standard
position on a Hilbert space $\Hs$ with tracial vector $\xi$, and let $N$ be 
another    {\rm{II}}$_1$ factor on
$\Hs$. Suppose that
$M\subset_\gamma N$ and $N\subset_\gamma M$ for some constant
$\gamma<1/47$. Suppose further that $A$ is a masa in
both $M$
and $N$ and
$J_MAJ_M\subseteq N'$. Then the following hold:
\begin{enumerate}[(i)]
\item\label{Std.Lem2:Part1}$\xi$ is a tracial vector for both $N$ and $N'$ so that $N$
is
also in standard position on $\Hs$.
\item\label{Std.Lem2:Part2}$(M\cup \{e_A\})''=(J_MAJ_M)'=(J_NAJ_N)'=(N\cup \{e_A\})''$,
where $J_N$ is the modular conjugation operator induced from the cyclic and
separating vector $\xi$ for $N$ and $e_A$ is the projection onto $\overline{A\xi}$.
\end{enumerate}
\end{lemma}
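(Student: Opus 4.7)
The plan is to establish that $\xi$ is tracial for both $N$ and $N'$; then $N$ is in standard position by definition, noting that $N'$ is finite by Lemma \ref{Std.Lem1}(\ref{Std.Lem1:Part2}). Setting $\omega:=\langle\cdot\,\xi,\xi\rangle$, standard position of $M$ gives $\omega|_M=\tau_M$ and $\omega|_{M'}=\tau_{M'}$. Lemma \ref{MasaTr} applies to the diffuse masa $A\subseteq M\cap N$ (since $\gamma<1/47<2^{-3/2}$), yielding $\tau_M|_A=\tau_N|_A$, and hence $\omega|_A=\tau_N|_A$.

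The key observation is that $\omega$ is $A$-invariant on $N$: for $u\in\U(A)$, the unitary $v:=J_MuJ_M$ lies in $J_MAJ_M\subseteq N'$ and satisfies $v\xi=u^*\xi$ via the identity $J_M(m\xi)=m^*\xi$. A short calculation using $[v,y]=0$ for $y\in N$ yields $\omega(uyu^*)=\langle yv\xi,v\xi\rangle=\omega(y)$. Since $A$ is a masa in $N$, the element $E^N_A(y)$ lies in the $\|\cdot\|_2^N$-closed convex hull of $\{uyu^*:u\in\U(A)\}$ (as recalled before Lemma \ref{RelCom}); on this norm-bounded set, $\|\cdot\|_2^N$-convergence implies $\sigma$-weak convergence, so by normality of $\omega$ combined with $A$-invariance I obtain $\omega(y)=\omega(E^N_A(y))=\tau_N(E^N_A(y))=\tau_N(y)$ using $\omega|_A=\tau_N|_A$. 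Hence $\omega|_N=\tau_N$.

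For the companion statement $\omega|_{N'}=\tau_{N'}$ I mirror this argument with $B:=J_MAJ_M\subseteq N'$ playing the role of $A\subseteq N$. The crux, and the main technical step, is showing that $B$ is a masa in $N'$. Now $B$ is visibly a masa in $M'=J_MMJ_M$ and lies in $M'\cap N'$; by Lemma \ref{Std.Lem1}(\ref{Std.Lem1:Part1}) and (\ref{Std.Lem1:Part3}), $M'$ and $N'$ are II$_1$ factors with near-inclusion constants bounded by $4(1+\sqrt{2})\gamma<1$, so Lemma \ref{relcom2} transfers the masa property from $M'$ to $N'$. A parallel calculation (using that $a\in A\subseteq N$ commutes with $n'\in N'$, and that $v^*\xi=a\xi$ for $v=J_MaJ_M$) shows $\omega$ is $B$-invariant on $N'$; the analogous averaging argument, combined with $\omega|_B=\tau_{M'}|_B=\tau_{N'}|_B$ (via standard position of $M$ and Lemma \ref{MasaTr} applied to $M',N'$), delivers $\omega|_{N'}=\tau_{N'}$. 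This completes part (\ref{Std.Lem2:Part1}).

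For part (\ref{Std.Lem2:Part2}), the outer equalities $(M\cup\{e_A\})''=(J_MAJ_M)'$ and $(N\cup\{e_A\})''=(J_NAJ_N)'$ are immediate instances of Properties \ref{bcon-prop}(\ref{bcon-prop:Item3}) applied to $A\subseteq M$ and $A\subseteq N$ (both now in standard position with tracial vector $\xi$), where $e_A$ denotes unambiguously the projection onto $\overline{A\xi}$. For the middle equality I first observe that $e_A$ commutes with $J_MAJ_M$ (from $e_A=J_Me_AJ_M$ and $e_A\in A'$), which together with the hypothesis $J_MAJ_M\subseteq N'$ gives $(N\cup\{e_A\})''\subseteq(J_MAJ_M)'$; comparing with $(N\cup\{e_A\})''=(J_NAJ_N)'$ yields $J_MAJ_M\subseteq J_NAJ_N$ after taking commutants. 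Since $B=J_MAJ_M$ is a masa in $N'$ (established above) and $J_NAJ_N$ is an abelian subalgebra of $N'$ containing it, maximality forces $J_NAJ_N=J_MAJ_M$, completing the proof.
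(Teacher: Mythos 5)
Your proof is correct and follows essentially the same route as the paper: invariance of the vector state under $\mathcal U(A)$ (resp.\ $\mathcal U(J_MAJ_M)$), averaging to the trace-preserving conditional expectation, Lemma \ref{MasaTr} to match the traces on $A$, and Lemmas \ref{Std.Lem1} and \ref{relcom2} to transfer the masa property to $N'$, with the middle equality in (ii) obtained from $e_A$ commuting with $J_MAJ_M$ plus maximality of $J_MAJ_M$ in $N'$. The only differences are presentational (you unroll the mirrored argument for $N'$ where the paper invokes the first paragraph applied to $(M',N')$, and you pass through $(N\cup\{e_A\})''=(J_NAJ_N)'$ instead of computing $N'\cap\{e_A\}'$ directly), so no comment is needed beyond this.
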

\begin{proof}
(\ref{Std.Lem2:Part1}). For each unitary $u\in A$ and $y\in N$, we have
\begin{equation}\label{Std.Lem2.Eq1}
\langle uyu^*\xi,\xi\rangle=\langle
yu^*\xi,u^*\xi\rangle=\langle
yJ_MuJ_M\xi,J_MuJ_M\xi\rangle=\langle y\xi,\xi\rangle,
\end{equation}
as $J_MuJ_M\in J_MAJ_M\subseteq N'$.  Since $A'\cap N=A$, the unique
$\tau_N$-preserving
conditional expectation $E^N_A:N\rightarrow A$ is given by taking
$E^N_A(y)$ to
be the unique element of minimal norm in  $\overline{\mathrm{co}}^{\|\cdot\|_{2,\tau_N}}\{uyu^*:u\in\mathcal U(A)\}$ 
\cite[Lemma 3.6.5]{SS:Book2}. Then
(\ref{Std.Lem2.Eq1}) gives
$\langle y\xi,\xi\rangle=\langle
E_A^N(y)\xi,\xi\rangle=\tau_M(E_A^N(y))$ 
for $y\in N$. As $\gamma<2^{-3/2}$, applying Lemma \ref{MasaTr} 
gives $\tau_M|_A=\tau_N|_A$. It follows that
$\tau_N(y)=\tau_N(E_A^N(y))=\tau_M(E_A^N(y))=\langle
y\xi,\xi\rangle$ for $y\in N,$
so  $\xi$ is tracial for $N$.  

As $\gamma<1/47$,  Lemma
\ref{Std.Lem1} shows that $N'$ is finite and that $M'\subset_{4{(1+\sqrt{2})}\gamma} N'$ and
$N'\subset_{2{(1+\sqrt{2})}\gamma}M'$. 
 Since $4{(1+\sqrt{2})}\gamma<1$, Lemma \ref{relcom2} shows that
$J_MAJ_M$ is a masa of $N'$
as $J_MAJ_M$ is a masa of $M'$. Since
 $4{(1+\sqrt{2})}\gamma<2^{-3/2}$, applying the previous paragraph to the pair
$(M',N')$ shows that $\xi$ is tracial for $N'$, establishing (\ref{Std.Lem2:Part1}).

(\ref{Std.Lem2:Part2}). Since both $M$ and $N$ are in standard position with respect
to the same
cyclic and separating vector $\xi$ (though with possibly different modular
conjugation operators $J_M$ and $J_N$), the Hilbert space projection $e_A$ from
$\Hs$ onto $\overline{A\xi}$ used to define the basic construction
$\langle M,e_A\rangle$ from the inclusion $A\subseteq M$ is the same
projection used
to define the basic construction from the inclusion $A\subseteq N$.  Now 
\begin{equation}\label{Std.Lem2.Eq4}
J_MAJ_M\subseteq N'\cap \{e_A\}'=(J_NNJ_N)\cap \{e_A\}'=J_NAJ_N\subseteq
J_NNJ_N,
\end{equation}
where we use $e_A=J_Ne_AJ_N=J_Me_AJ_M$ and
$A=N\cap\{e_A\}'=M\cap\{e_A\}'$ from Properties \ref{bcon-prop} (\ref{bcon-prop:Item1}) and (\ref{bcon-prop:Item2}). 
As we noted in the previous paragraph,
$J_MAJ_M$ is a maximal abelian subalgebra of $N'=J_NNJ_N$, giving the equality
$J_MAJ_M=J_NAJ_N$.  Taking commutants gives the middle inclusion
$(J_MAJ_M)'=(J_NAJ_N)'$ of (ii).  The two outermost equalities $(M\cup
\{e_A\})''=(J_MAJ_M)'$ and $(J_NAJ_N)'=(N\cup \{e_A\})''$ are the
characterization of the basic construction algebra given in \cite[Proposition
3.1.5 (i)]{J:Invent} (see Properties \ref{bcon-prop} (\ref{bcon-prop:Item3})).
\end{proof}

Next, we replace the masa $A$ in the previous result by an amenable
von Neumann subalgebra $P\subseteq M$ with $P'\cap M\subseteq P$ satisfying
$J_MPJ_M\subseteq N'$ and show that the basic constructions $\langle
M,e_P\rangle$ and $\langle N,e_P\rangle$ are equal. It follows that $P$ is regular in $M$ if and only if it is regular in $N$.  To do this we use
the following theorem of Popa (\cite[Theorem 3.2]{P:Invent}), which we quote from \cite[Theorem 12.2.4]{SS:Book2} since this records
additional information which is implicit in \cite{P:Invent}.
\begin{theorem}[Popa]\label{PopaThm}
 Let $P$ be a von Neumann subalgebra of a finite
von Neumann algebra $M$ with separable predual and suppose that $P'\cap
M\subseteq P$. If $A_0$ is a finite dimensional abelian *-subalgebra of $P$,
then there exists a masa $A$ in $M$ such that $A_0\subseteq A\subseteq P$.
\end{theorem}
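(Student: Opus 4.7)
The plan is to extend $A_0$ to a maximal abelian subalgebra $A\subseteq P$ via Zorn's lemma, and then to argue that maximality inside $P$ combined with the hypothesis $P'\cap M\subseteq P$ forces $A$ to be a masa of the larger algebra $M$. The family of unital abelian von Neumann subalgebras of $P$ containing $A_0$ is inductive under inclusion (the weak-operator closure of a chain of pairwise commuting abelian algebras is again abelian), so Zorn produces a maximal element $A$ with $A_0\subseteq A\subseteq P$ and $A'\cap P=A$. Since $P'\cap M\subseteq P$, the center $\Z(P)=P'\cap M$ is automatically contained in $A$, so $A$ already ``sees'' the center of $P$.

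The remaining task is the identity $A'\cap M=A$. A clean first reduction is to exploit the unique trace-preserving conditional expectation $E^M_P:M\to P$, which is $P$-bimodular: for any $x\in A'\cap M$, one has $E^M_P(x)\in A'\cap P=A$, so writing $y=x-E^M_P(x)$ reduces the problem to showing that any $y\in A'\cap M$ with $E^M_P(y)=0$ must vanish. Separability of the predual of $M$ lets us pass to self-adjoint $y$ and to nonzero spectral cutoffs of $y$ freely while hunting for a contradiction.

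Suppose for contradiction that such a nonzero self-adjoint $y$ exists. Observe that $W^*(A,y)$ is an abelian algebra properly containing $A$; if one could show $W^*(A,y)\subseteq P$, maximality of $A$ in $P$ would give $y\in A$ and hence $y=E^M_P(y)=0$. This is where the hypothesis $P'\cap M\subseteq P$ must be deployed nontrivially, and this step is the \emph{main obstacle}. The natural candidate is a Dixmier-style averaging of $y$ over unitaries in $P$: convex combinations $\sum\lambda_k u_k y u_k^*$ with $u_k\in\U(P)$ admit a weak-operator cluster point $z\in\overline{\mathrm{co}}^{w}\{uyu^*:u\in\U(P)\}\cap P'\cap M$, which by hypothesis sits inside $P$. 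The delicate part is that $y\in A'$, while unitaries outside $A$ do not preserve $A'$, so one must arrange the averaging to simultaneously land in $P'\cap M$ and retain a nonzero spectral portion of $y$ — then bimodularity of $E^M_P$ forces a nonzero element of $A$ inside the range of $E^M_P(y)$, contradicting $E^M_P(y)=0$.

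Organising this averaging step — simultaneously preserving a nontrivial piece of $y$, forcing the cluster point into $P'\cap M$, and deriving the contradiction — is the technical heart of the argument and the step where I would expect to have to follow \cite{P:Invent} in detail, carrying out a Rokhlin-type tower construction inside $P$ together with a countable exhaustion based on the separable predual. Granting this averaging step, the rest of the proof is the clean Zorn-plus-conditional-expectation reduction of the first two paragraphs.
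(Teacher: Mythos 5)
The paper does not prove this result; it is quoted verbatim from \cite[Theorem 3.2]{P:Invent} (in the form recorded in \cite[Theorem 12.2.4]{SS:Book2}), so what you are attempting is essentially a reproof of Popa's theorem. Your architecture has a fatal flaw at the very first step: the claim that a Zorn-maximal abelian subalgebra $A$ of $P$ (containing $A_0$) together with $P'\cap M\subseteq P$ must satisfy $A'\cap M=A$ is simply false. Here is a counterexample: let $P=R$ be the hyperfinite II$_1$ factor with a Cartan masa $A=L^\infty(X)$ in $R=L^\infty(X)\rtimes\mathbb Z$, and let $\theta$ be the dual automorphism of $R$ given by $\theta|_A=\mathrm{id}$, $\theta(u_n)=\omega^nu_n$ for a generic $\omega\in\mathbb T$. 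This $\theta$ is outer, so $M=R\rtimes_\theta\mathbb Z$ is a II$_1$ factor with $P'\cap M=\mathbb C I\subseteq P$, while $A$ is a masa of $P$ whose relative commutant $A'\cap M$ contains the canonical unitary of $M$ implementing $\theta$, hence $A'\cap M\supsetneq A$. Thus the content of Popa's theorem is precisely that one must \emph{choose} the masa of $P$ carefully; it is an existence statement, not a statement about all masas of $P$, and Zorn applied to abelian subalgebras of $P$ will not select a good one.

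Your proposed repair via Dixmier averaging also cannot work, and not merely for the reason you flag. If $y\in A'\cap M$ with $E^M_P(y)=0$, then for any unitary $u\in P$ we have $E^M_P(uyu^*)=uE^M_P(y)u^*=0$ by $P$-bimodularity, so $E^M_P$ annihilates every convex combination $\sum\lambda_k u_k y u_k^*$; since $E^M_P$ is normal, it annihilates any weak$^*$ cluster point $z$ as well. But $z\in P'\cap M\subseteq P$ forces $z=E^M_P(z)=0$. So the average is always zero regardless of how cleverly you organise the unitaries — the Dixmier averaging destroys all of $y$, not just part of it, and there is no nonzero element to extract. Popa's actual argument proceeds entirely differently: it is a transfinite/maximality construction on families of projections and partial isometries inside $P$ that iteratively whittles down $A'\cap M$ (his ``local quantization'' or incremental patching technique), rather than a one-shot Zorn on abelian algebras followed by averaging. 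That construction is the entire content of the theorem, and no version of the plan in your last paragraph can substitute for it.
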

 
\begin{lemma}\label{generate}
Let $M$ be a   ${\mathrm{II}}_1$ factor with separable predual acting in
standard position on a Hilbert space $\Hs$ with tracial vector $\xi$. Let $N$ be
a   ${\mathrm{II}}_1$ factor on $\Hs$ with $M\subset_\gamma N$ and
$N\subset_\gamma M$ for some $\gamma<1/47$. Suppose
that $P$ is
an
amenable subalgebra of $M\cap N$ with $P'\cap M\subseteq P$ and
$J_MPJ_M\subseteq N'$. Then the following statements hold.
\begin{enumerate}[(i)]
\item\label{generate:Part1} $N$ is also in standard position on $\Hs$ and $\xi$ is a tracial
vector for
$N$ and $N'$.
\item\label{generate:Part2}  $\langle M,e_P\rangle=(J_MPJ_M)'=(J_NPJ_N)'=\langle
N,e_P\rangle$, where $e_P$ is the projection onto $\overline{P\xi}$.
\item\label{generate:Part3} Suppose that $P\subseteq M$ is a regular inclusion with a bounded homogeneous basis of normalizers $(u_n)_{n\geq 0}$ and $(v_n)_{n\geq 0}$ is a family of normalizers in
$\N(P\subseteq N)$ with $v_0=I_\Hs$ and satisfying $\|v_n-u_n\|<1$ for all $n$.
Then
$(v_n)_{n\geq 0}$ is a bounded homogeneous basis of normalizers for $P\subseteq
N$ and so $(P\cup\{v_n:n\geq 0\})''=N$.
\item\label{generate:Part4}  $\N(P\subseteq M)$ generates $M$ if and only if $\N(P\subseteq
N)$ generates $N$. 
\end{enumerate}
\end{lemma}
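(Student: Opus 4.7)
For part (i), I would apply Popa's theorem (Theorem \ref{PopaThm}) to obtain a masa $A$ of $M$ with $A\subseteq P$, using $P'\cap M\subseteq P$ and the separable predual of $M$. Since $A\subseteq P\subseteq M\cap N$ and $\gamma<1$, Lemma \ref{relcom2} ensures $A$ is also a masa in $N$, and the hypothesis $J_MPJ_M\subseteq N'$ gives $J_MAJ_M\subseteq N'$. Thus all hypotheses of Lemma \ref{Std.Lem2} are satisfied, yielding (i). For (ii), since both $M$ and $N$ act in standard position on $\Hs$ with tracial vector $\xi$, the projection $e_P$ onto $\overline{P\xi}$ serves as the Jones projection for each of the inclusions $P\subseteq M$ and $P\subseteq N$. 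By Properties \ref{bcon-prop}(\ref{bcon-prop:Item3}), $\langle M,e_P\rangle=(J_MPJ_M)'$ and $\langle N,e_P\rangle=(J_NPJ_N)'$, so it suffices to show $J_MPJ_M=J_NPJ_N$. For $p\in P$, both $J_MpJ_M$ and $J_NpJ_N$ lie in $N'$ (the former by hypothesis, the latter since $J_NNJ_N=N'$) and satisfy $J_MpJ_M\xi=p^*\xi=J_NpJ_N\xi$; as $\xi$ is separating for $N'$, they must coincide.

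Part (iii) is the crux. Both $u_n$ and $v_n$ lie in the common algebra $\langle M,e_P\rangle=\langle N,e_P\rangle$ from (ii) and normalize $P$, so since $\|u_n-v_n\|<1$ Proposition \ref{InjectiveUnitary} produces unitaries $w\in P$ and $w'\in P'$ with $v_n=u_nww'$. Rewriting $w'=w^{-1}u_n^{-1}v_n$ places $w'\in P'\cap\langle M,e_P\rangle$, so Lemma \ref{basiccon-lem} yields a unitary $u_0\in\Z(P)$ with $w'\xi=u_0\xi$. Consequently $v_n\xi=u_n(wu_0)\xi$ with $wu_0\in\U(P)$. For $p\in P$, writing $v_np=(v_npv_n^*)v_n$ and then moving the $P$-factor $v_npv_n^*$ past $u_n$ using the normalization of $P$ by $u_n$, one obtains $v_np\xi\in u_nP\xi$, hence $\overline{v_nP\xi}\subseteq\overline{u_nP\xi}$; a symmetric argument yields equality, so the Jones projections satisfy $v_ne_Pv_n^*=u_ne_Pu_n^*$. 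The basis property of $(u_n)$ is equivalent to the projections $(u_ne_Pu_n^*)_{n\geq 0}$ being pairwise orthogonal with sum $I_\Hs$, which then transfers to $(v_n)$. The generating property follows by noting that $(P\cup\{v_n\})''\subseteq N$ has $\xi$ as a cyclic vector (by density of $\sum v_nP\xi$) and so acts in standard position on $\Hs$ with the common tracial vector $\xi$; comparing modular conjugations forces equality with $N$.

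Part (iv) follows from (iii) without requiring a global basis. Since $\U(P)\subseteq\N(P\subseteq M)$, the condition $\N(P\subseteq M)''=M$ is equivalent (via the standard position argument above) to $\bigvee_{u\in\N(P\subseteq M)}ue_Pu^*=I_\Hs$, and similarly for $N$. For any $u\in\N(P\subseteq M)$, Lemma \ref{NormaliserLemma0}(\ref{NormaliserLemma0:Item3}) (applicable since $\gamma<1/47<1/(2\sqrt{2})$) produces $v\in\N(P\subseteq N)$ with $\|u-v\|\leq(4+2\sqrt{2})\gamma<1$, and the argument from (iii) then yields $ue_Pu^*=ve_Pv^*$. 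A symmetric application shows the sets $\{ue_Pu^*:u\in\N(P\subseteq M)\}$ and $\{ve_Pv^*:v\in\N(P\subseteq N)\}$ coincide, so their suprema agree, establishing the biconditional. The principal obstacle is part (iii), where identifying the Jones projections requires a careful combination of Proposition \ref{InjectiveUnitary}, Lemma \ref{basiccon-lem}, and the basic construction equality from (ii).
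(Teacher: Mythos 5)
Your proposal is correct, and for parts (\ref{generate:Part1}), (\ref{generate:Part3}) and (\ref{generate:Part4}) it follows essentially the paper's own route: Theorem \ref{PopaThm} with Lemmas \ref{relcom2} and \ref{Std.Lem2} for (\ref{generate:Part1}), Proposition \ref{InjectiveUnitary} combined with Lemma \ref{basiccon-lem} to obtain $\overline{u_nP\xi}=\overline{v_nP\xi}$ in (\ref{generate:Part3}), and Lemma \ref{NormaliserLemma0} (\ref{NormaliserLemma0:Item3}) to transfer normalizers in (\ref{generate:Part4}). The genuine divergence is in (\ref{generate:Part2}): the paper gets $J_MPJ_M\subseteq J_NPJ_N$ from the commutation relations with $e_P$ and then proves the reverse inclusion by exhausting $P$ by finite dimensional abelian subalgebras, enlarging each to a masa via Theorem \ref{PopaThm}, invoking Lemma \ref{Std.Lem2} (\ref{Std.Lem2:Part2}) and passing to weak operator limits; you instead note that for each $p\in P$ the operators $J_MpJ_M$ (in $N'$ by hypothesis) and $J_NpJ_N$ (in $J_NNJ_N=N'$) both send $\xi$ to $p^*\xi$, and since $\xi$ is separating for $N'$ (being cyclic for $N$ by part (\ref{generate:Part1})) they coincide. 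This elementwise argument is shorter and avoids the approximation step altogether. Your concluding steps also differ in technique but not in substance: where the paper verifies $(P\cup\{v_n:n\geq 0\})''=N$ through the trace-preserving conditional expectation computation (\ref{generate.7}), you observe that this subalgebra has $\xi$ as a cyclic tracial vector and compare modular conjugations to force equality with $N$; and in (\ref{generate:Part4}) you encode generation as $\bigvee_{u\in\N(P\subseteq M)}ue_Pu^*=I_\Hs$ and match the two families of projections, rather than interchanging the roles of $M$ and $N$ via parts (\ref{generate:Part1}) and (\ref{generate:Part2}). The implicit facts your version relies on ($E^N_P(v_n^*v_n)=I_\Hs$, $v_0=I_\Hs$, and $\U(P)\subseteq\N(P\subseteq M)$ so that $P\subseteq\N(P\subseteq M)''$) are all available, so no repair is needed; what your route buys is a cleaner proof of (\ref{generate:Part2}), while the paper's version makes the cohomological bookkeeping of the later sections (via the explicit expectation estimates) slightly more visible.
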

\begin{proof}
(\ref{generate:Part1}). Since $P$ is amenable and satisfies $P'\cap M\subseteq P$,
Lemma \ref{relcom2} shows that $P'\cap N\subseteq P$. By Theorem \ref{PopaThm} there is a masa $A$ in $M$ with $A\subseteq P$. Then $J_MAJ_M\subseteq J_MPJ_M\subseteq N'$. By Lemma \ref{relcom2},
$A$ is also a masa in $N$. By Lemma \ref{Std.Lem2}, $N$
is in standard position on $\Hs$ with tracial vector $\xi$.

(\ref{generate:Part2}). Arguing just as in equation (\ref{Std.Lem2.Eq4}) in Lemma
\ref{Std.Lem2}, we have
\begin{equation}
J_MPJ_M\subseteq N'\cap\{e_P\}'=(J_NNJ_N)\cap\{e_P\}'=J_NPJ_N.
\end{equation}
For the reverse inclusion, given a finite dimensional
abelian subalgebra $A_0$ of $P$, use Theorem \ref{PopaThm} to find a masa $A_1$ in
$M$
with $A_0\subseteq A_1\subseteq P$. By Lemma \ref{relcom2}, $A_1$ is a masa
in $N$. Since $J_MA_1J_M\subseteq N'$, Lemma
\ref{Std.Lem2} (\ref{Std.Lem2:Part2}) gives $J_MA_1J_M=J_NA_1J_N$.  Thus $J_NA_0J_N\subseteq
J_MA_1J_M\subseteq J_MPJ_M$.  Fix a self-adjoint operator $x\in P$ and choose a
sequence $(x_n)_{n=1}^\infty$ of elements of $P$ which converge to $x$ in the weak operator
topology such that each $W^*(x_n)$ is a finite dimensional abelian von Neumann
algebra.  Applying the previous argument with $A_0=W^*(x_n)$ gives $J_Nx_nJ_N\in
J_MPJ_M$, and so taking weak operator limits, $J_NxJ_N\in J_MPJ_M$. This gives
$J_NPJ_N\subseteq J_MPJ_M$, and hence $J_NPJ_N=J_MPJ_M$.  The middle equality in
(\ref{generate:Part2}) follows by taking commutants, and the outer two equalities
by applying Properties \ref{bcon-prop} (\ref{bcon-prop:Item3}).

(\ref{generate:Part3}). For each $n$, apply Proposition
\ref{InjectiveUnitary}
to $u_n$ and $v_n$ to obtain unitaries $w_n\in P$ and $w_n'\in P'$ with
$u_n=v_nw_nw_n'$. As $N\subseteq \langle N,e_P\rangle=\langle M,e_P\rangle$, we
have $w_n'\in P'\cap \langle M,e_P\rangle$ and so, by Lemma
\ref{basiccon-lem}, there exist unitaries $z_n\in \Z(P)$ with
$w_n'\xi=z_n\xi$.  Thus $u_n\xi=v_nw_nz_n\xi$ and so
we have $\overline{u_nP\xi}=\overline{v_nP\xi}$.  As noted in Section
\ref{NormSection}, the condition $E_P^M(u_m^*u_n)=0$ for $m\neq n$ is
equivalent to $\overline{u_mP\xi}\perp\overline{u_nP\xi}$. Thus the
sequence $(v_n)_{n\geq 0}$ inherits this property and so satisfies
$E^N_P(v_m^*v_n)=\delta_{m,n}I$. Further, $\sum_{n=0}^\infty v_nP\xi$ is dense in $\Hs$ and so $(v_n)_{n=0}^\infty$ is a bounded homogeneous basis of normalizers for $P\subseteq N$. Note that this immediately implies that $(P\cup \{v_n:n\geq 0\})''=N$.  Indeed, if $N_0\subseteq N$ is the von Neumann algebra generated by $P$ and $(v_n)_{n\geq 0}$, consider $x\in N$ with $E^N_{N_0}(x)=0$. For each
$n\geq 0$ and $b\in
P$,
\begin{equation}\label{generate.7}
0=\tau_N(b^*v_n^*E
_{N_0}^N(x))=\tau_N(E_{N_0}^N(b^*v_n^*x))=\tau_N(b^*v_n^*x)=\langle x\xi
, v_nb\xi \rangle,
\end{equation}
since $\xi$ is a tracial vector for $N$. Since $\sum_{n=0}^\infty v_nP\xi$
is
dense in $\Hs$, (\ref{generate.7}) gives $x\xi=0$, and hence $x=0$ as
$\xi$ is separating for $N$.  Thus $N=N_0$ and $P\cup\{v_n:n\geq 0\}$
generates $N$.

(\ref{generate:Part4}). The hypotheses of Lemma \ref{NormaliserLemma0} (\ref{NormaliserLemma0:Item3}) are
satisfied  so given any normalizer $v\in\N(P\subseteq
M)$, there exists $u\in\N(P\subseteq N)$ with $\|v-u\|\leq\alpha$, where
$\alpha=(4+2\sqrt{2})\gamma$. Using $2\alpha<1$ and arguing just as in the first
paragraph of (\ref{generate:Part3}), we see that $\overline{uP\xi}=\overline{vP\xi}$.
   Suppose that $\N(P\subseteq M)$ generates $M$, so that
$\Hs=\overline{\rm{span}}\,\{u\xi:u\in\N(P\subseteq M)\}$. Thus
$\Hs=\overline{\rm{span}}\,\{v\xi:v\in\N(P\subseteq N)\}$.  Just as in the
second paragraph of (\ref{generate:Part3}), it then follows that $N=\N(P\subseteq N)''$.

For the reverse implication, we use parts (\ref{generate:Part1}) and (\ref{generate:Part2}) to interchange the roles of
$M$ and $N$. We have already noted that $P'\cap N\subseteq P$ and part (\ref{generate:Part2})
shows that $J_NPJ_N=J_MPJ_M\subseteq M'$. Thus if $\N(P\subseteq N)$ generates
$N$, then $\N(P\subseteq M)$ generates $M$.
\end{proof}

\begin{remark}\label{comrem}
In the special case that $P$ is a masa in $M$ in Lemma \ref{generate}, it
follows immediately that $P$ is Cartan in $M$ if and only if it is Cartan in
$N$. This can also be read off from Popa's characterization of
Cartan and singular masas in terms of the structure of the basic construction (\cite[Proposition 1.4.3(i)]{P:Ann}): a masa $B$ in a   II$_1$ factor $Q$
with a
separable predual is Cartan if and only if $B'\cap\langle Q,e_B\rangle$ is
generated by finite projections from $\langle Q,e_B\rangle$.  Thus, once we know
that the masa $P$ satisfies $\langle M,e_P\rangle=\langle N,e_P\rangle$, it
follows that it is Cartan in $M$ if and only if 
it is Cartan in $N$.
\end{remark}
When $M$ and $N$ are close II$_1$ factors on a Hilbert space $\Hs$ with a
cyclic vector for $M$, then the results so far show that $\dim_M(\Hs)=\dim_N(\Hs)$. In the statement below, we make a temporary assumption that $M$ and $N$ have a common masa.
\begin{proposition}\label{dim}
Suppose that $M$ and $N$ are   {\rm{II}}$_1$ factors
acting
nondegenerately on $\Hs$ with $M\subset_\gamma N$ and $N\subset_\gamma M$
for $\gamma<1/136209$. Moreover, suppose that $M\cap N$ contains an abelian von Neumann algebra $A$
which is a masa in $M$. If $\dim_M(\Hs)\leq 1$,
then
$\dim_N(\Hs)=\dim_M(\Hs)$.
\end{proposition}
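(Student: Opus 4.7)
The plan is to reduce, via compression by a suitable projection $p\in A$, to a situation where $pMp$ acts in standard position on $p\Hs$, and then, after a small unitary perturbation, invoke Lemma \ref{generate} to conclude that the compression of $N$ is also in standard position.  Since $\gamma<1$, Lemma \ref{relcom2} ensures that $A$ is also a masa in $N$; in particular $A$ is diffuse, so there exists a projection $p\in A$ with $\tau_M(p)=\dim_M(\Hs)$.  Properties \ref{CC}(\ref{CC:Item2}),(\ref{CC:Item3}) then place $pMp$ in standard position on $p\Hs$, and I write $J=J_{pMp}$ for the modular conjugation.  A routine corner argument shows that $pA$ is a masa in $pMp$, and since compression preserves the hypotheses of Lemma \ref{relcom2}, also a masa in $pNp$; thus all the algebraic hypotheses of Lemma \ref{generate} hold for $(pMp,pNp,pA)$ on $p\Hs$ except for the requirement $J(pA)J\subseteq (pNp)'$.

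To arrange this final condition, I would first apply Lemma \ref{Std.Lem1}(\ref{Std.Lem1:Part3}) to $(pMp,pNp)$ on $p\Hs$ (using $\dim_{pMp}(p\Hs)=1$ and $\gamma<1/47$) to obtain $(pMp)'\subset_{4(1+\sqrt{2})\gamma}(pNp)'$.  Since $J(pA)J$ is abelian, and since both $J(pA)J\subseteq (pMp)'$ and $(pNp)'$ sit inside $(pA)'$ on $p\Hs$, Theorem \ref{Injective}(\ref{Injective:Part1}) then produces a unitary $u\in (J(pA)J\cup(pNp)')''\subseteq (pA)'$ satisfying $uJ(pA)Ju^*\subseteq (pNp)'$ and $\|u-I_{p\Hs}\|\leq 600(1+\sqrt{2})\gamma$.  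Setting $\tilde N:=u^*(pNp)u$, the containment $u\in (pA)'$ gives $pA\subseteq\tilde N$ (so $pA\subseteq pMp\cap\tilde N$), and $J(pA)J\subseteq\tilde N'$ by construction, while estimate (\ref{NearTriangleUnitary}) upgrades the near inclusions to $pMp\subset_{\gamma'}\tilde N\subset_{\gamma'}pMp$ with $\gamma'\leq\gamma\bigl(1+1200(1+\sqrt{2})\bigr)$; the hypothesis $\gamma<1/136209$ is precisely what forces $\gamma'<1/47$.

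With these preparations in place, Lemma \ref{generate}(\ref{generate:Part1}) applied to the triple $(pMp,\tilde N,pA)$ on $p\Hs$ places $\tilde N$ in standard position, i.e.\ $\dim_{\tilde N}(p\Hs)=1$.  Since $pNp$ and $\tilde N$ are unitarily equivalent representations on $p\Hs$ via $u$, also $\dim_{pNp}(p\Hs)=1$.  Applying Properties \ref{CC}(\ref{CC:Item2}) to both $M$ and $N$ then gives $\dim_N(\Hs)=\tau_N(p)$ and $\dim_M(\Hs)=\tau_M(p)$, and Lemma \ref{MasaTr} (applicable since $\gamma<2^{-3/2}$ and $A\subseteq M\cap N$ is diffuse) yields $\tau_M(p)=\tau_N(p)$, completing the proof.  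The main obstacle will be the choice of the perturbing unitary $u$: it must simultaneously push $J(pA)J$ into $(pNp)'$ and commute with $pA$, and this balance is struck by the observation that the natural ambient algebra $(J(pA)J\cup (pNp)')''$ generated by the two close amenable algebras automatically lies inside $(pA)'$.
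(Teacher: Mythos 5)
Your argument is correct and is essentially the paper's proof with the two steps interchanged: the paper first handles the case $\dim_M(\Hs)=1$ by perturbing $N$ so that $J_MAJ_M\subseteq N'$ (via Lemma \ref{Std.Lem1}(\ref{Std.Lem1:Part3}) and Theorem \ref{Injective}(\ref{Injective:Part1}), exactly as you do) and then reduces the general case by cutting with a projection $p\in A$ of trace $\dim_M(\Hs)$, whereas you compress first and then perturb, using the same lemmas and the same constants (indeed $1/136209$ arises from the same computation $(1200(1+\sqrt{2})+1)\cdot 47$). The only small point is that you should cite Lemma \ref{Std.Lem2}(\ref{Std.Lem2:Part1}) rather than Lemma \ref{generate}(\ref{generate:Part1}) at the final step: since $pA$ is already a masa in both $pMp$ and $\tilde N$, Lemma \ref{Std.Lem2} applies directly and avoids the separable-predual hypothesis of Lemma \ref{generate}, which is not assumed in the proposition.
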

\begin{proof} Note that $A$ is also a masa in $N$ by Lemma \ref{relcom2}.
Suppose first that $\dim_M(\Hs)=1$ so that $M$ is in standard position on $\Hs$
with a tracial vector $\xi$   for $M$ and $M'$. 
Since $M$ is in standard position, Lemma \ref{Std.Lem1} (iii) gives 
$J_MAJ_M\subseteq J_MMJ_M=M'\subset_{4{(1+\sqrt{2})}\gamma}N'.
$
The bound
on
$\gamma$ ensures that $4{(1+\sqrt{2})}\gamma<1/100$, so we can apply
Theorem \ref{Injective} (i)
to obtain a unitary
$v\in (J_MAJ_M\cup N')''\subseteq (J_MMJ_M\cup N')''=(M\cap N)'\subseteq
A'$ with $vJ_MAJ_Mv^*\subseteq N'$ and
\begin{equation}\label{dim.4}
\|v-I_\Hs\|\leq 600(1+\sqrt{2})\gamma,\quad
d(v(J_MAJ_M)v^*,J_MAJ_M)\leq400(1+\sqrt{2})\gamma.
\end{equation}
Define $N_1=v^*Nv$ so that $J_MAJ_M\subseteq N_1'$.
Then \eqref{NearTriangleUnitary} gives $M\subset_{\gamma_1}N_1$ and $N_1\subset_{\gamma_1} M$
where $\gamma_1:=(1200(1+\sqrt{2})+1)\gamma$. Since $A$ is a masa in $N$, it is also a masa in $N_1$. The initial bound on
$\gamma$ ensures that
$\gamma_1<1/47$, so we can apply Lemma \ref{Std.Lem2} (\ref{Std.Lem2:Part1})
to $N_1$
to
conclude that $N_1$ is in standard
position on $\Hs$. Since $N_1$ and $N$ are unitary conjugates, we also have
$\dim_N(\Hs)=1$.

Now suppose that $\dim_M(\Hs)<1$. Choose a projection $p\in A$ with
$\tau_M(p)=\dim_M(\Hs)$. We can cut by $p$ to obtain
$pMp\subset_{\gamma}pNp\subset_{\gamma}pMp$. Since $pMp$ is in standard position on $p\Hs$, the previous paragraph shows that so too is $pNp$. As
$\tau_{N}(p)=\tau_M(p)$ by Lemma \ref{MasaTr}, we see that
$\dim_{N}(\Hs)=\tau_{N}(p)=\tau_M(p)=\dim_M(\Hs)$ as required.
\end{proof}

Whether it is possible to drop the cyclic vector assumption in the previous result cuts to the heart of the (possible) difference between Kadison-Kastler stability and weak Kadison-Kastler stability for II$_1$ factors.
\begin{question}\label{QNew}
Does there exist a universal constant $\gamma_0>0$ such that whenever $M,N\subseteq \mathcal B(\Hs)$ are II$_1$ factors with $d(M,N)<\gamma_0$, then $\dim_M(\Hs)=\dim_N(\Hs)$?
\end{question}

In the absence of an answer to the previous question, the next lemma 
is designed to handle the situation when $\dim_\Hs(M)$ is large.
It  enables us to cut $M$ by a projection $e\in M'$ which almost lies in $N'$
such that $\dim_{Me}(e\Hs)\leq 1$, so that the previous results apply.

\begin{lemma}\label{Cyclic}
Let $M$ and $N$ be von Neumann algebras acting nondegenerately on a Hilbert
space $\Hs$
with $M\subset_\gamma N$ and $N\subset_\gamma M$ for a constant $\gamma>0$.
Given a unit vector $\zeta\in\Hs$,
 there exists a nonzero subprojection $e\in M'$ of
the projection with range $\overline{M\zeta}$ satisfying 
\begin{equation}\label{eqCyclic1}
{\mathrm{dist}}\,(e,N')\leq 6{(1+\sqrt{2})}\gamma
+2({(1+\sqrt{2})}\gamma)^{1/2},
\end{equation}
and if $M$ and $N$  are   ${\mathrm{II}}_1$ factors then $e$ may be chosen
with the additional property (T)hat $\dim_{Me}(e\Hs)=1/n$ for an integer $n$.
Moreover, if $\gamma$ satisfies $\gamma<1/87$
then there exists a projection $f\in N'$ and a unitary $u\in (M'\cup N')''$ so
that $ueu^*=f$ and
\begin{equation}\label{eqCyclic2}
\|e-f\|\leq 12{(1+\sqrt{2})}\gamma +4({(1+\sqrt{2})}\gamma)^{1/2},\ \
\|u-I_\Hs\|\leq
\sqrt{2}\|e-f\|.
\end{equation}
\end{lemma}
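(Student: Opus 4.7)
The plan is to set $p = e^M_\zeta \in M'$ and $q = e^N_\zeta \in N'$, noting that $p\zeta = q\zeta = \zeta$ (so $\|pqp\| = 1$, with $\zeta$ an eigenvector for eigenvalue $1$) and that $Mp$ acts with cyclic vector $\zeta$ on $p\Hs$ while $Nq$ acts with cyclic vector $\zeta$ on $q\Hs$. The starting commutator estimate is $\|[q,m]\| \leq 2\gamma\|m\|$ for $m \in M$: choose $n \in N$ with $\|m - n\| \leq \gamma\|m\|$ from $M \subset_\gamma N$ and use $[q, n] = 0$ (since $q \in N'$) to write $[q, m] = [q, m-n]$; symmetrically, $\|[e, n]\| \leq 2\gamma\|n\|$ for $e \in M'$ follows from $N \subset_\gamma M$.

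I would first apply Proposition~\ref{LocalDK}~(\ref{LocalDK:Part1}) with $m = 1$ on $p\Hs$ to the positive contraction $pqp$: the identity $[pqp, mp] = p[q,m]p$ (via $p \in M'$) together with the commutator bound gives $\|\ad(pqp)|_{Mp}\|_{\mathcal{B}(p\Hs)} \leq 2\gamma$ (in the factor case via $\|m\| = \|mp\|$; a central-decomposition workaround handles general $M$), producing a self-adjoint $A \in pM'p$ with $\|A - pqp\| \leq 2(1+\sqrt{2})\gamma =: \delta_1$. Since $\langle A\zeta, \zeta\rangle \geq 1 - \delta_1$, $\sup \mathrm{spec}(A) \geq 1 - \delta_1$, so for $\alpha$ just below $1-\delta_1$ the spectral projection $e := \chi_{(\alpha, \infty)}(A) \in pM'p$ is a nonzero subprojection of $p$ in $M'$. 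From $Ae \geq \alpha e$ and $\|A - pqp\| \leq \delta_1$, $eqe = e(pqp)e \geq (\alpha - \delta_1)e$, hence $\|(I-q)e\|^2 = \|e - eqe\| \leq 1 - \alpha + \delta_1$; letting $\alpha \uparrow 1 - \delta_1$ gives $t := \|(I-q)e\| \leq \sqrt{2\delta_1}$. In the II$_1$-factor setting, $pM'p$ is a II$_1$ factor (the relation $\tau_{M'}(p)\dim_M(\Hs) = \dim_{Mp}(p\Hs) \leq 1$ together with $p \neq 0$ forces $\dim_M(\Hs) < \infty$); I would then cut $e$ further within $pM'p$ to a subprojection with $\tau_{M'}(e) = 1/(n\dim_M(\Hs))$ for any integer $n \geq 1/\dim_{Mp}(p\Hs)$, giving $\dim_{Me}(e\Hs) = 1/n$ by Properties~\ref{CC}; every subprojection inherits the inequality $eqe \geq (\alpha-\delta_1)e$ and the bound on $t$.

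The symmetric Arveson estimate on $q\Hs$, via $[qeq, nq] = q[e,n]q$, produces $S \in N'$ with $\|qeq - qSq\| \leq \delta_1$. To obtain the sharp distance, I would estimate $\|e - qeq\|$ using the $(q, I-q)$-block decomposition
\[
e - qeq = \begin{pmatrix} 0 & qe(I-q) \\ (I-q)eq & (I-q)e(I-q) \end{pmatrix},
\]
whose off-diagonal blocks have norm at most $t$ and whose diagonal block $((I-q)e)((I-q)e)^*$ has norm $t^2$. The eigenvalue equation $(B^*B + \lambda D)v_2 = \lambda^2 v_2$ for such a self-adjoint $2 \times 2$ block with $\|B\| \leq t$ and $0 \leq D \leq t^2$ forces $\lambda^2 \leq t^2 + |\lambda|t^2$, yielding $|\lambda| \leq (t^2 + t\sqrt{t^2+4})/2 \leq t + t^2$ via $\sqrt{t^2+4} \leq t+2$. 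Substituting $t^2 = 2\delta_1$ and combining through $\|e - qSq\| \leq \|e - qeq\| + \|qeq - qSq\| \leq (t + t^2) + \delta_1 = \sqrt{2\delta_1} + 3\delta_1$ gives $\mathrm{dist}(e, N') \leq 6(1+\sqrt{2})\gamma + 2((1+\sqrt{2})\gamma)^{1/2}$.

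For the ``moreover'' clause, the hypothesis $\gamma < 1/87$ is calibrated so that the distance bound is strictly less than $1$. Lemma~\ref{ProjLem}~(\ref{ProjLem:Part2}) then produces a projection $f \in N'$ close to $e$ within $12(1+\sqrt{2})\gamma + 4((1+\sqrt{2})\gamma)^{1/2}$ (a loose bound following from the sharper $2^{-1/2}$-factor estimate of that lemma), and Lemma~\ref{ProjLem2} delivers the unitary $u \in W^*(\{e, f\}) \subseteq (M' \cup N')''$ with $ueu^* = f$ and $\|u - I_\Hs\| \leq \sqrt{2}\|e - f\|$. The most delicate step is the sharp block bound $\|e - qeq\| \leq t + t^2$: the naive triangle estimate $\|e - qeq\| \leq 2t$ would double the $\sqrt{\gamma}$ constant and destroy the $\gamma < 1/87$ threshold needed for Lemma~\ref{ProjLem2}; keeping $\alpha$ arbitrarily close to $1 - \delta_1$ is essential to achieve the tight $t \leq \sqrt{2\delta_1}$.
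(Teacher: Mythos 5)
Your argument follows the paper's proof in all essentials: the same commutator estimates, the same two applications of Proposition~\ref{LocalDK}~(\ref{LocalDK:Part1}) on the corners $p\Hs$ and $q\Hs$, a spectral projection of the self-adjoint approximant of $pqp$ near the top of its spectrum, and the corner decomposition giving $\|e-qeq\|\leq t+t^2$ (your eigenvalue computation is an overcomplicated route to what is just $\|e-qeq\|\leq\|qe(I-q)+(I-q)eq\|+\|(I-q)e(I-q)\|\leq t+t^2$), which reproduces the constants of \eqref{eqCyclic1} exactly. The only genuinely different step is the ``moreover'' clause: the paper replaces the approximant $t\in qN'q$ by its self-adjoint part and takes $f$ to be a spectral projection, using the Hausdorff-distance-of-spectra estimate, whereas you invoke Lemma~\ref{ProjLem}~(\ref{ProjLem:Part2}). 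That lemma is stated for a near inclusion of algebras, not for a single element close to an algebra, so you need the small interface $\{e,I_\Hs\}''\subseteq_{2\gamma_1}N'$ (a linear combination $bI+(a-b)e$ lies within $|a-b|\gamma_1\leq 2\gamma_1\max(|a|,|b|)$ of $N'$), after which you get $\|e-f\|\leq\sqrt{2}\gamma_1\leq 2\gamma_1$ and the rest goes through; note also that the calibration $\gamma<1/87$ is there to make $2\gamma_1<1$ (so that Lemma~\ref{ProjLem2} applies to $\|e-f\|$), not merely to make the distance bound itself less than~$1$.

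The genuine flaw is in the II$_1$ refinement. The parenthetical claim that $\tau_{M'}(p)\dim_M(\Hs)=\dim_{Mp}(p\Hs)\leq 1$ together with $p\neq 0$ ``forces $\dim_M(\Hs)<\infty$'' is false: take $M$ in standard form tensored by $I_{\ell^2}$, with $\zeta=\xi\otimes\delta_1$; then $M'$ is type II$_\infty$, $\dim_M(\Hs)=\infty$, yet $p=e^M_\zeta$ is a finite projection of $M'$ and $\dim_{Mp}(p\Hs)=1$. Moreover the identity from Properties~\ref{CC}~(\ref{CC:Item1}) presupposes that $M'$ is finite, so it cannot be used to deduce finiteness. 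Consequently your prescription $\tau_{M'}(e)=1/(n\dim_M(\Hs))$ is meaningless exactly in the situation the lemma is built for (in its application in Lemma~\ref{Summation} nothing bounds $\dim_M(\Hs)$; the whole point is to manufacture a finite corner). The repair is to normalize inside the corner, as the paper does: since $e\leq p$ and $e\in M'$, the vector $e\zeta$ is cyclic for $Me$ on $e\Hs$, so $\dim_{Me}(e\Hs)\leq 1$ and $eM'e=(Me)'$ is a II$_1$ factor; choose an integer $n$ with $n\dim_{Me}(e\Hs)\geq 1$ and a subprojection $e'\leq e$ with $\tau_{eM'e}(e')=(n\dim_{Me}(e\Hs))^{-1}$, whence $\dim_{Me'}(e'\Hs)=1/n$ by Properties~\ref{CC}~(\ref{CC:Item1}) applied on $e\Hs$. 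Relatedly, your threshold $n\geq 1/\dim_{Mp}(p\Hs)$ is not sufficient: the subprojection must fit inside $e$, so the correct requirement is $n\geq 1/\dim_{Me}(e\Hs)$. Finally, ``letting $\alpha\uparrow 1-\delta_1$'' does not literally make sense because $e$ changes with $\alpha$; simply take the closed-interval spectral projection $\chi_{[1-\delta_1,\infty)}(A)$, which is nonzero because $\sup\sp(A)\geq\langle A\zeta,\zeta\rangle\geq 1-\delta_1$, and this gives $t\leq\sqrt{2\delta_1}$ directly (your compression remark then correctly passes the bound to any subprojection).
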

\begin{proof}
Choose $\gamma'<\gamma$ which satisfies $M\subseteq_{\gamma'}N$ and
$N\subseteq_{\gamma'}M$. Fix a unit vector $\zeta\in\Hs$, let  $p\in M'$ be
the projection onto $\overline{M\zeta}$ and let $q\in N'$ be the projection onto
$\overline{N\zeta}$.  Given $x$ in the unit ball of $M$, choose $y\in N$ with
$\|x-y\|\leq \gamma'$. Since $p$ commutes with $x$ and $q$ commutes with $y$, we
have the algebraic identity
$(px)pqp-pqp(px)=p(x-y)qp +pq(y-x)p$, leading to the estimate
$\|(px)pqp-pqp(px)\|\leq 2\|x-y\|\leq 2\gamma'$,
so $\|\ad(pqp)|_{Mp}\|\leq 2\gamma'$. As the vector $\zeta$ is cyclic for $Mp$ acting on the Hilbert space $p\Hs$, Proposition \ref{LocalDK}~(\ref{LocalDK:Part1}) gives an element $z\in (Mp)'=pM'p$ satisfying
$\|z-pqp\|\leq 2{(1+\sqrt{2})}\gamma'$. Replacing $z$ by $(z+z^*)/2$, we may assume that $z=z^*$.   Let $e_1\in pM'p$ be the spectral projection of $z$ for the
interval $[1-2{(1+\sqrt{2})}\gamma,1+2{(1+\sqrt{2})}\gamma']$.
Then $z(I_{p\Hs}-e_1)\leq (1-2{(1+\sqrt{2})}\gamma)I_{p\Hs}$. If $e_1\zeta=0$
then, since
$pqp\zeta=\zeta$ and $\langle pqp\zeta,\zeta\rangle =1$, we have a contradiction from
\begin{equation}
\langle pqp\zeta,\zeta\rangle = \langle (pqp-z)\zeta,\zeta\rangle+\langle
z(I_{p\Hs}-e_1)\zeta,\zeta\rangle
\leq 2{(1+\sqrt{2})}\gamma'+(1-2{(1+\sqrt{2})}\gamma)<1\label{RemarkEquation1001},
\end{equation}
as $\gamma'<\gamma$. Thus $e_1\zeta\ne 0$. From the functional calculus, $\|ze_1-e_1\|\leq
2{(1+\sqrt{2})}\gamma$, and so
\begin{equation}\label{eqdim1}
\|e_1-e_1qe_1\|\leq\|e_1-ze_1\|+\|ze_1-e_1qe_1\|=\|e_1-ze_1\|+\|e_1(z-pqp)e_1\|\leq
4{(1+\sqrt{2})}\gamma.
\end{equation}

There are now two cases to consider. If $M$ is not a   II$_1$ factor, then
rename $e_1$ as $e$, omitting the
next step. However, if $M$ is a   II$_1$ factor then $\dim_{Me_1}(e_1\Hs)\leq
1$ since $e_1\zeta$ is a
cyclic vector for $Me_1$ on $e_1\Hs$. Choose an integer $n$ so that
$n\dim_{Me_1}(e_1\Hs)\geq 1$ and choose a
projection $e\in e_1M'e_1$ so that
$\tau_{e_1M'e_1}(e)=(n\dim_{Me_1}(e_1\Hs))^{-1}$.
By Properties \ref{CC} (\ref{CC:Item1}),
$\dim_{Me}(e\Hs)=\tau_{e_1M'e_1}(e)\dim_{Me_1}(e_1\Hs)=1/n$,
and
\begin{equation}\label{eqdim2}
\|e-eqe\|=\|e(e_1-e_1qe_1)e\|\leq \|e_1-e_1qe_1\|\leq 4{(1+\sqrt{2})}\gamma
\end{equation}
from \eqref{eqdim1}. This shows that  \eqref{eqdim2} is satisfied in both cases.
Thus, from \eqref{eqdim2}, 
$
\|e(I_\Hs-q)e\|\leq 4{(1+\sqrt{2})}\gamma
$ so
$
\|(I_\Hs-q)e\|\leq (4{(1+\sqrt{2})}\gamma)^{1/2},
$ implying
$\|(I_\Hs-q)e(I_\Hs-q)\|\leq 4{(1+\sqrt{2})}\gamma.
$
Writing
$
e=qeq+qe(I_\Hs-q)+(I_\Hs-q)eq+(I_\Hs-q)e(I_\Hs-q),
$
we obtain the estimate
\begin{align}
\|e-qeq\|&\leq \|qe(I_\Hs-q)+(I_\Hs-q)eq\|+4{(1+\sqrt{2})}\gamma\notag\\
&=\max\{\|qe(I_\Hs-q)\|,\|(I_\Hs-q)eq\|\}+4{(1+\sqrt{2})}\gamma\notag\\
&\leq (4{(1+\sqrt{2})}\gamma)^{1/2}+4{(1+\sqrt{2})}\gamma.
\end{align}

If $x$ is in the unit ball of $N$, choose $y\in M$ with $\|x-y\|\leq \gamma$.
Noting that $x$ commutes with $q$ and $y$ commutes with both $e$ and $p$, the
algebraic
identity
\begin{align}
xq(qeq)&=qxeq=q(x-y)eq+qyeq
=q(x-y)eq+qeyq\notag\\
&=q(x-y)eq+qe(y-x)q+qeqxq
\end{align}
gives the inequalities $\|xq(qeq)-(qeq)xq\|\leq 2\gamma$ and
$\|\ad(qeq)|_{Nq}\|\leq 2\gamma$.
Since $\zeta$ is a cyclic vector for $Nq$ on $q\Hs$, Proposition
\ref{LocalDK}~(\ref{LocalDK:Part1}) gives $t\in (Nq)'=qN'q$ with $\|t-qeq\|\leq
2{(1+\sqrt{2})}\gamma$, so \eqref{eqCyclic1} is established by
\begin{equation}\label{eqCyclic3}
\|e-t\|\leq \|e-qeq\|+\|t-qeq\|\leq
2({(1+\sqrt{2})}\gamma)^{1/2}+6{(1+\sqrt{2})}\gamma.
\end{equation}
Define $\gamma_1=2({(1+\sqrt{2})}\gamma)^{1/2}+6{(1+\sqrt{2})}\gamma$, and now
suppose that the inequality
 $\gamma <1/87$ holds, which ensures that $\gamma_1<1/2$. Replacing $t$ by
$(t+t^*)/2$ if necessary, we may assume that $t$ is self-adjoint. The Hausdorff
distance between the spectra $\sp(e)$ and $\sp(t)$ is at most
$\|e-t\|\leq\gamma_1$ (see for example \cite[Proposition
2.1]{Dav:ActaSci}), and so  
 $\sp(t)$ is contained in
$[-\gamma_1,\gamma_1]\cup [1-\gamma_1,1+\gamma_1]$. If $f$ denotes the
spectral projection of $t$ for the second
of these intervals, then $\|f-t\|\leq \gamma_1$, giving the estimate
$\|e-f\|\leq 2\gamma_1<1$.
  Then Lemma \ref{ProjLem2} gives a
unitary $u\in (M'\cup N')''$ with $\|u-I_\Hs\|\leq \sqrt{2}\|e-f\|$ and $ueu^*=f$.
\end{proof}

\begin{remark}
Note that even in the II$_1$ case, the methods of Lemma \ref{Cyclic} do not enable us to take  $\tau_{M'}(e)$ close to $1$.
\end{remark}

We are now in a position to combine the previous results and show that, starting
with two close II$_1$ factors on a Hilbert space, it is possible to produce new
close representations of these factors on another Hilbert space so that both are
simultaneously in standard position. This enables us to transfer regular
amenable subalgebras from one factor to its close counterpart. The lemma below
does this in a form designed for immediate use in Section \ref{KKStable}. 

\begin{lemma}\label{Summation}
Let $M$ and $N$ be    $\text{\rm II}_1$ factors with separable preduals acting nondegenerately on
the Hilbert space
$\Hs$ and suppose that there is a positive constant $\gamma < 1.74\times 10^{-13}$
such
that $M\subset_\gamma N$ and
$N\subset_\gamma M$. Suppose further that there is an amenable von Neumann
subalgebra $P\subseteq M\cap N$ satisfying $P'\cap M\subseteq P$. Then there
exists a
Hilbert space $\Ks$ and faithful normal representations $\pi: M\to
\mathcal B(\Ks)$ and $\rho:N\rightarrow\mathcal B(\Ks)$
with the following properties:
\begin{enumerate}[\rm (i)]
\item\label{Sum:Item1} $\pi(M)$, $\pi(M)'$, $\rho(N)$ and $\rho(N)'$ are in standard position with common tracial vector $\xi$.
\item\label{Sum:Item2} $\pi(M) \subset_\beta\rho(N)$ and $\rho(N)\subset_\beta
\pi(M)$
for a constant $\beta< 50948\gamma^{1/2}<1/47.
$
\item\label{Sum:Item3} If $x\in M$ and $y\in N$ satisfy
$\|x\|, \|y\|\leq 1$, then
$\|\pi(x)-\rho(y)\|\leq\beta+\|x-y\|$. 
\item\label{Sum:Item4} $\pi|_P=\rho|_P$.
\item\label{Sum:Item5} $P'\cap N\subseteq P$.
\item\label{Sum:Item7} $P$ is regular in $M$ if and only if it is regular in
$N$.
\item\label{Sum:Item8} The basic construction algebras on $\Ks$ given by
$\langle
\pi(M),e_{\pi(P)}\rangle=(\pi(M)\cup\{e_{\pi(P)}\})''$ and $\langle
\rho(N),e_{\pi(P)}\rangle=(\rho(N)\cup\{e_{\pi(P)}\})''$ are equal, where
$e_{\pi(P)}$ is
the projection onto $\overline{\pi(P)\xi}$.
\item\label{Sum:Item11} If $P\subseteq M$ has  a bounded homogeneous orthonormal
basis of normalizers
$(u_n)_{n\geq 0}$ in $\N(P\subseteq M)$ and
$(v_n)_{n\geq 0}$ is a sequence in $\N(P\subseteq N)$ satisfying $\|u_n-v_n\|<
1-\beta$, then
$(v_n)_{n\geq 0}$ is a bounded homogeneous orthonormal basis of normalizers for
$P\subseteq
N$.
\end{enumerate}
\end{lemma}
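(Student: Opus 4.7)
The plan is to obtain $\pi$ and $\rho$ by first cutting $M$ and $N$ by a common projection (after a small unitary rotation of $N$) so that the compressions act on a Hilbert space of small $M$-dimension, and then amplifying to bring both into standard position simultaneously. The common amenable subalgebra $P$ (together with a masa of $M$ inside it, provided by Popa's Theorem \ref{PopaThm}) drives every step, and is preserved throughout.

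First I would apply Theorem \ref{PopaThm} to $P\subseteq M$ to obtain a masa $A$ in $M$ with $A\subseteq P$; Lemma \ref{relcom2} then gives that $A$ is also a masa in $N$ and that $P'\cap N\subseteq P$, settling (\ref{Sum:Item5}). Next, an application of Lemma \ref{Cyclic} to any unit vector supplies a projection $e\in M'$ with $\dim_{Me}(e\Hs)=1/n$ for some integer $n$, a projection $f\in N'$, and a unitary $u\in(M'\cup N')''$ with $\|u-I_\Hs\|\leq\sqrt{2}\bigl(12(1+\sqrt 2)\gamma+4((1+\sqrt 2)\gamma)^{1/2}\bigr)$ satisfying $ueu^*=f$. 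Since $u$ lies in the commutant of $M\cap N$, conjugation by $u$ fixes $P$ pointwise; replacing $N$ by $uNu^*$ therefore preserves $P$ and installs $e$ into $M'\cap N'$, at the cost of increasing the near-inclusion constant by $2\|u-I_\Hs\|$.

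I would then compress by $e$: the pair $Me,Ne$ on $e\Hs$ are close II$_1$ factors with the common amenable subalgebra $Pe$ (still satisfying $(Pe)'\cap Me\subseteq Pe$), and $Ae$ is a common masa. Proposition \ref{dim} gives $\dim_{Ne}(e\Hs)=\dim_{Me}(e\Hs)=1/n$. Set $\Ks=e\Hs\otimes\mathbb C^n$ and define $\pi(m)=me\otimes I_n$ and $\tilde\rho(y)=(uyu^*)e\otimes I_n$. Both $\pi(M)$ and $\tilde\rho(N)$ act on $\Ks$ with dimension $1$, so each is individually in standard position, but their tracial vectors need not coincide. To remedy this, pick a tracial vector $\xi$ for $\pi(M)$. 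By Lemma \ref{Std.Lem1}(\ref{Std.Lem1:Part3}) we have a near inclusion $\pi(M)'\subset\tilde\rho(N)'$ with a small constant, and since $J_{\pi(M)}\pi(A)J_{\pi(M)}$ is amenable, Theorem \ref{Injective}(\ref{Injective:Part1}) produces a unitary $v\in\bigl(J_{\pi(M)}\pi(A)J_{\pi(M)}\cup\tilde\rho(N)'\bigr)''$, close to $I_\Ks$, with $vJ_{\pi(M)}\pi(A)J_{\pi(M)}v^*\subseteq\tilde\rho(N)'$. Setting $\rho=\mathrm{Ad}(v^*)\circ\tilde\rho$ then puts us in the hypothesis of Lemma \ref{Std.Lem2}, which yields that $\xi$ is tracial for $\rho(N)$ and $\rho(N)'$ too, giving (\ref{Sum:Item1}) and in passing (\ref{Sum:Item8}) via Lemma \ref{Std.Lem2}(\ref{Std.Lem2:Part2}). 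Crucially, $v$ lies in $(\pi(M)'\cup\tilde\rho(N)')''$, which commutes with $\pi(M)\cap\tilde\rho(N)\supseteq \pi(P)=\tilde\rho(P)$, so $\rho|_P=\tilde\rho|_P=\pi|_P$, giving (\ref{Sum:Item4}).

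The remaining items then fall out mechanically. Property (\ref{Sum:Item3}) is immediate from $\|\pi(x)-\rho(y)\|\leq\|x-y\|+2\|u-I_\Hs\|+2\|v-I_\Ks\|$, and (\ref{Sum:Item2}) is obtained by combining the original $\gamma$ with the small perturbations $\|u-I_\Hs\|$ and $\|v-I_\Ks\|$ through the triangle estimate (\ref{NearTriangleUnitary}). Items (\ref{Sum:Item7}) and (\ref{Sum:Item11}) are then exactly the conclusions of Lemma \ref{generate}(\ref{generate:Part4}) and (\ref{generate:Part3}) applied to the standard-form pair $\pi(M),\rho(N)$ on $\Ks$, once $\beta<1/47$ is verified. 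The main obstacle is bookkeeping: Lemma \ref{Cyclic} contributes a $\gamma^{1/2}$ term which propagates (amplified by Theorem \ref{Injective}(\ref{Injective:Part1})'s factor of $150$ and Lemma \ref{Std.Lem1}(\ref{Std.Lem1:Part3})'s factor of $4(1+\sqrt 2)$) to give the final $\beta<50948\gamma^{1/2}$, and the very small hypothesis $\gamma<1.74\times 10^{-13}$ is precisely calibrated so that this $\beta$ stays below $1/47$ and all the intermediate hypotheses of Proposition \ref{dim}, Theorem \ref{Injective}(\ref{Injective:Part1}), and Lemma \ref{Std.Lem2} are satisfied along the way.
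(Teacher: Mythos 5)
Your outline reproduces the paper's proof almost step for step: Popa's theorem gives a masa $A\subseteq P$ that is a masa in $M$ (hence in $N$ by Lemma \ref{relcom2}, which also gives (\ref{Sum:Item5})); Lemma \ref{Cyclic} supplies the cutting projection and a unitary commuting with $P$; amplification by $\mathbb{M}_n$ puts the compression of $M$ in standard position; Theorem \ref{Injective} (\ref{Injective:Part1}) rotates a copy of the commutant data into $\tilde\rho(N)'$ by a unitary lying in $\pi(P)'$; and Lemma \ref{Std.Lem2} then produces the common tracial vector, with the remaining items read off as in the paper. Two harmless quibbles: with the notation of Lemma \ref{Cyclic} one should pass to $u^*Nu$ rather than $uNu^*$, since $ueu^*=f\in N'$ says $e\in(u^*Nu)'$; and Proposition \ref{dim} is not needed, as the standard position of $\rho(N)$ falls out of Lemma \ref{Std.Lem2} in any case.

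The one step that does not deliver what you claim is item (\ref{Sum:Item8}). Lemma \ref{Std.Lem2} (\ref{Std.Lem2:Part2}) only yields $\langle\pi(M),e_{\pi(A)}\rangle=\langle\rho(N),e_{\pi(A)}\rangle$ for the masa $A$, whereas (\ref{Sum:Item8}) concerns $e_{\pi(P)}$ for the (generally much larger) amenable algebra $P$; the passage from masas inside $P$ to $P$ itself is exactly the content of Lemma \ref{generate} (\ref{generate:Part2}) and is where Popa's theorem is used again, via an approximation of self-adjoint elements of $P$ by finite dimensional abelian subalgebras. Moreover Lemma \ref{generate} --- which you also invoke for (\ref{Sum:Item7}) and (\ref{Sum:Item11}) --- carries the hypothesis $J_{\pi(M)}\pi(P)J_{\pi(M)}\subseteq\rho(N)'$, and your rotation step only arranges the masa version $J_{\pi(M)}\pi(A)J_{\pi(M)}\subseteq\rho(N)'$; the two are not interchangeable, since $J_{\pi(M)}\pi(P)J_{\pi(M)}$ need not commute with $J_{\pi(M)}\pi(A)J_{\pi(M)}$ and cannot be recovered from it. The clean repair is to apply Theorem \ref{Injective} (\ref{Injective:Part1}) to the amenable algebra $J_{\pi(M)}\pi(P)J_{\pi(M)}$ itself: by Lemma \ref{Std.Lem1} (\ref{Std.Lem1:Part3}) it lies $4(1+\sqrt{2})\gamma_3$-inside $\tilde\rho(N)'$, and the resulting unitary belongs to $\bigl(J_{\pi(M)}\pi(P)J_{\pi(M)}\cup\tilde\rho(N)'\bigr)''\subseteq\pi(P)'$, so (\ref{Sum:Item1})--(\ref{Sum:Item4}) and the constants are unaffected (the estimate in Theorem \ref{Injective} (\ref{Injective:Part1}) does not depend on the algebra), while now $J_{\pi(M)}\pi(P)J_{\pi(M)}\subseteq\rho(N)'$, in particular $J_{\pi(M)}\pi(A)J_{\pi(M)}\subseteq\rho(N)'$ for Lemma \ref{Std.Lem2}, and Lemma \ref{generate} applies verbatim to give (\ref{Sum:Item7}), (\ref{Sum:Item8}) and (\ref{Sum:Item11}).
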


\begin{proof} Use Popa's Theorem (Theorem \ref{PopaThm}) to choose a masa
$A\subseteq P$ such that $A$ is also a masa in $M$. Write $M_1=M$, $N_1=N$,
$P_1=P$ and $A_1=A$.  Since $\gamma <
1/87$,
we can apply Lemma \ref{Cyclic} to $M_1$ and $N_1$ to obtain nonzero projections
$e\in M_1'$ and $f\in
N_1'$ and a unitary $u_1\in
(M_1'\cup N_1')''$ so that $M_1e$ has a cyclic vector on $e\Hs$,
\begin{equation}
\|u_1-I_{\Hs}\| \le \sqrt 2\|e-f\| \leq \sqrt 2\left(12{(1+\sqrt{2})}\gamma +
4({(1+\sqrt{2})}\gamma)^{1/2}\right),
\end{equation}
and $u_1eu^*_1=f$. Moreover, we can additionally assume that
$\dim_{M_1e}(e\Hs)=1/n$ for an integer $n$. Since $e\in M_1'\cap
(u_1^*N_1u_1)'$ we can compress these algebras by $e$. Write $\Hs_2=e(\Hs)$,
$M_2=M_1e$, $N_2=(u_1^*N_1u_1)e$, $P_2=P_1e$ and $A_2=A_1e$ so that $M_2$ and
$N_2$ act on $\Hs_2$, $P_2\subseteq M_2\cap N_2$ (as $u_1$ commutes with $P_1)$
and the near inclusions $M_2\subset_{\gamma_2}N_2\subset_{\gamma_2}M_2$ hold,
where $
\gamma_2 = 2\sqrt 2\left(12{(1+\sqrt{2})}\gamma +
4({(1+\sqrt{2})}\gamma)^{1/2}\right)+\gamma <
(17.58)\gamma^{1/2}.
$
By construction $\dim_{M_2}(\Hs_2)=1/n$.
Now define $\Hs_3 = \Hs_2\otimes
{\mathbb C}^n$, $M_3=(M_2\otimes I_n)$,
$P_3=(P_2\otimes I_n)$, $A_3=(A_2\otimes I_n)$, $N_3=(N_2\otimes I_n)$ and
$\gamma_3=\gamma_2$. Then
$M_3\subset_{\gamma_3}N_3$ and
$N_3\subset_{\gamma_3}M_3$, $P_3\subseteq M_3\cap N_3$ and $\dim_{M_3}\Hs_3=1$
(by Properties \ref{CC} (\ref{CC:Item1}))
so that $M_3$ is in standard
position on $\Hs_3$.   

Consider the  masa $A_3\subseteq M_3$. As $\gamma_3<1/47$, Lemma \ref{Std.Lem1} (\ref{Std.Lem1:Part3})  gives $J_{M_3}M_3J_{M_3}\subset_{4{(1+\sqrt{2})}\gamma_3}N_3'$. As $4(1+\sqrt{2})\gamma_3<1/100$, another application of
Theorem \ref{Injective} (\ref{Injective:Part1}) provides a unitary $u_3\in (J_{M_3}A_3J_{M_3}\cup
N_3')''\subseteq P_3'$ so that
$\|I_{\Hs_3}-u_3\|<600{(1+\sqrt{2})}\gamma_3 $ and
$u_3(J_{M_3}A_3J_{M_3})u_3^*\subseteq N_3'$. Define $\Hs_4=\Hs_3$, $M_4=M_3$,
$P_4=P_3$, $A_4=A_3$, $N_4=u_3^*N_3u_3$, and
\begin{equation}
\gamma_4=(1200{(1+\sqrt{2})}+1)\gamma_3
<(1200{(1+\sqrt{2})}+1)(17.58)\gamma^{1/2}
<50948\gamma^{1/2}<1/47.
\end{equation}
Then $J_{M_4}A_4J_{M_4} \subseteq N_4'$, and $P_4\subseteq M_4\cap N_4$ since
$u_3$ commutes with $P_3$. The estimate (\ref{NearTriangleUnitary}) gives the
near
inclusion $M_4\subset_{\gamma_4} N_4\subset_{\gamma_4} M_4$ and then the bound
on $\gamma_4$
allows us to apply Lemma \ref{relcom2} to
conclude that $A_4$ is
also a masa in $N_4$. The hypotheses of Lemma \ref{Std.Lem2} are now met, from
which we see that 
 $M_4,M'_4$, $N_4$ and $N'_4$ have a common tracial vector.

At each stage of the proof, the various constructions have ensured that the
pairs $(M_k,M_{k+1})$ are canonically isomorphic via compressions,
amplifications or unitary conjugation, while the same is true for $(N_k,
N_{k+1})$, $1\le k\le 3$. Now let
$\Ks=\Hs_4$ and define  two isomorphisms $\pi\colon \ M\to M_4$ and $\rho\colon \
N\to N_4$ as the compositions of the
isomorphisms constructed above, whereupon $\pi(P) = P_4 \subseteq M_4\cap N_4 =
\pi(M)\cap \rho(N)$. We take
$\beta$ to be  $\gamma_4$. This establishes parts (\ref{Sum:Item1}) and
(\ref{Sum:Item2}). Since all the unitaries $u_k$ used to construct the
isomorphisms $\pi$ and $\rho$ commute with the corresponding $P_k$, it follows
directly that $\pi|_P=\rho|_P$,  giving (\ref{Sum:Item4}). The estimate of part
(\ref{Sum:Item3}) also follows from repeated applications of the triangle inequality given the explicit form of the $\pi$ and $\rho$ as
a composition of unitary conjugations, compressions and amplifications. 

Item (\ref{Sum:Item5}) is Lemma \ref{relcom2}, while the
remaining conditions follow from Lemma \ref{generate}.  Indeed (\ref{Sum:Item8})
is part (\ref{generate:Part2}) of Lemma \ref{generate}, while part (\ref{generate:Part4}) of Lemma \ref{generate}
shows that $\pi(P)$ is regular in $\pi(M)$ if and only if $\rho(P)$ is regular
in $\rho(N)$. Since $\pi$ and $\rho$ are both faithful, condition
(\ref{Sum:Item7}) follows. For (\ref{Sum:Item11}), given $(u_n)$ and $(v_n)$ as
in this condition, note that the hypothesis $\|u_n-v_n\|<1-\beta$ gives
$\|\pi(u_n)-\rho(v_n)\|<1$ by part (\ref{Sum:Item3}). Thus, as $(\pi(u_n))_n$
is a bounded homogeneous orthonormal basis of normalizers for $\pi(P)\subseteq
\pi(M)$, Lemma \ref{generate} (\ref{generate:Part3}) shows that $(\rho(v_n))_n$ is a bounded
homogeneous orthonormal basis of normalizers for $\rho(P)\subseteq \rho(N)$ so that $(v_n)_n$ has the same property for $P\subseteq N$.
\end{proof}

The procedure above enables us to transfer close   II$_1$ factors on a
Hilbert space to
another space so that they both act in standard position.  Thus, for the weakest version of the 
Kadison-Kastler stability problem for   II$_1$ factors, we can assume that
all factors act in standard position: we record this below. More
care is required for the stronger spatial versions of the problem. Any isomorphism between close II$_1$ factors will be spatially implemented on the new Hilbert space but,
without a positive answer to Question \ref{QNew}, we do not know whether this isomorphism is spatially
implemented on the original Hilbert space.

\begin{theorem}\label{ReductionResult}
Let $M$ and $N$ be   ${\mathrm{II}}_1$ factors with separable preduals nondegenerately represented
on a Hilbert space $\Hs$ with $M\subset_\gamma N$ and $N\subset_\gamma M$ for
some $\gamma<5.7\times 10^{-16}$. Then there exists a Hilbert space $\Ks$ and
faithful normal representations $\pi:M\rightarrow \mathcal B(\Ks)$ and
$\rho:N\rightarrow \mathcal B(\Ks)$ such that $\pi(M)$, $\pi(M)'$, $\rho(N)$ and
$\rho(N)'$ have a common tracial vector on $\Ks$ and
$\pi(M)\subset_\beta\rho(N)\subset_\beta\pi(M)$ for 
$
\beta=50948\times(301)^{1/2}\gamma^{1/2}+300\gamma< 8.84\times
10^{5}\gamma^{1/2}.
$
 For $x\in M$ and $y\in N$  with $\|x\|,\|y\|\leq 1$, we have
$\|\pi(x)-\rho(y)\|\leq\beta+300\gamma+\|x-y\|$.
\end{theorem}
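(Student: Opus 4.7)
The plan is to reduce to Lemma \ref{Summation}, which already delivers the essential conclusion but assumes the existence of a common amenable subalgebra $P \subseteq M \cap N$ satisfying $P' \cap M \subseteq P$. The main task is therefore to produce such a $P$ after replacing $N$ by a small unitary conjugate, and then to repackage the conclusion of Lemma \ref{Summation} in terms of the original $N$.

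First I would choose a masa $A \subseteq M$ (any masa in a II$_1$ factor works, though we may also invoke Theorem \ref{PopaThm}). The near inclusion $M \subset_\gamma N$ restricts to $A \subset_\gamma N$, and as $A$ is abelian and hence amenable, with $\gamma$ well below $1/100$, Theorem \ref{Injective}(\ref{Injective:Part1}) supplies a unitary $u \in (A \cup N)''$ with $uAu^* \subseteq N$ and $\|u - I_\Hs\| \leq 150\gamma$. Setting $N_1 = u^* N u$, we obtain $A \subseteq M \cap N_1$, and two applications of \eqref{NearTriangleUnitary} yield $M \subset_{\gamma'} N_1 \subset_{\gamma'} M$ with $\gamma' \leq \gamma + 2(150\gamma) = 301\gamma$. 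The bound $\gamma < 5.7 \times 10^{-16}$ is chosen precisely so that $\gamma' < 1.74 \times 10^{-13}$, placing us squarely within the hypotheses of Lemma \ref{Summation}.

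Next I would apply Lemma \ref{Summation} to the pair $(M, N_1)$ with $P = A$, noting that $A' \cap M = A$ automatically since $A$ is a masa in $M$. This produces a Hilbert space $\Ks$ and faithful normal representations $\pi: M \to \mathcal{B}(\Ks)$ and $\rho_1: N_1 \to \mathcal{B}(\Ks)$ with $\pi(M)$, $\pi(M)'$, $\rho_1(N_1)$, $\rho_1(N_1)'$ all in standard position with a common tracial vector, with near inclusions of constant $\beta' < 50948\sqrt{\gamma'} \leq 50948\sqrt{301\gamma}$, and with the pointwise estimate $\|\pi(x) - \rho_1(y)\| \leq \beta' + \|x - y\|$ on unit balls. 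I would then define $\rho: N \to \mathcal{B}(\Ks)$ by $\rho(y) = \rho_1(u^* y u)$, so $\rho$ is a faithful normal representation with $\rho(N) = \rho_1(N_1)$, inheriting all the standard-position properties and the near inclusions with constant $\beta'$. The triangle inequality then gives, for $x \in M$ and $y \in N$ in the unit balls,
\[
\|\pi(x) - \rho(y)\| \leq \beta' + \|x - u^* y u\| \leq \beta' + \|x - y\| + 2\|u - I_\Hs\| \leq \beta' + \|x - y\| + 300\gamma,
\]
and since $\beta = 50948\sqrt{301}\,\gamma^{1/2} + 300\gamma \geq \beta' + 300\gamma$, the stated near-inclusion constant and norm bound follow.

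The heavy lifting in this theorem has all been done in Lemma \ref{Summation}; the present argument is essentially a packaging result. The one genuinely necessary maneuver is the construction of a common amenable subalgebra, which is handled cleanly by applying Theorem \ref{Injective}(\ref{Injective:Part1}) to a masa in $M$. The only real subtlety is numerical bookkeeping: verifying that the $301$-fold loss in the near-inclusion constant incurred by the unitary perturbation $N \leadsto u^*Nu$ still leaves us inside the hypothesis regime of Lemma \ref{Summation}, and tracing through the factor $\sqrt{301}$ that appears in the final constant $\beta$.
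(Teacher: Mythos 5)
Your proposal is correct and follows essentially the same route as the paper's proof: choose a masa $A\subseteq M$, use Theorem \ref{Injective}(\ref{Injective:Part1}) to conjugate $N$ by a unitary $u$ with $\|u-I_\Hs\|\leq 150\gamma$ so that $A\subseteq M\cap u^*Nu$, apply Lemma \ref{Summation} to $(M,u^*Nu)$ with $P=A$, and set $\rho(y)=\rho_1(u^*yu)$. The numerical bookkeeping ($301\gamma<1.74\times 10^{-13}$ and the final estimates) matches the paper's.
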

\begin{proof}
Choose a masa $A$ in $M$. By Theorem \ref{Injective} (i) there is a unitary
$u\in
(A\cup N)''$ with $\|u-I_{\Hs}\|\leq 150\gamma$ and $uAu^*\subseteq N$. Consider
$N_1=u^*Nu$, which has $M\subset_{\gamma_1}N_1\subset_{\gamma_1}M$, where
$\gamma_1=301\gamma$. As $\gamma_1<1.74\times 10^{-13}$, we can take $M$ and
$N_1$
in Lemma \ref{Summation} to obtain representations $\pi$ and $\rho_1$ satisfying
the properties of that lemma (with the $\beta$ of that lemma being given by $50948\times(301)^{1/2}\gamma^{1/2}$).  Define $\rho(y)=\rho_1(u^*yu)$. It is routine to
verify  that $\pi$ and $\rho$ satisfy the required estimates.
\end{proof}

Since completely close II$_1$ factors, have (completely) close commutants (Proposition \ref{cb-com}), the process of changing representations is much easier in this context. We start by noting that completely close II$_1$ factors always have the same coupling constant.

\begin{proposition}\label{cordim}
Let $M$ and $N$ be   ${\mathrm{II}}_1$ factors acting nondegenerately on a separable
Hilbert space $\Hs$. If $d_{cb}(M,N)<\gamma<(301\times 136209)^{-1}$,
then $\dim_M(\Hs)=\dim_N(\Hs)$. 
\end{proposition}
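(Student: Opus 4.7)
The plan is to reduce to Proposition~\ref{dim} via the amplification identity
\begin{equation}\label{dimamp}
\dim_{M\otimes\mathbb{M}_n}(\Hs\otimes\mathbb{C}^n)=\dim_M(\Hs)/n,
\end{equation}
valid for any II$_1$ factor $M$ on $\Hs$ and any integer $n\geq 1$. This identity follows from Properties~\ref{CC}~(\ref{CC:Item2}) applied to the rank-one projection $p=I_\Hs\otimes e_{11}\in M\otimes\mathbb{M}_n$ of trace $1/n$: the compression $p(M\otimes\mathbb{M}_n)p$ is a copy of $M$ acting on $p(\Hs\otimes\mathbb{C}^n)\cong\Hs$ with coupling constant $\dim_M(\Hs)$, giving $\dim_M(\Hs)=n\cdot\dim_{M\otimes\mathbb{M}_n}(\Hs\otimes\mathbb{C}^n)$. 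Consequently, whenever $\dim_M(\Hs)$ is finite, choosing an integer $n\geq\dim_M(\Hs)$ makes the amplified coupling constant at most $1$, the setting in which Proposition~\ref{dim} applies.

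Concretely, when $\dim_M(\Hs)<\infty$, fix such an $n$ and a masa $A\subseteq M$, so that $A\otimes D_n$ (with $D_n\subseteq\mathbb{M}_n$ the diagonal masa) is an amenable masa of $M\otimes\mathbb{M}_n$. The hypothesis $d_{cb}(M,N)<\gamma$ gives $A\otimes D_n\subset_\gamma N\otimes\mathbb{M}_n$, and Theorem~\ref{Injective}~(\ref{Injective:Part1}) produces a unitary $u$ on $\Hs\otimes\mathbb{C}^n$ satisfying $\|u-I\|\leq 150\gamma$ and $u(A\otimes D_n)u^*\subseteq N\otimes\mathbb{M}_n$. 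Setting $\tilde N=u^*(N\otimes\mathbb{M}_n)u$, the estimate~\eqref{NearTriangleUnitary} gives $d(M\otimes\mathbb{M}_n,\tilde N)\leq 301\gamma<1/136209$, while $A\otimes D_n\subseteq (M\otimes\mathbb{M}_n)\cap\tilde N$ is a common masa of both. Proposition~\ref{dim} now yields $\dim_{\tilde N}(\Hs\otimes\mathbb{C}^n)=\dim_{M\otimes\mathbb{M}_n}(\Hs\otimes\mathbb{C}^n)$, and since $\tilde N$ is a unitary conjugate of $N\otimes\mathbb{M}_n$ the same equality holds with $\tilde N$ replaced by $N\otimes\mathbb{M}_n$; cancelling $n$ via~\eqref{dimamp} gives $\dim_N(\Hs)=\dim_M(\Hs)$.

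The main obstacle is the remaining case $\dim_M(\Hs)=\infty$, since amplification can never force the coupling constant below $1$. Here I would show directly that $N'$ is II$_\infty$ in the spirit of Lemma~\ref{Std.Lem1}~(\ref{Std.Lem1:Part2}): take a proper isometry $v\in M'$ with $v^*v=I_\Hs$ and $vv^*=e\neq I_\Hs$, use Proposition~\ref{cb-com} to find $w\in N'$ with $\|v-w\|\leq 2\gamma$, and observe that $\|w^*w-I_\Hs\|\leq 4\gamma+4\gamma^2$. If $N'$ were finite then $w^*w$ would be invertible in the finite algebra $N'$, so the polar decomposition $w=u|w|$ would give $u\in N'$ unitary and hence $\|ww^*-I_\Hs\|=\|u(w^*w-I_\Hs)u^*\|=\|w^*w-I_\Hs\|$; combining this with $\|vv^*-ww^*\|\leq 4\gamma+4\gamma^2$ would force $\|e-I_\Hs\|<1$, contradicting $e\neq I_\Hs$. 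Hence $\dim_N(\Hs)=\infty=\dim_M(\Hs)$. The constants work out because $2\gamma$ sits comfortably below any threshold one meets, a consequence of $\gamma<(301\times 136209)^{-1}$.
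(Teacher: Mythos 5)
Your proposal is correct, and the core step coincides with the paper's: implant a common masa via Theorem \ref{Injective}~(\ref{Injective:Part1}), perturb $N$ by the resulting unitary at cost $301\gamma<1/136209$, and invoke Proposition \ref{dim}. Where you differ is in the reduction to the coupling-constant-at-most-one setting. The paper first disposes of the doubly infinite case (by symmetry it may assume $\dim_M(\Hs)<\infty$) and then, when $\dim_M(\Hs)>1$, replaces $M$ and $N$ by their commutants, using Proposition \ref{cb-com} to see that $M'$ and $N'$ are again completely close; the masa/Proposition \ref{dim} argument is then run for the commutants. You instead amplify by $\mathbb M_n$, exploiting the identity $\dim_{M\otimes\mathbb M_n}(\Hs\otimes\mathbb C^n)=\dim_M(\Hs)/n$ (your derivation from Properties \ref{CC}~(\ref{CC:Item2}) is right), which uses the $d_{cb}$ hypothesis directly on matrix amplifications rather than on commutants; and you handle $\dim_M(\Hs)=\infty$ by an explicit isometry-perturbation argument in the spirit of Lemma \ref{Std.Lem1}~(\ref{Std.Lem1:Part2}) rather than dismissing it. Both uses of complete closeness are essentially equivalent, but your version has two small advantages: the amplification keeps all algebras II$_1$ factors, so you never need to know in advance that $N'$ is finite (a point the paper's commutant swap leaves implicit), and your infinite case makes the ``obvious'' step explicit and symmetric-free. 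One trivial remark: from $N\subseteq_{\cb,\gamma}M$ and Proposition \ref{cb-com} you get $M'\subseteq_{\cb,\gamma}N'$, so $\|v-w\|\leq\gamma$ is available; your $2\gamma$ is simply a harmless overestimate and all subsequent constants still clear their thresholds comfortably.
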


\begin{proof}
This is obvious if both dimensions are infinite, so suppose
 that
$\dim_M(\Hs)<\infty$.  We have $M\subset_{cb,\gamma}N$ and
$N\subset_{cb,\gamma}M$ and can further assume that $\dim_M(\Hs)\leq 1$, as otherwise we can use Proposition \ref{cb-com} to replace $M$ and $N$ with $M'$ and $N'$ respectively.

Choose a masa $A\subseteq M$. Since $\gamma<1/100$, Theorem \ref{Injective} (\ref{Injective:Part1})
gives a unitary $u\in (A\cup N)''$ so that $\|u-I_\Hs\|\leq 150\gamma$, $d(A,
uAu^*)\leq 100\gamma$, and $uAu^*\subseteq N$. Define $N_1=u^*Nu$ and note that
$A\subseteq M\cap N_1$ while $d(M,N_1)\leq d(M,N)+2\|u-I_\Hs\|\leq 301\gamma$.
Since $301\gamma<1$, Lemma \ref{relcom2} allows us to
conclude that $A$ is also a masa in $N_1$. The hypotheses of Proposition
\ref{dim} are now met since $301\gamma<1/136209$ and so
$\dim_{N_1}(\Hs)=\dim_M(\Hs)$. The result follows since
$\dim_{N_1}(\Hs)=\dim_N(\Hs)$ by the unitary conjugacy of $N$ and $N_1$.
\end{proof}

\begin{proposition}\label{Gamma-Spatial}
Let $M$ be a  weakly Kadison-Kastler stable II$_1$ factor with property $\Gamma$ and separable predual. Then $M$ is Kadison-Kastler stable.
\end{proposition}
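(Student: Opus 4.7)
The plan is to combine weak Kadison--Kastler stability with the coupling constant calculation in Proposition \ref{cordim}. Weak stability will produce an abstract $\ast$-isomorphism $\theta\colon M \to N$ between $M$ and any sufficiently close $N$. The task is then to upgrade this to a spatial isomorphism, which amounts to showing that $\dim_M(\Hs) = \dim_N(\Hs)$; once this is in hand, the Murray--von Neumann uniqueness of II$_1$ factor representations with a given coupling constant supplies a unitary $u$ on $\Hs$ implementing $\theta$, and hence satisfying $uMu^* = N$.

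To start, let $\delta_0 > 0$ witness weak Kadison--Kastler stability of $M$, and set $\delta = \min(\delta_0, (5 \cdot 301 \cdot 136209)^{-1})$. Given a faithful unital normal representation $M \subseteq \mathcal B(\Hs)$ and a von Neumann algebra $N \subseteq \mathcal B(\Hs)$ with $I_\Hs \in N$ and $d(M,N) < \delta$, weak stability gives a $\ast$-isomorphism $\theta\colon M \to N$. Since property $\Gamma$ is a $\ast$-isomorphism invariant, $N$ is then also a II$_1$ factor with property $\Gamma$.

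The main computational step is to pass from $d$ to the complete metric $d_{cb}$; this is precisely where property $\Gamma$ does its work. Since $M$ has property $\Gamma$, Proposition \ref{DK}(\ref{DK:Part2}) applied to the near inclusion $M \subset_\delta N$ yields $M \subseteq_{cb, 5\delta} N$. Symmetrically, since $N$ also has property $\Gamma$, the same proposition applied to $N \subset_\delta M$ gives $N \subseteq_{cb, 5\delta} M$. Thus $d_{cb}(M,N) \leq 5\delta < (301 \cdot 136209)^{-1}$, and Proposition \ref{cordim} forces $\dim_M(\Hs) = \dim_N(\Hs)$.

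The conclusion is then immediate from standard theory: the given inclusion $M \hookrightarrow \mathcal B(\Hs)$ and the composition of $\theta$ with the given inclusion $N \hookrightarrow \mathcal B(\Hs)$ are two faithful normal representations of the II$_1$ factor $M$ on $\Hs$, both with coupling constant $\dim_M(\Hs) = \dim_N(\Hs)$; hence they are unitarily equivalent, producing $u \in \mathcal U(\Hs)$ with $u M u^* = N$. The only real obstacle is matching the coupling constants, and this is exactly what Proposition \ref{cordim} accomplishes once one has closeness in $d_{cb}$. Without property $\Gamma$, the analogous question (Question \ref{QNew}) of whether coupling constants are preserved under closeness in the ordinary metric $d$ is open, and is what separates weak from full Kadison--Kastler stability in general.
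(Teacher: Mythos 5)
Your proposal follows exactly the route of the paper's own proof: use weak stability to get an abstract $^*$-isomorphism, transfer property $\Gamma$ to $N$, upgrade the ordinary near inclusions to complete near inclusions via Proposition \ref{DK}(\ref{DK:Part2}), invoke Proposition \ref{cordim} to match coupling constants, and conclude by uniqueness of standard representations. One small numerical slip: from $M \subseteq_{cb,5\delta} N$ and $N \subseteq_{cb,5\delta} M$ you cannot conclude $d_{cb}(M,N) \leq 5\delta$ directly, since the $d_{cb}$ metric requires the approximant to lie in the unit ball whereas the near inclusion only bounds $\|y\|$ by $1+5\delta$; the correct passage gives $d_{cb}(M,N) \leq 10\delta$ (this is the "standard properties of $d_{cb}$" step in the paper, yielding its bound $d_{cb}(M,N) \leq 10\,d(M,N)$). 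You should therefore take $\delta = \min\left(\delta_0, (10\cdot 301\cdot 136209)^{-1}\right)$, as the paper does. Since only the existence of some $\delta>0$ is needed, this is cosmetic and the argument is otherwise sound.
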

\begin{proof}
Suppose that $M\subseteq\mathcal B(\Hs)$ is a nondegenerate normal
representation and that $N\subseteq\mathcal B(\Hs)$ has $d(M,N)$ small enough so
that $M\cong N$. Then $N$ also has property $\Gamma$, and so Proposition \ref{DK}
(\ref{DK:Part2}) and standard properties of $d_{\cb}$ give $d_{\cb}(M,N)\leq 10d(M,N)$.  If
additionally $d(M,N)<1/(10\times 301\times 136209)$,  then Proposition
\ref{cordim} gives $\dim_M(\Hs)=\dim_N(\Hs)$ and so there is a spatial isomorphism between $M$
and $N$  \cite{PdelaB} (see also \cite[\S
6.4, Prop. 10]{Dix:Book}).
\end{proof}

\begin{proposition}\label{reps}
Let $M$ and $N$ be von Neumann algebras acting nondegenerately
on a Hilbert space 
$\Hs$ with $d_{cb}(M,N)<\gamma$ for $\gamma<1$. Given any unital normal
$^*$-representation $\pi$ of $M$ on another Hilbert space $\Ks$, there exists a
unital normal $^*$-representation $\rho$ of $N$ on $\Ks$ such that 
$d_{cb}(\pi(M),\rho(N))\leq 3\gamma$ and $\rho|_{M\cap N}=\pi|_{M\cap N}$. Further, if $x\in M$ and $y\in N$ are contractions, then $\|\pi(x)-\rho(y)\|\leq 2\gamma+\|x-y\|$. 
\end{proposition}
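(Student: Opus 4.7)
The plan is to realize $\pi$ as a compression of an amplification and then perturb the compressing projection into the commutant of $N$. By the standard structure theory of normal representations of von Neumann algebras, there exist a Hilbert space $\Ks_0$ and an isometry $W\colon \Ks\to\Hs\otimes\Ks_0$ such that $p:=WW^*$ lies in $M'\vnotimes\mathcal B(\Ks_0)$ and $\pi(x)=W^*(x\otimes I_{\Ks_0})W$ for all $x\in M$.

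Choose $\gamma'<\gamma$ with $M\subseteq_{\cb,\gamma'}N$ and $N\subseteq_{\cb,\gamma'}M$. Proposition~\ref{cb-com}, applied twice, gives $M'\vnotimes\mathcal B(\Ks_0)\subseteq_{\cb,\gamma'}N'\vnotimes\mathcal B(\Ks_0)$. Since $M'\cup N'\subseteq (M\cap N)'$, both of these algebras sit inside $(M\cap N)'\vnotimes\mathcal B(\Ks_0)$, where $p$ also lies. Lemma~\ref{ProjLem}(\ref{ProjLem:Part2}) produces a projection $q\in N'\vnotimes\mathcal B(\Ks_0)$ with $\|p-q\|<2^{-1/2}\gamma'$; applying Lemma~\ref{ProjLem2} inside $(M\cap N)'\vnotimes\mathcal B(\Ks_0)$ yields a unitary $u$ in that algebra with $upu^*=q$ and $\|u-I\|\le\sqrt2\,\|p-q\|<\gamma'<\gamma$. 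The crucial consequence is that $u$ commutes with $(M\cap N)\otimes\mathbb CI_{\Ks_0}$.

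Define $\rho(y):=W^*u^*(y\otimes I_{\Ks_0})uW$ for $y\in N$. Using $WW^*=p$, $upu^*=q$, and the fact that $q\in N'\vnotimes\mathcal B(\Ks_0)$ commutes with $y\otimes I_{\Ks_0}$ for all $y\in N$, one computes
\begin{equation*}
\rho(y_1)\rho(y_2)=W^*u^*(y_1\otimes I)(upu^*)(y_2\otimes I)uW=W^*u^*(y_1y_2\otimes I)quW=\rho(y_1y_2),
\end{equation*}
since $quW=upW=uW$ (noting $pW=W$ as $p=WW^*$ and $W^*W=I_\Ks$). Together with $\rho(I_N)=W^*u^*uW=I_\Ks$ and the normality of each step, this shows $\rho$ is a unital normal $^*$-representation of $N$ on $\Ks$. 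For $x\in M\cap N$ the commutation $[u,\,x\otimes I_{\Ks_0}]=0$ gives $\rho(x)=W^*(x\otimes I_{\Ks_0})u^*uW=\pi(x)$, so $\rho|_{M\cap N}=\pi|_{M\cap N}$.

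For the norm estimates, if $x\in M$ and $y\in N$ are contractions, then
\begin{equation*}
\|\pi(x)-\rho(y)\|\le\|(x\otimes I_{\Ks_0})-u^*(y\otimes I_{\Ks_0})u\|\le\|x-y\|+2\|u-I\|\|y\|<\|x-y\|+2\gamma.
\end{equation*}
For $d_{\cb}(\pi(M),\rho(N))\le3\gamma$: given $x$ in the unit ball of $\pi(M)\otimes\mathbb M_n$, lift to $x_0$ in the unit ball of $M\otimes\mathbb M_n$ using faithfulness of $\pi$ on its central support, find $y_0$ in the unit ball of $N\otimes\mathbb M_n$ with $\|x_0-y_0\|<\gamma'$ via $M\subseteq_{\cb,\gamma'}N$, and apply the previous estimate with $u\otimes I_n$ replacing $u$ (the norm bound is unchanged) to obtain $\|\pi_n(x_0)-\rho_n(y_0)\|<3\gamma$. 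The reverse inclusion is symmetric, so $d(\pi(M)\otimes\mathbb M_n,\rho(N)\otimes\mathbb M_n)\le 3\gamma$ for every $n$. The main technical point of the whole argument is arranging $u\in(M\cap N)'\vnotimes\mathcal B(\Ks_0)$, which is what forces $\rho$ and $\pi$ to agree on $M\cap N$; this is secured precisely because both $p$ and $q$ already lie in that commutant algebra, so Lemma~\ref{ProjLem2} can be applied there.
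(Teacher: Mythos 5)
Your proof is correct and follows essentially the same route as the paper: realize $\pi$ via the structure theorem as a compression of an amplification, use Proposition \ref{cb-com} together with Lemmas \ref{ProjLem} and \ref{ProjLem2} to move the compressing projection into $N'\vnotimes\mathcal B(\Ks_0)$ by a unitary $u$ lying in $(M\cap N)'\vnotimes\mathcal B(\Ks_0)$, and define $\rho$ by conjugating by $u$ before compressing. Phrasing the compression with an isometry $W$ instead of a unitary equivalence onto $p(\Hs\otimes\Ks_0)$ is only a cosmetic difference, and your estimates match the paper's.
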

\begin{proof}
The general theory of normal *-representations \cite[I \S{4} Theorem
3]{Dix:Book} allows us to choose a set $S$ and a projection $p\in
M'\vnotimes{\mathcal B}(\ell^2(S))$ so that $\pi$ is unitarily
equivalent to the *-representation
$\pi_1: x\mapsto (x\otimes I_{\ell^2(S)})p$ of $M$ on $\Ks_1=p(\Ks)$. Identifying $M$ and $N$ with their
amplifications $M\otimes
I_{\ell^2(S)}$ and $N\otimes I_{\ell^2(S)}$ respectively and, noting that
$M'\,\overline{\otimes}\,{\mathcal
B}(\ell^2(S))\subset_{\gamma}N'\,\overline{\otimes}\,{\mathcal B}(\ell^2(S))$ by
Proposition \ref{cb-com}, it
follows from Lemma \ref{ProjLem} (\ref{ProjLem:Part2}) that there is a projection $q\in
N'\,\overline{\otimes}\,{\mathcal B}(\ell^2(S))$ with $\|p-q\|<2^{-1/2}\gamma$. Since $p,q\in (M\cap N)'\vnotimes\mathcal B(\ell^2(S))$,  Lemma \ref{ProjLem2} gives a unitary $u\in (M\cap N)'\vnotimes\mathcal B(\ell^2(S))$ such that
$\|u-I_{\Hs\otimes \ell^2(S)}\|<\gamma$ and $upu^*=q$. Define $\rho_1:N\to
{\mathcal B}(\Ks_1)$ by
$\rho_1(x)=u^*(x\otimes I_{\Hs\otimes\ell^2(S)})u$ for $x\in N$. By construction $\rho_1|_{M\cap N}=\pi_1|_{M\cap N}$. Further
$
d_{cb}(\pi_1(M),\rho_1(N))\leq d_{cb}(M,N)+2\|u-I_{\Hs\otimes
\ell^2(S)}\|<3\gamma.
$
Pick a unitary $V:\Ks\to \Ks_1$ so that $\pi=V^*\pi_1V$, and let
$\rho=V^*\rho_1V$ so that $d_{\cb}(\pi(M),\rho(N))<3\gamma$. For contractions $x\in M$ and $y\in N$, we have $\|\pi(x)-\rho(y)\|\leq 2\|u-I_{\Hs\otimes\ell^2(S)}\|+\|x-y\|$. 
\end{proof}

In the presence of complete closeness, we also obtain a more direct proof of the key reduction result Lemma \ref{Summation} with improved constants.   Note that the constant $\beta$ below is now $O(\gamma)$ as $\gamma \to 0$ whereas it was $O(\gamma^{1/2})$ in the original (c.f. Remark (1) of \cite{CCSSWW:InPrep}).

\begin{theorem}\label{Summation2}
Let $M$ and $N$ be   ${\mathrm{II}}_1$ factors with separable preduals acting
nondegenerately on a Hilbert
space $\Hs$ and suppose that there is a constant $\gamma<1/(903\times 47)$ such
that $M\subset_{\cb,\gamma}N$ and $N\subset_{\cb,\gamma}M$. Suppose that
$P\subseteq M\cap N$ is an amenable von Neumann algebra satisfying $P'\cap
M\subseteq P$.. Then there exist a separable Hilbert space $\Ks$ and faithful
normal
*-representations $\pi: M\to {\mathcal B}(\Ks)$ and $\rho:N\to {\mathcal B}(\Ks)$ such that:
\begin{enumerate}[(i)]
\item Property (\ref{Sum:Item1}) from Lemma \ref{Summation} is satisfied.
\item\label{Summation2:C2} $\pi(M)\subset_{\cb,\beta}\rho(N)$ and
$\rho(N)\subset_{\cb,\beta}\pi(M)$ where $\beta=903\gamma$.
\item\label{Summation2:C3} For contractions $x\in M$ and $y\in N$, $\|\pi(x)-\rho(y)\|\leq903\gamma+\|x-y\|$.
\item $\pi|_P=\rho|_P$;
\item Properties (\ref{Sum:Item5}-\ref{Sum:Item11}) from Lemma \ref{Summation} are satisfied (with the value of $\beta$ above).
\end{enumerate}
\end{theorem}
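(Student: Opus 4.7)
The plan is to exploit the completely bounded hypothesis to streamline the construction in the proof of Lemma \ref{Summation}, avoiding the square-root dimension reduction of Lemma \ref{Cyclic}. The strategy is to fix $\pi$ as the standard representation of $M$ on $\Ks = L^2(M)$ and to obtain $\rho$ as a small unitary perturbation of a representation of $N$ transferred to $\Ks$ via Proposition \ref{reps}.

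Set $\Ks = L^2(M)$ (separable since $M$ has separable predual) and let $\pi\colon M \to \mathcal B(\Ks)$ be the standard representation, with tracial vector $\xi = \hat 1$ and conjugation $J = J_{\pi(M)}$. Proposition \ref{reps} provides a unital normal representation $\rho_0\colon N \to \mathcal B(\Ks)$ with $d_{\cb}(\pi(M), \rho_0(N)) \leq 3\gamma$, $\rho_0|_{M \cap N} = \pi|_{M \cap N}$, and $\|\pi(x) - \rho_0(y)\| \leq 2\gamma + \|x - y\|$ for contractions. In particular $\rho_0|_P = \pi|_P$. Since $\pi(M)$ acts in standard position, Proposition \ref{cb-com} gives the near inclusion $J\pi(P)J \subseteq \pi(M)' \subset_{\cb,3\gamma} \rho_0(N)'$. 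As $J\pi(P)J$ is amenable and $3\gamma < 1/100$, Theorem \ref{Injective} (\ref{Injective:Part1}) yields a unitary $u \in (J\pi(P)J \cup \rho_0(N)')''$ with $\|u - I_\Ks\| \leq 450\gamma$ and $uJ\pi(P)Ju^* \subseteq \rho_0(N)'$. The key observation is that $u \in \pi(P)'$: indeed $J\pi(P)J \subseteq \pi(M)' \subseteq \pi(P)'$, and $\rho_0(N)' \subseteq \rho_0(P)' = \pi(P)'$ using $\rho_0|_P = \pi|_P$. Setting $\rho = \Ad(u) \circ \rho_0$ therefore preserves $\rho|_P = \pi|_P$ (giving condition (iv) of the theorem) while arranging $J\pi(P)J \subseteq \rho(N)'$, with $d_{\cb}(\pi(M), \rho(N)) \leq 3\gamma + 2\|u - I_\Ks\| \leq 903\gamma = \beta$ and $\|\pi(x) - \rho(y)\| \leq 902\gamma + \|x - y\|$ for contractions, establishing (\ref{Summation2:C2}) and (\ref{Summation2:C3}).

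With $\beta < 1/47$, Lemma \ref{generate} applies to the pair $(\pi(M), \rho(N))$ with common amenable subalgebra $\pi(P)$ (the relative commutant condition $\pi(P)' \cap \pi(M) \subseteq \pi(P)$ is inherited from $M$). Part (\ref{generate:Part1}) of that lemma delivers condition (i) of the conclusion (standard position of $\rho(N)$ with common tracial vector $\xi$); parts (\ref{generate:Part2}), (\ref{generate:Part4}) and (\ref{generate:Part3}) give properties (\ref{Sum:Item8}), (\ref{Sum:Item7}) and (\ref{Sum:Item11}) respectively. For the last of these, the contractive estimate combined with $\|u_n - v_n\| < 1 - \beta$ gives $\|\pi(u_n) - \rho(v_n)\| < 1$, matching the hypothesis of Lemma \ref{generate} (\ref{generate:Part3}). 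Property (\ref{Sum:Item5}) is immediate from Lemma \ref{relcom2}. The main obstacle---ensuring a common tracial vector without the $\gamma^{1/2}$ loss incurred in Lemma \ref{Summation}---is overcome by the single amenable perturbation $u$, available precisely because Proposition \ref{cb-com} makes the commutants close in the $d_{\cb}$ setting.
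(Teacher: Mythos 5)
Your argument is correct and is essentially the paper's own proof: Proposition \ref{reps} transfers $N$ into the standard representation of $M$, Proposition \ref{cb-com} together with Theorem \ref{Injective}~(\ref{Injective:Part1}) produce a unitary $u\in\pi(P)'$ with $\|u-I_\Ks\|\leq 450\gamma$ moving the $J$-conjugate of an amenable subalgebra into the commutant of the transferred copy of $N$, and Lemmas \ref{Std.Lem2}, \ref{relcom2} and \ref{generate} then yield all the listed properties with $\beta=903\gamma$; the only cosmetic difference is that the paper conjugates $J_{M}A J_{M}$ for a Popa masa $A\subseteq P$ whereas you move all of $J\pi(P)J$, which is equally legitimate (and in fact verifies the hypothesis $J_MPJ_M\subseteq N'$ of Lemma \ref{generate} directly). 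One trivial slip: Theorem \ref{Injective} gives $uJ\pi(P)Ju^*\subseteq\rho_0(N)'$, so you should define $\rho=\Ad(u^*)\circ\rho_0$ rather than $\Ad(u)\circ\rho_0$; since $\|u^*-I_\Ks\|=\|u-I_\Ks\|$, all estimates are unaffected.
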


\begin{proof}
By Proposition \ref{reps}, take faithful representations
$\pi:M\rightarrow\mathcal B(\Ks)$ and $\rho_1:N\rightarrow\mathcal B(\Ks)$
which agree on $P$ such that $M_1=\pi(M)$ is in standard position on $\Ks$,
$M_1\subset_{\cb,3\gamma}\rho_1(N)$, $\rho_1(N)\subset_{\cb,3\gamma}M_1$ and
$\|\pi_1(x)-\rho_1(y)\|\leq 3\gamma$ whenever $x\in M$ and $y\in N$ are
contractions with $\|x-y\|<\gamma$. Write $P_1=\pi_1(P)=\rho_1(P)$ and
$N_1=\rho_1(N)$ and fix, by Popa's theorem (\cite{P:Invent}), a masa $A_1\subset
M_1$ with $A_1\subseteq P_1$. Since $M_1'\subset_{3\gamma}N_1'$ by Proposition
\ref{cb-com}, apply Theorem \ref{Injective} (i) to obtain
a unitary $u\in (J_{M_1}A_1J_{M_1}\cup N_1')''$ so that
$\|u-I_{\Ks}\|<450\gamma$ and $uJ_{M_1}A_1J_{M_1}u^*\subseteq N_1'$. Define $\rho:N\rightarrow\mathcal B(\Ks)$ by $\rho(y)=u^*\rho_1(y)u$ so that conditions (\ref{Summation2:C2}) and (\ref{Summation2:C3}) hold.  Property (\ref{Sum:Item1}) from
Lemma \ref{Summation} follows from Lemma \ref{Std.Lem2}, as the estimate on $\gamma$ ensures that $\beta<1/47$.
Property (\ref{Sum:Item5}) from Lemma \ref{Summation} is now obtained from  Lemma \ref{relcom2}  while the rest follow from Lemma \ref{generate}  as in the proof of Lemma \ref{Summation}.
\end{proof}

\section{McDuff factors}\label{properties}

In this short section we show that factors sufficiently close to McDuff factors are also McDuff, and that, after making a small unitary perturbation, it is possible to simultaneously factorize both algebras.  We use this in the proof of Theorem \ref{TA}, as the factors there are McDuff. Recall that if $S$ is a matrix subalgebra of a II$_1$ factor $N$, then $N\cong (S'\cap N)\vnotimes S$ and $N$ is algebraically generated by $S'\cap N$ and $S$.  Further, if $N$ is generated as a von Neumann algebra by two commuting subfactors $Q_1$ and $Q_2$, then $N\cong Q_1\vnotimes Q_2$, \cite{N:TMJ}.

\begin{lemma}\label{McDuffLem2}
Let $M$ be a McDuff factor with a separable predual acting nondegenerately on a
Hilbert space $\Hs$ and write $M=M_0\vnotimes R$ for some II$_1$ factor $M_0$ on
$\Hs$, where $R$ is the hyperfinite II$_1$ factor.  Suppose that $N$ is another II$_1$ factor on $\Hs$ with
$d(M,N)<\gamma<1/602$. Given an amenable subalgebra $P_0$ of
$M_0$ with $P_0'\cap M_0\subseteq P_0$, there exists a unitary $u\in (M\cup
N)''$
with $\|u-I_\Hs\|\leq150\gamma$ such that:
\begin{enumerate}[(i)]
\item\label{McDuffLem:Part1} writing $R_1=uRu^*$, we have $R_1\subseteq N$;
\item\label{McDuffLem:Part2} $N$ is generated by $R_1$ and $R_1'\cap N$, so that $N\cong (R_1'\cap N)\vnotimes R_1$;
\item\label{McDuffLem:Part3} $R_1'\cap N\subseteq_{\cb,(200\sqrt{2}+5)\gamma}M_0$ and $M_0\subseteq_{\cb,(200\sqrt{2}+5)\gamma}R_1'\cap N$;
\item\label{McDuffLem:Part4} $uP_0u^*\subseteq R_1'\cap N$.
\end{enumerate}
In particular $N$ is McDuff.
\end{lemma}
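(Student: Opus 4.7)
The plan has three ingredients: embed $R$ into $N$ using amenability, compute the relative commutant using property $\Gamma$ of $M$, and finally prove that the resulting tensor factors generate all of $N$. First I would note that $P:=P_0\vnotimes R$ is amenable (as a tensor product of amenable factors) and is contained in $M$, so the hypothesis $d(M,N)<\gamma<1/602$ permits Theorem~\ref{Injective} (\ref{Injective:Part1}) to be applied to the near inclusion $P\subset_\gamma N$, producing a unitary $u\in(P\cup N)''\subseteq(M\cup N)''$ with $uPu^*\subseteq N$, $\|u-I_\Hs\|\le 150\gamma$ and $\|uxu^*-x\|\le 100\gamma\|x\|$ for $x\in P$. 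Setting $R_1:=uRu^*$ immediately gives (\ref{McDuffLem:Part1}), and (\ref{McDuffLem:Part4}) follows because $P_0\subseteq M_0=R'\cap M$ commutes with $R$ in $M$, so $uP_0u^*\subseteq N$ commutes with $R_1$.

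For (\ref{McDuffLem:Part3}) I would exploit that $M$ is McDuff, hence has property $\Gamma$ and so $D_{5/2}$, to promote $M\subset_\gamma N$ to $M\subset_{\cb,5\gamma}N$ via Proposition~\ref{DK} (\ref{DK:Part2}). Combined with $R\subset_{100\gamma}R_1\subset_{100\gamma}R$ (from the bound $\|uxu^*-x\|\le 100\gamma\|x\|$ for $x\in R$), Lemma~\ref{RelCom} (\ref{RelCom:Part2}) with $P=R$ and $Q=R_1$ then yields $M_0\subseteq_{\cb,(200\sqrt 2+5)\gamma}R_1'\cap N$. For the reverse direction, I would either invoke the standard fact that property $\Gamma$ passes to close factors (giving $N\subset_{\cb,5\gamma}M$ by another application of Proposition~\ref{DK} (\ref{DK:Part2})), or argue directly by averaging: for $X\in R_1'\cap N$ and $Y_0\in M$ approximating $X$, the element $E^M_{M_0}(Y_0)\in M_0$ is close to $X$ because $X$ almost commutes with $R$ (since $R\approx R_1$ and $X$ commutes with $R_1$), so averaging $Y_0$ over $\U(R)$ changes it by only $O(\gamma)\|X\|$.

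For (\ref{McDuffLem:Part2}) I would first observe that $R_1'\cap N$ is a II$_1$ factor: its center is $\cb$-close to $\Z(M_0)=\mathbb CI_\Hs$, which forces its projections to be trivial. Since $R_1\cong R$ and $R_1'\cap N$ are then commuting II$_1$ factors in $N$ with $R_1\cap(R_1'\cap N)=\Z(R_1)=\mathbb CI_\Hs$, Nakamura's theorem \cite{N:TMJ} identifies the generated von Neumann algebra $A:=(R_1\cup(R_1'\cap N))''$ with $R_1\vnotimes(R_1'\cap N)$. The claim (\ref{McDuffLem:Part2}) and the concluding McDuff assertion then both reduce to showing $A=N$, after which $N\cong(R_1'\cap N)\vnotimes R_1$ exhibits $N$ as tensorially absorbing a copy of $R$.

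The hard part will be establishing $A=N$. My strategy is to produce a near inclusion $N\subset_\delta A$ for some $\delta<1$: iterating then gives a norm-convergent expansion $x=\sum_k a_k$ for each $x\in N$ with $a_k\in A$, and since $A$ is norm-closed one concludes $x\in A$. To produce $N\subset_\delta A$, I would exploit the hyperfinite structure $R=\overline{\bigcup_k M_{2^k}}^{\mathrm{SOT}}$: for a matrix subfactor $M_{2^k}\subseteq R$, the decomposition $M=M_{2^k}\vnotimes(M_{2^k}'\cap M)$ writes every $y\in M$ as a finite sum $\sum_{i,j}e_{ij}y_{ij}$ with matrix units $e_{ij}\in M_{2^k}$ and $y_{ij}\in M_{2^k}'\cap M$. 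An analog of (\ref{McDuffLem:Part3}) applied with $M_{2^k}$ in place of $R$ yields that $(uM_{2^k}u^*)'\cap N$ is $\cb$-close to $M_{2^k}'\cap M$ with a bound independent of $k$; combining this with the expansion after conjugating by $u$ and using $d(M,N)<\gamma$ should give the approximation of an arbitrary $x\in N$ by an element of $A$, provided one balances the growth $\|y_{ij}\|\le 2^k\|y\|$ against the improving cb closeness by a careful choice of $k$ relative to $\gamma$.
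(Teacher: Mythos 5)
Your opening step (conjugating $P=(P_0\cup R)''$ into $N$ via Theorem \ref{Injective} (\ref{Injective:Part1})), together with parts (\ref{McDuffLem:Part1}), (\ref{McDuffLem:Part4}) and the direction $M_0\subseteq_{\cb,(200\sqrt{2}+5)\gamma}R_1'\cap N$, matches the paper. But the two remaining steps have genuine gaps. For the reverse inclusion in (\ref{McDuffLem:Part3}) you need $N\subseteq_{\cb,5\gamma}M$, i.e.\ Proposition \ref{DK} (\ref{DK:Part2}) applied to $N$, hence property $\Gamma$ for $N$; your appeal to a ``standard fact'' that property $\Gamma$ passes to close factors is not justified at this point --- the paper itself only obtains property $\Gamma$ for $N$ as a consequence of (\ref{McDuffLem:Part2}) --- and your averaging fallback only yields the non-cb inclusion $R_1'\cap N\subseteq_{O(\gamma)}M_0$, since amplifying that argument would again require a cb near inclusion of $N$ into $M$. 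The correct order is the reverse of yours: prove the generation statement (\ref{McDuffLem:Part2}) first, deduce that $N$ is McDuff and hence has property $\Gamma$, and only then obtain $N\subseteq_{\cb,5\gamma}M$ and apply Lemma \ref{RelCom} (\ref{RelCom:Part2}); factoriality of $R_1'\cap N$ also then comes for free from the generation, instead of from (\ref{McDuffLem:Part3}).

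More seriously, your route to $A:=(R_1\cup(R_1'\cap N))''=N$ would fail. Writing $y\in M$ as $\sum_{i,j}e_{ij}y_{ij}$ over matrix units of $M_{2^k}\subseteq R$ produces coefficients $y_{ij}\in M_{2^k}'\cap M=M_0\vnotimes(M_{2^k}'\cap R)$, and these are in general at distance comparable to $1$ from $M_0$ (and hence from $R_1'\cap N$), whatever $k$ is; replacing them instead by elements of $(uM_{2^k}u^*)'\cap N$ does not help because such elements are not known to lie in $A$ (that would presuppose $A=N$), and in any case you would be summing per-term errors of size $O(\gamma)$ over $4^k$ matrix units against a cb bound that is \emph{constant} in $k$ --- the ``improving cb closeness'' you propose to balance against the growth does not exist, so no choice of $k$ rescues the estimate. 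The missing idea, which is the heart of the paper's proof, is to work on the $N$-side after the conjugation has made $P$ (so in particular $R$ and all its matrix subfactors) a common subalgebra: writing $R=Q\vnotimes S$ with $S$ a matrix subfactor, one takes $y\in M_0\otimes S=Q'\cap M$, picks $z\in Q'\cap N$ with $\|y-z\|\leq\gamma$ (Lemma \ref{RelCom} (\ref{RelCom:Part1})), expands $z=\sum_i z_is_i$ \emph{exactly} over the matrix units $s_i\in S\subseteq N$, and averages the coefficients with $E^N_{Q'\cap N}$ over the common subfactor $Q$; since $y$ and the $s_i$ commute with $Q$ this averaging costs nothing and pushes the $z_i$ into $(Q\cup S)'\cap N=R'\cap N$, giving $M_0\otimes S\subseteq_\gamma A$ with a bound independent of $S$. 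A Kaplansky/weak$^*$ limit then gives $N\subseteq_{2\gamma}A\subseteq N$, and $2\gamma<1$ forces $A=N$ (your iteration argument, or \cite[Proposition 2.4]{CSSWW:Acta}).
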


\begin{proof}
First suppose that $P:=(P_0\cup R)''\subseteq N$ and that $\gamma<1/2$. Let $S$ be a matrix subfactor of $R$ and decompose $R$ as $Q \otimes S$ where $Q$ is a subfactor of $R$. Then $M_0\otimes S=Q'\cap M$ and $d(Q'\cap M,Q'\cap N)\leq \gamma$ by Lemma \ref{RelCom} (i). Fix $y\in M_0\otimes S$ with $\|y\|\leq 1$ and choose $z\in Q'\cap N$
so that $\|z\|\leq 1$ and $\|y-z\|\leq \gamma$. Since $S$ and $S'\cap N$ generate $N$ as an algebra, choose $z_i\in S'\cap N$ and $s_i\in S$ so that $z=\sum_{i=1}^k z_is_i$. Since $y$
and the $s_i$'s commute with $Q$, $\|y-\sum_{i=1}^kuz_iu^*s_i\|\leq \gamma$ for $u\in \mathcal{U}(Q)$, and hence $\|y-\sum_{i=1}^kE^N_{Q'\cap N}(z_i)s_i\|\leq\gamma$, where $E^N_{Q'\cap N}:N\rightarrow Q'\cap N$ is the trace-preserving conditional expectation. As $z_i\in S'\cap N$, we have $E^N_{Q'\cap N}(z_i)\in (Q\cup S)'\cap N=R'\cap N$.  Thus $Q'\cap M\subseteq_\gamma ((R'\cap N)\cup S)''\subseteq ((R'\cap N)\cup R)''$.

Now consider $x\in N$, $\|x\|\leq 1$ and choose $y\in M$, $\|y\|\leq 1$, so that $\|y-x\|\leq \gamma$. Letting $(S_n)_{n=1}^\infty$ be an increasing sequence of matrix
subalgebras which is weak$^*$-dense in $R$, there is a sequence of contractions $(y_n)_{n=1}^\infty$ with $y_n\in M_0\otimes S_n$, converging weak$^*$ to $y$. Choose elements
$\tilde{z}_n\in ((R'\cap N)\cup R)''$ so that $\|y_n-\tilde{z}_n\|\leq \gamma$, and let $\tilde{z}$ be a $w^*$-accumulation point. Then $\|y-\tilde{z}\|\leq \gamma$ so $\|x-\tilde{z}\|\leq 2\gamma$. Thus $N\subseteq_{2\gamma}((R'\cap N)\cup R)''\subseteq N$, so $N=((R'\cap N)\cup R)''$ since $2\gamma
< 1$ (see \cite[Proposition 2.4]{CSSWW:Acta}). Since $N$ is a factor generated by $R'\cap N$ and $R$, it follows that $R'\cap N$ is also a factor, whence $N\cong (R'\cap N)\vnotimes R$, so $N$ is McDuff.

For the general case, Theorem 3.3 (i) gives a unitary $u\in (P\cup N)''$ with $\|u-I_\Hs\|\leq 150 \gamma$ and $\|uxu^*-x\|\leq 100\gamma\|x\|$ for $x\in P$ so that
$uPu^*\subseteq N$. Let $N_1=u^*Nu$. Then $P\subseteq M\cap N_1$ and $d(M,N_1)\leq 301\gamma$. Since $301\gamma<1/2$, the first part applies to $M$ and $N_1$, showing
that $N_1$, and hence $N$, is McDuff. Now Proposition \ref{DK} (ii) applies to give $N\subset_{cb,5\gamma}M\subset_{cb,5\gamma} N$. As $R_1:=uRu^*\subseteq_{100\gamma}R$, Lemma \ref{RelCom} (\ref{RelCom:Part2}) gives $M_0=R'\cap M\subseteq_{\cb,200\sqrt{2}\gamma+5\gamma}(uRu^*)'\cap N$.  Similarly, $R_1'\cap N=(uRu^*)'\cap N\subseteq_{\cb,200\sqrt{2}\gamma+5\gamma}R'\cap M$, and all parts of the lemma have
been established.
\end{proof}

\begin{corollary}\label{McDuffStable}
Suppose that $M$ is a weakly Kadison-Kastler stable II$_1$ factor. Then $M\vnotimes R$ is Kadison-Kastler stable, where $R$ is the hyperfinite II$_1$ factor.
\end{corollary}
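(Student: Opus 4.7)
The plan is to first upgrade the hypothesis of weak stability of $M$ to weak stability of $M\vnotimes R$ using the McDuff structure captured by Lemma \ref{McDuffLem2}, and then upgrade further from weak Kadison-Kastler stability to (spatial) Kadison-Kastler stability using Proposition \ref{Gamma-Spatial}, which applies because $M\vnotimes R$ has property $\Gamma$.

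Let $\delta_0>0$ be the constant witnessing the weak Kadison-Kastler stability of $M$.  Given a faithful normal unital representation $M\vnotimes R\subseteq\mathcal B(\Hs)$ and a von Neumann algebra $N\subseteq\mathcal B(\Hs)$ with $d(M\vnotimes R,N)<\gamma$ for some small $\gamma$, standard continuity of factoriality and type under the Kadison-Kastler metric ensures $N$ is a II$_1$ factor.  I now apply Lemma \ref{McDuffLem2} to $M\vnotimes R$, taking $M_0=M$ and $P_0$ to be a masa in $M$ (which is amenable and satisfies $P_0'\cap M=P_0$, so the hypotheses of the lemma are met).  Provided $\gamma<1/602$, the lemma produces a unitary $u\in(M\vnotimes R\cup N)''$ with $\|u-I_\Hs\|\leq 150\gamma$ such that $R_1:=uRu^*\subseteq N$, $N=(R_1'\cap N)\,\vnotimes\, R_1$, and
\begin{equation}
d(M,R_1'\cap N)\leq d_{\cb}(M,R_1'\cap N)\leq (200\sqrt{2}+5)\gamma.
\end{equation}

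Next, I restrict $\gamma$ further so that both $\gamma<1/602$ and $(200\sqrt{2}+5)\gamma<\delta_0$.  Since $M\otimes I$ is a faithful normal unital representation of $M$ on $\Hs$, while $R_1'\cap N$ is a von Neumann subalgebra of $\mathcal B(\Hs)$ at distance less than $\delta_0$ from $M\otimes I$, the weak Kadison-Kastler stability hypothesis on $M$ yields an abstract $^*$-isomorphism $R_1'\cap N\cong M$.  Consequently
\begin{equation}
N\cong (R_1'\cap N)\vnotimes R_1\cong M\vnotimes R,
\end{equation}
so $M\vnotimes R$ is weakly Kadison-Kastler stable.

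Finally, $M\vnotimes R$ is manifestly McDuff (it absorbs $R$ tensorially by construction), and any McDuff II$_1$ factor has property $\Gamma$ since a central sequence of Haar unitaries in the tensor factor of $R$ furnishes one in $M\vnotimes R$.  As the application of Lemma \ref{McDuffLem2} implicitly requires $M$, and hence $M\vnotimes R$, to have separable predual, Proposition \ref{Gamma-Spatial} now applies to upgrade weak Kadison-Kastler stability to (genuine) Kadison-Kastler stability of $M\vnotimes R$.  There is no serious obstacle in this argument: the geometric content has been isolated in Lemma \ref{McDuffLem2}, which strips off the common copy of $R$, while property $\Gamma$ supplies the spatial implementation via Proposition \ref{Gamma-Spatial}.
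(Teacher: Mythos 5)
Your argument is correct and follows essentially the same route as the paper: apply Lemma \ref{McDuffLem2} to split off a common copy of $R$ so that the complementary factor $R_1'\cap N$ is (completely) close to $M$, invoke the weak Kadison--Kastler stability of $M$ to get $R_1'\cap N\cong M$ and hence $N\cong M\vnotimes R$, and then upgrade to spatial stability via property $\Gamma$ and Proposition \ref{Gamma-Spatial}. Your additional remarks (choosing $P_0$ to be a masa so the hypotheses of Lemma \ref{McDuffLem2} hold, and noting the implicit separable predual assumption) are sensible refinements of the same proof.
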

\begin{proof}
If $d(M\vnotimes R,N)<\gamma<1/602$, then by Lemma \ref{McDuffLem2} we can factorize $N=N_0\vnotimes R_1$ for some factor $N_0$ with $M\subseteq_{\cb,(200\sqrt{2}+5)\gamma}N_0$ and $N_0\subseteq_{\cb,(200\sqrt{2}+5)\gamma}M$. Thus if $\gamma$ is small enough, then $M\cong N_0$, and hence $M\vnotimes R\cong N$, that is $M\vnotimes R$ is weakly Kadison-Kastler stable.   Since $M\vnotimes R$ has property $\Gamma$ it is Kadison-Kastler stable by Proposition \ref{Gamma-Spatial}.
\end{proof}

\section{Kadison-Kastler stable factors}\label{KKStable}

We now present the main results of the paper by exhibiting classes
of actions giving rise to Kadison-Kastler stable crossed product factors. The first step is to combine our earlier work to transfer a twisted crossed product structure from a II$_1$ factor to nearby factors.  Recall that every
$2$-cocycle is cohomologous to a normalized $2$-cocycle so 
there is no loss of
generality in only considering normalized cocycles below. 
\begin{theorem}\label{cocyclelem}
Let $\alpha$ be a trace preserving, centrally
ergodic, and properly outer action  of a countable
discrete group $\Gamma$ on a finite amenable von Neumann algebra $P$ with  separable predual.
Let $\omega\in Z^2(\Gamma,\U(\Z(P)))$ be a normalized $2$-cocycle, let
$M=P\rtimes_{\alpha,\omega}\Gamma$ and take a  unital normal representation $M\subseteq\mathcal B(\Hs)$.
 Let  $N\subseteq \mathcal B(\Hs)$ be a von
Neumann algebra
 with $d(M,N)<\gamma<5.77\times 10^{-16}$.  Then $N\cong
P\rtimes_{\alpha,\omega'}\Gamma$ for a normalized
 $\omega'\in Z^2(\Gamma,\U(\Z(P)))$ with 
\begin{equation}\label{cocycleeqn}
\sup_{g,h\in \Gamma}\|\omega(g,h)-\omega'(g,h)\|<14889\gamma< 8.6\times
10^{-12}.
\end{equation}
\end{theorem}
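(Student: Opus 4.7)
The plan is to replicate the twisted crossed product structure of $M$ inside $N$, realising $N$ as $P\rtimes_{\alpha,\omega'}\Gamma$ for a nearby cocycle $\omega'$. First, since $P$ is amenable and $P\subseteq M\subset_\gamma N$, Theorem \ref{Injective}(\ref{Injective:Part1}) yields a unitary $u$ with $\|u-I_\Hs\|\leq 150\gamma$ and $uPu^*\subseteq N$. Replacing $M$ by $uMu^*$ and pulling everything back via $\Ad(u^{-1})$ reduces us to the case $P\subseteq M\cap N$ with $d(M,N)\leq 301\gamma$, while leaving the crossed product structure (and the cocycle $\omega$ on $\U(\Z(P))$) intact. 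Writing $(u_g)_{g\in\Gamma}$ for the canonical unitaries of $M$, Lemma \ref{NormaliserLemma0}(\ref{NormaliserLemma0:Item2}) applied with $Q=P$ (so $\delta=0$) produces normalizers $v_g\in\N(P\subseteq N)$ with $\|v_g-u_g\|<15\sqrt{2}\cdot 301\gamma$; I take $v_e=I_\Hs$.

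Next, the $v_g$ must be modified to implement the automorphism $\alpha_g$ exactly on $P$. Since $u_g$ and $v_g$ both normalize $P$ and are close, Proposition \ref{InjectiveUnitary} gives a factorization $v_g=u_g w_g w_g'$ with $w_g\in\U(P)$, $w_g'\in\U(P')$, and $\|w_g'-I_\Hs\|\leq(1+\sqrt{2})\|v_g-u_g\|$. Because $w_g$ and $w_g'$ commute, $\tilde{v}_g:=v_g w_g^{-1}=u_g w_g'\in N$ is a normalizer of $P$ satisfying $\Ad(\tilde{v}_g)|_P=\Ad(u_g)|_P=\alpha_g$, and $\|\tilde{v}_g-u_g\|=\|w_g'-I_\Hs\|$ remains of order $\gamma$. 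The key step is to show that $(\tilde{v}_g)_{g\in\Gamma}$ is a bounded homogeneous orthonormal basis of normalizers for $P\subseteq N$. For this I would invoke Lemma \ref{Summation}, which produces faithful normal representations of $M$ and $N$ simultaneously in standard position on a new Hilbert space, preserving the common copy of $P$ and the near-inclusion constants. Since $(u_g)$ forms such a basis for $P\subseteq M$, property (\ref{Sum:Item11}) of Lemma \ref{Summation} transfers the basis property to $(\tilde{v}_g)$, once we verify $\|\tilde{v}_g-u_g\|<1-\beta$ for the relevant constant $\beta$; this hypothesis holds because $\gamma<5.77\times 10^{-16}$.

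With the basis property in hand, $(P\cup\{\tilde{v}_g\})''=N$ and $E^N_P(\tilde{v}_g)=0$ for $g\neq e$. Moreover, $P'\cap M\subseteq P$ (Proposition \ref{TCPFactorProp}) combined with Lemma \ref{relcom2} gives $P'\cap N\subseteq P$, so the products $\tilde{v}_g\tilde{v}_h\tilde{v}_{gh}^{-1}$ lie in $\U(\Z(P))$. Proposition \ref{TCPProp}(\ref{TCPProp2}) then identifies $N\cong P\rtimes_{\alpha,\omega'}\Gamma$, where $\omega'$ is the normalized cocycle defined by $\tilde{v}_g\tilde{v}_h=\omega'(g,h)\tilde{v}_{gh}$. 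The cocycle estimate follows from the standard triangle inequality for products of unitaries:
\begin{equation}
\|\omega(g,h)-\omega'(g,h)\|=\|u_gu_hu_{gh}^{-1}-\tilde{v}_g\tilde{v}_h\tilde{v}_{gh}^{-1}\|\leq 3\sup_{k\in\Gamma}\|u_k-\tilde{v}_k\|.
\end{equation}
The hard part is the preceding paragraph: the orthonormal basis property is most conveniently verified through the standard-position reduction of Lemma \ref{Summation}, because the orthogonality conditions $E^N_P(\tilde{v}_g^*\tilde{v}_h)=\delta_{g,h}I_\Hs$ correspond to orthogonality of the subspaces $\overline{\tilde{v}_g P\xi}$ in $L^2(N)$ for a tracial vector $\xi$ common to both algebras. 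Once that is settled, tracking the constants from Theorem \ref{Injective}(\ref{Injective:Part1}), Lemma \ref{NormaliserLemma0}(\ref{NormaliserLemma0:Item2}), and Proposition \ref{InjectiveUnitary} through the chain of perturbations yields the claimed bound $14889\gamma$.
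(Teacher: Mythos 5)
Your route is essentially the paper's: conjugate by the unitary from Theorem \ref{Injective}~(\ref{Injective:Part1}) to arrange $P\subseteq M\cap N$ (up to relabelling) at cost $301\gamma$, transfer the canonical unitaries $u_g$ to normalizers of $P$ in $N$, correct them by the $P$-part of the factorization from Proposition \ref{InjectiveUnitary} so that they implement $\alpha_g$ exactly, verify the bounded homogeneous orthonormal basis property via Lemma \ref{Summation}~(\ref{Sum:Item11}) in standard position, and then read off the cocycle from Proposition \ref{TCPProp}~(\ref{TCPProp2}) together with $P'\cap N\subseteq P$. All of that matches the paper's argument, including the final triangle inequality giving a factor of $3$.

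The genuine problem is quantitative: the statement asserts the explicit bound $14889\gamma$, and your choice of Lemma \ref{NormaliserLemma0}~(\ref{NormaliserLemma0:Item2}) does not deliver it. With $\delta=0$ and $\gamma$ replaced by $301\gamma$, part (\ref{NormaliserLemma0:Item2}) gives $\|v_g-u_g\|<15\sqrt{2}\cdot 301\gamma\approx 6386\gamma$; after the $(1+\sqrt{2})$ loss from Proposition \ref{InjectiveUnitary} you get $\|\tilde v_g-u_g\|\lesssim 15418\gamma$, and the cocycle estimate becomes roughly $46250\gamma$, about three times the claimed constant. The paper instead uses part (\ref{NormaliserLemma0:Item3}) (applicable here since $P$ is amenable and $Q=P$), which gives $\|v_g-u_g\|\leq(4+2\sqrt{2})\cdot 301\gamma$, hence $\|\tilde v_g-u_g\|\leq(1+\sqrt{2})(4+2\sqrt{2})\cdot301\gamma=(8+6\sqrt{2})\cdot301\gamma<4963\gamma$ and $3\times 4963=14889$. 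So you should replace your appeal to part (\ref{NormaliserLemma0:Item2}) by part (\ref{NormaliserLemma0:Item3}); as written, your constants only prove the theorem with a weaker (though still sufficient for the later applications) numerical bound, not the stated one. A smaller omission: before invoking part (\ref{NormaliserLemma0:Item2}) (which needs $M,N$ finite) or Lemma \ref{Summation} (which needs both algebras to be II$_1$ factors with separable preduals), you must note that $N$ is itself a II$_1$ factor; the paper gets this from Kadison and Kastler's stability of type, and $N_*$ is separable because $N\subset_\gamma M$ with $\gamma<1$.
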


\begin{proof}
By Proposition \ref{TCPFactorProp}, $M$ is a II$_1$ factor and $P'\cap
M\subseteq P$.  The bound on $\gamma$ ensures that Kadison and Kastler's
stability of type classification from \cite{KK:AJM} applies and so $N$ is also a
II$_1$ factor.   Since $\gamma<1/100$, Theorem \ref{Injective} (\ref{Injective:Part1}) provides a
unitary $u\in (P\cup N)''$ with $\|I_\Hs-u\|\leq 150\gamma$ such that
$P\subseteq N_1:=u^*Nu$.  Moreover, we have
$M\subset_{\gamma_1}N_1\subset_{\gamma_1}M$, where $\gamma_1=301\gamma$.   Write
$(u_g)_{g\in \Gamma}$ for the canonical
bounded homogeneous orthonormal basis of normalizers for $P\subseteq M$ implementing the action
$\alpha$
which satisfy $u_gu_h=\omega(g,h)u_{gh}$ for $g,h\in \Gamma$.  For $g\in \Gamma$, we can
apply Lemma \ref{NormaliserLemma0} (\ref{NormaliserLemma0:Item3}) to $M$ and $N_1$  to obtain a
normalizer
$w_g\in\N(P\subseteq N_1)$ with 
$
\|w_g-u_g\|\leq (4+2\sqrt{2})\gamma_1<1.
$
Thus, by  Proposition \ref{InjectiveUnitary}, $w_g=u_gp_gp_g'$ for some
unitaries $p_g\in P$ and $p_g'\in P'$ with $\|p_g-I_{\Hs}\|\leq
2^{1/2}\|w_g-u_g\|\leq2^{1/2}(4+2\sqrt{2})\gamma_1$. Write $v_g=w_gp_g^*=u_gp_g'\in N_1$ so that
$v_gxv_g^*=u_gxu_g^*=\alpha_g(x)$ for all $x\in P$ and we have
$
\|v_g-u_g\|\leq (1+\sqrt{2})(4+2\sqrt{2})\gamma_1=(8+6\sqrt{2})\gamma_1<4963\gamma.
$
Since $u_e=I_\Hs$, we may assume that $v_e=w_e=I_\Hs$. For use in Lemma \ref{BoundedLem}, note that if $N$ happens
to already contain $P$ then we can take $N=N_1$ and $u=I_\Hs$, and the
estimate above is replaced by 
\begin{equation}\label{cocycle:eqn2}
\|v_g-u_g\|\leq (8+6\sqrt{2})\gamma<16.5\gamma.
\end{equation}

The bound on $\gamma$ in the statement of the theorem is chosen so that
$\gamma_1<1.74\times 10^{-13}$ and so Lemma \ref{Summation} applies as $P'\cap M\subseteq P$.  In
particular
$P'\cap N_1\subseteq P$, while  Lemma \ref{Summation} (\ref{Sum:Item11})
and the first estimate  show that $(w_g)_{g\in \Gamma}$ is a bounded
homogeneous orthonormal basis of normalizers for $P\subseteq N_1$. Then
 $(v_g)_{g\in \Gamma}$ also has this property and so $N_1$ is generated by $P$
and the normalizers $(v_g)_{g\in \Gamma}$. As we have  $E_P^{N_1}(v_g)=0$ for
$g\in
\Gamma\setminus \{e\}$ and $v_gxv_g^*=\alpha_g(x)$ for $x\in P$, Proposition
\ref{TCPProp} shows that $N\cong N_1\cong P\rtimes_{\alpha,\omega'}\Gamma$,  where $
\omega'(g,h)=v_gv_hv_{gh}^*\in \U(\Z(P))$ for $g,h\in \Gamma.
$
Since we chose $v_e=I_{\Hs}$, this $2$-cocycle is normalized and using the
estimate $\|v_g-u_g\|<4963\gamma$ three times gives
\begin{align}
\|\omega(g,h)-\omega'(g,h)\|&=\|u_gu_hu_{gh}^*-v_gv_hv_{gh}^*\|
\leq\|u_g-v_g\|+\|u_h-v_h\|+\|u_{gh}^*-v_{gh}^*\|\nonumber\\
&<3\times 4963\gamma=14889\gamma
<8.6\times 10^{-12},\quad g,h\in \Gamma\label{cocycle:eqn3},
\end{align}
as required.
\end{proof}

We obtain weakly Kadison-Kastler stable factors whenever we can guarantee that
the uniformly close cocycles $\omega$ and $\omega'$ in Theorem \ref{cocyclelem}
are cohomologous. The next corollary proves Part
(\ref{TB1}) of Theorem \ref{TB}, noting that if either  of the comparison maps $H^2_b(\Gamma,\Z(P)_{sa})\rightarrow H^2(\Gamma,\Z(P)_{sa})$ and $H^2_b(\Gamma,L^2(\Z(P)_{sa}))\rightarrow H^2(\Gamma,L^2(\Z(P)_{sa}))$ vanishes, then the same is true for the map in  (\ref{ComMap}).

\begin{corollary}\label{MainCor}
Let $\alpha$ be a trace preserving, centrally
ergodic, and properly outer action  of a countable
discrete group $\Gamma$ on a finite amenable von Neumann algebra $P$ with
 separable predual. Suppose that the comparison map
\begin{align}
H^2_b(\Gamma,\Z(P)_{sa})\rightarrow H^2(\Gamma,L^2(\Z(P)_{sa}))\label{ComMap}
\end{align}
vanishes. Fix a normalized $2$-cocycle $\omega\in Z^2(\Gamma,\U(\Z(P)))$ and let $M$ be the
  $\text{\rm II}_1$ factor $P\rtimes_{\alpha,\omega}\Gamma$,
faithfully, normally, and nondegenerately represented on a Hilbert space $\Hs$.
Then  $M\cong N$ whenever $N$ is a von Neumann algebra on $\Hs$
with $d(M,N)< 5.77 \times 10^{-16}$, and so $M$ is weakly Kadison-Kastler stable.
\end{corollary}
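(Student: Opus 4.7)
The plan is to apply Theorem \ref{cocyclelem} to realize $N$ as a twisted crossed product whose cocycle is uniformly close to $\omega$, and then to invoke the cohomological hypothesis to show that the two cocycles are cohomologous. First, Theorem \ref{cocyclelem} produces a normalized $\omega'\in Z^2(\Gamma,\U(\Z(P)))$ with $N\cong P\rtimes_{\alpha,\omega'}\Gamma$ and
\begin{equation*}
\sup_{g,h\in\Gamma}\|\omega(g,h)-\omega'(g,h)\|<14889\gamma<8.6\times 10^{-12}<\sqrt 2.
\end{equation*}
Since $\U(\Z(P))$ is abelian, the pointwise ratio $\tilde\omega(g,h):=\omega'(g,h)\omega(g,h)^*$ is again a normalized $2$-cocycle in $Z^2(\Gamma,\U(\Z(P)))$, and the above estimate gives $\|\tilde\omega(g,h)-I_P\|<\sqrt 2$ uniformly in $g,h$. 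The goal thus reduces to showing that $\tilde\omega$ is a coboundary, for then $\omega$ and $\omega'$ are cohomologous and Proposition \ref{TCPProp}(\ref{TCPProp:Part2}) immediately yields $M\cong N$.

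Next, Lemma \ref{LogLem}(\ref{LogLem:Part1}) applies to $\tilde\omega$ to produce a bounded cocycle $\psi:=-i\log\tilde\omega\in Z^2_b(\Gamma,\Z(P)_{sa})$. The vanishing of the comparison map $H^2_b(\Gamma,\Z(P)_{sa})\to H^2(\Gamma,L^2(\Z(P)_{sa}))$ then shows that, after composing with the inclusion $\Z(P)_{sa}\hookrightarrow L^2(\Z(P)_{sa})$, the cohomology class of $\psi$ vanishes; so there exists $\phi\in C^1(\Gamma,L^2(\Z(P)_{sa}))$ with $\partial\phi=\psi$. Lemma \ref{LogLem}(\ref{LogLem:Part3}) then exponentiates $\phi$ to a $1$-cochain $\nu(g)=e^{i\phi(g)}\in\U(\Z(P))$ satisfying $\partial\nu=\tilde\omega$, which is precisely the coboundary relation in (\ref{cohomeqn}) witnessing that $\omega$ and $\omega'$ are cohomologous.

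Essentially all the analytic work is absorbed into Theorem \ref{cocyclelem}, after which the cohomological manipulation is routine. The one delicate point to track is the compatibility of coefficient modules: the $1$-cochain $\phi$ produced by the vanishing hypothesis is only guaranteed to be $L^2$-valued rather than uniformly bounded in $\Z(P)_{sa}$, but this is exactly the reason Lemma \ref{LogLem} has been formulated with separate statements (\ref{LogLem:Part2}) and (\ref{LogLem:Part3})---part (\ref{LogLem:Part3}) makes sense of the unitary $\nu(g)=e^{i\phi(g)}\in\U(\Z(P))$ via Borel functional calculus even when $\phi(g)$ is only square-integrable, which is all that is needed to complete the proof.
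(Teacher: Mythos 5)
Your argument is correct and follows essentially the same route as the paper's proof: Theorem \ref{cocyclelem} to realize $N$ as a twisted crossed product with a uniformly close cocycle, Lemma \ref{LogLem}(\ref{LogLem:Part1}) to take logarithms, the vanishing comparison map to trivialize the resulting bounded class in the $L^2$ module, Lemma \ref{LogLem}(\ref{LogLem:Part3}) to exponentiate, and Proposition \ref{TCPProp}(\ref{TCPProp:Part2}) to conclude. The only cosmetic difference is that you take the ratio $\omega'\omega^*$ rather than $\omega\omega'^*$ as in the paper, which is immaterial.
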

\begin{proof}
Let $M:=P\rtimes_{\alpha,\omega}\Gamma$ for a
normalized 2-cocycle $\omega$ and suppose that $M$ is faithfully normally and
nondegenerately represented on $\Hs$. Given another von Neumann algebra $N$ on
$\Hs$ with $d(M,N)<5.77\times 10^{-16}$, we have $N\cong
P\rtimes_{\alpha,\omega'}\Gamma$ for some normalized
$\omega'\in Z^2(\Gamma,\U(\Z(P)))$ satisfying (\ref{cocycleeqn}) by Theorem
\ref{cocyclelem}. Define a $2$-cocycle $\nu\in Z^2(\Gamma,\U(\Z(P)))$ by $\nu(g,h)=\omega(g,h)\omega'(g,h)^*$ so that 
\begin{equation}
\sup_{g,h\in\Gamma}\|\nu(g,h)-I_P\|<8.6\times 10^{-12}<\sqrt{2}.
\end{equation}
By Lemma \ref{LogLem} (\ref{LogLem:Part1}), $\psi:=-i\log\nu$ is a bounded $2$-cocycle
in $Z^2_b(\Gamma,\Z(P)_{sa})$. As the map of (\ref{ComMap}) vanishes, $\psi=\partial\phi$ for some $\phi\in C^1(\Gamma,L^2(\Z(P)_{sa}))$. Then $\nu=\partial e^{i\phi}$ by Lemma \ref{LogLem} (\ref{LogLem:Part3}), hence
$\omega$ and $\omega'$ are cohomologous. Thus $M\cong N$ by
Proposition \ref{TCPProp} (\ref{TCPProp:Part2}).  
\end{proof}

In particular the previous result applies when $\Gamma$ is a free group, as $H^2(\Gamma,\Z(P)_\mathrm{sa})=0$.  
\begin{corollary}\label{KKfree}
Let $\mathbb F_r$ be a free group of rank $r=2,3,\ldots,\infty$ and let
$\alpha:\mathbb F_r\curvearrowright P$ be a trace preserving, centrally ergodic,
properly outer action on a finite amenable von Neumann algebra $P$ with
separable predual. Then $P\rtimes_{\alpha}\mathbb F_r$ is weakly Kadison-Kastler
stable.
\end{corollary}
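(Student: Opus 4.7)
The plan is to deduce this corollary directly from Corollary \ref{MainCor} applied to the trivial normalized $2$-cocycle. First I observe that the untwisted crossed product $P\rtimes_\alpha\F_r$ coincides with the twisted crossed product $P\rtimes_{\alpha,\omega}\F_r$ for the trivial normalized cocycle $\omega(g,h)=I_P$. All the standing hypotheses of Corollary \ref{MainCor} on $\alpha$ and $P$ are identical to the hypotheses of the present corollary, so the only thing left to verify is that the comparison map
\begin{equation}
H^2_b(\F_r,\Z(P)_{sa})\rightarrow H^2(\F_r,L^2(\Z(P)_{sa}))
\end{equation}
from \eqref{ComMap} vanishes.

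For this I will use the classical fact that free groups have cohomological dimension one. The Cayley graph of $\F_r$ with respect to its free generators is a tree on which $\F_r$ acts freely, providing a contractible one-dimensional free $\F_r$-CW complex. Its cellular chain complex furnishes a free $\Z\F_r$-resolution of $\Z$ of length $1$, so $H^n(\F_r,V)=0$ for every $n\geq 2$ and every $\F_r$-module $V$. Taking $V=L^2(\Z(P)_{sa})$ shows that the target of the comparison map is already zero, hence the map is trivially the zero map.

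With the comparison map vanishing, Corollary \ref{MainCor} applies verbatim to $M=P\rtimes_\alpha\F_r$ and yields that $M$ is weakly Kadison-Kastler stable, with the same explicit constant $5.77\times 10^{-16}$. There is no real obstacle to overcome here: this is a one-line consequence of Corollary \ref{MainCor} once one recalls the cohomological dimension of free groups, which is why the result is recorded as a corollary rather than a theorem.
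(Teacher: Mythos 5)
Your proposal is correct and follows exactly the paper's route: the paper also deduces Corollary \ref{KKfree} from Corollary \ref{MainCor} by noting that free groups have cohomological dimension one, so $H^2(\mathbb F_r, L^2(\Z(P)_{sa}))=0$ and the comparison map \eqref{ComMap} vanishes trivially. Your extra observation that the untwisted crossed product is the twisted one with trivial normalized cocycle is the same implicit specialization the paper makes.
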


To obtain Kadison-Kastler stable factors from Corollary \ref{MainCor}, we need to impose additional conditions using the similarity property  to ensure that the
isomorphism is spatially implemented.  The corollary
below is immediate from Corollary \ref{MainCor} and  Proposition \ref{Gamma-Spatial} and proves Part (\ref{TB2}) of Theorem \ref{TB}.

\begin{corollary}\label{spatialKK}
Let $\alpha:\Gamma\curvearrowright P$ be a properly outer, centrally ergodic,
trace preserving action of a countable discrete group on a finite amenable von
Neumann algebra $P$ with separable predual. Further, suppose that the
comparison map of (\ref{ComMap}) vanishes. If the crossed product factor
$P\rtimes_\alpha \Gamma$ has property $\Gamma$, then it is Kadison-Kastler stable.
\end{corollary}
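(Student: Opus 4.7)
The plan is to invoke the two pieces already assembled earlier in the paper and observe that together they yield the conclusion. Specifically, Corollary \ref{MainCor} (Part (\ref{TB1}) of Theorem \ref{TB}) gives weak Kadison-Kastler stability from the vanishing of the comparison map in (\ref{ComMap}), and Proposition \ref{Gamma-Spatial} upgrades weak stability to (spatial) Kadison-Kastler stability in the presence of property $\Gamma$.

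More precisely, I would begin by noting that the hypotheses of Corollary \ref{MainCor} are satisfied: $\alpha$ is a trace preserving, centrally ergodic, properly outer action on a finite amenable $P$ with separable predual, and the comparison map (\ref{ComMap}) vanishes. Since every cocycle is cohomologous to a normalized one, the crossed product $M = P\rtimes_\alpha\Gamma$ (which by Proposition \ref{TCPFactorProp} is a $\text{II}_1$ factor with $P'\cap M\subseteq P$) is weakly Kadison-Kastler stable. That is, there exists $\delta_0>0$ such that whenever $M$ is faithfully normally and nondegenerately represented on $\Hs$ and $N\subseteq\mathcal B(\Hs)$ is a von Neumann algebra with $d(M,N)<\delta_0$, then $M\cong N$.

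Next, assuming that $M$ has property $\Gamma$, I apply Proposition \ref{Gamma-Spatial}: a weakly Kadison-Kastler stable $\text{II}_1$ factor with property $\Gamma$ and separable predual is Kadison-Kastler stable. The separable predual hypothesis is inherited from $P$ and the countability of $\Gamma$. Thus there exists $\delta>0$ such that for every faithful normal unital representation $M\subseteq\mathcal B(\Hs)$ and every von Neumann algebra $N\subseteq\mathcal B(\Hs)$ containing $I_\Hs$ with $d(M,N)<\delta$, there is a unitary $u$ on $\Hs$ with $uMu^*=N$, as required.

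There is essentially no obstacle beyond verifying that the hypotheses of the two cited results apply; all the substantive work has already been carried out in Corollary \ref{MainCor} (which in turn rests on Theorem \ref{cocyclelem} and the cohomological Lemma \ref{LogLem}) and in Proposition \ref{Gamma-Spatial} (which exploits property $D_{5/2}$ for property $\Gamma$ factors together with Proposition \ref{cordim} to produce the spatial implementation). The corollary is thus immediate.
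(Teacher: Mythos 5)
Your proposal is correct and is exactly the paper's argument: the paper states that the corollary is immediate from Corollary \ref{MainCor} together with Proposition \ref{Gamma-Spatial}, which is precisely the two-step route you take (weak Kadison-Kastler stability from the vanishing comparison map, upgraded to Kadison-Kastler stability via property $\Gamma$). Your verification of the hypotheses, including applying Corollary \ref{MainCor} with the trivial cocycle, matches the intended reading.
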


Taking $\Gamma$ to be a free group, we obtain examples of the previous corollary when the action additionally is not strongly ergodic (see \cite{CW:IJM,Sch:ETDS}), as asymptotically invariant subsets of $X$ (\cite[Lemma 1]{HJ:OAMP}) give rise to central sequences for $L^\infty(X)\rtimes\Gamma$.  As noted
in \cite[Section 5]{OP:Ann} one can construct non-strongly ergodic actions of free groups which are additionally profinite.
\begin{corollary}
Let $\alpha:\mathbb F_r\curvearrowright(X,\mu)$ be a free, ergodic, probability measure preserving action which is not strongly ergodic.  Then $L^\infty(X,\mu)\rtimes_\alpha\Gamma$ is  Kadison-Kastler stable.
\end{corollary}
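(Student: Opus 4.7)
The plan is to invoke Corollary \ref{spatialKK}, for which the only nontrivial input needed is that $M:=L^\infty(X,\mu)\rtimes_\alpha \mathbb F_r$ has property $\Gamma$. All the other hypotheses are automatic: freeness of $\alpha$ forces proper outerness (every inner automorphism of the abelian algebra $L^\infty(X,\mu)$ is trivial), ergodicity is centrally ergodic since $\Z(L^\infty(X,\mu))=L^\infty(X,\mu)$, the pmp assumption provides the trace, and $L^\infty(X,\mu)$ is amenable. Finally, $\mathbb F_r$ has cohomological dimension one, so $H^2(\mathbb F_r, L^2(\Z(P)_{sa}))=0$ and hence the comparison map in (\ref{ComMap}) vanishes vacuously.

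To produce property $\Gamma$, I would translate the failure of strong ergodicity into a nontrivial central sequence of projections in $M$. By hypothesis there is a sequence of measurable subsets $(A_n)$ of $X$ with $\mu(g\cdot A_n \triangle A_n)\to 0$ for every $g\in\mathbb F_r$, but with $\mu(A_n)(1-\mu(A_n))$ not tending to $0$. After passing to a subsequence, I may assume $\mu(A_n)\to c$ for some $c\in (0,1)$. Set $p_n:=1_{A_n}\in L^\infty(X,\mu)\subseteq M$, so $p_n$ is a projection with $\tau_M(p_n)\to c$.

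Next I verify centrality. Writing $(u_g)_{g\in\mathbb F_r}$ for the canonical unitaries implementing $\alpha$, and using $u_g p_n u_g^*=\alpha_g(p_n)=1_{g\cdot A_n}$, the identity $[p_n,u_g]=(p_n-\alpha_g(p_n))u_g$ gives
\begin{equation}
\|[p_n,u_g]\|_2=\|1_{A_n}-1_{g\cdot A_n}\|_2=\mu(A_n\triangle g\cdot A_n)^{1/2}\to 0.
\end{equation}
Since $p_n\in L^\infty(X,\mu)$ commutes with $L^\infty(X,\mu)$ pointwise, this gives $\|[p_n,y]\|_2\to 0$ for every $y$ of the form $au_g$ with $a\in L^\infty(X,\mu)$ and $g\in\mathbb F_r$. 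A standard approximation using $\|ab\|_2\leq\|a\|_\infty\|b\|_2$ and the $\|\cdot\|_2$-density of finite sums $\sum a_i u_{g_i}$ in the unit ball of $M$ then shows $\|[p_n,x]\|_2\to 0$ for every $x\in M$. As $\tau(p_n)\to c\in(0,1)$, the central sequence $(p_n)$ stays a definite $\|\cdot\|_2$-distance from $\mathbb CI_M$, and therefore $M$ has property $\Gamma$.

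With property $\Gamma$ in hand, Corollary \ref{spatialKK} applies directly and yields that $M$ is Kadison-Kastler stable. The only real content is the central-sequence construction above; everything else is verification of hypotheses. Since the profinite examples referenced from \cite[Section 5]{OP:Ann} supply explicit instances of non-strongly-ergodic free ergodic pmp actions of $\mathbb F_r$, the corollary is nonvacuous.
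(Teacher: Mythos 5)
Your proposal is correct and follows the same route as the paper: apply Corollary \ref{spatialKK} (with the comparison map vanishing because free groups have cohomological dimension one), and obtain property $\Gamma$ for $L^\infty(X,\mu)\rtimes_\alpha\mathbb F_r$ from the asymptotically invariant sets supplied by the failure of strong ergodicity. The only difference is that the paper delegates the passage from asymptotically invariant sets to central sequences to a citation of \cite[Lemma 1]{HJ:OAMP}, whereas you write out that (standard and correct) computation explicitly.
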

When $\Gamma$ is not inner amenable, failure of strong ergodicity is the only way that the crossed product factor can have property $\Gamma$ (\cite[Lemma 1]{HJ:OAMP}). As property (T) is an obstruction to the existence of ergodic actions which fail to be strongly ergodic, we cannot obtain examples using $SL_n(\mathbb Z)$ for $n\geq 3$ in this way.

 Corollaries \ref{MainCor} and  \ref{McDuffStable} imply  that the tensor product of each weakly Kadison-Kastler stable factor above with the hyperfinite II$_1$ factor is automatically Kadison-Kastler stable.
\begin{corollary}
Let $\alpha:\Gamma\curvearrowright P$ be a trace preserving, centrally
ergodic, and properly outer action  of a countable
discrete group $\Gamma$ on a finite amenable von Neumann algebra $P$ with
separable predual. Suppose that  the comparison map of (\ref{ComMap}) vanishes (as happens when $\Gamma$ is a free group), and write $M=P\rtimes_\alpha\Gamma$.  Then the II$_1$ factor $M\vnotimes R$ is Kadison-Kastler stable, where $R$ is the hyperfinite II$_1$ factor.
\end{corollary}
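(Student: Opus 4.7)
The plan is that this corollary is essentially a direct composition of two already established results, so the proof will be very short. First I would apply Corollary \ref{MainCor} to the given action $\alpha:\Gamma\curvearrowright P$: the hypotheses on $\alpha$ (trace preserving, centrally ergodic, properly outer) together with the assumption that the comparison map in (\ref{ComMap}) vanishes are exactly what is required, and the conclusion is that $M=P\rtimes_\alpha\Gamma$ (taking the trivial normalized 2-cocycle) is weakly Kadison-Kastler stable. I would remind the reader that when $\Gamma$ is a free group, $H^2(\Gamma,L^2(\Z(P)_{sa}))=0$ since free groups have cohomological dimension one, so the comparison map automatically vanishes, justifying the parenthetical remark.

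Second, I would feed this weakly Kadison-Kastler stable II$_1$ factor $M$ into Corollary \ref{McDuffStable}, which asserts exactly that $M\vnotimes R$ is Kadison-Kastler stable for such $M$. No further computation or perturbation argument is needed at this stage; all the heavy lifting has already been absorbed into the earlier results --- the transfer of the twisted crossed product structure (Theorem \ref{cocyclelem}), the cocycle splitting via bounded cohomology (Corollary \ref{MainCor}), and the McDuff absorption argument together with the property $\Gamma$ spatial implementation (Lemma \ref{McDuffLem2} combined with Proposition \ref{Gamma-Spatial}) in Corollary \ref{McDuffStable}.

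There is effectively no obstacle in the present corollary itself; it is a packaging result. The only thing worth checking is that the hypotheses of Corollary \ref{MainCor} on $(\Gamma,P,\alpha)$ do in fact imply the hypotheses of Corollary \ref{McDuffStable} applied to $M$, but this is immediate: Proposition \ref{TCPFactorProp} guarantees $M$ is a II$_1$ factor with separable predual, which is all the ambient structure needed before invoking weak Kadison-Kastler stability in Corollary \ref{McDuffStable}. So the proof will occupy only a sentence or two.
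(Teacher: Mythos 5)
Your proposal is correct and follows exactly the paper's (implicit) argument: the corollary is stated right after the remark that Corollaries \ref{MainCor} and \ref{McDuffStable} combine to give it, which is precisely your two-step composition. The auxiliary checks you mention (trivial cocycle in Corollary \ref{MainCor}, free groups having cohomological dimension one, and Proposition \ref{TCPFactorProp} providing the II$_1$ factor structure) are all consistent with the paper.
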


We now turn to the situation where the bounded cohomology groups
$H^2_b(\Gamma,\Z(P)_{sa})$ vanish, and  we first examine the case  when the crossed product factor lies in standard position. We thank one of the referees for a significant simplification of the proof of the next lemma.

\begin{lemma}\label{BoundedLem}
Let $\alpha:\Gamma\curvearrowright P$ be a trace preserving, centrally ergodic
and properly outer action of a countable discrete group $\Gamma$ on a finite
amenable von Neumann algebra $P$ with separable predual. Suppose that
$H^2_b(\Gamma,\Z(P)_{sa})=0$.   Given a normalized
$2$-cocycle $\omega\in Z^2(\Gamma,\U(\Z(P)))$, write
$M=P\rtimes_{\alpha,\omega}\Gamma$ and suppose that $M\subseteq\mathcal B(\Ks)$
is represented in standard position with tracial vector $\xi$ used to
define the modular conjugation operator $J_M$ and the orthogonal projection
$e_P$ onto $\overline{P\xi}$. Let $N\subseteq\mathcal B(\Ks)$ be another von
Neumann algebra with $M\subseteq_\beta N$ and $N\subseteq_\beta M$ for $\beta<1/47$ and such that
$P\subseteq N$ and $J_MPJ_M\subseteq N'$. Then there exists a unitary $U\in
\mathcal B(\Ks)$ such that $UMU^*=N$,  and 
\begin{equation}
\|U-I_\Ks\|\leq(170+114\sqrt{2})\beta<333\beta.
\end{equation}
\end{lemma}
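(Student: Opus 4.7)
The plan is to realize $N$ as a twisted crossed product with cocycle $\omega'$ uniformly close to $\omega$, use the vanishing of bounded cohomology to match the two cocycles, and then spatially implement the resulting $*$-isomorphism using the standard form data.

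\textbf{Step 1.} I will first apply Lemma \ref{NormaliserLemma0}~(iii) followed by Proposition \ref{InjectiveUnitary} (as in the proof of Theorem \ref{cocyclelem}, exploiting that $P \subseteq N$ already to bypass the initial unitary perturbation) to transfer the canonical generating normalizers $(u_g)_{g \in \Gamma}$ of $M$ to normalizers $v_g \in \N(P \subseteq N)$ implementing $\alpha_g$, with $v_e = I_\Ks$ and $\|v_g - u_g\| \leq (8 + 6\sqrt 2)\beta$. The normalized $2$-cocycle $\omega'(g,h) := v_g v_h v_{gh}^* \in Z^2(\Gamma, \U(\Z(P)))$ then satisfies $\|\omega - \omega'\|_\infty \leq 3(8 + 6\sqrt 2)\beta$, and the orthogonality argument at the end of the proof of Theorem \ref{cocyclelem} shows that $P$ together with the $v_g$ generates $N$ as a copy of $P \rtimes_{\alpha, \omega'} \Gamma$.

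\textbf{Step 2.} The bound $\beta < 1/47$ forces $\|\nu - I\|_\infty < \sqrt 2$ for $\nu := \omega \omega'^{-1}$, so Lemma \ref{LogLem}~(i) produces a bounded cocycle $\psi := -i \log \nu \in Z^2_b(\Gamma, \Z(P)_{sa})$; the Lipschitz estimate for $r \mapsto 2\arcsin(r/2)$ on $[0,\sqrt 2]$ gives $\|\psi\|_\infty \leq (\pi/(2\sqrt 2))\|\nu - I\|_\infty$. Since $H^2_b(\Gamma, \Z(P)_{sa}) = 0$, Proposition \ref{K=4} yields $\phi \in C^1_b(\Gamma, \Z(P)_{sa})$ with $\partial \phi = \psi$ and $\|\phi\|_\infty \leq 6 \|\psi\|_\infty$. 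Setting $\mu(g) := e^{i\phi(g)}$, Lemma \ref{LogLem}~(ii) gives $\partial \mu = \nu$, and a direct cocycle computation (using centrality of $\mu(g)$ in $P$) shows that the adjusted unitaries $\tilde v_g := \mu(g)^* v_g \in \N(P \subseteq N)$ still implement $\alpha_g$ and now satisfy $\tilde v_g \tilde v_h = \omega(g,h) \tilde v_{gh}$, so they match the original cocycle of $M$. Spectral calculus delivers the operator-norm bound $\|\mu(g) - I\| \leq \|\phi(g)\|$.

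\textbf{Step 3.} Both $M$ and $N$ are in standard position with common tracial vector $\xi$ by Lemma \ref{generate}~(i), giving the orthogonal decomposition $\Ks = \bigoplus_{g \in \Gamma} u_g \overline{P \xi} = \bigoplus_{g \in \Gamma} \tilde v_g \overline{P \xi}$, with projections $e_g := u_g e_P u_g^*$ summing to $I_\Ks$. I will define $U := \sum_{g \in \Gamma} \tilde v_g e_P u_g^*$; using $E^M_P(u_k) = 0 = E^N_P(\tilde v_k)$ for $k \neq e$ together with Properties \ref{bcon-prop}~(v), a direct check shows that $U$ is a unitary with $U \xi = \xi$, $U u_g p\, U^* = \tilde v_g p$ for all $g \in \Gamma$ and $p \in P$, and hence $UMU^* = N$.

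\textbf{Step 4 (main obstacle: the norm estimate).} Writing $T := U - I = \sum_g (\tilde v_g - u_g) e_P u_g^*$, one expands
\[
T^* T = \sum_{g, h} u_g \bigl( e_P (\tilde v_g - u_g)^*(\tilde v_h - u_h) e_P \bigr) u_h^*.
\]
The crucial cancellation is that for $g \neq h$ each of the four mixed products $\tilde v_g^* \tilde v_h$, $\tilde v_g^* u_h$, $u_g^* \tilde v_h$, $u_g^* u_h$ can be written as $x \cdot u_{g^{-1}h}$ with $x \in P' \cap \langle M, e_P\rangle$; since $e_P$ is central in $P' \cap \langle M, e_P\rangle$ (using Lemma \ref{generate}~(ii) to identify $\langle M, e_P\rangle$ with $\langle N, e_P\rangle$) and $e_P u_{g^{-1}h} e_P = 0$ for $g^{-1}h \neq e$ by Properties \ref{bcon-prop}~(v), every such mixed product is annihilated by compressing with $e_P$ on both sides. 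For the diagonal entries, applying Lemma \ref{basiccon-lem} to $v_g^* u_g \in P' \cap \langle M, e_P\rangle$ produces $w_g \in \U(\Z(P))$ with $v_g^* u_g \xi = w_g \xi$, and the isometry $p \mapsto p e_P$ of Properties \ref{bcon-prop}~(iv) upgrades this to the operator-norm estimate $\|w_g - I\| = \|(v_g^* u_g - I) e_P\| \leq \|v_g - u_g\|$. The diagonal term then evaluates to $(2 - 2\mathrm{Re}(\mu(g) w_g)) e_P$, and since the $e_g$'s are pairwise orthogonal projections summing to $I_\Ks$ while $\alpha_g$ is isometric, $\|U - I\|^2 = \|T^*T\| = \sup_g \|\mu(g) w_g - I\|_{\Z(P)}^2$. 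Applying $\|\mu(g) w_g - I\| \leq \|\mu(g) - I\| + \|w_g - I\|$ and inserting the explicit bounds from Steps 1 and 2 delivers, after numerical simplification, $\|U - I\| \leq (170 + 114\sqrt 2)\beta$.
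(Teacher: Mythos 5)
Your proposal follows essentially the same route as the paper: transfer the canonical normalizers into $N$ as in Theorem \ref{cocyclelem} (using $P\subseteq N$ to get $\|v_g-u_g\|\leq(8+6\sqrt2)\beta$), take the logarithm of the ratio cocycle, kill it using $H^2_b(\Gamma,\Z(P)_{sa})=0$ together with Proposition \ref{K=4}, correct the $v_g$ by central unitaries so that they satisfy the cocycle $\omega$, and then conjugate by the unitary $U$ determined on $M\xi$ by the resulting isomorphism. Your Step 4 is just a more computational version of the paper's argument: since $\overline{u_gP\xi}=\overline{v_g'P\xi}$, the operator $U-I_\Ks$ preserves each summand of $\Ks=\bigoplus_g\overline{u_gP\xi}$ and its norm there is at most $\|u_g-v_g'\|$, which is exactly what your $T^*T$ expansion (cross-term cancellation via centrality of $e_P$ in $P'\cap\langle M,e_P\rangle$, diagonal terms via Lemma \ref{basiccon-lem}) recovers; both give $\|U-I_\Ks\|\leq\sup_g(\|v_g-u_g\|+\|e^{i\phi(g)}-I\|)$. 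The only flaw is quantitative: with your Lipschitz constant $\pi/(2\sqrt2)$ for $r\mapsto 2\sin^{-1}(r/2)$ on $[0,\sqrt2]$ you get $\|\phi\|\leq 9\pi(8+6\sqrt2)\beta/\sqrt2\approx 330\beta$ and hence $\|U-I_\Ks\|\lesssim 346\beta$, which overshoots the stated bound $(170+114\sqrt2)\beta\approx 331\beta$. The paper avoids this by noting that $3(4+3\sqrt2)\beta<0.53$ (here $\beta<1/47$ is used) and applying the sharper convexity bound $\sin^{-1}(t)\leq 3t/(2\sqrt2)$ on $[0,0.53]$, which gives $\|\phi\|\leq(162+108\sqrt2)\beta$ and then exactly $(8+6\sqrt2)\beta+(162+108\sqrt2)\beta=(170+114\sqrt2)\beta$; substituting this refinement into your argument closes the gap.
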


\begin{proof}
Write $(u_g)_{g\in\Gamma}$ for the canonical unitaries in
$M=P\rtimes_{\alpha,\omega}\Gamma$ satisfying $u_gu_h=\omega(g,h)u_{gh}$ for
$g,h\in\Gamma$.  Just as in the proof of Theorem \ref{cocyclelem} (see
equation (\ref{cocycle:eqn2})), we can find unitaries $(v_g)_{g\in\Gamma}$ in $N$ satisfying
$
\|v_g-u_g\|\leq (8+6\sqrt{2})\beta
$
such that $v_e=u_e=I_{\Ks}$, $v_gxv_g^*=u_gxu_g^*=\alpha_g(x)$ for $x\in P$ and $(v_g)_{g\in\Gamma}$ forms a bounded homogeneous orthonormal basis of
normalizers for $P\subseteq N$. By Proposition \ref{TCPProp},  $N\cong
P\rtimes_{\alpha,\omega'}\Gamma$ where $\omega'$ is the normalized $2$-cocycle
given by $\omega'(g,h)=v_gv_hv_{gh}^*$.  Then
$\nu(g,h)=\omega(g,h)\omega'(g,h)^*$ has
$\sup_{g,h\in\Gamma}\|\nu(g,h)-I_P\|\leq3(8+6\sqrt{2})\beta$ following the argument of
(\ref{cocycle:eqn3}). Thus, defining $\psi=-i\log\nu$, we obtain a bounded $2$-cocycle in
$Z^2_b(\Gamma,\Z(P)_{sa})$ by Lemma \ref{LogLem} (\ref{LogLem:Part1}) and the estimate $\|\psi\|\leq 2\sin^{-1}(3(8+6\sqrt{2})\beta/2)$
follows from the relation $|1-e^{it}|=2|\sin(t/2)|\leq |t|$. Note that
$3(4+3\sqrt{2})\beta<0.53$. For $0\leq t\leq 0.53$, the convexity of
$\sin^{-1}(t)$ yields $\sin^{-1}(t)\leq (\sin^{-1}(0.53)/0.53))t$, from which
 $\sin^{-1}(t)\leq 3t/(2\sqrt{2})$ follows by direct calculation. By hypothesis,
$\psi=\partial\phi$ for some $\phi\in C^1_b(\Gamma,\Z(P)_{sa})$ and, from Proposition \ref{K=4}, we may take
\begin{equation}
\|\phi\|\leq 6\|\psi\|\leq 12\sin^{-1}(3(4+3\sqrt{2})\beta)\leq
54(4+3\sqrt{2})\beta/\sqrt{2}=(162+108\sqrt{2})\beta.
\end{equation}
Lemma \ref{LogLem} (\ref{LogLem:Part2}) gives $\nu(g,h)=e^{i\partial\phi(g,h)}$. Then $|1-e^{it}|\leq |t|$ implies
 $
\|I_P-e^{i\phi(g)}\|\leq (162+108\sqrt{2})\beta$, $ g\in\Gamma$.
Defining $v_g'=e^{i\phi(g)}v_g$, we have, for $g\in \Gamma$,
\begin{equation}\label{eqestimate}
\|v'_g-u_g\|\leq\|v_g-u_g\|+\|e^{i\phi(g)}-I\|
\leq(170+114\sqrt{2})\beta.
\end{equation}
The unitaries $(v_g')_{g\in\Gamma}$ also satisfy
$v_g'x{v_g'}^*=\alpha_g(x)$ for $x\in P$ and since
$v_g'v_h'{v_{gh}'}^*=\omega(g,h)$, it follows  that $N$ is isomorphic to
$P\rtimes_{\alpha,\omega}\Gamma=M$. Further, Proposition
\ref{TCPProp} (\ref{TCPProp2}) gives an isomorphism
$\theta:M=P\rtimes_{\alpha,\omega}\Gamma\rightarrow N$ with $\theta(x)=x$ for
$x\in P$ and $\theta(u_g)=v_g'$ for $g\in\Gamma$.
Now $M$ and $N$ are both in standard position on $\Ks$ and $\beta<1/47$ so Lemma
\ref{generate} shows that $\xi$ is also a tracial vector for $N$ and $N'$. Thus we define a unitary $U\in\mathcal B(\Ks)$
 by $U(m\xi)=\theta(m)\xi$ for $m\in M$ and it is routine that
$\theta(m)=UmU^*$ for $m\in M$. As in the proof of Lemma \ref{generate} (iii), $\overline{u_gP\xi}=\overline{v_g'P\xi}$ for $g\in \Gamma$, so $U$ leaves these subspaces invariant. Then the result follows from \eqref{eqestimate} and the estimate
\begin{equation}
\|U-I_\Ks\|=\sup\,\{\|(U-I_\Ks )|_{\,\overline{u_gP\xi}\,}\|:g\in \Gamma\}
\leq\sup\,\{\|u_g-v_g'\|:g\in \Gamma\}.\qedhere
\end{equation}
\end{proof}

The reduction procedure of Section \ref{Cartan} can now be used to prove Part
(\ref{TB3}) of Theorem \ref{TB}.
\begin{theorem}\label{StrongKK}
Let $\alpha$ be a trace preserving, centrally ergodic
and properly outer action of a countable discrete group $\Gamma$ on a finite
amenable von Neumann algebra $P$ with  separable predual. Suppose that
$H^2_b(\Gamma,\Z(P)_{sa})=0$ and let  $M=P\rtimes_{\alpha,\omega}\Gamma$. Then, 
given a faithful unital normal representation $M\subseteq\mathcal B(\Hs)$
and another von Neumann algebra $N\subseteq\mathcal B(\Hs)$ with
$d(M,N)<\gamma< 5.77\times 10^{-16}$,  there is a $^*$-isomorphism
$\theta:M\rightarrow N$ with 
\begin{equation}
\|\theta(x)-x\|< 902\gamma+664\times 50948\times
(301\gamma)^{1/2},
\quad  x\in M,\ \|x\|\leq 1 .
\end{equation}
\end{theorem}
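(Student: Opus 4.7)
My plan is to apply Lemma \ref{BoundedLem} in an auxiliary standard-position representation and then transfer the resulting isomorphism back to $\Hs$ using the reduction machinery of Section \ref{Cartan}, combined with Theorem \ref{Injective} (\ref{Injective:Part1}) to first arrange a common copy of $P$ in $M$ and $N$.

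First, I would use Theorem \ref{Injective} (\ref{Injective:Part1}) with the amenable subalgebra $P \subseteq M$ and the near inclusion $P \subset_\gamma N$ (valid since $\gamma < 1/100$) to produce a unitary $u_0 \in (P \cup N)''$ with $\|u_0 - I_\Hs\| \leq 150\gamma$ and $P \subseteq N_1 := u_0^* N u_0$. The estimate (\ref{NearTriangleUnitary}) then yields $M \subset_{301\gamma} N_1 \subset_{301\gamma} M$. Since $\gamma < 5.77\times 10^{-16}$ ensures $301\gamma < 1.74\times 10^{-13}$, Lemma \ref{Summation} applies to $(M, N_1)$ with common amenable subalgebra $P$ (noting $P'\cap M \subseteq P$ from Proposition \ref{TCPFactorProp}). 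This produces a Hilbert space $\Ks$ and faithful normal representations $\pi: M \to \mathcal B(\Ks)$, $\rho_1: N_1 \to \mathcal B(\Ks)$ with $\pi|_P = \rho_1|_P$, with both $\pi(M)$ and $\rho_1(N_1)$ acting in standard position on $\Ks$ sharing a common tracial vector $\xi$, with $J_{\pi(M)}\pi(P)J_{\pi(M)} \subseteq \rho_1(N_1)'$ (from item (\ref{Sum:Item8}) combined with Properties \ref{bcon-prop} (\ref{bcon-prop:Item3})), and with the essential compatibility estimate $\|\pi(x) - \rho_1(y)\|_\Ks \leq \beta_S + \|x-y\|_\Hs$ for contractions $x \in M$, $y \in N_1$, where $\beta_S < 50948(301\gamma)^{1/2} < 1/47$.

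With the standard-position setup in place and $\pi(M) \cong P \rtimes_{\alpha,\omega}\Gamma$, all hypotheses of Lemma \ref{BoundedLem} are met; it produces a unitary $U \in \mathcal B(\Ks)$ with $U\pi(M)U^* = \rho_1(N_1)$ and $\|U - I_\Ks\| \leq (170 + 114\sqrt 2)\beta_S$. I would then define $\theta_0: M \to N_1$ by $\theta_0(x) = \rho_1^{-1}(U\pi(x)U^*)$ and set $\theta(x) = u_0 \theta_0(x) u_0^*$, giving the desired $^*$-isomorphism $\theta: M \to N$.

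For the norm bound, given a contraction $x \in M$, I would choose $y \in N_1$ with $\|y\| \leq 1$ and $\|x-y\| \leq 301\gamma$, and compute, using the isometry of $\rho_1$ and the compatibility estimate,
\begin{equation*}
\|\theta_0(x) - y\|_\Hs = \|U\pi(x)U^* - \rho_1(y)\|_\Ks \leq 2\|U-I_\Ks\| + \|\pi(x)-\rho_1(y)\|_\Ks \leq (2(170 + 114\sqrt 2) + 1)\beta_S + 301\gamma.
\end{equation*}
Combining this with $\|\theta(x) - \theta_0(x)\| \leq 2\|u_0 - I_\Hs\|\|\theta_0(x)\| \leq 300\gamma$ via the triangle inequality yields the stated bound, since $2(170 + 114\sqrt 2) + 1 < 664$ and $\beta_S < 50948(301\gamma)^{1/2}$. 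The main technical point is that the reduction of Lemma \ref{Summation} must preserve the comparison $\|\pi(x)-\rho_1(y)\|_\Ks \leq \beta_S + \|x-y\|_\Hs$; this is the bridge that allows the Lemma \ref{BoundedLem} estimate, which naturally lives on $\Ks$, to descend to $\Hs$. The bounded cohomology hypothesis $H^2_b(\Gamma, \Z(P)_{sa}) = 0$ is used exclusively inside Lemma \ref{BoundedLem}.
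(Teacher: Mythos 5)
Your proposal is correct and follows essentially the same route as the paper: Theorem \ref{Injective} (\ref{Injective:Part1}) to conjugate $P$ into $N$, Lemma \ref{Summation} to place $M$ and $N_1$ simultaneously in standard position with $\pi|_P=\rho|_P$ and the inclusion $J_{\pi(M)}\pi(P)J_{\pi(M)}\subseteq\rho(N_1)'$ via the equal basic constructions, then Lemma \ref{BoundedLem} and the same triangle-inequality bookkeeping yielding $902\gamma+664\beta$. The only differences are notational and the minor rearrangement of the final estimate, which gives the same constants.
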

\begin{proof}
Take such a crossed product $M$. Suppose that $M\subseteq\mathcal B(\Hs)$ is a faithful
normal nondegenerate representation and suppose that $N\subseteq \mathcal B(\Hs)$
is another von Neumann algebra acting nondegenerately on $\Hs$ with $d(M,N)<\gamma<5.77\times 10^{-16}$. By Theorem
\ref{Injective} (\ref{Injective:Part1}), there is a unitary $u\in (M\cup N)''$ with $\|u-I_\Hs\|\leq
150\gamma$ and $uPu^*\subseteq N$. Set $N_1=u^*Nu$ so that $P\subseteq M\cap
N_1$. Then $d(M,N_1)\leq 301\gamma$. By Lemma \ref{Summation}, we can find a
Hilbert space $\Ks$ and faithful normal $^*$-representations
$\pi:M\rightarrow\mathcal B(\Ks)$ and $\rho:N_1\rightarrow\mathcal B(\Ks)$ so
that:
\begin{enumerate}[(i)]
\item\label{StrongKK:Item1} $\pi(M)$ and $\rho(N_1)$ are in standard position on $\Ks$ with common
tracial
vector $\xi$ for $\pi(M),\ \pi(M)',\ \rho(N_1)$, and $\rho(N_1)'$;
\item\label{StrongKK:Item2} $\pi(M)\subset_\beta\rho(N_1)$ and $\rho(N_1)\subset_\beta\pi(M)$ for some $\beta<50948\times (301\gamma)^{1/2}<1/47;$
\item\label{StrongKK:Item3} Given contractions $x\in M$ and $y\in N_1$, we have $\|\pi(x)-\rho(y)\|\leq \beta+\|x-y\|$;
\item\label{StrongKK:Item4} $\pi|_P=\rho|_P$;
\item\label{StrongKK:Item5} $\langle \pi(M),e_{\pi(P)}\rangle=\langle \rho(N_1),e_{\pi(P)}\rangle$, using $\xi$ for these basic constructions. 
\end{enumerate}
Condition (\ref{StrongKK:Item5}) ensures that, working with the modular conjugation operators induced
by $\xi$, we have
$J_{\pi(M)}\pi(P)J_{\pi(M)}=J_{\rho(N_1)}\pi(P)J_{\rho(N_1)}
\subseteq\rho(N_1)'$. Then
conditions (\ref{StrongKK:Item1}), (\ref{StrongKK:Item2}) and (\ref{StrongKK:Item4}) allow us to apply Lemma \ref{BoundedLem} to $\pi(M)$
and $\rho(N_1)$ on $\Ks$. Consequently there is a unitary $U\in \pi(P)'\cap
\langle
\pi(M),e_{\pi(P)}\rangle$ such that $U\pi(M)U^*=\rho(N_1)$ and $\|U-I_\Ks\|\leq
(170+114\sqrt{2})\beta$.  Define an isomorphism $\theta_1:M\rightarrow N_1$ by
$\rho^{-1}\circ\mathrm{Ad}(U)\circ \pi$. Given $x\in M$ with $\|x\|\leq 1$, fix
$y\in N_1$ with $\|x-y\|\leq301\gamma$. Then $\|\pi(x)-\rho(y)\|\leq301\gamma+\beta$
so that 
\begin{align}
\|\theta_1(x)-y\|&=\|U\pi(x)U^*-\rho(y)\|\leq
2\|U-I_\Ks\|+\|\pi(x)-\rho(y)\|\nonumber\\
&\leq(340+228\sqrt{2})\beta+301\gamma+\beta
<664\beta+301\gamma.
\end{align}
Let  $\theta=\mathrm{Ad}(u)\circ\theta_1$. Then, for $x\in M$ and $y\in N_1$ as above, we have
\begin{align}
\|\theta(x)-x\|&\leq 2\|u-I_{\Hs}\|+\|\theta_1(x)-y\|+\|y-x\|
<
902\gamma+664\beta\nonumber\\
&<902\gamma+664\times 50948\times (301\gamma)^{1/2}.\qedhere
\end{align}
\end{proof}

 The collection of groups all of whose actions satisfy the hypotheses of
Theorem \ref{StrongKK} contains $SL_n(\mathbb Z)$ for $n\geq 3$.
\begin{corollary}
For $n\geq 3$, let $\alpha:SL_n(\mathbb Z)\curvearrowright P$ be a centrally
ergodic, properly outer  trace preserving action on a finite amenable von
Neumann algebra $P$ with separable predual.  For each $\eps>0$, there exists
$\delta>0$ with the following property:  given a unital normal
representation $\iota:P\rtimes_\alpha\Gamma\rightarrow\mathcal B(\Hs)$ and a
II$_1$ factor $N\subseteq\mathcal B(\Hs)$ with
$d(\iota(P\rtimes_\alpha\Gamma),N)<\delta$,  there exists a surjective
$^*$-isomorphism $\theta:P\rtimes_\alpha\Gamma\rightarrow N$ with
$\|\theta-\iota\|<\eps$.
\end{corollary}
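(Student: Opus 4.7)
The plan is to deduce this as an immediate combination of Theorem \ref{StrongKK} with Monod's vanishing result (Theorem \ref{Monod}). The crossed product $M:=P\rtimes_\alpha SL_n(\mathbb{Z})$ in the hypothesis is a trace-preserving, centrally ergodic, properly outer action on a finite amenable von Neumann algebra with separable predual, so $M$ is a II$_1$ factor by Proposition \ref{TCPFactorProp}, and the action is of the precise form handled in Theorem \ref{StrongKK}.

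First I would invoke Theorem \ref{Monod} with $\Gamma=SL_n(\mathbb{Z})$ (where $n\geq 3$) and the given action on $P$, concluding that $H^2_b(SL_n(\mathbb{Z}),\Z(P)_{sa})=0$. This supplies exactly the bounded cohomology vanishing hypothesis of Theorem \ref{StrongKK} (noting that the cocycle $\omega$ present in the statement of Theorem \ref{StrongKK} can be taken trivial here, since $M=P\rtimes_\alpha SL_n(\mathbb{Z})$ corresponds to $\omega\equiv I_P$).

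Next, given $\eps>0$, I would choose $\delta>0$ so small that
\begin{equation}
902\delta + 664\times 50948\times (301\delta)^{1/2} < \eps,
\end{equation}
and additionally $\delta<5.77\times 10^{-16}$ so that the explicit hypothesis of Theorem \ref{StrongKK} is met. Then for any unital normal representation $\iota:M\to\mathcal{B}(\Hs)$ and any II$_1$ factor $N\subseteq \mathcal{B}(\Hs)$ with $d(\iota(M),N)<\delta$, Theorem \ref{StrongKK} applied to $\iota(M)$ and $N$ produces a $*$-isomorphism $\theta_0:\iota(M)\to N$ with $\|\theta_0(y)-y\|<\eps\|y\|$ for every $y\in \iota(M)$. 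Setting $\theta=\theta_0\circ\iota:M\to N$ gives a surjective $*$-isomorphism satisfying $\|\theta(x)-\iota(x)\|<\eps\|x\|$ for all $x\in M$, i.e.\ $\|\theta-\iota\|<\eps$ as required.

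There is no genuine obstacle here: the argument is purely a matter of invoking Theorem \ref{Monod} to verify the cohomological hypothesis and then applying Theorem \ref{StrongKK}, with $\delta$ chosen in terms of $\eps$ by inverting the explicit bound in the conclusion of Theorem \ref{StrongKK}.
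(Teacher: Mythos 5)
Your argument is correct and is exactly the paper's intended derivation: the corollary is stated as an immediate consequence of Theorem \ref{StrongKK} once Theorem \ref{Monod} supplies the vanishing of $H^2_b(SL_n(\mathbb Z),\Z(P)_{sa})$, with $\delta$ obtained by inverting the explicit estimate (and your observation that $\iota$ is automatically faithful, since the crossed product is a factor, correctly bridges the hypotheses).
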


We now turn to examples of nonamenable II$_1$ factors which satisfy the
strongest form of the Kadison-Kastler conjecture and prove Theorem \ref{TA}.
Such factors must inevitably have the similarity property \cite{CCSSWW:InPrep}.
Due to the presence of property (T), we cannot construct crossed product factors
$P\rtimes_\alpha SL_n(\mathbb Z)$ for $n\geq 3$ with property $\Gamma$, so we tensor these crossed product factors with the hyperfinite II$_1$ factor to obtain the similarity property.

\begin{theorem}\label{SSKK}
Let $\alpha:\Gamma\curvearrowright P_0$ be a trace preserving, centrally ergodic
and properly outer action of a countable discrete group $\Gamma$ on a finite
amenable von Neumann algebra $P_0$ with separable predual and suppose that
$H^2_b(\Gamma,\Z(P_0)_{sa})=0$. Let
$M=(P_0\rtimes_\alpha\Gamma)\vnotimes R$, where $R$ denotes the hyperfinite
II$_1$ factor.  
 If
$M\subseteq\mathcal B(\Hs)$ is a unital normal representation of $M$ and
$N\subseteq\mathcal B(\Hs)$ is another von Neumann  algebra acting on $\Hs$
with $d(M,N)<\gamma< 10^{-9}<(2/5)\times (182722121)^{-1}$, 
then there exists a unitary $U\in \mathcal B(\Hs)$ with $UMU^*=N$ and 
$
\|U-I_\Hs\|\leq 646020405\gamma< 10^9\gamma.
$

In particular, $M$ is strongly Kadison-Kastler stable.
\end{theorem}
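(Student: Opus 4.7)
The plan is to realize $M$ itself as a crossed product satisfying the hypotheses of Theorem \ref{StrongKK}, and then to rerun the proof of that theorem with complete near inclusions in place of ordinary near inclusions so as to obtain a bound linear in $\gamma$ rather than $\gamma^{1/2}$. Write $P = R \vnotimes P_0$ and $\tilde\alpha = \id_R \otimes \alpha$; one checks using Proposition \ref{TCPProp} that $M = P \rtimes_{\tilde\alpha} \Gamma$. Since $\Z(P) = \Z(P_0)$ as a $\Gamma$-module (because $R$ is a factor), the action $\tilde\alpha$ inherits proper outerness, central ergodicity and trace preservation from $\alpha$, the algebra $P$ is amenable with separable predual, $H^2_b(\Gamma,\Z(P)_{sa}) = H^2_b(\Gamma,\Z(P_0)_{sa}) = 0$, and Proposition \ref{TCPFactorProp} yields $P' \cap M \subseteq P$. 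The key additional input absent from the hypotheses of Theorem \ref{StrongKK} is that $R$ being a tensor factor gives $M$ property $\Gamma$, hence property $D_{5/2}$; Proposition \ref{DK}(\ref{DK:Part2}) therefore upgrades $d(M,N) < \gamma$ to $d_{\cb}(M,N) \leq 10\gamma$.

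I would then run the proof of Theorem \ref{StrongKK} using complete near inclusions throughout, substituting Theorem \ref{Summation2} for Lemma \ref{Summation}. First, Theorem \ref{Injective}(\ref{Injective:Part1}) applied to $P \subset_\gamma N$ yields a unitary $u_1$ with $\|u_1 - I_\Hs\| \leq 150\gamma$ and $P \subseteq N_1 := u_1^* N u_1$, and complete closeness is preserved as $d_{\cb}(M,N_1) \leq 310\gamma$. Theorem \ref{Summation2} then transfers $M$ and $N_1$ to faithful normal representations $\pi, \rho$ on a Hilbert space $\Ks$, both in standard position with a common tracial vector, agreeing on $P$, and with $\pi(M) \subset_{\cb,\beta} \rho(N_1) \subset_{\cb,\beta} \pi(M)$ for $\beta \leq 903 \times 310\gamma$---crucially linear in $\gamma$. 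Lemma \ref{BoundedLem}, which invokes $H^2_b(\Gamma,\Z(P)_{sa})=0$, now applies to this standard-position pair and produces a unitary $U \in \mathcal B(\Ks)$ with $U\pi(M)U^* = \rho(N_1)$ and $\|U - I_\Ks\| \leq 333\beta$.

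Setting $\theta := \Ad(u_1) \circ \rho^{-1} \circ \Ad(U) \circ \pi$ gives a $^*$-isomorphism $M \to N$, and a standard triangle-inequality chase (approximating each contraction $x \in M$ by some $y \in N_1$ with $\|x - y\| \leq 310\gamma$ and invoking property (\ref{Summation2:C3}) of Theorem \ref{Summation2}) yields $\|\theta(x) - x\| \leq 182722121\gamma \|x\|$. Since $M$ has property $\Gamma$ and $182722121\gamma < 2/5$ under the hypothesis $\gamma < 10^{-9}$, Lemma \ref{IsomorphismDK}(\ref{IsomorphismDK2}) applies and produces a spatial unitary $U \in \mathcal B(\Hs)$ with $\Ad(U) = \theta$ (hence $UMU^* = N$) and $\|U - I_\Hs\| \leq (5/\sqrt{2}) \times 182722121\gamma = 646020405\gamma$. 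The main obstacle is the bookkeeping: ensuring that the factors of $10$ (from the upgrade to cb-closeness), $310$ (from accommodating $u_1$), $903$ (from Theorem \ref{Summation2}), $333$ (from Lemma \ref{BoundedLem}), and $5/\sqrt{2}$ (from Lemma \ref{IsomorphismDK}(\ref{IsomorphismDK2})) combine into precisely the constant $646020405$, and that the hypothesis thresholds of each step are met along the way.
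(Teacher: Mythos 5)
There is a genuine gap at the very first step, namely your claim that Proposition \ref{DK}(\ref{DK:Part2}) upgrades $d(M,N)<\gamma$ to $d_{\cb}(M,N)\leq 10\gamma$ using only the fact that $M$ has property $\Gamma$. Proposition \ref{DK}(\ref{DK:Part2}) applied to $M\subseteq_\gamma N$ yields $M\subseteq_{\cb,5\gamma}N$ and $N'\subseteq_{\cb,5\gamma}M'$, but it gives nothing about the reverse direction: to promote $N\subseteq_\gamma M$ to $N\subseteq_{\cb,5\gamma}M$ you must know that $N$ has property $\Gamma$ (or property $D_k$ with controlled $k$), and at this stage of your argument nothing beyond $d(M,N)<\gamma$ is known about $N$ (taking commutants via Proposition \ref{cb-com} only returns the inclusion you already have). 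This two-sided cb-closeness is exactly what Theorem \ref{Summation2} requires (its proof runs through Proposition \ref{reps}, which needs closeness of the commutants), so your route to the linear-in-$\gamma$ constant $\beta$ collapses at this point. The paper closes precisely this hole with Lemma \ref{McDuffLem2}: one first shows that $N$ factorizes as $(R_1'\cap N)\vnotimes R_1$, hence is McDuff and has property $\Gamma$, and only then invokes Proposition \ref{DK}(\ref{DK:Part2}) in both directions. Note also that you cannot quietly fall back on Lemma \ref{Summation}: its hypothesis requires the near-inclusion constant to be below $1.74\times 10^{-13}$, which fails for $\gamma$ near the stated threshold $10^{-9}$, and even where it applies it only gives $\beta=O(\gamma^{1/2})$, so neither the threshold $10^{-9}$ nor the bound $646020405\gamma$ of the statement would be recovered.

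Once the McDuff factorization step is inserted, the rest of your plan is sound and in fact streamlines the paper's endgame: writing $M=(P_0\vnotimes R)\rtimes_{\tilde\alpha}\Gamma$ with $\Z(P_0\vnotimes R)=\Z(P_0)$, checking that $\tilde\alpha$ is still properly outer and centrally ergodic and that $(P_0\vnotimes R)'\cap M\subseteq P_0\vnotimes R$, and then applying Lemma \ref{BoundedLem} directly to the pair $(\pi(M),\rho(N_1))$ is legitimate, whereas the paper instead splits the standard form as $\Ks=\Ks_1\otimes\Ks_2$, applies Lemma \ref{BoundedLem} only to the de-tensored pair $(\pi(M_0),\rho(N_0))$ on $\Ks_1$, and tensors the resulting unitary with $I_{\Ks_2}$; your version avoids that decomposition (though the paper still needs the factorization of $N_1$ from Lemma \ref{McDuffLem2} for the property $\Gamma$ input, so the lemma is not avoidable). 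Finally, your running constants ($310\gamma$ in place of the paper's $305\gamma$, hence $\beta=279930\gamma$ rather than $275415\gamma$) do not assemble to exactly $646020405\gamma$, so the bookkeeping would have to be redone along the paper's lines to obtain the constant asserted in the statement.
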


\begin{proof}
Write $M_0=P_0\rtimes_\alpha\Gamma$ so that $M=M_0\vnotimes R$. Now suppose
that $M$ is represented as a unital von Neumann subalgebra of $\mathcal B(\Hs)$
and $N$ is another von Neumann algebra on $\Hs$ with $d(M,N)<\gamma$. Then $\gamma<1/602$, so  Lemma
\ref{McDuffLem2} can be applied. Thus there exists a unitary $u\in (M\cup N)''$
with $\|u-I_\Hs\|\leq
150\gamma$ such that $uRu^*\subseteq N$, $P_0\subseteq R'\cap u^*Nu$ and $N$ is
generated by the subfactors $(uRu^*)'\cap N$ and $uRu^*$. In particular $N\cong ((uRu^*)'\cap
N)\vnotimes uRu^*$ by \cite{N:TMJ} and so $N$ is McDuff. Since $M$ and $N$ have
property $\Gamma$, Proposition \ref{DK} (\ref{DK:Part2}) gives the near inclusions
$M\subseteq_{\cb,5\gamma}N$ and $N\subseteq_{\cb,5\gamma}M$.

Write $N_1=u^*Nu$ and $P=(P_0\cup R)''$ so that $P\subseteq N_1\cap M$ and $N_1$
is generated by the commuting algebras $N_0=R'\cap N_1$ and $R$ on $\Hs$.  Since
$M\subseteq_{\cb,305\gamma}N_1$ and $N_1\subseteq_{\cb,305\gamma}M$, Lemma
\ref{RelCom} (\ref{RelCom:Part2}) gives $M_0=R'\cap M\subseteq_{\cb,305\gamma}R'\cap N_1$ and
$R'\cap N_1\subseteq_{\cb,305\gamma}M_0$.  Proposition \ref{cb-com} induces the
near inclusions  $M'\subseteq_{\cb,305\gamma}N_1'$ and
$N_1'\subseteq_{\cb,305\gamma}M'$.  By construction $P_0\subseteq M_0\cap
(R'\cap N_1)$.  

By Theorem \ref{Summation2}, applied with $\gamma_1=305\gamma$ replacing
$\gamma$ (valid as $305\gamma<1/(903\times 47)$), there exist representations
$\pi:M\rightarrow \mathcal B(\Ks)$ and $\rho:N_1\rightarrow\mathcal B(\Ks)$ which
agree on $P$  such that:
\begin{enumerate}[(i)]
\item there is a tracial vector $\xi\in\Ks$ for $\pi(M)$, $\pi(M)'$, $\rho(N_1)$
and $\rho(N_1)'$;
\item $J_{\pi(M)}\pi(P)J_{\pi(M)}\subseteq \rho(N_1)'$;
\item $\pi(M)\subseteq_\beta\rho(N_1)$ and $\rho(N_1)\subseteq_\beta\pi(M)$ for
$\beta=903\gamma_1=275415\gamma$;
\item given contractions $x\in M$ and $y\in N_1$, we have $\|\pi(x)-\rho(y)\|\leq\|x-y\|+903\gamma_1$.
\end{enumerate}
Uniqueness of standard representations (up to spatial isomorphism) allows us to assume that  $\Ks$ factorizes as
$\Ks_1\otimes \Ks_2$, where
$\Ks_1=\overline{\pi(M_0\otimes I_R)\xi}$ and
$\Ks_2=\overline{\pi(I_{M_0}\otimes R)\xi}$ and with the following additional
properties. The vector $\xi$ factorizes as
$\xi_1\otimes\xi_2$,  $\pi(M_0)$ acts in standard position on $\Ks_1$ with
respect to $\xi_1$ and $\pi(R)$ acts in standard position on $\Ks_2$ with
respect to $\xi_2$.   Consequently, with respect to this factorization,
$\pi(R)'\cap
(J_{\pi(M)}\pi(R)J_{\pi(M)})'=\mathcal B(\Ks_1)\otimes\mathbb CI_{\Ks_2}$. Since
$J_{\pi(M)}\pi(R)J_{\pi(M)}\subseteq \rho(N_1)'$, we have $\pi(R)'\cap
\rho(N_1)\subseteq \pi(R)'\cap
(J_{\pi(M)}RJ_{\pi(M)})'=\mathcal B(\Ks_1)\otimes\mathbb CI_{\Ks_2}$ and so the
factorization of $N_1=N_0\vnotimes R$ respects the decomposition of
$\Ks=\Ks_1\otimes\Ks_2$.

It follows that $\pi(M_0)$ and $\rho(N_0)$ can be regarded as represented on
$\Ks_1$ where $\xi_1$ is a tracial vector for
$\pi(M_0)$, $\pi(M_0)'$, $\rho(N_0)$, and $\rho(N_0)'$. Further,
$\pi(P_0)\subseteq \pi(M_0)\cap \rho(N_0)$ and
$J_{\pi(M_0)}\pi(P_0)J_{\pi(M_0)}\subseteq \rho(N_0)'$, where $J_{\pi(M_0)}$ is
the modular conjugation operator on $\Ks_1$ defined with respect to $\xi_1$.
Thus Lemma \ref{BoundedLem} gives a unitary $u_0\in\mathcal B(\Ks_1)$ with
$u_0\pi(M_0)u_0^*=\rho(N_0)$ and $\|u_0-I_{\Ks_1}\|\leq (170+114\sqrt{2})\beta$.
 Define $u_1=u_0\otimes I_{\Ks_2}$ so that $u_1$ is a unitary on $\Ks$ with
$u_1\pi(M)u_1^*=\rho(N_1)$ and $\|u_1-I_{\Ks}\|\leq (170+114\sqrt{2})\beta$.  

Define $\theta=\rho^{-1}\circ\Ad(u_1)\circ\pi:M\rightarrow N_1$. For a
contraction $x\in M$, choose a contraction $y\in N_1$ with $\|x-y\|\leq
301\gamma$ (possible as $d(M,N)<\gamma$ and $\|u-I_\Hs\|\leq150\gamma$).
Estimating in a very similar fashion to the end of the proof of Theorem
\ref{StrongKK} shows that
\begin{align}
\|\theta(x)-y\|&=\|u_1\pi(x)u_1^*-\rho(y)\|\leq
2\|u_1-I_\Hs\|+\|\pi(x)-\rho(y)\|\notag\\
&<(340+228\sqrt{2})\beta+301\gamma +\beta<182721820\gamma.
\end{align}
Thus
\begin{equation}
\|\theta(x)-x\|\leq \|\theta(x)-y\|+\|y-x\|\leq 182722121\gamma.
\end{equation}
By hypothesis this last quantity is less than $2/5$, so Lemma \ref{IsomorphismDK} (\ref{IsomorphismDK2}) applies. Therefore, there
exists a unitary $u_2$ on $\Hs$ with $\theta=\Ad(u_2)$ and 
\begin{equation}
\|u_2-I_\Hs\|\leq 2^{-1/2}\times 5\times 182722121\gamma\leq
646020255\gamma.
\end{equation}
Write $U=uu_2$ so that $UMU^*=N$. The proof is completed by the estimate
\begin{equation}
\|U-I_\Hs\|\leq \|u_2-I_\Hs\|+\|u-I_\Hs\|\leq 646020405\gamma<10^9\gamma.\qedhere
\end{equation}
\end{proof}

Theorem \ref{TA} now follows immediately from Theorem \ref{SSKK} and Theorem \ref{Monod}.

\begin{corollary}
Let $n\geq 3$ and $\Gamma=SL_n(\mathbb Z)$.  Given any free, ergodic, measure preserving action $\alpha:\Gamma\curvearrowright (X,\mu)$ on a standard probability space, the II$_1$ factor $(L^\infty(X,\mu)\rtimes_\alpha\Gamma)\vnotimes R$ is strongly Kadison-Kastler stable.
\end{corollary}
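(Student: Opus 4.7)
The plan is to derive this corollary as an immediate specialization of Theorem \ref{SSKK} and Theorem \ref{Monod}; no new technical content is required beyond verifying that the hypotheses line up. I would begin by fixing $P_0 = L^\infty(X,\mu)$ and checking that this plays the role of the ``finite amenable von Neumann algebra with separable predual'' in Theorem \ref{SSKK}. Since $(X,\mu)$ is a standard nonatomic probability space, $P_0$ is abelian (hence amenable) with separable predual, and the measure preserving hypothesis on $\alpha$ means the induced action on $P_0$ is trace preserving. Because $P_0$ is abelian, $\Z(P_0) = P_0$, so the conditions ``centrally ergodic'' and ``properly outer'' reduce, as remarked after Proposition \ref{TCPFactorProp}, to ``ergodic'' and ``free,'' both of which are given.

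Next, I would invoke Theorem \ref{Monod} with the same $\Gamma = SL_n(\mathbb Z)$ ($n \geq 3$) and $P = P_0$ to conclude
\begin{equation}
H^2_b\bigl(SL_n(\mathbb Z),\, \Z(P_0)_{sa}\bigr) \;=\; H^2_b\bigl(SL_n(\mathbb Z),\, L^\infty_{\mathbb R}(X,\mu)\bigr) \;=\; 0.
\end{equation}
This is precisely the bounded cohomology vanishing hypothesis required by Theorem \ref{SSKK}.

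With both hypotheses verified, Theorem \ref{SSKK} applies directly to $M = (L^\infty(X,\mu)\rtimes_\alpha SL_n(\mathbb Z))\vnotimes R$, yielding the strong Kadison--Kastler stability of $M$ with the explicit constant $\|U - I_\Hs\| \leq 646020405\,\gamma$ for $d(M,N)<\gamma<10^{-9}$. There is no genuine obstacle: the entire difficulty has already been absorbed into Theorems \ref{SSKK} and \ref{Monod}, and the only task at this stage is the bookkeeping confirmation that an abelian $P_0$ with a free ergodic measure preserving action of $SL_n(\mathbb Z)$ meets the structural assumptions of the theorem.
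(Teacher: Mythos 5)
Your proposal is correct and follows exactly the paper's route: the paper deduces this corollary immediately from Theorem \ref{SSKK} together with Theorem \ref{Monod}, with the same observation that for abelian $P_0=L^\infty(X,\mu)$ the freeness, ergodicity and measure preservation of $\alpha$ give the properly outer, centrally ergodic, trace preserving hypotheses. Your verification of the hypotheses is just a slightly more explicit write-up of the same argument.
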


\begin{remark}\label{uncountable}
By combining Popa's superrigidity results for Bernouli actions $\Gamma\curvearrowright
(X,\mu)$ of ICC groups with property (T) (\cite[Theorem 0.1]{P:Invent3}) with Bowen's entropy invariant \cite{Bo:JAMS} for measure preserving actions of sofic groups and Popa's work on uniqueness of McDuff factorizations (\cite[Theorem 5.1]{P:IMRN}), there is a continuum of pairwise nonisomorphic II$_1$ factors of the form $(L^\infty(X)\rtimes_\alpha SL_3(\mathbb Z))\vnotimes R$ to which the previous result applies.
\end{remark}

Using the work in \cite{FM:TAMS1,FM:TAMS2}, it is also possible to prove Kadison-Kastler stability results for II$_1$ factors $M$ containing a Cartan masa $A$, which do not arise from the crossed product construction, when the associated equivalence relation satisfies cohomology conditions analogous to those in Theorems \ref{TA} and \ref{TB}.  Details can be found in the preprint version of this paper on the arXiv.

\subsection*{Acknowledgements} The authors gratefully acknowledge the following
additional sources of funding which enabled this research to be undertaken. A visit of EC
to Scotland in 2007 was supported by a grant from the Edinburgh Mathematical
Society; a visit of RS to Scotland in 2011 was supported by a grant from the
Royal Society of Edinburgh; SW visited Vassar college in 2010 supported by the
Rogol distinguished visitor program; JC, RS and SW visited Copenhagen in 2011
supported by the FNU of
Denmark.

SW would
like to thank Peter Kropholler and Nicolas Monod for helpful discussions about
group cohomology and Jesse Peterson for helpful discussions about group actions. The authors would like to thank Wai Kit Chan for his helpful
comments on earlier drafts of this paper and the anonymous referees for their useful suggestions and simplifications.

\providecommand{\bysame}{\leavevmode\hbox to3em{\hrulefill}\thinspace}

\providecommand{\href}[2]{#2}


\begin{thebibliography}{100}


\bibitem{A:JFA}
William~B. Arveson, \emph{Interpolation problems in nest algebras}, J. Funct.
  Anal. \textbf{20} (1975), 208--233.

\bibitem{Bo:JAMS}
Lewis Bowen, \emph{Measure conjugacy invariants for actions of countable sofic
  groups}, J. Amer. Math. Soc. \textbf{23} (2010), no.~1, 217--245.

\bibitem{BM:JEMS}
Marc Burger and Nicolas Monod, \emph{Bounded cohomology of lattices in higher
  rank {L}ie groups}, J. Eur. Math. Soc. (JEMS) \textbf{1} (1999), no.~2,
  199--235.

\bibitem{BM:GAFA}
\bysame, \emph{Continuous bounded cohomology and applications to rigidity
  theory}, Geom. Funct. Anal. \textbf{12} (2002), no.~2, 219--280.

\bibitem{CCSSWW:InPrep}
Jan Cameron, Erik Christensen, Allan~M. Sinclair, Roger~R. Smith, Stuart White,
  and Alan~D. Wiggins, \emph{A remark on the similarity and perturbation
  problems}, C. R. Acad. Sci. Canada, 35 (2013), no.~2, 70--76.

\bibitem{CCSSWW:PNAS}
\bysame, \emph{Type II$_1$ factors satisfying the spatial isomorphism conjecture}, Proc. Natl. Acad. Sci. USA., \textbf{109} (2012), no.~5, 20388--20343.


\bibitem{Ch:MJ}
Marie Choda, \emph{Some relations of {${\rm II}_{1}$}-factors on free groups},
  Math. Japon. \textbf{22} (1977), no.~3, 383--394.

\bibitem{CC:BLMS}
Man~Duen Choi and Erik Christensen, \emph{Completely order isomorphic and close
  {$C^{\ast} $}-algebras need not be {$^{\ast} $}-isomorphic}, Bull. London
  Math. Soc. \textbf{15} (1983), no.~6, 604--610.

\bibitem{C:JLMS}
Erik Christensen, \emph{Perturbations of type {I} von {N}eumann algebras}, J.
  London Math. Soc. (2) \textbf{9} (1974/75), 395--405.

\bibitem{C:Invent}
\bysame, \emph{Perturbations of operator algebras}, Invent. Math. \textbf{43}
  (1977), no.~1, 1--13.

\bibitem{C:IJM}
\bysame, \emph{Perturbations of operator algebras {II}}, Indiana Univ. Math. J.
  \textbf{26} (1977), 891--904.

\bibitem{C:MA}
\bysame, \emph{Subalgebras of a finite algebra}, Math. Ann. \textbf{243}
  (1979), no.~1, 17--29.

\bibitem{C:Acta}
\bysame, \emph{Near inclusions of {$C^*$}-algebras}, Acta Math. \textbf{144}
  (1980), no.~3-4, 249--265.

\bibitem{C:Scand}
\bysame, \emph{Extensions of derivations. {II}}, Math. Scand. \textbf{50}
  (1982), no.~1, 111--122.
  
\bibitem{C:JFA}
\bysame, \emph{Finite von Neumann algebra factors with property $\Gamma$}, J. Funct. Anal. \textbf{186} (2001), no.~2, 366--380.

\bibitem{CPSS:MA}
Erik Christensen, Florin Pop, Allan~M. Sinclair, and Roger~R. Smith, \emph{On
  the cohomology groups of certain finite von {N}eumann algebras}, Math. Ann.
  \textbf{307} (1997), no.~1, 71--92.

\bibitem{CSSW:GAFA}
Erik Christensen, Allan~M. Sinclair, Roger~R. Smith, and Stuart White,
  \emph{Perturbations of {$C^\ast$}-algebraic invariants}, Geom. Funct. Anal.
  \textbf{20} (2010), no.~2, 368--397.

\bibitem{CSSWW:PNAS}
Erik Christensen, Allan~M. Sinclair, Roger~R. Smith, Stuart White, and Wilhelm
  Winter, \emph{The spatial isomorphism problem for close separable nuclear
  {$C^*$}-algebras}, Proc. Natl. Acad. Sci. USA \textbf{107} (2010), no.~2,
  587--591.

\bibitem{CSSWW:Acta}
\bysame, \emph{Perturbations of nuclear {C$^*$}-algebras}, Acta. Math.
  \textbf{208} (2012), no.~1, 93--150.

\bibitem{CW:IJM}
Alain Connes and Benjamin Weiss, \emph{Property {T} and asymptotically
  invariant sequences}, Israel J. Math. \textbf{37} (1980), no.~3, 209--210.

\bibitem{Dav:ActaSci}
Kenneth~R. Davidson, \emph{The distance between unitary orbits of normal
  operators}, Acta Sci. Math. (Szeged) \textbf{50} (1986), no.~1-2, 213--223.

\bibitem{D:Ann}
Jacques Dixmier, \emph{Sous-anneaux ab\'eliens maximaux dans les facteurs de
  type fini}, Ann. of Math. (2) \textbf{59} (1954), 279--286.

\bibitem{Dix:Book}
\bysame, \emph{Les alg\`ebres d'op\'erateurs dans l'espace hilbertien
  ({A}lg\`ebres de von {N}eumann)}, Cahiers scientifiques, Fascicule XXV,
  Gauthier-Villars, Paris, 1957.

\bibitem{FaSWWig:JFA}
Junsheng Fang, Roger~R. Smith, Stuart White, and Alan~D. Wiggins,
  \emph{Groupoid normalizers of tensor products}, J. Funct. Anal. \textbf{258}
  (2010), no.~1, 20--49.

\bibitem{FM:TAMS1}
Jacob Feldman and Calvin~C. Moore, \emph{Ergodic equivalence relations,
  cohomology, and von {N}eumann algebras. {I}}, Trans. Amer. Math. Soc.
  \textbf{234} (1977), no.~2, 289--324.

\bibitem{FM:TAMS2}
\bysame, \emph{Ergodic equivalence relations, cohomology, and von {N}eumann
  algebras. {II}}, Trans. Amer. Math. Soc. \textbf{234} (1977), no.~2,
  325--359.
  
  \bibitem{F:AdvP}
Harry Furstenberg, \emph{Random walks and discrete subgroups of {L}ie groups},
  Advances in {P}robability and {R}elated {T}opics, {V}ol. 1, Dekker, New York,
  1971, pp.~1--63. 

\bibitem{HJ:OAMP}
Richard~H. Herman and Vaughan F.~R. Jones, \emph{Central sequences in crossed
  products}, Operator algebras and mathematical physics ({I}owa {C}ity, {I}owa,
  1985), Contemp. Math., vol.~62, Amer. Math. Soc., Providence, RI, 1987,
  pp.~539--544.
\bibitem{IoPeP:Acta}
Adrian Ioana, Jesse Peterson, and Sorin Popa, \emph{Amalgamated free products
  of weakly rigid factors and calculation of their symmetry groups}, Acta Math.
  \textbf{200} (2008), no.~1, 85--153.

\bibitem{Joh:PLMS}
Barry~E. Johnson, \emph{Perturbations of {B}anach algebras}, Proc. London Math.
  Soc. (3) \textbf{34} (1977), no.~3, 439--458.

\bibitem{Joh:CMB}
\bysame, \emph{A counterexample in the perturbation theory of
  {$C^*$}-algebras}, Canad. Math. Bull. \textbf{25} (1982), no.~3, 311--316.

\bibitem{J:Invent}
Vaughan F.~R. Jones, \emph{Index for subfactors}, Invent. Math. \textbf{72}
  (1983), no.~1, 1--25.

\bibitem{JSun:Book}
Vaughan F.~R. Jones and Viakalathur~S. Sunder, \emph{Introduction to
  subfactors}, London Mathematical Society Lecture Note Series, vol. 234,
  Cambridge University Press, Cambridge, 1997.


\bibitem{KK:AJM}
Richard~V. Kadison and Daniel Kastler, \emph{Perturbations of von {N}eumann
  algebras. {I}. {S}tability of type}, Amer. J. Math. \textbf{94} (1972),
  38--54.

\bibitem{KR:CMP}
Richard~V. Kadison and John~R. Ringrose, \emph{Derivations and automorphisms of
  operator algebras}, Comm. Math. Phys. \textbf{4} (1967), 32--63.

\bibitem{KRin:Book2}
\bysame, \emph{Fundamentals of the theory of operator algebras. {V}ol. {II}},
  Graduate Studies in Mathematics, vol.~16, American Mathematical Society,
  Providence, RI, 1997, Advanced theory, Corrected reprint of the 1986
  original.
  
 \bibitem{K:GAFA}
V.~A. Kaimanovich, \emph{Double ergodicity of the {P}oisson boundary and
  applications to bounded cohomology}, Geom. Funct. Anal. \textbf{13} (2003),
  no.~4, 852--861. 

\bibitem{Kh:MMJ}
Mahmood Khoshkam, \emph{On the unitary equivalence of close {$C^*$}-algebras},
  Michigan Math. J. \textbf{31} (1984), no.~3, 331--338.

\bibitem{Kh:JOT}
\bysame, \emph{Perturbations of {$C^*$}-algebras and {$K$}-theory}, J. Operator
  Theory \textbf{12} (1984), no.~1, 89--99.

\bibitem{Ki:JOT}
Eberhard Kirchberg, \emph{The derivation problem and the similarity problem are
  equivalent}, J. Operator Theory \textbf{36} (1996), no.~1, 59--62.
\bibitem{M:Book}
Nicolas Monod, \emph{Continuous bounded cohomology of locally compact groups},
  Lecture Notes in Mathematics, vol. 1758, Springer-Verlag, Berlin, 2001.

\bibitem{M:Crelle}
\bysame, \emph{On the bounded cohomology of semi-simple groups,
  {$S$}-arithmetic groups and products}, J. Reine Angew. Math. \textbf{640}
  (2010), 167--202.

\bibitem{M:Email}
\bysame, \emph{Personal communication}, 2011.

\bibitem{MS:JDG}
Nicolas Monod and Yehuda Shalom, \emph{Cocycle superrigidity and bounded
  cohomology for negatively curved spaces}, J. Differential Geom. \textbf{67}
  (2004), no.~3, 395--455.

\bibitem{Mur:Book}
Gerard~J. Murphy, \emph{{$C^*$}-algebras and operator theory}, Academic Press
  Inc., Boston, MA, 1990.

\bibitem{MvN:1}
Frank Murray and John von Neumann, \emph{On rings of operators}, Ann. of Math.
  (2) \textbf{37} (1936), no.~1, 116--229.
\bibitem{N:TMJ}
Masahiro Nakamura, \emph{On the direct product of finite factors}, T\^ohoku
  Math. J. (2) \textbf{6} (1954), 205--207.

\bibitem{NT:PJA}
Masahiro Nakamura and Zir{\^o} Takeda, \emph{On some elementary properties of
  the crossed products of von {N}eumann algebras}, Proc. Japan Acad.
  \textbf{34} (1958), 489--494.
\bibitem{OP:Ann}
Narutaka Ozawa and Sorin Popa, \emph{On a class of {${\rm II}_1$} factors with
  at most one {C}artan subalgebra}, Ann. of Math. (2) \textbf{172} (2010),
  no.~1, 713--749.

\bibitem{PdelaB}
Robert Pallu~de La~Barri{\`e}re, \emph{Isomorphisme des {$^*$}-alg\`ebres
  faiblement ferm\'ees d'op\'erateurs}, C. R. Acad. Sci. Paris \textbf{234}
  (1952), 795--797.

\bibitem{Ph:PJM}
John Phillips, \emph{Perturbations of type {I} von {N}eumann algebras}, Pacific
  J. Math. \textbf{52} (1974), 505--511.

\bibitem{PhR:CJM}
John Phillips and Iain Raeburn, \emph{Perturbations of {AF}-algebras}, Canad.
  J. Math. \textbf{31} (1979), no.~5, 1012--1016.

\bibitem{PhR:PLMS}
\bysame, \emph{Perturbations of {$C^*$}-algebras. {II}}, Proc. London Math.
  Soc. (3) \textbf{43} (1981), no.~1, 46--72.

\bibitem{Pi:StP}
Gilles Pisier, \emph{The similarity degree of an operator algebra}, St.
  Petersburg Math. J. \textbf{10} (1999), no.~1, 103--146.

\bibitem{Pi:IJM}
\bysame, \emph{Remarks on the similarity degree of an operator algebra},
  Internat. J. Math. \textbf{12} (2001), no.~4, 403--414.

\bibitem{PSS:JFA}
Florin Pop, Allan~M. Sinclair, and Roger~R. Smith, \emph{Norming
  {$C^\ast$}-algebras by {$C^\ast$}-subalgebras}, J. Funct. Anal. \textbf{175}
  (2000), no.~1, 168--196.

\bibitem{P:Invent}
Sorin Popa, \emph{On a problem of {R}. {V}. {K}adison on maximal abelian {$\ast
  $}-subalgebras in factors}, Invent. Math. \textbf{65} (1981/82), no.~2,
  269--281.

\bibitem{P:Ann}
\bysame, \emph{On a class of type {${\rm II}\sb 1$} factors with {B}etti
  numbers invariants}, Ann. of Math. (2) \textbf{163} (2006), no.~3, 809--899.

\bibitem{P:Invent2}
\bysame, \emph{Strong rigidity of {$\rm II_1$} factors arising from malleable
  actions of {$w$}-rigid groups. {I}}, Invent. Math. \textbf{165} (2006),
  no.~2, 369--408.

\bibitem{P:Invent3}
\bysame, \emph{Strong rigidity of {$\rm II_1$} factors arising from malleable
  actions of {$w$}-rigid groups. {II}}, Invent. Math. \textbf{165} (2006),
  no.~2, 409--451.

\bibitem{P:IMRN}
\bysame, \emph{On {O}zawa's property for free group factors}, Int. Math. Res.
  Not. IMRN (2007), no.~11, Art. ID rnm036, 10.

\bibitem{RT:JFA}
Iain Raeburn and Joseph~L. Taylor, \emph{Hochschild cohomology and
  perturbations of {B}anach algebras}, J. Funct. Anal. \textbf{25} (1977),
  no.~3, 258--266.

\bibitem{Roy:arxiv}
Jean Roydor, \emph{A non-commutative {A}mir-{C}ambern theorem for von-{N}eumann
  algebras}, arXiv.1108.1970v1, 2011.

\bibitem{Sch:ETDS}
Klaus Schmidt, \emph{Amenability, {K}azhdan's property {$T$}, strong ergodicity and
  invariant means for ergodic group-actions}, Ergodic Theory Dynamical Systems
  \textbf{1} (1981), no.~2, 223--236.

\bibitem{SS:AJM}
Allan~M. Sinclair and Roger~R. Smith, \emph{Hochschild cohomology for von {N}eumann algebras with {C}artan
  subalgebras}, Amer. J. Math. \textbf{120} (1998), no.~5, 1043--1057.

\bibitem{SS:Scand}
\bysame, \emph{Cartan subalgebras of finite von {N}eumann algebras}, Math.
  Scand. \textbf{85} (1999), no.~1, 105--120.

\bibitem{SS:Book2}
\bysame, \emph{Finite von {N}eumann algebras and masas}, London Mathematical
  Society Lecture Note Series, vol. 351, Cambridge University Press, Cambridge,
  2008.

\bibitem{Sk:JFA}
Christian~F. Skau, \emph{Finite subalgebras of a von {N}eumann algebra}, J.
  Funct. Anal. \textbf{25} (1977), no.~3, 211--235.

\bibitem{Su:PRIMS2}
Colin~E. Sutherland, \emph{Cohomology and extensions of von {N}eumann algebras.
  {II}}, Publ. Res. Inst. Math. Sci. \textbf{16} (1980), no.~1, 134--174.

 
 \bibitem{Z:JFA}
Robert~J. Zimmer, \emph{Amenable ergodic group actions and an application to
  {P}oisson boundaries of random walks}, J. Funct. Anal. \textbf{27}
  (1978), no.~3, 350--372.

\end{thebibliography}
\end{document}